\theoremstyle{plain}
\newtheorem{theorem}{Theorem}[section]
\newtheorem{lemma}[theorem]{Lemma}
\newtheorem{prop}[theorem]{Proposition}
\newtheorem{cor}[theorem]{Corollary}
\theoremstyle{definition}
\newtheorem{fact}[theorem]{Fact}
\newtheorem{example}[theorem]{Example}
\newtheorem*{ack}{Acknowledgements}
\theoremstyle{remark}
\newtheorem{remark}[theorem]{Remark}
\newtheorem{notation}[theorem]{Notation}
\newcommand{\Z}{\mathbf{Z}}
\newcommand{\R}{\mathbf{R}}
\newcommand{\C}{\mathbf{C}}
\newcommand{\Q}{\mathbf{Q}}
\newcommand{\F}{\mathbf{F}}
\newcommand{\expp}{\mathscr{E}xp\mathscr{M}}
\newcommand{\Var}{\mathrm{Var}}
\newcommand{\evar}{\mathrm{ExpVar}}
\newcommand{\LL}{\mathbf{L}}
\newcommand{\M}{\mathscr{M}}
\newcommand{\OO}{\mathcal{O}}
\DeclareMathOperator{\ord}{ord}
\renewcommand{\t}{\mathbf{t}}
\newcommand{\dx}{\mathrm{d}}
\newcommand{\A}{\mathbf{A}}
\newcommand{\Proj}{\mathbf{P}}
\newcommand{\spec}{\mathrm{Spec}\,}
\newcommand{\res}{\mathrm{res}}
\renewcommand{\ker}{\mathrm{Ker}}
\newcommand{\pt}{\mathrm{pt}}
\newcommand{\Pic}{\mathrm{Pic}}
\newcommand{\rk}{\mathrm{rk}}
\newcommand{\id}{\mathrm{id}}
\newcommand{\pr}{\mathrm{pr}}
\renewcommand{\bar}{\overline}
\newcommand{\ul}{\underline}
\DeclareMathOperator{\Span}{Span}
\newcommand{\x}{\mathbf{x}}
\newcommand{\y}{\mathbf{y}}
\renewcommand{\1}{\mathbf{1}}
\newcommand{\T}{\mathbf{T}}
\newcommand{\h}{\mathbf{h}}
\newcommand{\g}{\mathbf{g}}
\newcommand{\0}{\mathbf{0}}
\renewcommand{\u}{\mathbf{u}}
\renewcommand{\phi}{\varphi}
\newcommand{\G}{\mathbf{G}}
\newcommand{\eps}{\varepsilon}
\newcommand{\sfl}{\mathsf{\Lambda}}
\renewcommand{\a}{\mathfrak{a}}
\renewcommand{\b}{\mathfrak{b}}
\newcommand{\Mor}{\mathrm{Mor}}
\newcommand{\Spec}{\mathrm{Spec}}
\newcommand{\maj}{\mathrm{major}}
\newcommand{\minor}{\mathrm{minor}}
\newcommand{\Conf}{\mathrm{Conf}}
\newcommand{\Sym}{\mathrm{Sym}}
\newcommand{\Poly}{\mathrm{Poly}}
\newcommand{\MPoly}{\mathrm{MPoly}}
\newcommand{\MHM}{\mathrm{MHM}}
\newcommand{\mhm}{\mathrm{MHM}}
\newcommand{\HM}{\mathrm{HM}}
\newcommand{\Hdg}{\mathrm{Hdg}}
\newcommand{\VHS}{\mathrm{VHS}}
\newcommand{\mon}{\mathrm{mon}}
\newcommand{\gr}{\mathrm{Gr}}
\def\twtimes{{\kern .1em\mathop{\boxtimes}\limits^{\scriptscriptstyle{T}}\kern .1em}}
\newcommand{\tot}{\mathrm{tot}}
\newcommand{\supp}{\mathrm{supp}}
\numberwithin{equation}{section}
\begin{document}

\title{A motivic circle method}

\author{Margaret Bilu}
\address{Centre de Mathématiques Laurent Schwartz\\
Ecole Polytechnique\\
91128 Palaiseau Cedex\\France}

\email{margaret.bilu@polytechnique.edu}
\author{Tim Browning}
\address{IST Austria\\
Am Campus 1\\
3400 Klosterneuburg\\
Austria}
\email{tdb@ist.ac.at}

\subjclass[2010]{14H10 (11D72, 11P55 , 14E18)}

\begin{abstract}
The circle method has been successfully used over the last century to study  rational points on hypersurfaces. More recently, a version of the method over function fields, combined with spreading out techniques, has led to a range of results about moduli spaces of rational curves on hypersurfaces. In this paper a version of the circle method is implemented in the setting of the  Grothendieck ring of varieties. 
This allows us to approximate the classes of these moduli spaces directly, without relying on point counting, and leads to a deeper understanding of their geometry.

%\bigskip 
%
%\noindent
%\textsc{Titre.}
%\textbf{\textsc{Une m\'ethode du cercle motivique}}
%
%\medskip 
%\noindent
%\textsc{R\'esum\'e.}
%La m\'ethode du cercle a \'et\'e utilis\'ee avec succ\`es au cours du si\`ecle dernier pour l'\'etude des points rationnels sur les hypersurfaces. Plus r\'ecemment, une version fonctionnelle de cette m\'ethode, combin\'ee \`a des techniques d'\'etalement, a men\'e \`a une s\'erie de r\'esultats sur les espaces de modules de courbes sur les hypersurfaces. Dans cet article on impl\'emente une version de la m\'ethode du cercle dans le cadre de l'anneau de Grothendieck des vari\'et\'es. Cela permet d'approximer les classes de ces espaces de modules directement, sans recours au comptage de points, ce qui donne acc\`es \`a une compr\'ehension plus profonde de leur g\'eom\'etrie.
\end{abstract}

\date{\today}
\maketitle

\thispagestyle{empty}
\setcounter{tocdepth}{1}
\tableofcontents

\section{Introduction}

Let $k$ be a field and let $f\in k[x_1,\dots,x_n]$ be a non-singular homogeneous polynomial  of degree $d\geq 2$, defining a hypersurface 
$X\subset \A^n$. For global fields, the density of $k$-points on $X$ has been the object of intense study over the years.  When $k=\Q$, the Hardy--Littlewood circle method can be used to cast light on the limit
$$
\lim_{B\to \infty} B^{-(n-d)}\#\{\x\in \Z^n: |\x|\leq B, ~f(\x)=0\},
$$
where $\x=(x_1,\dots,x_n)$ and 
$|\x|=\max_{1\leq i\leq n}|x_i|.$
Thus, it follows from work of 
Birch \cite{birch} that this limit exists as a product of local densities, provided that $n>2^d(d-1)$.
Lee \cite{lee} has worked out the analogous statement when $k=\F_q(t)$, for a finite field $\F_q$ of characteristic $>d$. Thus, under the same assumption  $n>2^d(d-1)$, a similar statement is proved about 
the existence and nature of the limit
$$
\lim_{e\to \infty} q^{-e(n-d)}\#\{\g\in \F_q[t]^n: \deg (g_1),\dots,\deg(g_n)\leq e, ~f(\g)=0\},
$$
where $\g=(g_1,\dots,g_n)$. This paper is concerned with the field $k=\C(t)$ and   the geometry of the affine variety 
\begin{equation}\label{eq:brighton}
M_e=
\{\g\in \C[t]^n: \deg (g_1),\dots,\deg(g_n)\leq  e, ~f(\g)=0\},
\end{equation}
over $\C$,
as $e\to \infty$.
We have $(e+1)n$ indeterminates and $de+1$ equations, and so the expected dimension of $M_e$ is 
\begin{equation}\label{eq:mu}
\mu(e):=(e+1)n -(de +1) = e(n-d)+n-1.
\end{equation}
Typically, tools from algebraic geometry only allow one to prove that the expected dimension is actually  the true dimension when $X$ is {\em generic}.  
This follows from seminal  work of Harris, Roth and Starr \cite{HRS} when 
$d<n/2$, a  restriction that  has since been weakened to 
$d<2n/3$ by  Beheshti and Kumar \cite{BK} (assuming that $n\geq 23$),
and then to $d\leq n-3$ by Riedl and Yang \cite{RY}.
In the setting   $d=3$  of cubic hypersurfaces it is possible to obtain 
results for {\em all} smooth hypersurfaces in the family. 
Thus Coskun and Starr \cite{CS} have shown 
that $M_e$  is  irreducible  and of  dimension $\mu(e)$, 
for any smooth cubic hypersurface  $X\subset \A^n$, provided that   $n\geq  4$.
In the setting of 
smooth projective Fano varieties, the behaviour of  the invariants associated to 
 spaces like $M_e$ is explored from the perspective of  the {\em geometric Manin conjecture} by Lehmann and Tanimoto \cite{sho}. 

It turns out that the circle method can shed light on 
the geometry of $M_e$ when $d$ is small enough compared to $n$, for any  hypersurface $X \subset \A^n$ defined by a non-singular form. 
Thus, under the assumption 
$n> (5d-4)2^{d-1}$,  
it follows from \cite{BV} that 
$M_e$ is irreducible and of dimension $\mu(e)$. 
This was later refined in \cite[Thm.~1.2]{BSfree}, with the outcome that for 
$n>3\cdot 2^{d-1}(d-1)$, 
the variety $M_e$ is  a reduced and irreducible  complete intersection of dimension $\mu(e)$,
which is smooth outside a set of codimension at least $(n/2^{d-2}-6(d-1))(1+\lfloor \frac{e+1}{d-1}\rfloor)$.
The arguments in \cites{BV,BSfree} rely on {\em spreading out}, which translates the problem into a counting problem  over a finite field $\F_q$. An application of the  Lang--Weil estimates yields the desired conclusion via an application of the circle method over $\F_q(t)$.

This paper aims to tackle deeper questions about the geometry of $M_e$ without recourse to spreading out and a reduction to counting arguments. A step in this direction was taken recently in \cite{BS}, where a sheaf-theoretic version of the circle method is developed to study a variant of $M_e$, using tools from \'etale cohomology.  Our goal is to rework the circle method in a motivic setting and to try and understand the class 
of $M_e$ in the 
Grothendieck ring of varieties.

The {\em Grothendieck ring of varieties} $K_0(\Var_\C)$ over $\C$ is defined by generators and relations. The generators are $\C$-varieties $X$. 
The relations are given by 
$X-Y$, whenever $X$ and $Y$ are $\C$-varieties that are isomorphic over $\C$, and the scissor relation
$ X-Y-U$, 
whenever $X$ is a $\C$-variety, 
$Y$ a closed  subscheme of~$X$ and $U=X\setminus Y$ its open complement.
We  write~$[X]$ for the class in $K_0(\Var_\C)$ 
of $X$. The product $[X][Y] = [X\times_\C Y]$ endows $K_0(\Var_\C)$ with a ring structure. 
We denote by $\LL$ the class of $\A^1$ in $K_0(\Var_\C)$, which is  a zero divisor in the ring, by work of Borisov \cite{lev}.  Inverting $\LL$ gives the ring $\M_\C=K_0(\Var_\C)[\LL^{-1}]$.
We are interested in the class of $M_e$ in $\M_\C$. In fact, since the circle method involves working with exponential sums, in order to be able to set up a motivic version of it, we  need to work in the 
 {\em Grothendieck ring of varieties with exponentials} $K_0(\evar_\C)$ and its localisation 
 $\expp_\C=K_0(\evar_\C)[\LL^{-1}]$. The definition of 
 $K_0(\evar_\C)$  is given in 
Section \ref{s:def_groth}, together with some of its properties. In order to talk about convergence, as $e\to \infty$, we need to introduce a topology on $\expp_\C$, which refines the obvious filtration by dimension. In Section \ref{s:weight} we  carry out the construction of an appropriate  weight 
function
$$
w:\expp_\C \to \Z,
$$
coming from Hodge theory. We  denote by $\widehat{\M_{\C}}$  the completion of $\M_{\C}$ with respect to this weight topology.

Working in $\expp_\C$, our main result 
shows that the class of $M_e$ converges to the classes of local terms that resemble the main term in the circle method, 
as $e\to \infty$. 
Given $x\in \A^1$ and integer $N\geq 0$, 
our work involves the jet spaces
 \begin{equation}\label{eq:lambda}
    \Lambda_N(f,x) = \left\{
    \g\in \C[t]^n: \deg (g_1),\dots,\deg(g_n)<N, ~ f(\g)\in (t-x)^N\C[t]\right\},
  \end{equation}
parameterising  solutions of $f(\g) = 0$ modulo $(t-x)^N$, together with
\begin{equation}\label{eq:jet}\Lambda_N(f,\infty) = \left\{\g \in \C[t]^n: 
\begin{array}{l}
\deg(g_1),\dots,\deg(g_n)< N\\
f(g_1(t^{-1}),\dots,g_n(t^{-1})) \in t^{-N}\C[t^{-1}]
\end{array}
\right\}.
\end{equation}
Note that $\Lambda_N(f,\infty) $ can naturally be identified with $\Lambda_N(f,0)$. 
Bearing this notation in mind, we  prove the following result.

\begin{theorem}\label{theorem_naivespace} 
Let $f\in \C[x_1,\dots,x_n]$ be a non-singular homogeneous polynomial  of degree $d\geq 3$, defining a  hypersurface  $X\subset \A^n$.
Assume $n > 2^d(d-1)$ and let $e\geq 1$. Then 
$$
[M_e] = \LL^{\mu(e)}\left(\mathfrak{S}(f)  \cdot 
\lim_{N\to \infty }\LL^{-N(n-1)} [\Lambda_N(f,\infty)]
+ R_{e}\right)
$$
in $\widehat{\M_\C}$, 
where 
$$
\mathfrak{S}(f)=\prod_{x\in \A^1} 
\lim_{N\to \infty }\LL^{-N(n-1)} [\Lambda_N(f,x)]
$$
 is a motivic Euler product and 
$R_e$ is an error term satisfying
$$
w(R_e) \leq  4
  - \frac{n-2^{d}(d-1)}{2^{d-2}} \left(1+\left\lfloor \frac{e+1}{2d-2}\right\rfloor\right).
$$
\end{theorem}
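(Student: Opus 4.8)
The plan is to set up a motivic analogue of the Hardy--Littlewood circle method in the completed ring $\widehat{\M_\C}$, following the classical template of \cite{birch} and its function-field incarnation in \cite{lee}, but replacing point counts with classes in $\expp_\C$. First I would express $[M_e]$ as a ``motivic integral'' over the space of additive characters: concretely, one views $M_e$ as the fibre over $0$ of the map $\A^{(e+1)n}\to \A^{de+1}$ given by the coefficients of $f(\g)$, and uses the motivic Fourier-theoretic orthogonality built into $K_0(\evar_\C)$ to write $[M_e]$ as a sum (really an integral indexed by a motivic version of $\A^{de+1}$ of ``dual'' coefficients) of classes of varieties carrying exponential functions $e(\cdot)$. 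This is the step where the formalism of Section~\ref{s:def_groth} is essential: one needs that the pushforward of the trivial exponential class along $\A^1$ is $\LL$ times the class of a point, and vanishes for a nontrivial linear exponential, which is the motivic substitute for $\sum_{a\bmod q} e(ab/q) = q\cdot \mathbf{1}_{q\mid b}$.

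Next I would partition the dual space into ``major arcs'' and ``minor arcs''. Over $\C(t)$ the natural parameter is a rational function, and the major arcs correspond to dual data supported near the poles $x\in\A^1\cup\{\infty\}$ up to bounded order, while the minor arcs are everything else. On the minor arcs the plan is to prove a motivic Weyl-type bound: the relevant exponential class has weight bounded by something decreasing linearly in $e$, with the gain governed by $n - 2^d(d-1)$ and the Deligne-style $2^{d-2}$ and $2d-2$ denominators visible in the statement of $R_e$. This requires a motivic Weyl differencing / van der Corput argument — iterating the squaring/differencing trick $d-1$ times to linearise $f$ — and then estimating the weight of the resulting multilinear exponential sums via the non-singularity of $f$ (so that the singular locus of the auxiliary bilinear forms has controlled dimension). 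I expect this minor-arc estimate to be the main obstacle: one must make the weight function $w$ from Section~\ref{s:weight} interact well with the differencing operations, control how $w$ behaves under the fibrations and projections that appear, and keep careful track of the $\lfloor (e+1)/(2d-2)\rfloor$ factor coming from how many independent differencing steps the degree-$e$ constraint permits. The classical inequalities become statements about weights of classes, and one has to check that each manipulation (Cauchy--Schwarz, completion of exponential sums, trivial bounds) has a motivic counterpart with the expected weight loss.

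On the major arcs the plan is to show that the contribution factors, up to the predicted power $\LL^{\mu(e)}$, as a product of local densities: the ``singular series'' $\mathfrak{S}(f) = \prod_{x\in\A^1}\lim_N \LL^{-N(n-1)}[\Lambda_N(f,x)]$ interpreted as a motivic Euler product in the sense that is recalled or developed earlier, times the ``singular integral'' $\lim_N \LL^{-N(n-1)}[\Lambda_N(f,\infty)]$ coming from the archimedean-like place at infinity. Concretely, for each $x$ one shows that the sum over dual data with pole at $x$ of bounded order computes $\LL^{-(\text{shift})}[\Lambda_N(f,x)]$ in the limit, using a motivic Hensel/completion argument: the jet-space counts stabilise because $f$ is non-singular (smoothness of $X$ makes the jet schemes $\Lambda_N(f,x)$ behave like those of a smooth complete intersection, so $\LL^{-N(n-1)}[\Lambda_N(f,x)] \to 1$ for all but finitely many $x$, which is what makes the Euler product converge in $\widehat{\M_\C}$). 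One then assembles the local factors into the Euler product using its defining properties and checks the convergence of the infinite product in the weight topology.

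Finally I would combine the two contributions: the major arcs give the stated main term $\LL^{\mu(e)}\mathfrak{S}(f)\cdot \lim_N \LL^{-N(n-1)}[\Lambda_N(f,\infty)]$, and the minor arcs plus the truncation errors in the major-arc analysis are absorbed into $R_e$, whose weight is bounded by the minor-arc Weyl bound — hence the shape $4 - \tfrac{n-2^d(d-1)}{2^{d-2}}(1+\lfloor\tfrac{e+1}{2d-2}\rfloor)$, where the constant $4$ collects the bounded-weight overhead from the cutoffs and the number of major arcs. Two technical points deserve care throughout: first, every limit and infinite product must be shown to converge in $\widehat{\M_\C}$, which means tracking weights at each stage and not merely dimensions; second, one must verify that the exponential classes that appear actually lie in the image of $\M_\C$ (i.e. the ``exponential'' parts cancel on the nose in the major-arc main term and are only estimated, never needed exactly, on the minor arcs), so that the final identity makes sense in $\widehat{\M_\C}$ rather than only in $\expp_\C$.
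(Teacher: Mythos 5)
Your proposal follows essentially the same route as the paper: activating the method via the motivic orthogonality relation on $\A^{de+1}$, splitting into major and minor arcs indexed by rational approximations, bounding the minor arcs through a motivic Weyl differencing argument for the weight function, and factoring the major-arc contribution into the singular series $\mathfrak{S}(f)$ (as a motivic Euler product of jet-space densities $\LL^{-N(n-1)}[\Lambda_N(f,x)]$) times the singular integral at infinity, all converging in the weight topology. The only slight imprecision is that the Euler product converges not because the local factors equal $1$ at almost all places (over $\C$ they are all equal and differ from $1$), but because the coefficients $U_i(f)_x\LL^{-in}$ have negative weight growing linearly in $i$ under $n>2^d(d-1)$; otherwise the plan matches the paper's proof, which is exactly the combination of its major-arc and minor-arc propositions in the decomposition $[M_e]=N_{\mathrm{major}}+N_{\mathrm{minor}}$.
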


The term $\mathfrak{S}(f)$ 
is the  \textit{motivic singular series} and  emerges as 
an infinite convergent sum, whose precise definition is given in 
\eqref{eq:SS}. In Sections \ref{s:SS1} and \ref{s:SS2} we  express 
 $\mathfrak{S}(f)$ as a motivic Euler product, using the construction of motivic Euler products found in   \cite[Section 3]{Bilu}. The relevant background facts about motivic Euler products is recalled in Section \ref{sect:motivic_euler_prods}.

\begin{remark}
In this paper the focus is on hypersurfaces of degree $d\geq 3$. However, 
on combining Propositions~\ref{prop:major} and \ref{prop:minor} in  \eqref{eq:plan}, 
it is also possible to deduce a statement for $d=2$, with the only difference being a weaker error term if $e\leq n/2-7$. 
\end{remark}

The proof of 
Theorem \ref{theorem_naivespace}  relies on the development of a motivic version of the Hardy--Littlewood circle method. The arguments parallel some of the sheaf-theoretic arguments in \cite{BS}, but are much more attuned to the steps taken in the classical  circle method over $\Q$ in \cite{birch}, and its incarnation over $\F_q(t)$ in \cite{lee}. The motivic circle method is developed over the course of several sections. In  Section
\ref{sec:circle} we lay the foundations of the method and in Section \ref{s:weight} we construct an appropriate weight function on relative Grothendieck rings of varieties with exponentials: this was one of the main difficulties we encountered, since  the weight function from \cite{Bilu} was not suitable for our purposes. In Section  \ref{sec:weyl} we work out a Weyl differencing argument to bound the weight of a general class of motivic exponential sums, using the properties of this new weight function.  In Sections \ref{sec:major} and 
\ref{sect:minor_arcs} we analyse the contribution from the major and minor arcs, 
respectively.  Our treatment of the minor arcs relies on tools from the geometry of numbers over the function field $\C(t)$, in the spirit of work by Mahler \cite{mahler}. We have collected together the necessary facts  in  Appendix~\ref{sec:gon}.

One notable feature of our work is that 
we work over $\C$ throughout  and never rely on the kind of  counting arguments over finite fields that arise from  spreading out.  In doing so, we believe that 
Theorem~\ref{theorem_naivespace}  has the potential to unlock even more geometric information about the  parameter space $M_e$, and its cousins. 
We explain some applications in the remainder of this introduction. 
\subsection*{The Hodge--Deligne polynomial}

As explained in 
\cite[Prop.~3.2.14 of Chapter 2]{CNS},
the Hodge--Deligne polynomial gives a unique motivic measure $\mathrm{HD}: K_0(\Var_\C) \to \Z[u,v]$, via
$$
\mathrm{HD}(X)= \sum_{p,q\geq 0} (-1)^{p+q} h^{p,q}(X)u^pv^q,
$$
where $h^{p,q}(X)$ are the virtual Hodge numbers of $X$. 
One has 
$\mathrm{HD}(\LL)=uv$. Moreover,  $\mathrm{HD}$ induces a motivic measure
$$\widehat{\M_{\C}}\to \Z[u,v][[(uv)^{-1}]].$$
Applying this motivic measure to both sides of the equality in Theorem 
\ref{theorem_naivespace}, it is possible to express a positive proportion of the coefficients of 
$\mathrm{HD}(M_e)$ in terms of  
the Hodge--Deligne polynomial of the motivic Euler product $\mathfrak{S}(f)$.
Following the categorification of inclusion--exclusion,  recently worked out by Das and Howe \cite{DH}, 
it would be interesting to discern whether this  could give access to homological stability results
 for the parameter space $M_e$.
 
While the  
use of spreading-out and counting procedures 
 usually  only gives  access to dimension and irreducibility results, which corresponds to the dominant term of the Hodge--Deligne polynomial, our motivic approach computes a positive proportion of the coefficients of this polynomial. We  take this point of view further in Corollary \ref{cor:coefficients} in the special case $e=1$.

\subsection*{The space of morphisms}

Let $f\in \C[x_1,\dots,x_n]$ be a non-singular homogeneous polynomial of degree $d$, and let $Z\subset \Proj^{n-1}$ be the hypersurface it defines. We can also study the space of morphisms of degree $e$ from $\Proj^1$ to $Z$, given by
$$
\Mor_{e}(\Proj^1,Z) = \left\{\g\in (\C[t]^n-\{0\})/\C^{\times}: 
\begin{array}{l}
\max \deg g_i = e, ~\gcd(g_1,\dots,g_n) = 1, \\
f(\g) = 0
\end{array}
\right\}.
$$
The restriction of  $(\C[t]^n-\{0\})/\C^{\times}$ to polynomials of degree $\leq e$ is  isomorphic to  $\Proj^{n(e+1)-1}$, and 
so the expected dimension of $\Mor_{e}(\Proj^1,Z)$ is
$
n(e+1) - 1 - (de+1) = \mu(e)-1,
$
because it is cut out by $de+1$ equations. 

Our next result is deduced from  Theorem \ref{theorem_naivespace} using inclusion--exclusion and  supplies a similar result for  
the space of morphisms $\Mor_{e}(\Proj^1,Z)$.

\begin{theorem}\label{theorem_morspace} 
Let $f\in \C[x_1,\dots,x_n]$ be a non-singular homogeneous polynomial  of degree $d\geq 3$, defining a  hypersurface  $Z\subset \Proj^{n-1}$.
Assume $n > 2^d(d-1)$ and let $e\geq 1$.
Then 
$$[\Mor_e(\Proj^1,Z)] = \frac{
\LL^{\mu(e) - 1}}
{1-\LL^{-1}}
\left( 
\prod_{v\in \Proj^1} c_v
+ S_e\right)
$$
in $\widehat{\M_\C}$,
where
$$
c_v=(1-\LL^{-1}) \frac{[Z]  }{\LL^{n-2}}
$$
and 
 $S_e$ is an error term satisfying
$$w(S_e)\leq 4 - \left(\frac{n-2^d(d-1)}{2^{d-1}(d-1)}\right)(e+1).
$$
\end{theorem}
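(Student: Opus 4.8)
The plan is to deduce Theorem \ref{theorem_morspace} from Theorem \ref{theorem_naivespace} by a motivic inclusion–exclusion argument that strips off, from the affine cone description appearing in $M_e$, first the scaling by $\C^\times$ and then the locus where $\gcd(g_1,\dots,g_n)\neq 1$. Concretely, let $\widetilde M_e$ denote the open subvariety of $M_e\setminus\{0\}$ cut out by $\max_i\deg g_i = e$ exactly (rather than $\leq e$); since the locus $\max_i\deg g_i\leq e-1$ inside $M_e$ is just $M_{e-1}$, we get $[\widetilde M_e\cup\{0\}] = [M_e] - [M_{e-1}]$ in $\widehat{\M_\C}$, and Theorem \ref{theorem_naivespace} gives both $[M_e]$ and $[M_{e-1}]$ with explicit main terms and error terms. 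The $\C^\times$-action on $\widetilde M_e$ is free (the origin having been removed), and the quotient by this free $\G_m$-action multiplies the class by $(\LL-1)^{-1}=\LL^{-1}/(1-\LL^{-1})$ after the usual care — one uses that $\widetilde M_e$ is, Zariski-locally on the base, a product with $\G_m$, so the relation $[\widetilde M_e] = (\LL-1)[\widetilde M_e/\C^\times]$ holds in $K_0(\Var_\C)$; this accounts for the prefactor $\LL^{\mu(e)-1}/(1-\LL^{-1})$.

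Next I would handle the coprimality condition via motivic Möbius inversion over the monic polynomials in $\C[t]$. Writing $N_e = \widetilde M_e/\C^\times$ for the space with $\gcd$ unconstrained and $\Mor_e = \Mor_e(\Proj^1,\tilde X)$ for the coprime locus, stratifying by the monic $\gcd\ h$ of $(g_1,\dots,g_n)$ of degree $j\leq e$ and writing $g_i = h\tilde g_i$ gives a decomposition $N_e = \bigsqcup_{j=0}^{e}\bigsqcup_{\deg h = j,\,h\text{ monic}} \Mor_{e-j}(\Proj^1,\tilde X)$, hence in the Grothendieck ring
\begin{equation*}
[N_e] = \sum_{j=0}^{e} [\Sym^j\A^1]\,[\Mor_{e-j}] = \sum_{j=0}^{e}\LL^j\,[\Mor_{e-j}],
\end{equation*}
using $[\Sym^j\A^1]=\LL^j$. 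Inverting this triangular system — equivalently, noting $\sum_{j\geq 0}\LL^j T^j$ has inverse $1-\LL T$ — yields $[\Mor_e] = [N_e] - \LL\,[N_{e-1}]$. Combining with the previous paragraph, $[\Mor_e] = \frac{\LL^{-1}}{1-\LL^{-1}}\big([\widetilde M_e] - \LL[\widetilde M_{e-1}]\big)$, and then $[\widetilde M_e]+[\{0\}] = [M_e]-[M_{e-1}]$ lets me feed in Theorem \ref{theorem_naivespace} directly; the class of the point is negligible for the weight filtration. What remains is bookkeeping: one combines the four instances of the main term of Theorem \ref{theorem_naivespace} (at levels $e$, $e-1$, and shifted by $\LL$) and checks that the $M_{e-1}$-contributions and the $\Sym$-corrections conspire to telescope, leaving a single clean main term $\prod_{v\in\Proj^1}c_v$.

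Identifying that main term is the step requiring genuine input rather than formal manipulation. After the dust settles, the main term of $[\Mor_e]$ is proportional to $\mathfrak S(f)\cdot\lim_{N}\LL^{-N(n-1)}[\Lambda_N(f,\infty)]$ from Theorem \ref{theorem_naivespace}, and I must recognise this motivic singular series and its "infinite place" factor as the motivic Euler product $\prod_{v\in\Proj^1}c_v$ with $c_v=(1-\LL^{-1})[\tilde X]/\LL^{n-2}$. For this one invokes a motivic local density computation: the local factor $\lim_{N\to\infty}\LL^{-N(n-1)}[\Lambda_N(f,x)]$ should equal the motivic volume of the $(t-x)$-adic solution set of $f(\g)=0$, and a Hensel-type / smoothness argument (valid because $X$ is non-singular, so $f=0$ is smooth away from the origin and the singular point contributes negligibly in the weight topology) identifies it with $(1-\LL^{-1})[\tilde X]\LL^{-(n-2)}$ — the motivic point-count of the smooth projective hypersurface normalised appropriately, exactly as the analogous $\F_q$ computation in \cite{birch,lee} yields $q^{n-d}$ times a density equal to $\#\tilde X(\F_q)/q^{n-2}$. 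The passage from the product over closed points $x\in\A^1$ together with the $\infty$-factor to a product over $v\in\Proj^1$ is then the definition of the motivic Euler product over $\Proj^1$, using the formalism of \cite{Bilu}. The main obstacle, then, is this last identification: proving rigorously in $\widehat{\M_\C}$ that the jet-space limit computes the expected local density $c_v$, handling the motivic Euler product convergence, and controlling the singular locus's contribution — everything else is an exercise in the scissor relations and geometric series. Finally, the error term $S_e$ is obtained by propagating $R_e$ and $R_{e-1}$ through the identities above; since $w$ is subadditive and multiplication by $\LL$ and $(1-\LL^{-1})^{-1}$ shift weights in a controlled way, the bound $w(R_{e-1})\leq 4-\big(\frac{n-2^d(d-1)}{2^{d-2}}\big)\big(1+\lfloor\frac{e}{2d-2}\rfloor\big)$ is the dominant contribution, and simplifying $1+\lfloor e/(2d-2)\rfloor \geq (e+1)/(2d-2)$ gives the stated $w(S_e)\leq 4 - \big(\frac{n-2^d(d-1)}{2^{d-1}(d-1)}\big)(e+1)$.
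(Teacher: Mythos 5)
Your reduction of $[\Mor_e(\Proj^1,\tilde X)]$ to the three-term combination $(\LL-1)[\Mor_e(\Proj^1,\tilde X)]=[M_e]-(\LL+1)[M_{e-1}]+\LL[M_{e-2}]$ -- via the free $\C^{\times}$-quotient and the stratification by the monic gcd, i.e. $[N_e]=\sum_{j}\LL^{j}[\Mor_{e-j}(\Proj^1,\tilde X)]$ -- is exactly the paper's Lemma \ref{lem:morspace_naivespace}, and your error propagation is essentially the paper's (minor points: you need the conventions $M_{-1}=\mathrm{pt}$ and $R_{e}=\varnothing$ for $e\leq 0$ to cover small $e$, and the dominant error contribution is $R_e$ rather than $R_{e-1}$, the latter being damped by the factor $(\LL+1)\LL^{-(n-d)}$; either way the stated bound follows).

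The genuine gap is in your identification of the main term. You assert that $\lim_{N\to\infty}\LL^{-N(n-1)}[\Lambda_N(f,x)]=(1-\LL^{-1})[\tilde X]\LL^{-(n-2)}$ because the cone point of $X\subset\A^n$ ``contributes negligibly in the weight topology''. This is false: jets based at the cone point contribute a geometric series, and the correct value (the paper's Lemma \ref{lem:*}) is $(1-\LL^{-1})[\tilde X]\LL^{-(n-2)}(1-\LL^{-(n-d)})^{-1}$, and likewise for $\sigma_\infty(f)$. These limits are part of the \emph{main} term, so they must be computed exactly; weight-negligibility is only available for the error terms, and the discrepancy your value would introduce into the final formula has weight about $2-2(n-d)$, independent of $e$, hence is not covered by the claimed bound $w(S_e)\leq 4-\nu(e+1)$ once $e$ is large. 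Relatedly, plugging Theorem \ref{theorem_naivespace} into the three-term combination produces the factor $1-(\LL+1)\LL^{-(n-d)}+\LL^{-2(n-d)+1}=(1-\LL^{-(n-d)})(1-\LL^{-(n-d)+1})$ in front of $\mathfrak{S}(f)\,\sigma_\infty(f)$, and your sketch never accounts for it: there is no ``telescoping to a single clean main term''. In the paper this factor is cancelled exactly against the cone-point factors: $(1-\LL^{-(n-d)})$ is absorbed by the place at infinity, while $(1-\LL^{-(n-d)+1})$ is spread over the places of $\A^1$ using multiplicativity of motivic Euler products and Kapranov's zeta value $Z_{\A^1}(T)=(1-\LL T)^{-1}$, together with Remark \ref{rem:faisant}, which is needed because special values of motivic Euler products cannot be manipulated factor-by-factor naively; only after this does each local factor become $c_v=(1-\LL^{-1})[\tilde X]\LL^{-(n-2)}$ and the main term become $\prod_{v\in\Proj^1}c_v$. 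So the missing ingredients are the exact local computation of Lemma \ref{lem:*} (keeping the factor $(1-\LL^{-(n-d)})^{-1}$) and the Euler-product bookkeeping that matches it against the factor generated by the inclusion--exclusion.
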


\subsection*{Irreducibility and dimension of the space of morphisms}
We  describe the properties of the weight function $w:\expp_\C \to \Z$ in Section \ref{s:weight}.
One of its  key  properties is the identity   $w([V])=2\dim V$, for any $\C$-variety $V$, 
which is proved in Lemma \ref{lemma:weight_dimension}. In fact, by Lemma \ref{lemma:weight_cancellation},
we have  $$
w([V]-[W])\leq 2r-1,
$$ 
whenever  
$V,W$ are irreducible $\C$-varieties of equal dimension $r$.
Rather than apply the statement of Theorem \ref{theorem_morspace}, we  instead invoke  its proof and  the expression 
\eqref{eq:staging-post} in particular. 
Lemma \ref{lem:relation'} and Corollary
\ref{cor:dim} imply that $\Lambda_{N}(f,\infty)\subset \A^{Nn}$ is irreducible and of dimension $N(n-1)$. 
Hence  $\sigma_\infty(f)=1+E_1$ in \eqref{eq:staging-post}, 
with $w(E_1)\leq -1$.
Moreover, it follows from
Remark \ref{rem:weight-SS} that  $\mathfrak{S}(f) = 1 + E_2$ in 
$\expp_\C$,  where $w(E_2) <0$, and 
$$
\frac{(1-\LL^{-(n-d)})(1-\LL^{-(n-d) + 1})}{1-\LL^{-1}}=1+E_3,
$$
where $w(E_3)\leq -2$. 
Hence the following  result  is an easy consequence of 
\eqref{eq:staging-post} in the proof of 
Theorem \ref{theorem_morspace}, combined with \cite[II, Theorem 1.2]{kollar_rat_curves}, which ensures that all irreducible components of the space of morphisms have at least the expected dimension.

\begin{cor}\label{cor:irred}
Let $f\in \C[x_1,\dots,x_n]$ be a non-singular homogeneous polynomial  of degree $d\geq 3$, defining a  hypersurface  $Z\subset \Proj^{n-1}$.
Assume $n > 2^d(d-1)$
and let $e\geq 1$.
Then 
$\Mor_e(\Proj^1,Z)$ is an irreducible variety of dimension $\mu(e)-1$.
\end{cor}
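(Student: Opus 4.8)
The plan is to read off the dimension and irreducibility of $\Mor_e(\Proj^1,\tilde X)$ from the \emph{weight} of its class in $\widehat{\M_\C}$, once that class has been pinned down modulo lower-weight terms. The two properties of $w\colon\expp_\C\to\Z$ from Section~\ref{s:weight} that I would use are $w([V])=2\dim V$ for a $\C$-variety $V$ (Lemma~\ref{lemma:weight_dimension}) and $w([V]-[W])\le 2r-1$ for $V,W$ irreducible of common dimension $r$ (Lemma~\ref{lemma:weight_cancellation}), supplemented by the basic Hodge-theoretic fact that the weight-$2D$ graded piece of an effective class has rank equal to the number of its $D$-dimensional irreducible components. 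Together these give the following criterion: if $M$ is a $\C$-variety and $[M]=\LL^{D}+R$ in $\widehat{\M_\C}$ with $w(R)\le 2D-1$, then $\dim M=D$ (otherwise $[M]-\LL^{D}$ would have weight $>2D$ if $\dim M>D$, and weight $2D$ if $\dim M<D$) and $M$ is irreducible (otherwise it would have at least two $D$-dimensional components, forcing $w([M]-\LL^{D})=2D$). So it suffices to show that
\[
[\Mor_e(\Proj^1,\tilde X)]=\LL^{\mu(e)-1}+R,\qquad w(R)\le 2(\mu(e)-1)-1=2\mu(e)-3 .
\]

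For this I would not invoke the statement of Theorem~\ref{theorem_morspace}, whose error term $S_e$ need not have negative weight when $n$ is close to $2^d(d-1)$, but rather the intermediate identity \eqref{eq:staging-post} produced in its proof, which expresses $[\Mor_e(\Proj^1,\tilde X)]$ as $\LL^{\mu(e)-1}$ times the product of the local factors $\sigma_\infty(f)$, $\mathfrak{S}(f)$ and $\frac{(1-\LL^{-(n-d)})(1-\LL^{-(n-d)+1})}{1-\LL^{-1}}$, up to an error whose weight, once the prefactor $\LL^{\mu(e)-1}$ is removed, is $\le -1$. It then remains to check that each of these three factors equals $1$ up to negative weight; this is exactly what is recorded just above the statement. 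Namely, $\sigma_\infty(f)=1+E_1$ with $w(E_1)\le-1$: by Lemma~\ref{lem:relation'} and Corollary~\ref{cor:dim} the variety $\Lambda_N(f,\infty)\subset\A^{Nn}$ is irreducible of dimension $N(n-1)$, so $[\Lambda_N(f,\infty)]-[\A^{N(n-1)}]$ has weight $\le 2N(n-1)-1$ by Lemma~\ref{lemma:weight_cancellation}, whence $\LL^{-N(n-1)}[\Lambda_N(f,\infty)]-1$ has weight $\le-1$, a bound that survives the limit $N\to\infty$; $\mathfrak{S}(f)=1+E_2$ with $w(E_2)<0$ by Remark~\ref{rem:weight-SS}; and $\frac{(1-\LL^{-(n-d)})(1-\LL^{-(n-d)+1})}{1-\LL^{-1}}=1+E_3$ with $w(E_3)\le-2$ by a direct expansion of the geometric series, using $n-d\ge 2$ (which holds since $n>2^d(d-1)$ and $d\ge 3$).

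Multiplying the three factors gives $1+E$ with $w(E)\le-1$, so $\LL^{\mu(e)-1}(1+E)=\LL^{\mu(e)-1}+R'$ with $w(R')=2\mu(e)-2+w(E)\le 2\mu(e)-3$; adding the error term from \eqref{eq:staging-post}, also of weight $\le 2\mu(e)-3$, yields $[\Mor_e(\Proj^1,\tilde X)]=\LL^{\mu(e)-1}+R$ with $w(R)\le 2\mu(e)-3$, and the criterion of the first paragraph concludes (nonemptiness being automatic, since the right-hand side has weight $2\mu(e)-2\neq -\infty$). The only ingredient that is more than bookkeeping — and the step I would flag as the crux — is the irreducibility half of that criterion: one must know that for an effective class the weight-$2D$ part cannot be cancelled, so that $[M]-\LL^{D}$ drops below weight $2D$ precisely when $M$ has a single $D$-dimensional component. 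This is a property of the Hodge-theoretic weight function of Section~\ref{s:weight} (via Poincar\'e--Lefschetz duality, $H^{2D}_c$ of a $D$-dimensional variety is a sum of copies of $\Q(-D)$, all of weight $2D$) and is independent of the circle method; granting it, everything else is a routine weight computation.
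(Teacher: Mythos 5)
Your route is the paper's route: you invoke the intermediate identity \eqref{eq:staging-post} rather than Theorem \ref{theorem_morspace}, expand $\sigma_\infty(f)$, $\mathfrak{S}(f)$ and the factor $(1-\LL^{-(n-d)})(1-\LL^{-(n-d)+1})/(1-\LL^{-1})$ as $1$ plus terms of negative weight (with the same references the paper uses), and then read off dimension and irreducibility from the weight function. The genuine gap is in the irreducibility half of your criterion. From $[M]=\LL^{D}+R$ with $w(R)\le 2D-1$ you may conclude that $\dim M=D$ and that $M$ has a \emph{unique irreducible component of dimension $D$} (your non-cancellation observation for the top graded piece is fine for this), but not that $M$ is irreducible: a reducible variety need not have two top-dimensional components, and lower-dimensional components are invisible to the weight. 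Concretely, $M=\A^{D}\sqcup\{\mathrm{pt}\}$ satisfies $[M]=\LL^{D}+1$ with $w(1)=0\le 2D-1$, yet is reducible, so the step ``otherwise it would have at least two $D$-dimensional components'' fails. What closes this gap, and what the paper has available (it is how the expected dimension $\mu(e)-1$ is computed in the introduction), is that $\Mor_e(\Proj^1,\tilde X)$ is cut out by the $de+1$ coefficient equations of $f(g_1,\dots,g_n)=0$ inside an open subset of $\Proj^{n(e+1)-1}$, so by Krull's height theorem every irreducible component has dimension at least $n(e+1)-1-(de+1)=\mu(e)-1$; together with ``dimension exactly $\mu(e)-1$ and a single component of that dimension'' this yields irreducibility. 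Your proposal never uses the defining equations, so this input is missing.

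A secondary point concerns the error term. Your stated reason for working with \eqref{eq:staging-post} instead of Theorem \ref{theorem_morspace} --- that the theorem's $S_e$ ``need not have negative weight'' while the error in \eqref{eq:staging-post} has weight $\le -1$ after removing $\LL^{\mu(e)-1}$ --- is not coherent: the error term appearing in \eqref{eq:staging-post} \emph{is} $S_e$, with exactly the bound $4-\nu(e+1)$ established in the paper's proof. The actual reason for passing to \eqref{eq:staging-post} (as the paper does) is that its \emph{main} term appears in the factored form $\mathfrak{S}(f)\,\sigma_\infty(f)\cdot(1-\LL^{-(n-d)})(1-\LL^{-(n-d)+1})/(1-\LL^{-1})$, each factor of which is $1$ plus negative-weight terms, not that its error is any better. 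So the negativity of the error's weight, which your weight criterion requires, has to be extracted from the quantitative bounds on $S_e$ (this is precisely where the hypotheses on $n$ and $e$ enter) rather than asserted; as written, your ``$\le -1$'' claim is unsupported and is contradicted by your own remark about $S_e$ one sentence earlier.
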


This refines \cite[Thm.~1.1]{BSfree}, which achieves the same conclusion under the more stringent assumption $n>2^{d}\left(d-\frac{1}{2}\right)$.

\subsection*{Motivic Manin--Peyre}
Our work allows us to address a question of Peyre \cite[Question 5.4]{peyre}
about the convergence of 
$$
[\Mor_e(\Proj^1,Z)]\LL^{-e(n-d)},
$$
for a  smooth hypersurface $Z\subset \mathbf{P}^{n-1}$ of degree $d$ that is  defined over $\C$. 
If  $n > 2^d(d-1)$, then  Theorem \ref{theorem_morspace} shows that this sequence
converges in the weight topology to 
$$
\frac{\LL^{n-2}}{1-\LL^{-1}}
\prod_{v\in \Proj^1} c_v,
$$
where
$$
c_v=(1-\LL^{-1}) \frac{[Z]  }{\LL^{n-2}}.
$$
This is consistent with a motivic analogue of the Manin--Peyre conjecture described by Faisant \cite[p.~3]{faisant}, who suggests that the limit can be interpreted as an adelic volume
$$
\frac{\LL^{\dim(Z)}}{(1-\LL^{-1})^{\rk \Pic(Z)}} \prod_{v\in \Proj^1}\mathfrak{c}_v,
$$
where $
\mathfrak{c}_v = (1-\LL^{-1})^{\rk \Pic(Z)} [Z] \LL^{-\dim Z},
$
at all but finitely many places. 
Indeed, we have
$\dim(Z)=n-2$ and 
 $\rk \Pic(Z) = 1$, 
by the Lefschetz hyperplane theorem.

\subsection*{Weak approximation and equidistribution}

Given a smooth hypersurface $Z\subset \mathbf{P}^{n-1}$
and points $p_1,\dots,p_m\in \Proj^1$ and $x_1,\dots,x_m\in Z$, 
a  natural generalisation involves 
studying the space 
$\Mor_{e}(\Proj^1,Z;p_1,\dots,p_m;x_1,\dots,x_m)$ of degree $e$ morphisms  $g:\Proj^1\to Z$, such that $g(p_i)=x_i$ for $1\leq i\leq m$.
Assuming  that 
 $n>2^d(d-1)$ and 
$e$ is large enough in terms of $m$, 
the methods of this paper are  robust enough to study the class of this variety in 
 $\widehat{\M_\C}$, which  would yield an analogue of 
Corollary~\ref{cor:irred} for $\Mor_{e}(\Proj^1,Z;p_1,\dots,p_m;x_1,\dots,x_m)$.
This would address a question raised by Will Sawin in his  lecture at the Banff workshop
``Geometry via Arithmetic'' in July 2021, who highlighted that methods from algebraic geometry 
break down when $m$ is allowed to be arbitrary, 
even for generic hypersurfaces. 

Note that the study of  $\Mor_{e}(\Proj^1,Z;p_1,\dots,p_m;x_1,\dots,x_m)$ can also  be interpreted as a special case of the equidistribution principle, as described in \cite{faisant}. A related question is that of the
 weak approximation problem  over $\C(t)$, a topic that was studied  by 
 Hassett and Tschinkel
\cite{HT,HT2}.

\subsection*{The variety of lines on hypersurfaces} Our method is robust enough to give non-trivial results already in the case $e=1$ of lines. Let $Z\subset \Proj^{n-1}$ as above be a smooth hypersurface of degree $d$, and let $F_1(Z)$ be the Fano variety of lines associated to $Z$. We  prove the following result in Section~\ref{sec:lines}.

\begin{theorem}\label{thm:lines}
Let $Z\subset \mathbf{P}^{n-1}$ be a smooth hypersurface of degree $d\geq 3$ with $n>2^d(d-1)$. Then 
\begin{align*}
[F_1(Z)] 
&= 
 \frac{\LL^{-d} [Z]^2-
 \LL^{n-d-2} [Z]}{1 + \LL^{-1}} 
+\widehat{R}_{n,d}
\end{align*}
in $\widehat{\M_\C}$, where
$$
w(\widehat{R}_{n,d})\leq 4n-2d-6- \frac{n-2^{d}(d-1)}{2^{d-2}}.
$$
\end{theorem}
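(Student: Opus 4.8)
The plan is to deduce the formula from Theorem~\ref{theorem_naivespace} at $e=1$, after first expressing the class of $F_1(\tilde X)$ in terms of that of the naive space $M_1$. Identify a tuple $\g=(g_1,\dots,g_n)$ of polynomials of degree $\leq 1$ with the $2\times n$ matrix $A$ of its coefficients; then $M_1$ is the closed subscheme of $\mathrm{Mat}_{2,n}\cong\A^{2n}$ cut out by the $d+1$ coefficients of $f(\g)\in\C[t]$. Stratify by the rank of $A$. On the rank-$2$ locus $\widetilde F\subset M_1$, sending $A$ to its row span realises $\widetilde F$ as a $\mathrm{GL}_2$-torsor over $F_1(\tilde X)$, since a line lies on $\tilde X$ exactly when $f$ vanishes identically along it. The rank-$\leq 1$ locus consists of the origin together with the affine cone over the Segre image of $\Proj^1\times\tilde X$, and so has class $1+(\LL^2-1)[\tilde X]$. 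Because $\mathrm{GL}_2$ is a special group, the torsor is Zariski-locally trivial, so $[\widetilde F]=[\mathrm{GL}_2][F_1(\tilde X)]=\LL(\LL-1)^2(\LL+1)[F_1(\tilde X)]$, and altogether
$$
[F_1(\tilde X)]=\frac{[M_1]-(\LL^2-1)[\tilde X]-1}{\LL(\LL-1)^2(\LL+1)}
$$
in $\widehat{\M_\C}$. Theorem~\ref{theorem_naivespace} with $e=1$ (so $\mu(1)=2n-d-1$, and $\lfloor (e+1)/(2d-2)\rfloor=0$ since $d\geq 3$) then gives $[M_1]=\LL^{2n-d-1}(\mathfrak S(f)\,\sigma_\infty(f)+R_1)$ with $w(R_1)\leq 4-(n-2^d(d-1))/2^{d-2}$, where $\sigma_\infty(f)=\lim_{N\to\infty}\LL^{-N(n-1)}[\Lambda_N(f,\infty)]$.

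The crux is to evaluate $\mathfrak S(f)\,\sigma_\infty(f)$ in closed form. Since $f$ contains no variable $t$, the jet space $\Lambda_N(f,x)$ does not depend on $x$ and is isomorphic to $\Lambda_N(f,0)$; hence every local factor $\sigma_x(f)$ equals a single class $\sigma_0(f)$, and multiplicativity of motivic Euler products over $\Proj^1=\A^1\sqcup\{\mathrm{pt}\}$ gives $\mathfrak S(f)\,\sigma_\infty(f)=\prod_{x\in\Proj^1}\sigma_0(f)$, a constant Euler product. To compute $\sigma_0(f)$ I would stratify $\Lambda_N(f,0)$ according to whether the constant term $\g(0)$ vanishes: where $\g(0)\neq 0$ the cone $C_X=\{f=0\}$ is smooth, each successive Taylor coefficient is cut out by a single nonzero linear equation, and the locus is an iterated affine $\A^{n-1}$-bundle over $C_X\setminus\{0\}$; where $\g(0)=0$ one writes $\g=t\h$ and uses $f(t\h)=t^df(\h)$ to reduce to $\Lambda_{N-d}(f,0)$. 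This produces the recursion $[\Lambda_N(f,0)]=\LL^{(n-1)(N-1)}(\LL-1)[\tilde X]+\LL^{n(d-1)}[\Lambda_{N-d}(f,0)]$ for $N>d$, whence in the limit $\sigma_0(f)=(1-\LL^{-1})[\tilde X]\LL^{-(n-2)}/(1-\LL^{-(n-d)})$; in particular $\sigma_0(f)=(1-\LL^{-1})[\tilde X]\LL^{-(n-2)}$ up to a term of weight $\leq -2(n-d)$. It remains to compute the constant product $\prod_{x\in\Proj^1}\sigma_0(f)$ using the formalism recalled in Section~\ref{sect:motivic_euler_prods}, for which the relevant input is the motivic zeta function $Z_{\Proj^1}(T)=\bigl((1-T)(1-\LL T)\bigr)^{-1}$; this is exactly the $e=1$ instance of the adelic product $\prod_{v\in\Proj^1}c_v$ with $c_v=(1-\LL^{-1})[\tilde X]\LL^{-(n-2)}$ computed in the proof of Theorem~\ref{theorem_morspace}, and one gets
$$
\mathfrak S(f)\,\sigma_\infty(f)=(\LL-1)^2\LL^{-(2n-3)}[\tilde X]\bigl([\tilde X]-\LL^{n-2}\bigr)+E,\qquad w(E)\leq -2(n-d-1).
$$

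Substituting this into the expression for $[F_1(\tilde X)]$, the leading contribution simplifies to
$$
\frac{\LL^{2n-d-1}\cdot(\LL-1)^2\LL^{-(2n-3)}[\tilde X]([\tilde X]-\LL^{n-2})}{\LL(\LL-1)^2(\LL+1)}=\frac{\LL^{1-d}[\tilde X]\bigl([\tilde X]-\LL^{n-2}\bigr)}{\LL+1}=\frac{\LL^{-d}[\tilde X]^2-\LL^{n-d-2}[\tilde X]}{1+\LL^{-1}},
$$
which is the claimed main term. The error $\widehat R_{n,d}$ then collects: $\LL^{2n-d-1}R_1/(\LL(\LL-1)^2(\LL+1))$, of weight $\leq 4n-2d-10+w(R_1)\leq 4n-2d-6-(n-2^d(d-1))/2^{d-2}$; $\LL^{2n-d-1}E/(\LL(\LL-1)^2(\LL+1))$, of weight $\leq 4n-2d-10+w(E)\leq 2n-8$; and $-(1+(\LL^2-1)[\tilde X])/(\LL(\LL-1)^2(\LL+1))$, of weight $2n-8$. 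A direct check shows that the hypothesis $n>2^d(d-1)$ forces $2n-8\leq 4n-2d-6-(n-2^d(d-1))/2^{d-2}$, so $w(\widehat R_{n,d})$ is bounded as stated, using Lemma~\ref{lemma:weight_dimension} to read off the weights of the individual classes.

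The main obstacle is the middle step: identifying $\sigma_0(f)$ and then carrying out the constant motivic Euler product over $\Proj^1$ explicitly in terms of $[\tilde X]$. This is the only genuinely new computation, and even it overlaps substantially with the adelic density computed inside the proof of Theorem~\ref{theorem_morspace}. I note that one could instead route through $[\Mor_1(\Proj^1,\tilde X)]$, which is a $\mathrm{PGL}_2$-torsor over $F_1(\tilde X)$ and to which Theorem~\ref{theorem_morspace} applies directly; however this produces the weaker error bound $w(\widehat R_{n,d})\leq 4n-2d-6-(n-2^d(d-1))/(2^{d-2}(d-1))$, because the passage from $M_e$ to $\Mor_e$ inflates the error term, so it is preferable to work with $M_1$ directly.
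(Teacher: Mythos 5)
Your reduction of $[F_1(\tilde X)]$ to $[M_1]$ is correct: the rank stratification of $2\times n$ coefficient matrices recovers exactly the identity \eqref{eq:from_theorem}, which the paper obtains instead from Lemma \ref{lem:morspace_naivespace} and the $\mathrm{PGL}_2$-fibration $\Mor_1(\Proj^1,\tilde X)\to F_1(\tilde X)$, and your recursion for $[\Lambda_N(f,0)]$ is just Lemma \ref{lem:*}. The genuine gap is the crux step: the claim that
$\mathfrak S(f)\,\sigma_\infty(f)=(\LL-1)^2\LL^{-(2n-3)}[\tilde X]\bigl([\tilde X]-\LL^{n-2}\bigr)+E$ with $w(E)\leq -2(n-d-1)$.
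You justify it by saying this is ``exactly the $e=1$ instance of the adelic product $\prod_{v\in\Proj^1}c_v$ computed in the proof of Theorem \ref{theorem_morspace}'', but that proof never evaluates the adelic product in closed form: the answer there is deliberately left as a formal motivic Euler product (Notation \ref{notation:euler_product}), whose coefficients are configuration-space classes built from symmetric powers of $\tilde X$. Converting a \emph{constant} Euler product over $\Proj^1$ into a polynomial in $[\tilde X]$ and $\LL$ is precisely the kind of statement the paper warns against: for $d=3$ it amounts, via \eqref{eq:GS}, to approximating $[\Sym^2\tilde X]$ by expressions in $[\tilde X]^2$ and $[\tilde X]$, which holds only approximately in the weight topology and with an error that has to be estimated (compare \eqref{eq:pen} and Remark \ref{rem:burillo}). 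Your exponent $-2(n-d-1)$ is asserted with no argument, and it does not follow from anything in the paper: already the tail $\sum_{m\geq 2}S_m(f)\LL^{-nm}$ of the singular series, which is part of the difference you are bounding, is only controlled by \eqref{eq:weight-of-complete-sum}/Lemma \ref{lem:train} at weight about $8-n/2^{d-2}$, far weaker than $-2(n-d-1)$; to do better one would need a finer analysis of $S_2(f)$, of the jet classes, and of the discrepancy between configuration spaces and plain products --- none of which your sketch supplies. So as written the proof has a hole exactly where the new content should be.

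The paper's actual route avoids this entirely, and differs from both options you consider. Instead of invoking Theorem \ref{theorem_naivespace} (full singular series and singular integral), it uses the truncated major-arc identity of Section \ref{sect:major_arcs_conclusion} at $e=1$: since $\gamma=1$ and $\Lambda_1(f,\infty)\simeq X$, one has exactly $N_{\maj}=\LL^{\mu(1)}\bigl(1+S_1(f)\LL^{-n}\bigr)\LL^{-(n-1)}[X]$, with only the minor arcs (Proposition \ref{prop:minor}) contributing an error of weight $\leq 4-\tilde\nu$. The single exponential sum $S_1(f)$ is then computed exactly by orthogonality, $S_1(f)=[X]\LL^2-\LL^{n+1}$ as in \eqref{eq:S1}, so the closed-form main term drops out with no Euler-product evaluation at all. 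If you want to repair your argument, replace the claimed evaluation of $\mathfrak S(f)\sigma_\infty(f)$ by this truncated expression; your final bookkeeping of weights (and your closing observation that routing through Theorem \ref{theorem_morspace} only yields the weaker exponent with an extra factor $d-1$) is otherwise sound.
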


As a consequence, in Section \ref{sec:lines} we may compute a positive proportion of the coefficients of the Hodge--Deligne polynomial of $F_1(Z)$.

\begin{cor}\label{cor:coefficients}
 Let $Z\subset \Proj^{n-1}$ be a smooth hypersurface of degree $d\geq 3$, with $n> 2^{d}(d-1)$. Let $p,q\in \Z$ such that $p,q\leq 2n-5-d$ and
$$p+q > 4n - 2d - 6 - \frac{n-2^d(d-1)}{2^{d-2}}.$$
Then
$$
h^{p,q}(F_1(Z)) = 
\begin{cases}
 \left\lfloor \frac{2n-d-5-p}{2}\right \rfloor + 1  &\text{ if}\  p=q,\\
0 &\text{ otherwise.}
\end{cases}
$$
\end{cor}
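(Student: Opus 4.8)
The plan is to apply the Hodge--Deligne realization $\mathrm{HD}\colon \widehat{\M_\C}\to \Z[u,v][[(uv)^{-1}]]$ to the identity of Theorem \ref{thm:lines}, and then read off the coefficients $h^{p,q}(F_1(\tilde X))$ in the range where the error term $\widehat R_{n,d}$ does not interfere. Since $w(\widehat R_{n,d})\leq 4n-2d-6-\frac{n-2^d(d-1)}{2^{d-2}}$, and since a class of weight $\leq w_0$ contributes to $\mathrm{HD}$ only in bidegrees $(p,q)$ with $p+q\leq w_0$ (this is exactly the compatibility of $w$ with the Hodge filtration, as recorded in Section~\ref{s:weight}), the contribution of $\widehat R_{n,d}$ vanishes as soon as $p+q>4n-2d-6-\frac{n-2^d(d-1)}{2^{d-2}}$. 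So in that range $h^{p,q}(F_1(\tilde X))$ is determined entirely by the main term
$$
T:=\frac{\LL^{-d}[\tilde X]^2-\LL^{n-d-2}[\tilde X]}{1+\LL^{-1}}.
$$

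First I would compute $\mathrm{HD}([\tilde X])$. Since $\tilde X\subset\Proj^{n-1}$ is a smooth hypersurface of degree $d$, its cohomology agrees with that of $\Proj^{n-2}$ outside the middle degree $n-2$, so
$$
\mathrm{HD}([\tilde X])=\sum_{i=0}^{n-2}(uv)^i + (-1)^{n-2}\bigl(\mathrm{HD}(\tilde X)_{\mathrm{prim}}\bigr),
$$
where the primitive part sits in total degree $n-2$ and hence contributes to bidegrees $(p,q)$ with $p+q=n-2$. The key point for the stated range is that we only need the "large" bidegree behaviour: writing $\mathrm{HD}([\tilde X])=P(uv)+(\text{terms of total degree }n-2)$ with $P(uv)=1+uv+\cdots+(uv)^{n-2}$, the primitive middle-degree terms only affect bidegrees with $p+q=n-2$, while the hypothesis forces $p+q> 4n-2d-6-\frac{n-2^d(d-1)}{2^{d-2}}$, which (using $n>2^d(d-1)$, so the last fraction is positive but bounded) one checks exceeds $n-2$; likewise $[\tilde X]^2$ contributes middle-type terms only in total degrees $n-2$ and $2n-4$, and $\LL^{-d}[\tilde X]^2$ shifts these down by $2d$. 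After multiplying by $\LL^{n-d-2}$ in the second summand, the upshot is that in the relevant range of $(p,q)$ only the "diagonal" parts $P(uv)$ of $\mathrm{HD}([\tilde X])$ and $\mathrm{HD}([\tilde X])^2$ survive, so it remains to compute
$$
\mathrm{HD}(T)\bmod(\text{total degree}\leq \text{the cutoff})
=\frac{(uv)^{-d}\,\widetilde P(uv)-(uv)^{n-d-2}P(uv)}{1+(uv)^{-1}},
$$
where $\widetilde P(uv)=P(uv)^2=\bigl(\sum_{i=0}^{n-2}(uv)^i\bigr)^2=\sum_{k=0}^{2n-4}\min(k+1,\,2n-3-k)(uv)^k$.

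The remaining work is a power-series extraction in the single variable $z:=uv$: expanding $\frac{1}{1+z^{-1}}=\frac{z}{1+z}=z-z^2+z^3-\cdots$ (or rather as an element of $\Z[z][[z^{-1}]]$, $\frac{1}{1+z^{-1}}=1-z^{-1}+z^{-2}-\cdots$) and multiplying by $z^{-d}\widetilde P(z)-z^{n-d-2}P(z)$, I would collect the coefficient of $z^{(p+q)/2}$. Because every term of $T$ is a power of $\LL$, $\mathrm{HD}(T)$ is a Laurent series in $z=uv$ alone, so $h^{p,q}=0$ whenever $p\neq q$ in the claimed range, and for $p=q$ the coefficient of $z^p$ is, up to the alternating signs from $\frac{z}{1+z}$ telescoping against the two polynomial factors, exactly $\lfloor\frac{2n-d-5-p}{2}\rfloor+1$; the floor appears precisely because of the $(-1)$-alternation interacting with the triangular coefficients $\min(k+1,2n-3-k)$ of $\widetilde P$. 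I would carry out this bookkeeping carefully, checking the edge of the range $p\leq 2n-5-d$ where the formula is still positive.

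The main obstacle is the second step: controlling exactly which non-diagonal (i.e. genuinely Hodge-theoretic, off the $(uv)$-line) contributions of $[\tilde X]$ and $[\tilde X]^2$ can reach into the claimed range of $(p,q)$, and verifying that the numerical hypotheses $p,q\leq 2n-5-d$ and $p+q>4n-2d-6-\frac{n-2^d(d-1)}{2^{d-2}}$ are exactly calibrated to kill all of them while leaving the diagonal part. Once that is pinned down, the rest is the elementary generating-function computation sketched above. It would also be worth remarking that the same computation, fed with the sharper error term of Theorem \ref{theorem_naivespace}, underlies the analogous statement alluded to after the Hodge--Deligne discussion in the introduction.
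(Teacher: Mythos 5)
Your proposal is essentially the paper's own proof: apply $\mathrm{HD}$ to Theorem \ref{thm:lines}, use the weight bound on $\widehat{R}_{n,d}$ to discard its contribution in all bidegrees with $p+q$ above the cutoff, observe that the non-diagonal (primitive middle cohomology) parts of $[\tilde X]$ and $[\tilde X]^2$ also land below the cutoff, and then extract the diagonal coefficients of the main term as a Laurent series in $uv$, where the expansion of $\frac{1}{(1-(uv)^{-1})(1-(uv)^{-2})}$ produces exactly the coefficients $\lfloor m/2\rfloor+1$ at $(uv)^{2n-d-5-m}$ (the paper works with the infinite tail $\frac{(uv)^{n-2}}{1-(uv)^{-1}}$ rather than your finite polynomial $P(uv)$, an immaterial difference). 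One correction to your bookkeeping: it is not true that $[\tilde X]^2$ has non-diagonal terms only in total degrees $n-2$ and $2n-4$; the cross terms (diagonal part of $\mathrm{HD}(\tilde X)$ times the primitive part) reach total degree $2(n-2)+(n-2)=3n-6$, so after multiplying by $\LL^{-d}$ the off-diagonal contributions go up to total degree $3n-2d-6$, and the comparison you must make is $3n-2d-6\leq 4n-2d-6-\frac{n-2^d(d-1)}{2^{d-2}}$, i.e.\ $\frac{n-2^d(d-1)}{2^{d-2}}\leq n$, which holds for $d\geq 3$; checking only that the cutoff exceeds $n-2$ is not enough.
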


The fact that $F_1(Z)$ is irreducible and of the expected dimension in this case was already known by \cite{beheshti-riedl} in the much less restrictive range $n\geq 2d$, and is an instance of the Debarre--de Jong conjecture. On the other hand, we have not been able to find the computation of these Hodge coefficients in the literature in general. In related work of 
Debarre and Manivel 
\cite[Thm.~3.4]{debarre-manivel}, which is conditional on the assumption that  $F_1(Z)$ is  smooth and of the expected dimension, a Lefschetz-type result for the inclusion of $F_1(Z)$ into the space of all lines in $\Proj^{n-1}$ is proved. This allows one to compute the Hodge numbers of $F_1(Z)$ for $p+q< n-4$. However, the smoothness of $F_1(Z)$ is only known when $Z$ is general (as explained in \cite[Thm.~2.1(b)]{debarre-manivel}) or when $d = 3$.

Let us now discuss in a bit more detail the case $d=3$, that is, when $Z$ is a smooth cubic hypersurface.  In this case, $F_1(Z)$ is known to be a smooth variety of  dimension $2n-8$ as soon as $n\geq 4$, by work of Altman and Kleiman \cite{altman}. Galkin and Schinder \cite{galkin} have established a relationship between 
 the classes $[Z]$ and $[F_1(Z)]$ in $K_0(\Var_\C)$:
 it follows from 
\cite[Thm.~5.1]{galkin} that 
\begin{equation}\label{eq:GS}
\LL^2[F_1(Z)]=[\Sym^2Z] - (1+\LL^{n-2}) [Z].
\end{equation}
In fact, in the cubic case, Corollary \ref{cor:coefficients} 
can be deduced from  \cite[Thm.~6.1]{galkin}. (See also 
Eq.~(4.21) in 
the book by Huybrechts \cite{huy}).

On 
taking  $d=3$ in  Theorem~\ref{thm:lines}, we obtain
\begin{align*}
\LL^2[F_1(Z)] 
&= 
\frac{\LL^{-1} [Z]^2-
 \LL^{n-3} [Z]}{1 + \LL^{-1}} 
+\widehat{R}_n,
\end{align*}
if $n\geq 17$, 
where
$
w(\widehat{R}_n)\leq \frac{7}{2}n.
$
Note that $w(\LL^2[F_1(Z)])=4+2(2n-8)=4n-12$, and so we do indeed get non-trivial information. Comparing it with 
\eqref{eq:GS}, and  assuming  $n\geq 17$, this  results in the expression
\begin{equation}\label{eq:pen}
[\Sym^2Z] =\frac{\LL^{-1} [Z]^2-
 \LL^{n-3} [Z]}{1 + \LL^{-1}} + \LL^{n-2} [Z]
+\widehat{R'}_n,
\end{equation}
where  $w(\widehat{R'}_n)\leq \frac{7}{2}n$.
In general there seems no reason to expect a relation between the class of 
$\Sym^2Z$ and the classes   $[Z]^2$ and $[Z]$ in the Grothendieck group, but \eqref{eq:pen} gives  a good approximation of the class of $\Sym^2Z$ 
in the weight topology.
In Remark \ref{rem:burillo} we use work of Burillo \cite{Burillo} to calculate part of the Hodge--Deligne polynomial of 
$\Sym^2Z$ and check that
Corollary \ref{cor:coefficients} is consistent with 
\eqref{eq:GS} when  $n\geq 17$.

It would be interesting to see whether our work could shed light on recent 
questions of Popov \cite{popov} around the class in $K_0(\Var_\C)$ 
of the moduli space that  parameterises twisted cubics in smooth cubic hypersurfaces $Z\subset \Proj^{n-1}$.

\bigskip

\begin{ack}
The authors are grateful to Yohan Brunebarbe, Tom Burel, Antoine Chambert-Loir, Loïs Faisant, Mirko Mauri and Will Sawin for useful comments. 
Thanks are also due to the anonymous referees for numerous helpful remarks. 
M.B.  received funding from the European Union's Horizon 2020 research and innovation programme under the Marie Sk\l{}odowska-Curie Grant agreement No. 893012.
T.B. was
supported by a FWF grant (DOI 10.55776/P36278)
and by a grant from the Institute
for Advanced Study School of Mathematics.
\end{ack}

\section{The Grothendieck ring of varieties with exponentials}\label{s:def_groth}

In this section we define the 
Grothendieck ring of varieties with exponentials, relative to an arbitrary noetherian scheme $S$. The {\em Grothendieck group of varieties with exponentials} $K_0(\evar_S)$  is defined by generators and relations. Generators are pairs $(X,f)$, where $X$ is a variety over $S$ and $f:X\to\A^1$ is a morphism (and by $\A^1$ we mean $\A^1_{\Z}$).  We call such a pair a \textit{variety with exponential}.
Relations are the following:
$$(X,f)-(Y,f\circ u) $$
whenever $X$, $Y$ are $S$-varieties, $f\colon X\to\A^1$
a morphism, and $u\colon Y\to X$  is an $S$-isomorphism;
$$ (X,f)-(Y,f|_Y)-(U,f|_U) $$
whenever $X$ is an $S$-variety, $f\colon X\to\A^1$ a morphism,
$Y$ a closed  subscheme of~$X$ and $U=X\setminus Y$ its open complement; and
$$ (X\times_{\Z}\A^1,\pr_2)$$
where $X$ is a $S$-variety and $\pr_2$ is the second projection.
We  write~$[X,f]$ (or $[X,f]_S$ if we want to keep track of the base scheme $S$) for the class in $K_0(\evar_S)$ 
of a pair~$(X,f)$. The product $[X,f][Y,g] = [X\times_S Y, f\circ \pr_1+g\circ \pr_2]$ endows $K_0(\evar_S)$ with a ring structure. 

We denote by $\LL$, or $\LL_S$, the class of $[\A^1_S,0]$ in $K_0(\evar_S)$. As for the usual Grothendieck ring $K_0(\Var_S)$, we may invert $\LL$, which gives us a ring denoted by $\expp_S$. As proved in 
\cite[Lemma~1.1.3]{CLL}, the natural morphisms $K_0(\Var_S)\to K_0(\evar_S)$ and $\M_S\to \expp_S$ given by sending $[X]$ to $[X,0]$ are injective.

\begin{remark}[Interpretation as exponential sums]\label{sect:exp_sum_interpretation}
Let $\psi:\F_q\to \C^*$ be a non-trivial additive character. Then there is a ring homomorphism
$$
K_0(\evar_{\F_q})\to \C,
$$
given by 
 $$[X,f]\mapsto \sum_{x\in X(\F_q)} \psi(f(x)).$$
 In general, even when the ground field $k$ is not finite, we  think of the elements of $\evar_k$ and $\expp_k$ as exponential sums. 
\end{remark}

Any morphism $u:T\to S$ of noetherian schemes naturally induces a group homomorphism
$$u_{!}: K_0(\evar_{T}) \to K_0(\evar_{S})$$
and a ring homomorphism
$$u^*: K_0(\evar_{S}) \to K_0(\evar_T).$$
The latter endows $K_0(\evar_T)$ with a $K_0(\evar_S)$-module structure: in particular, this allows us to write expressions involving elements of both rings, which are implicitly  viewed as living in $K_0(\evar_T)$. 

\subsection{Functional interpretation}

An element $\phi\in K_0(\evar_S)$ may be interpreted as a motivic function with source $S$. More precisely, for every point $s\in S$ with residue field $k(s)$, denote by $\phi(s)$ the element $s^*\phi\in K_0(\evar_{k(s)})$. Then the following lemma says that a motivic function is determined by its values:
\begin{lemma}\label{lemma:zero_criterion} Let $\phi\in K_0(\evar_S)$. Assume that  $\phi(s) = 0$
for all $s\in S$.
Then $\phi = 0$. 
\end{lemma}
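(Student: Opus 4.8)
The plan is to reduce the vanishing of $\phi$ on $S$ to vanishing after base change along a finite stratification, and then to invoke the corresponding statement over a field, where one can argue by a dimension/Noetherian induction. First I would write $\phi$ as a finite $\Z$-linear combination $\phi = \sum_i a_i [X_i, f_i]$ of generators, where each $X_i$ is an $S$-variety and $f_i : X_i \to \A^1$. By Noetherian induction on $S$, it suffices to prove the statement over each irreducible component (with reduced structure) and, by the scissor relations, over a dense open subscheme of $S$; so we may assume $S$ is integral with generic point $\eta$ and function field $k(\eta)$. Shrinking $S$ further, we may assume all the structure morphisms $X_i \to S$ are flat with nice fibres (e.g. that the fibre dimensions are constant), so that specialisation behaves well.

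The key reduction is then: if $\phi(\eta) = \eta^* \phi = 0$ in $K_0(\evar_{k(\eta)})$, then after shrinking $S$ to a dense open we have $\phi = 0$ in $K_0(\evar_S)$. This is the ``spreading out'' step: a relation witnessing $\eta^*\phi = 0$ in $K_0(\evar_{k(\eta)})$ is a finite expression involving finitely many $k(\eta)$-varieties, morphisms to $\A^1$, and the three types of defining relations (isomorphism, scissor, and the $(X\times\A^1,\pr_2)$ relation); all of these are defined over some finitely generated subring of $k(\eta)$, hence over a dense open $U \subseteq S$. Pulling this relation back along $U \hookrightarrow S$ (using the ring homomorphism $u^*$ for $u: U \to S$, which sends $\phi$ to $\phi|_U$) shows $\phi|_U = 0$ in $K_0(\evar_U)$. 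Combining with the Noetherian induction hypothesis applied to the closed complement $S \setminus U$ (whose dimension is strictly smaller, and on which $\phi$ restricts to a class that still vanishes at every point, by hypothesis), the scissor relation $[\phi|_S] = [\phi|_U] + [\phi|_{S\setminus U}]$ — interpreted appropriately via $i_!$ for the closed immersion $i$ — gives $\phi = 0$ in $K_0(\evar_S)$.

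It remains to settle the base case, namely $S = \Spec k$ for a field $k$: here the hypothesis is simply $\phi = 0 \in K_0(\evar_k)$, which is the conclusion, so there is nothing to prove — the content of the lemma is entirely in organising the descent from $S$ to its points. What genuinely needs care, and is where I expect the main obstacle to lie, is making the ``spreading out'' step rigorous: one must check that \emph{the relation itself}, not just the varieties appearing in it, descends to a dense open of $S$. Concretely, a vanishing relation in the Grothendieck group is an identity in the free abelian group modulo the subgroup generated by the relations, so witnessing $\eta^*\phi = 0$ means exhibiting $\eta^*\phi$ as a finite integer combination of generators of that subgroup; each such generator involves a variety over $k(\eta)$ together with either an $S$-isomorphism, a closed subscheme, or a product with $\A^1$, and one must spread all of these out simultaneously over a common dense open $U$, shrinking finitely many times. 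Since there are only finitely many data to spread out, this is possible, but the bookkeeping — ensuring closed immersions remain closed immersions, open complements remain open complements, and isomorphisms remain isomorphisms after restriction to $U$ — is the delicate part. Once this is in place, the Noetherian induction on $\dim S$ closes the argument.
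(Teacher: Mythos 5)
Correct: the paper does not prove this lemma itself but simply cites \cite[Lemma 1.1.8]{CLL}, and your argument — reduce to $S$ integral, use that a relation witnessing $\eta^*\phi=0$ in $K_0(\evar_{k(\eta)})$ involves only finitely many varieties, morphisms, isomorphisms and closed immersions and hence spreads out to a dense open $U\subset S$ (i.e.\ $K_0(\evar_{k(\eta)})$ is the colimit of the $K_0(\evar_U)$), then finish by Noetherian induction on the closed complement via the scissor decomposition $\phi=j_!(\phi|_U)+i_!(\phi|_{S\setminus U})$ — is exactly the standard proof of that cited result. So your proposal is correct and takes essentially the same route as the proof the paper relies on.
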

\begin{proof} See \cite[Lemma 1.1.8]{CLL}. 
\end{proof}
\begin{remark} 
Let $k$ be a field. Using the relations, we see that  $[\A^1, \lambda \id] = 0$ in $K_0(\evar_k)$ for any non-zero $\lambda \in k$. More generally,
for any variety $X$ over $k$ with a morphism $u:X\to \G_m$, we have
$$[X\times \A^1, (x,t) \mapsto u(x)t] =0$$
in $K_0(\evar_k)$. Indeed, using Lemma \ref{lemma:zero_criterion} we see that this holds already in $K_0(\evar_X)$. 
\end{remark}

\begin{lemma}\label{lemma:linear_form} Let $V$ be a finite-dimensional $k$-vector space and $f:V\to k$ a linear form. Then
$$[V,f] = 
\begin{cases}

 \LL^{\dim V} & \text{if}\ f = 0,\\
 0 & \text{otherwise.}\end{cases}
 $$
\end{lemma}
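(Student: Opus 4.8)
The plan is to reduce the statement to the elementary Lemma \ref{lemma:linear_form}, which handles linear forms on a vector space, by dealing with the two cases separately.

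First I would treat the case $f=0$. Then $(V,f)=(V,0)$ is just the class $[V]=[\A^{\dim V}]$ of affine space, viewed inside $K_0(\evar_k)$ via the injective map $K_0(\Var_k)\to K_0(\evar_k)$. Since $[\A^1]=\LL$ in $K_0(\Var_k)$ and this identity is preserved by the injection (indeed $\LL=[\A^1_k,0]$ by definition), we get $[V,0]=\LL^{\dim V}$ by multiplicativity of the class under products, choosing a linear isomorphism $V\cong \A^{\dim V}$.

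Next, suppose $f\neq 0$. The key point is that a nonzero linear form is surjective, so $\ker f$ has dimension $\dim V-1$, and choosing a vector $v_0$ with $f(v_0)=1$ gives a linear isomorphism $V\cong \ker f\times \A^1$ under which $f$ becomes the projection $(\,\x,t)\mapsto t$. I would then invoke Remark \ref{remark:extra_relations}: applying it with $X=\ker f$ and $u:X\to\G_m$ the constant morphism equal to $1$ (which indeed factors through $\G_m$), we get $[\ker f\times\A^1,(\x,t)\mapsto t]=0$ in $K_0(\evar_k)$. Alternatively, and even more directly, one can use the basic relation $[\A^1,\lambda\,\id]=0$ for $\lambda\neq0$ noted at the start of that remark, combined with the product structure $[\ker f,0]\cdot[\A^1,\id]$; either route gives $[V,f]=0$.

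I do not anticipate a genuine obstacle here: the lemma is essentially a warm-up consequence of the relations defining $K_0(\evar_k)$ and the multiplicativity of classes under products. The only point requiring a little care is making the identification $V\cong\ker f\times\A^1$ compatible with the morphism to $\A^1$, i.e. checking that under this isomorphism $f$ really does correspond to $\pr_2$ (up to the $S$-isomorphism relation), so that the third defining relation — or its consequence in Remark \ref{remark:extra_relations} — applies cleanly.
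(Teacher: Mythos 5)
Your argument is correct, but note that the paper itself does not prove this lemma at all: it simply cites \cite[Lemma 1.1.11]{CLL}. Your proposal therefore supplies a self-contained proof from the defining relations of $K_0(\evar_k)$, which is a perfectly valid (and arguably more instructive) alternative to the citation. The two cases are handled exactly as one would expect: for $f=0$ the class is $[\A^{\dim V},0]=\LL^{\dim V}$ by multiplicativity; for $f\neq 0$, choosing $v_0$ with $f(v_0)=1$ gives an isomorphism of varieties with exponentials $(V,f)\simeq (\ker f\times \A^1,\pr_2)$, and the third defining relation $[X\times\A^1,\pr_2]=0$ (with $X=\ker f$) kills the class directly — you do not even need the detour through Remark \ref{remark:extra_relations}, though that route works too. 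One small slip in your write-up: your opening sentence says you will ``reduce the statement to Lemma \ref{lemma:linear_form}'', which is the very statement being proved; since the body of your argument uses only the defining relations (and optionally Remark \ref{remark:extra_relations}, whose proof is independent of this lemma), there is no circularity, but you should rephrase that sentence.
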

\begin{proof} \cite[Lemma 1.1.11]{CLL}.
\end{proof}

\subsection{A cohomological realisation}
Let $k$ be a finite field, let $\ell\neq \mathrm{char}\, k$ be a prime, and let $k^s$ be a separable closure of $k$. We denote $G_k = \mathrm{Gal}(k^s/k)$. Consider the category $\mathrm{Rep}_{G_k}\Q_{\ell}$ of continuous $\Q_{\ell}$-representations of $G_k$ and the corresponding Grothendieck ring $K_0(\mathrm{Rep}_{G_k}\Q_{\ell})$.  
We fix a non-trivial additive character $\psi:k\to \C^*$, and denote by $\mathcal{L}_{\psi}$ the corresponding Artin-Schreier sheaf on $\A^1_k$ (see \cite{deligne-sommes-trig}). There is a ring homomorphism

$$K_0(\evar_{k}) \to K_0(\mathrm{Rep}_{G_k}\Q_{\ell}),$$ 
given by 
$$[X,f] \mapsto \sum_{i\geq 0}(-1)^i [H^{i}_{\text{ét},c}(X\times_kk^s, f^{*}\mathcal{L}_{\psi})].$$
This motivic measure provides a dictionary between some computations in this paper and those carried out in \cite{BS}.

\subsection{Motivic functions and integrals}\label{sect:motivic_integrals} In this section, we follow the ideas of \cite[1.2.1]{CLL}, except that we adapt the notation and normalisations to make them more convenient for our setting. 

The motivic functions we  consider are  motivic analogues of compactly supported and locally constant functions on the locally compact field $k((t^{-1}))$, where $k$ is a finite field. For such a function $\phi$, there exist integers $M\geq N$ such that $\phi$ is zero outside $t^{M}k[[t^{-1}]]$, and such that $\phi$ is invariant modulo $t^{N}k[[t^{-1}]]$, so that $\phi$ may be seen as a function on the quotient $t^{M}k[[t^{-1}]] / t^{N} k[[t^{-1}]]$. 
We then say that $\phi$ is of level $(M,N)$.
The latter can be endowed with the structure of an affine space of dimension $M-N$ over the field $k$, through the identification
$$\begin{array}{ccc} t^{M}k[[t^{-1}]] / t^{N} k[[t^{-1}]]& \to& \A_k^{M-N}(k)\\
 a_M t^{M} + a_{M-1} t^{M-1} + \dots + a_{N+1}t^{N+1} \bmod{t^Nk[[t^{-1}]]}
 &\mapsto &(a_M,\dots,a_{N+1}) \end{array}.$$
\begin{notation} For any field $k$ and for $M\geq N$ integers, we denote by $\A^{(M,N)}_k$ the affine space $\A^{M-N}_k$, interpreted as the domain of definition for functions which are zero outside $t^{M}k[[t^{-1}]]$ and invariant modulo $t^{N}k[[t^{-1}]]$ as above. More generally, for $n\geq 1$ an integer, we denote by $\A^{n(M,N)}_k$ the affine space $(\A^{(M,N)}_k)^n$. If $S$ is a variety over $k$, we denote by $\A^{n(M,N)}_S$ the extension $\A^{n(M,N)}_k\times_k S$.
\end{notation}

Writing $K_\infty=k((t^{-1}))$, 
we denote by $\mathscr{F}_S(K_{\infty}^n,M,N)$ the ring $\expp_{\A^{n(M,N)}_S}$ and interpret it as the ring of $S$-families of motivic functions of level $(M,N)$ in $n$ variables. In the same way that is  explained in \cite[1.2.3]{CLL}, as $M$ and $N$ vary, the rings $\mathscr{F}_S(K_{\infty}^n,M,N)$ fit into a directed system with direct limit denoted $\mathscr{F}_S(K_{\infty}^n)$, the total ring of $S$-families of motivic functions in $n$ variables. 

If we have an element $\phi\in\mathscr{F}_S(K_{\infty}^n)$ we define its integral in the following way: pick a pair $(M,N)$ such that $\phi\in\mathscr{F}_S(K_{\infty}^n,M,N)$, and we put
$$
\int \phi  := \LL^{(N+1)n} [\phi]_S,
$$
where $[\phi]_S$ is the image of $\phi$ in $\expp_S$ via the forgetful morphism
$$\expp_{\A^{n(M,N)}_S}\to \expp_S.$$
We may also write $\int \phi= \int \phi(\x) \mathrm{d}\x $, if we want to stress with respect to which variables the integral is taken, or even
$\int \phi= \int_{\A^{n(M,N)}} \phi(\x) d\x$,
  if the choice of $M,N$ has been made explicit. 
Analogously to \cite[1.2.3]{CLL}, this definition does not depend on the choice of $(M,N)$ (since the integral remains the same if we pick a larger $M$ or a smaller~$N$), and defines an $\expp_S$-linear map
$$\int:\mathscr{F}_S(K_{\infty})\to \expp_S.$$
\begin{example} Write $\T = t^{-1}k[[t^{-1}]]$, the analogue of the fundamental compact interval $[0,1]$ in the function field circle method literature. Let $\phi = \1_{\T^n}$ be the characteristic function of $\T^n$. Picking $M = N = -1$, we get $\int \phi = 1$. In other words, with this normalisation, the motivic volume of $\T^n$ is 1. 
\end{example}

\begin{remark}[Change of variables]\label{rem:change_of_var} Let $\phi\in \expp_{\A^{n(M,N)}_S}$ and assume that we want to perform a change of variables $\x\mapsto t^p\x=:\y$ for some integer $p \in \Z$. 
This change of variable induces an isomorphism 
$$
\A^{n(M,N)}_S\to \A^{n(M+p,N+p)}_S,
$$ 
which gives us an isomorphism between the corresponding Grothendieck rings, sending $\phi$ to the function $\psi:\y\mapsto \phi(t^{-p}\y)$. Moreover, this induces an isomorphism of the underlying $S$-varieties with exponentials, which implies the equality $[\phi]_S = [\psi]_S$. We may conclude that we have the change of variable formula
\begin{align*}
\int_{\A^{n(M,N)}} \phi(\x) \mathrm{d} \x = \LL^{n(N+1)}[\phi]_S = \LL^{n(N+1)}[\psi]_S 
&= \LL^{-pn}\int_{\A^{n(M+p,N+p)}} \psi(\y) \mathrm{d}\y\\
&= \LL^{-pn} \int_{\A^{n(M+p,N+p)}}\phi(t^{-p}\y) \mathrm{d} \y.
\end{align*}

\end{remark}

\subsection{Motivic Euler products} \label{sect:motivic_euler_prods}
We  use the motivic Euler products of \cite[Section~3]{Bilu}; more specifically, the variant where the coefficients are varieties with exponentials. We recall its definition here.  Let $X$ be a variety over a field $k$, and let $$(A_i)_{i\geq 1} = (X_i, f_i:X_i \to \A^1)_{i\geq 1}$$ be a family of quasi-projective varieties with exponentials over $X$. Let $n\geq 1$ be an integer, and let $\omega = (n_i)_{i\geq 1}$ be a partition of $n$, with $n_i$ being the number of occurrences of $i$, so that $n = \sum_{i\geq 1} i n_i$. We define the variety with exponential $\Conf^{\omega}(X_i,f_i)_{i\geq 1}$ in the following way: consider the product
$$\prod_{i\geq 1} X_i^{n_i}$$ together with the morphism to $\prod_{i\geq 1}X^{n_i}$ induced by the structural morphisms $X_i\to X$. We denote by $\left( \prod_{i\geq 1} X_i^{n_i} \right)_{*,X}$ the open subset lying above the complement of the big diagonal in $\prod_{i\geq 1}X^{n_i}$ (that is, points with no two coordinates being equal). Then the variety $\Conf^{\omega}(X_i)_{i\geq 1}$underlying $\Conf^{\omega}(X_i,f_i)_{i\geq 1}$ is defined to be the quotient of $\left( \prod_{i\geq 1} X_i^{n_i} \right)_{*,X}$ by the natural permutation action of the product of symmetric groups 
 $\prod_{i \geq 1} \mathfrak{S}_{n_i}$. 
 The corresponding morphism $f^{\omega}:\Conf^{\omega}(X_i)_{i\geq 1}\to \A^1$ is induced in the obvious way by the natural $\prod_{i\geq 1} \mathfrak{S}_{n_i}$-invariant morphism $\prod_{i\geq 1} f_i^{n_i} : \prod_{i\geq 1} X_i^{n_i} \to \A^1$,  
 given by 
 $$
 (x_{i,1},\dots,x_{i,n_i})_{i\geq 1} \mapsto \sum_{i\geq 1} \left(f_i(x_{i,1}) 
 + \dots + f_i(x_{i,n_i}) 
  \right).
 $$

We define the motivic Euler product by
$$\prod_{x\in X}\left(1 + \sum_{i\geq 1} A_{i,x}T^i\right) : = 1 + \sum_{n\geq 1} \left(\sum_{\omega} [\Conf^{\omega}(X_i,f_i)_{i\geq 1}]\right)T^n\in K_0(\evar_k)[[T]],
$$
where the inner sum is over partitions $\omega$ of $n$.
Note that the left-hand side is only a \textit{notation} for the series on the right-hand side. In \cite{Bilu}, it is shown that this notion of motivic Euler product satisfies many good properties, in particular in terms of multiplicativity.

\section{The motivic circle method}\label{sec:circle}
We start by recalling the main ideas behind the function field version of the circle method. Let $k = \F_q$ be a finite field. The field $k$ is endowed with an additive character $\psi: k\to \C^{*}$ defined by
$$x\mapsto \exp\left(\frac{2i\pi}{p}\mathrm{Tr}_{\F_q/\F_p}(x)\right).$$
The role of the compact unit interval $[0,1]$ is played by the set 
$$\T = t^{-1}\F_q[[t^{-1}]] = \{\alpha \in \F_q((t^{-1})),\ \ord(\alpha) \leq -1\}.$$ The field $k((t^{-1}))$ is locally compact, and we may normalise its Haar measure so that $
\int_{\T}\mathrm{d}\alpha = 1$. 
 The igniting spark of the circle method over $\F_q(t)$ is the identity
$$
\int_{\T} \psi(\res(x\alpha))\mathrm{d} \alpha = 
\begin{cases}
1 & \text{ if}\ x= 0,\\
0& \text{ otherwise},
\end{cases}
$$
for any $x\in \F_q[t]$, where, for a Laurent series $f\in k((t^{-1}))$, we denote by $\res(f)$ the coefficient of $t^{-1}$ in $f$. 

If we want to count solutions of some polynomial equation over $k$, we may use this identity to rewrite the quantity we are interested in as the integral
$$\int_{\T} S(\alpha) \mathrm{d} \alpha$$
of an appropriate exponential sum $S(\alpha)$. The method then proceeds, as in the classical case, with cutting up $\T$ into major and minor arcs. The contribution of the major arcs is rewritten as a product of local factors, but one requires a suitably strong upper bound for the contribution from the minor arcs.

The main obstruction in passing from $k = \F_q$ to $k = \C$ is that we lose  local compactness
of the field of Laurent series, and there is no theory of integration in the classical sense that we could use to reproduce the above steps. This is why we need to introduce ideas from \textit{motivic integration}: exponential sums are replaced by elements in the Grothendieck rings  of varieties with exponentials, bearing in mind Section \ref{sect:exp_sum_interpretation}. 
Integrals are replaced by the operation defined in Section \ref{sect:motivic_integrals}. We use the cut-and-paste relations in the Grothendieck ring of varieties to decompose our expression into major and minor arcs. The decomposition into local factors of the major arc contribution is done via the notion of motivic Euler products, while the minor arcs are bounded in a topology defined using Hodge theory. 
Once this translation into the motivic setting has been properly made, the motivic circle method bears many similarities with the function field circle method, as implemented by Lee \cite{lee}.

\begin{notation}\label{notation:poly_and_series}
For any integer $m$, let  $\Poly_{\leq m}$ be the space of polynomials in $t$  
of degree $\leq m$ with coefficients in $\C$.
We let  $\MPoly_{m}$ denote the space of monic polynomials in $t$ 
of degree exactly $m$ with coefficients in $\C$. Both spaces are simply affine spaces over $\C$; the first one has dimension $m+1$ and the second one has dimension $m$.  We let $\T$ be the space of elements $\alpha\in \C((t^{-1}))$ such that 
$\ord(\alpha)\leq -1$. For an element $f\in \C((t^{-1}))$, we denote by $\{f\}\in \T$ its fractional part.
\end{notation}
\subsection{Approximation by rational functions}\label{sect:approximation}
The classical circle method relies greatly on approximation of real numbers by rationals. In the same way, its function field version depends on approximation of power series in $\T$ by rational functions.

When we want to approximate $\alpha \in \T$ by a rational function $\frac{h_1}{h_2}$, there are two parameters:
\begin{itemize} \item the bound on the degree of $h_2$;
\item the precision of the approximation, as  quantified by a bound on the order of $\alpha h_2 - h_1$. 
\end{itemize}
Thus, we define
$$\T_{m,s} = \{\alpha\in \T,\ \exists h_2\in \Poly_{\leq m}, h_1\in \Poly_{< \deg h_2},\ \ \ord (\alpha h_2-h_1) < -s\},
$$
for $m\geq 0$ and $s\geq 1$.

We can reformulate the definition of $\T_{m,s}$ purely in terms of linear algebra, as follows. Writing $\alpha = \sum_{i\geq 1}b_it^{-i}$ for $b_i\in \C$, 
we have $\alpha\in \T_{m,s}$ if and only if there exists a polynomial $c_0 + c_1 t + \dots + c_m t^m$ such that the coefficients of $t^{-1},\dots,t^{-s}$ in the series
$$\left( \sum_{i\geq 1} b_i t^{-i} \right) (c_0 + c_1 t + \dots + c_m t^m)$$
are zero. This gives us a linear system of $s$ equations in $c_0,\dots,c_m$:
\begin{align*}
b_1 c_0 + b_2 c_1 + \dots + b_{m+1}c_m &= 0,\\
                                          b_2c_0 + b_3c_1 + \dots + b_{m+2} c_m &= 0,\\
                                          &\vdots  \\
                                          b_s c_0 + b_{s+1} c_1 + \dots + b_{m+s} c_m&=0,
                                          \end{align*}
                                          which translates into 
                                                                                    $$\left(\begin{array}{c} c_0\\
                                          \vdots\\
                                          c_m \end{array}\right)\in \mathrm{Ker} \left( \begin{array}{cccc} b_1 & b_2 & \dots & b_{m+1} \\
                                          \vdots & & & \vdots \\
                                          b_s & b_{s+1} & \dots & b_{m+s} \\
                                          \end{array}\right).$$
                                          In other words, denoting by $M$ the above $s\times (m+1)$ matrix, we have that $\alpha \in \T_{m,s}$ if and only if $M$ has non-trivial kernel. Our first observation, is that this is always satisfied if $m+1> s$, that is, if $m\geq s$. In other words, whenever $m\geq s$ we have $\T_{m,s} = \T$.  This gives us the functional version of Dirichlet approximation.
                                          
                                          \begin{lemma}[Dirichlet approximation] 
                                          Let $\alpha \in \T$. Then for any $m\geq 1$ there exist polynomials $h_1,h_2$ such that $\deg h_1 < \deg h_2 \leq m$ and
                                                                                  $\ord(\alpha h_2 - h_1) < -m.$
                                        
                                        \end{lemma}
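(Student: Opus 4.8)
The plan is to read the lemma directly off the linear-algebra reformulation of $\T_{m,s}$ given just above, specialised to $s=m$. Writing $\alpha=\sum_{i\geq 1}b_it^{-i}\in\T$, recall that $\alpha\in\T_{m,m}$ precisely when the $m\times(m+1)$ matrix $M$ whose $j$-th row is $(b_j,b_{j+1},\dots,b_{j+m})$ has a non-trivial kernel. Since $M$ has one more column than it has rows, the rank--nullity theorem gives $\dim\ker M\geq (m+1)-m\geq 1$; hence $\T_{m,m}=\T$, and in particular $\alpha\in\T_{m,m}$. Unwinding the definition of $\T_{m,m}$ then produces polynomials $h_2\in\Poly_{\leq m}$ and $h_1\in\Poly_{<\deg h_2}$ with $\ord(\alpha h_2-h_1)<-m$, which is exactly the assertion of the lemma. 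This is the function-field shadow of the pigeonhole step in the classical Dirichlet approximation theorem, with the box principle replaced by a dimension count.

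To make the unwinding explicit I would argue as follows. Choose a non-zero vector $(c_0,\dots,c_m)\in\ker M$ and set $h_2=c_0+c_1t+\dots+c_mt^m$, a non-zero polynomial of degree $d:=\deg h_2\leq m$. Let $h_1:=\alpha h_2-\{\alpha h_2\}$ be the polynomial part of the Laurent series $\alpha h_2$. Because $(c_0,\dots,c_m)$ solves the displayed linear system, the coefficients of $t^{-1},\dots,t^{-m}$ in $\alpha h_2$ all vanish, so $\{\alpha h_2\}=\alpha h_2-h_1$ has only terms of order $\leq -(m+1)$; thus $\ord(\alpha h_2-h_1)\leq -(m+1)<-m$. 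Finally, since $\ord(\alpha)\leq -1$, multiplication by $\alpha$ lowers degrees by at least one, so $\deg h_1\leq d-1<d=\deg h_2$. This checks all the required conditions.

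I do not expect any serious obstacle here. The only points that deserve a moment's attention are that the kernel vector can be chosen non-zero (so that $h_2\not\equiv 0$ and $d$ is well defined), and the degenerate case $d=0$: there $h_2$ is a non-zero constant and $h_1=0$ since $\alpha\in\T$, yet the vanishing of the first $m$ coefficients of $\alpha$ still yields $\ord(\alpha h_2-h_1)=\ord(\alpha)\leq -(m+1)<-m$, so the conclusion persists. Everything else is routine bookkeeping with orders of Laurent series.
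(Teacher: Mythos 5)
Your argument is exactly the paper's: the lemma is stated as a consequence of the preceding linear-algebra reformulation of $\T_{m,s}$, where taking $s=m$ makes the $m\times(m+1)$ matrix $M$ have a non-trivial kernel by rank--nullity, so $\T_{m,m}=\T$. Your explicit unwinding (choosing a kernel vector, taking $h_1$ to be the polynomial part of $\alpha h_2$, and checking $\deg h_1<\deg h_2$ via $\ord(\alpha)\leq -1$, including the constant-$h_2$ case) is correct and just fills in bookkeeping the paper leaves implicit.
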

                                       
                              In general, the larger $s$ becomes, the more equations we impose on the coefficients $c_0,\dots,c_m$ of $h_2$, and we get a more and more restrictive set. 
                              
                              In the motivic setting, there is an extra parameter, namely the number of coefficients of $\alpha$ we actually take into account in our analysis. More precisely, we  look at expressions of the form $\res(\alpha f(g_1,\dots,g_n))$ where $f$ is a polynomial of degree $d$ and $g_1,\dots,g_n$ have degrees at most $e$, and thus only the first $de+1$ coefficients of $\alpha$  matter. If we want to make do with only considering these coefficients, i.e.\ if we do not want the matrix $M$ to contain any additional coefficients of $\alpha$, we would need to impose the condition
                              $s+m\leq de+1.$
Motivated by the extra constraint, we define
$$A^{de+1}_{m} = \{(b_1,\dots,b_{de+1})\in \A^{de+1}: 
\sum_{i=1}^{de+1} b_i t^{-i} \in \T_{m, de-m+1}\}.
$$

\begin{remark}
\label{rem:Am_0}
 Note in particular that when $m\geq \frac{de+1}{2}$, then $de+1-m\leq \frac{de+1}{2} \leq m$. Thus  $A^{de+1}_m = \A^{de+1}$ in this case. 
\end{remark}

\begin{remark}\label{rem:Am}
Note that  $A^{de+1}_m$ is the space of $(b_1,\dots,b_{de+1})\in \A^{de+1}$ such that there exist coprime polynomials $h_1,h_2$ with $h_2$ monic, $\deg h_1< \deg h_2 \leq m$ and $\ord\left(\alpha - \frac{h_1}{h_2}\right)\leq -de-2 + m-\deg h_2$. 
\end{remark}

\begin{remark} \label{rem:stratum-dimension}
The sets $A^{de+1}_m$ 
can be stratified into sets $A^{de+1}_{m,m'}\subset A^{de+1}_m$ of those $\alpha$ such that $h_2(t)$ is of degree exactly $m'\leq m$. 
The space of monic $h_2(t)$ has dimension $m'$, and the space of 
polynomials of  degree $<m'$ also has dimension $m'$. Hence the dimension of the space of pairs of coprime polynomials $(h_1(t),h_2(t))$ is at most $2m'$. For each such pair, the constraint 
$\ord\left(\alpha - \frac{h_1}{h_2}\right)\leq -de-2 + m-m'$ means that only the coefficients 
$b_{de+2-m+m'},\dots,b_{de+1}$ are allowed to run freely in $\alpha$.
We may therefore  deduce that
$$\dim A^{de+1}_{m,m'} \leq 2m' + (m-m')\leq m+m' \leq 2m,$$
from which we in particular have that 
$\dim A^{de+1}_m \leq 2m.$
\end{remark}

\subsection{Activation of the circle method}

Recall the definition \eqref{eq:brighton} of the affine variety $M_e$.
We are now ready to analyse its  class  $[M_e]$ in the Grothendieck ring, using the motivic circle method point of view.  Our first  result involves the 
identification of $\alpha\in \C((t^{-1}))$ with $\A^{de+1}$, via its coefficient vector in the expansion
$\alpha = b_{1} t^{-1} + \dots + b_{de+1} t^{-de-1}$, together with the 
set  $\Poly_{\leq e}$ from Notation~\ref{notation:poly_and_series}. Finally, we recall that the residue map $\res(\beta)$ is  defined to be the  coefficient of $t^{-1}$ in the Laurent series expansion of any element $\beta\in \C((t^{-1}))$.

\begin{lemma}\label{lem:active}
 We have
\begin{align*}
[M_e] 
=  \LL^{-de-1}[(\Poly_{\leq e})^n\times \A^{de+1}, \res(\alpha f(g_1,\dots,g_n))]
\end{align*}
in the Grothendieck ring $\expp_{\Poly_{\leq e}^n}.$
\end{lemma}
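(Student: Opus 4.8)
The plan is to recognize $[M_e]$ as a fiberwise count encoded by the motivic exponential-sum identity, in the same spirit as the classical orthogonality relation $\int_{\T}\psi(\res(x\alpha))\,\dx\alpha$ detecting $x=0$. First I would fix the base scheme $S=\Poly_{\leq e}^n$, so that the tautological family of tuples $\g=(g_1,\dots,g_n)$ lives over $S$, and observe that $f(g_1,\dots,g_n)$ is a polynomial of degree $\leq de$; hence it is determined by its coefficient vector, an element of $\A^{de+1}$ (the coefficients of $t^0,\dots,t^{de}$). Thus $M_e$, cut out by the vanishing of all these $de+1$ coefficients, is the zero locus in $S$ of a morphism $S\to \A^{de+1}$.

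Next I would apply the linear detection identity of Lemma \ref{lemma:linear_form}: for a finite-dimensional vector space $V$ and a linear form $\lambda$, one has $[V,\lambda]=\LL^{\dim V}$ if $\lambda=0$ and $0$ otherwise. The key point is that for fixed $\g$, the assignment $(\alpha\text{-coefficients})\mapsto \res(\alpha f(\g))$ is a linear form on $\A^{de+1}$ (the variable here being $\alpha=\sum_{i\geq 1}b_it^{-i}$ truncated to $b_1,\dots,b_{de+1}$, since $\res(\alpha f(\g))=\sum_{i=1}^{de+1} b_i\cdot(\text{coeff.\ of }t^{i-1}\text{ in }f(\g))$), and this linear form is identically zero precisely when all $de+1$ coefficients of $f(\g)$ vanish, i.e.\ exactly on the fiber of $M_e$. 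Working in $\expp_S$, this says
$$
[(\Poly_{\leq e})^n\times \A^{de+1},\ \res(\alpha f(g_1,\dots,g_n))]
= \LL^{de+1}\,[M_e],
$$
because over the closed subscheme $M_e\subset S$ the class is $\LL^{de+1}[M_e]$ and over its open complement it is $0$; the scissor relation in $K_0(\evar_S)$ then gives the sum. Rearranging yields the claimed formula after multiplying by $\LL^{-de-1}$.

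To make the middle step rigorous I would invoke the zero-criterion (Lemma \ref{lemma:zero_criterion}): it suffices to check the identity after specializing at every point $s\in S$, i.e.\ for every concrete tuple $\g$ over a field extension, where it reduces exactly to the two-case evaluation of Lemma \ref{lemma:linear_form} applied to the linear form $\alpha\mapsto\res(\alpha f(\g))$ on $\A^{de+1}$. I expect the only genuinely delicate point to be bookkeeping: confirming that $\res(\alpha f(\g))$ depends on $\alpha$ only through $b_1,\dots,b_{de+1}$ and is linear in them (immediate from $\deg f(\g)\leq de$ and the definition of $\res$ as the $t^{-1}$-coefficient), and that the resulting linear form on the $\A^{de+1}$-factor, viewed relatively over $S$, vanishes identically in $\alpha$ iff $\g\in M_e$ — which is the scheme-theoretic translation of "all coefficients of $f(\g)$ are zero." Everything else is a formal application of the scissor relation and $\expp_S$-linearity.
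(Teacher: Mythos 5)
Your proposal is correct and follows essentially the same route as the paper: express $\res(\alpha f(\g))$ as a linear form in the coefficients $b_1,\dots,b_{de+1}$ of $\alpha$, evaluate fiberwise using Lemma \ref{lemma:linear_form}, and conclude with the zero-criterion of Lemma \ref{lemma:zero_criterion} over $S=\Poly_{\leq e}^n$. The scissor-relation phrasing is just a reformulation of that same fiberwise argument, so nothing further is needed.
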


\begin{proof}  Write $$f(g_1,\dots,g_n) = c_0(\g) + c_1(\g) t + \dots + c_{de}(\g) t^{de},$$ where $c_0,\dots,c_{de}$ are regular functions in the coefficients of $g_1,\dots,g_n$,
and $$\alpha = b_{1} t^{-1} + \dots + b_{de+1} t^{-de-1}.$$
Then $$\res( \alpha f(g_1,\dots,g_n)) = b_1 c_0 + \dots + b_{de+1}c_{de}$$
is a linear form in the coefficients of $\alpha$. 

Consider now a point $\g\in (\Poly_{\leq e})^n$. Then on evaluating the right-hand side at $\g$, we get
$$\LL^{-de-1}[\A^{de+1}_{k(\g)}, b_1c_0(\g) + \dots + b_{de+1}c_{de}(\g)].$$
By Lemma \ref{lemma:linear_form},  this is non-zero if and only if all of the coefficients 
$c_0(\g),\dots,c_{de}(\g)$ are zero. This occurs if and only if $\g\in M_{e}$, and in this case the expression is equal to 1. We conclude using Lemma~\ref{lemma:zero_criterion} with 
$$
\phi=[M_e]-\LL^{-de-1}[(\Poly_{\leq e})^n\times \A^{de+1}, \res(\alpha f(\g))]
$$
and $S=(\Poly_{\leq e})^n$.
\end{proof}

Adopting the notation in Section \ref{sect:motivic_integrals}, we may also write 
$$
[M_e] 
 = \int_{\A^{(-1,-de-2)}} [(\Poly_{\leq e})^n\times \A^{(-1,-de-2)}, \res(\alpha f(g_1,\dots,g_n))] \dx \alpha
$$
in  $\expp_{\Poly_{\leq e}^n}$, in order to emphasise the similarity with the  classical circle method. 
In fact, 
our plan is to write
\begin{equation}\label{eq:plan}
[M_e] = N_{\maj} + N_{\minor},
\end{equation}
where $N_{\maj}$ (resp. $N_{\minor}$) is the contribution of the major (resp. minor) arcs. We want to compute $N_{\maj}$ as precisely as possible, and find a weight bound on $N_{\minor}$ of the form 
$ w(N_{\minor}) < 2\mu(e)$.
Let
\begin{equation}\label{eq:tilde-nu}
\tilde \nu := \frac{n-2^{d}(d-1)}{2^{d-2}}>0.
\end{equation}
Although we delay defining the precise major and minor arcs that we work with until the relevant sections, 
we proceed by recording the  two main steps in the motivic circle method, whose proofs  occupy the bulk of this paper.

\begin{prop}\label{prop:major}
Let $\tilde \nu>0$ be given by \eqref{eq:tilde-nu} and recall the varieties defined in 
\eqref{eq:lambda} and \eqref{eq:jet}.
Then 
$$
N_{\maj}
 =  \LL^{\mu(e)}\left(\mathfrak{S}(f) \cdot \lim_{N\to \infty} \LL^{-(n-1)N}[\Lambda_{N}(f,\infty)]
+ R_e\right),
$$
where 
$$
w(R_e)\leq 
\begin{cases}
4
  -\tilde\nu \left(1+\left\lfloor \frac{e+1}{2d-2}\right\rfloor\right) & \text{ if $d\geq 3$,}\\
\max\left( -\frac{(e+1)(n-2)}{2}, ~-\frac{n(e+2)}{2}+2e+8\right) & \text{ if $d=2$.}
\end{cases} 
$$
and 
$$
\mathfrak{S}(f)=\prod_{x\in \A^1} 
\lim_{N\to \infty }\LL^{-N(n-1)} [\Lambda_N(f,x)].
$$
\end{prop}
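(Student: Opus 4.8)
The plan is to follow the classical circle-method blueprint, but entirely inside $\expp_{\Poly_{\leq e}^n}$ and $\widehat{\M_\C}$, starting from the integral expression for $[M_e]$ established in Lemma~\ref{lem:active}. First I would fix the precise major arcs: for a monic polynomial $h_2$ of degree $\leq$ some threshold parameter (to be chosen of size comparable to $e/(2d-2)$, which is what produces the floor term in the error) and a coprime numerator $h_1$ with $\deg h_1 < \deg h_2$, the major arc $\mathfrak{M}(h_1,h_2)$ consists of $\alpha\in\T$ with $\ord(\alpha - h_1/h_2)$ sufficiently negative; the complement in $\T$ is the minor arcs. Using the stratification of $A^{de+1}_m$ by the exact degree $m'$ of $h_2$ (Remark~\ref{rem:stratum-dimension}) and the cut-and-paste relations, one decomposes $[M_e]$ as in \eqref{eq:plan}. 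The quantity $N_{\maj}$ is then the motivic integral of the exponential sum $\res(\alpha f(\g))$ restricted to the major arcs.

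Next I would carry out the local analysis on each major arc. Writing $\alpha = h_1/h_2 + \beta$ with $\beta$ of small support, the sum $\res(\alpha f(\g))$ splits: the $h_1/h_2$ part contributes a "complete" exponential sum in $\g$ taken modulo $h_2$, and the $\beta$ part contributes a motivic oscillatory integral. Summing the complete sums over all admissible $(h_1,h_2)$ and passing to the limit over the degree of $h_2$ yields the singular series: by multiplicativity of motivic Euler products (\cite{Bilu}, recalled in Section~\ref{sect:motivic_euler_prods}), this limit is exactly the product $\mathfrak{S}(f) = \prod_{x\in\A^1}\lim_N \LL^{-N(n-1)}[\Lambda_N(f,x)]$, where each local factor is the class of the jet space in \eqref{eq:lambda}; here one uses that a complete exponential sum modulo a prime power $(t-x)^N$ detects exactly the solutions counted by $\Lambda_N(f,x)$, via Lemma~\ref{lemma:linear_form} applied fibrewise. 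The oscillatory integral over $\beta$, after the change of variables of Remark~\ref{rem:change_of_var} and rescaling $\g$, converges to the archimedean-type factor $\lim_{N\to\infty}\LL^{-(n-1)N}[\Lambda_N(f,\infty)]$, using the identification of $\Lambda_N(f,\infty)$ with $\Lambda_N(f,0)$ noted after \eqref{eq:jet}. Combining these and accounting for the normalising power $\LL^{-de-1}$ from Lemma~\ref{lem:active} together with the ambient dimension gives the leading term $\LL^{\mu(e)}\,\mathfrak{S}(f)\cdot\lim_N\LL^{-(n-1)N}[\Lambda_N(f,\infty)]$, with $\mu(e)$ as in \eqref{eq:mu}.

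The error term $R_e$ collects three sources: the truncation of the singular series (tail of the Euler product over $h_2$ of large degree), the truncation of the $\beta$-integral, and the error in replacing the major-arc exponential sum by its factored form. Each must be bounded in the weight topology. For the tail of the singular series one needs a weight estimate on the local factors $\LL^{-N(n-1)}[\Lambda_N(f,x)]$ uniform in $x$ and $N$, which comes from a Weyl-differencing bound on the associated complete exponential sums — exactly the general motivic exponential-sum estimate to be proved in Section~\ref{sec:weyl}; the non-singularity of $f$ enters here to control the differenced variety, and this is where the quantity $\tilde\nu = (n-2^d(d-1))/2^{d-2}$ of \eqref{eq:tilde-nu} is born. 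The $d=2$ case is handled separately because Weyl differencing after one step already linearises the form, giving the alternative bound displayed in the statement.

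I expect the main obstacle to be the weight bookkeeping in assembling $R_e$: one must show that the errors from the three sources genuinely have weight at most $4 - \tilde\nu(1+\lfloor(e+1)/(2d-2)\rfloor)$, which forces the major-arc degree threshold and the Weyl-differencing savings to be balanced against each other optimally. Making the motivic Euler product converge in $\widehat{\M_\C}$ — i.e.\ controlling $w$ of the partial products uniformly — relies on the multiplicativity and estimates for motivic Euler products together with the dimension bound $\dim\Lambda_N(f,x) = N(n-1)$ (which itself needs Lemma~\ref{lem:relation'} and Corollary~\ref{cor:dim}); keeping all of this uniform in $x\in\A^1$ as the product ranges over infinitely many places is the delicate point, and is precisely the step that has no classical analogue over $\Q$, where one instead sums a numerical series.
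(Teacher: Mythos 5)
Your overall architecture matches the paper's: activation via Lemma~\ref{lem:active}, major arcs defined by rational approximation with bounded denominator degree and prescribed precision, the splitting $\alpha = h_1/h_2+\theta$ into complete sums times an oscillatory factor, completion of the singular series into a motivic Euler product identified with the local densities $\lim_N\LL^{-N(n-1)}[\Lambda_N(f,x)]$, identification of the oscillatory factor with the place at infinity, and weight bounds on the three truncation errors via the Weyl-differencing/shrinking-lemma machinery of Section~\ref{sec:weyl}. However, two concrete points are miscalibrated, and as written the stated bound on $w(R_e)$ would not come out. First, the denominator-degree threshold is not of size $e/(2d-2)$: the factorisation step (the analogue of Lemma~\ref{lem:order_small}, which kills the cross terms $\theta\cdot(f(\bar g+h_2 q)-f(h_2q))$) forces (degree of $h_2$) $+$ (precision $\gamma$) $\leq e+1$, and the paper takes both $\approx (e+1)/2$. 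The floor $\lfloor\frac{e+1}{2d-2}\rfloor$ is not produced by the threshold itself but by the per-degree weight saving of the complete sums: $w(S_m(f)\LL^{-nm})\lesssim -\tilde\nu\lfloor\frac{m}{d-1}\rfloor$ (the $d-1$ coming from the shrinking-lemma choice of $s$), so a tail starting at $m\approx(e+1)/2$ yields $-\tilde\nu\lfloor\frac{e+1}{2d-2}\rfloor$. With your threshold $\approx e/(2d-2)$ the singular-series tail would only give roughly $-\tilde\nu\,\frac{e}{2(d-1)^2}$, which is weaker than the claimed error by a factor of $d-1$.

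Second, your explanation of the $d=2$ dichotomy is not the right mechanism. In the paper the oscillatory integral is evaluated exactly at finite level, $\int_\theta U(\theta)=\LL^{-de-1+\gamma}[V_{d+\gamma-1}]$ with $[V_{d+\gamma-1}]=[\Lambda_\gamma(f,\infty)]\LL^{n(e+1-\gamma)}$, and the passage from $\LL^{-(n-1)\gamma}[\Lambda_\gamma(f,\infty)]$ to $\lim_N\LL^{-(n-1)N}[\Lambda_N(f,\infty)]$ costs an error of weight about $-2\gamma(n/d-1)$, coming from the jet-space recursion (Lemma~\ref{lem:jet_space}, which needs non-singularity of $f$ and $n>d$). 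The case distinction in the statement is exactly the competition between this singular-integral truncation error and the singular-series tail: for $d\geq 3$ the latter always dominates, while for $d=2$ the former can dominate, producing the displayed maximum (its first entry $-\frac{(e+1)(n-2)}{2}$ is precisely the jet-truncation term at $d=2$). It has nothing to do with Weyl differencing linearising a quadratic after one step. So you should (i) re-tune the threshold/precision to $\approx(e+1)/2$ each, tracking the constraint from the cross-term lemma, and (ii) carry out the jet-space comparison explicitly and include its error in $R_e$; with those corrections your outline coincides with the paper's proof.
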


\begin{prop}\label{prop:minor}
Let $\tilde \nu>0$ be given by \eqref{eq:tilde-nu}. Then 
$$
w(N_{\minor})\leq 
 2\mu(e) + 4 - {\tilde \nu}\left(1+ \left\lfloor \frac{e+1}{2d-2}\right\rfloor \right).
$$
\end{prop}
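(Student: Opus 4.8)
The plan is to use the expression from Lemma~\ref{lem:active}, writing $[M_e]$ as a motivic integral over $\alpha$ of the exponential sum $[(\Poly_{\leq e})^n\times\A^{(-1,-de-2)},\res(\alpha f(\g))]$, and to identify $N_{\minor}$ as the part of this integral supported on the complement of the major arcs. In the notation of Section~\ref{sect:approximation}, the major arcs should correspond to the $\alpha$ whose coefficient vector lies in $A^{de+1}_{m_0}$ for a suitable threshold $m_0$, and the minor arcs to the complement, which is built out of the strata $A^{de+1}_{m,m'}$ with $m'>m_0$, i.e.\ the $\alpha$ that are only well approximated by rational functions $h_1/h_2$ with $\deg h_2$ large. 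The first step is therefore to fix the threshold $m_0$ (roughly $m_0\sim (de+1)/(2d-2)$, matching the floor term in the statement) and to record that $N_{\minor}$ is a sum over the relevant strata of classes of the form $\LL^{-de-1}$ times the class of a variety with exponential fibred over a piece of $\A^{de+1}$ of dimension $\leq m+m'\leq 2m$ (Remark~\ref{rem:stratum-dimension}), with exponential given by $\res(\alpha f(\g))$ and $\g$ ranging over $(\Poly_{\leq e})^n$.

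The heart of the argument is then a pointwise weight bound on the inner exponential sum over $\g$ for $\alpha$ in a minor-arc stratum. This is exactly the Weyl differencing estimate that, according to the introduction, is carried out in Section~\ref{sec:weyl}: for $\alpha$ that is not well approximable, the class $[(\Poly_{\leq e})^n,\res(\alpha f(\g))]$ has weight strictly smaller than $2n(e+1)$, with the saving quantified in terms of how large $\deg h_2$ must be, i.e.\ in terms of $m'$. Concretely one differences $f$ repeatedly ($d-1$ times, as for a degree-$d$ form) to reduce to bounding the weight of classes attached to the bilinear/multilinear system coming from the Hessian-type forms of $f$; non-singularity of $f$ guarantees these systems have the expected rank outside a small locus, and the geometry-of-numbers input over $\C(t)$ collected in Appendix~\ref{sec:gon} (in the spirit of Mahler) converts "no good rational approximation of small denominator" into a lower bound for the number of lattice-type constraints, hence an upper bound of the shape $w\bigl([(\Poly_{\leq e})^n,\res(\alpha f(\g))]\bigr)\leq 2n(e+1)-c\,m'$ for an explicit constant $c$ depending on $n,d$ (with $c/2$ of order $\tilde\nu/(de+1)\cdot(\dots)$). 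Integrating this over the stratum: the stratum contributes at most $2\cdot 2m'$ to the weight from the base $\A^{de+1}$ direction (dimension $\leq 2m'$, so weight $\leq 4m'$ from the $\alpha$-variable after accounting for the $\LL^{(N+1)n}$ normalisation in $\int$), minus $c\,m'$ from the Weyl saving, so for $c$ large enough relative to the constant $4$ the weight decreases in $m'$ and the sup over the minor-arc strata is attained at $m'=m_0+1$. Book-keeping the normalising powers of $\LL$ (the $\LL^{-de-1}$ from Lemma~\ref{lem:active} and the $\LL^{(N+1)n}$ from the definition of $\int$) then produces the main term $2\mu(e)$, and the remaining deficit is precisely $4-\tilde\nu(1+\lfloor(e+1)/(2d-2)\rfloor)$.

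The main obstacle is the Weyl differencing step and its interface with the geometry of numbers: one must (i) control the weight of the motivic exponential sums attached to the differenced multilinear forms uniformly over the parameter $\alpha$ in a family, using the properties of $w$ from Section~\ref{s:weight} (additivity under scissor relations, $w([V])=2\dim V$, behaviour under fibrations) rather than the archimedean estimates available classically; (ii) quantify, via Appendix~\ref{sec:gon}, exactly how the failure of good rational approximation forces a lower bound on the corank of the relevant coefficient matrices, which is what feeds the constant $\tilde\nu/2^{d-2}$; and (iii) verify that the locus where $f$'s Hessian-type forms degenerate, which is cut out by the non-vanishing of a resultant because $f$ is non-singular, contributes a negligible class. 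Two further routine-but-delicate points are keeping the dependence on $N$ (equivalently on the level $(-1,-de-2)$) harmless — since only the first $de+1$ coefficients of $\alpha$ enter $\res(\alpha f(\g))$, the integral is genuinely finite-dimensional and no completion subtlety arises here — and checking that the floor $\lfloor(e+1)/(2d-2)\rfloor$ (rather than $\lfloor(de+1)/(2d-2)\rfloor$ or similar) is the correct threshold, which comes from optimising $m_0$ against the Weyl saving. Once these are in place, combining the pointwise bound with the stratification and summing the (finite) geometric-type series in $\LL$ over $m'$ gives Proposition~\ref{prop:minor}.
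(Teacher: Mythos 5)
Your overall route is the paper's: write $N_{\minor}$ as $\LL^{-de-1}$ times the class of the exponential sum over the complement of the major arcs, stratify that complement by the approximation sets $A^{de+1}_m$, bound the weight relative to each stratum by the Weyl differencing estimate (Proposition \ref{prop:weyl_differencing}, packaged as Proposition \ref{pro:stone}) together with the shrinking lemma over $\C(t)$, add twice the stratum dimension from Remark \ref{rem:stratum-dimension} via Proposition \ref{prop:weight-properties} (and take a maximum over strata by weight subadditivity, rather than summing a geometric series), with the worst stratum sitting at the boundary with the major arcs. In that sense the skeleton matches Section \ref{sect:minor_arcs}.

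The concrete gap is in where you place the major/minor cut and, consequently, in your account of where the floor $\lfloor\frac{e+1}{2d-2}\rfloor$ comes from. You propose a threshold $m_0\sim\frac{de+1}{2d-2}$ and claim it "matches the floor term"; but the cut is not free: $N_{\minor}$ is the contribution of the complement of the major arcs of Proposition \ref{prop:major}, and the major-arc averaging (Lemma \ref{lem:order_small}, which needs $m'+\gamma\leq e+1$ with precision $\gamma=\lceil\frac{e+1}{2}\rceil$) forces the major arcs to contain only denominators of degree at most $\Delta=\lfloor\frac{e+1}{2}\rfloor$, so the minor-arc strata run over $\Delta\leq m\leq\lfloor\frac{de+1}{2}\rfloor-1$. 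Moreover the per-stratum saving is not a corank or resultant bound: on $A^{de+1}_{m+1}-A^{de+1}_m$ one uses the dichotomy ($\deg h_2=m+1$, or $\deg h_2\leq m$ with $\ord\theta$ pinned), applies Lemma \ref{lemma:general_bound_exp_sums} with shrinking parameter $s=e-\lfloor\frac{m}{d-1}\rfloor$ so that all the multilinear forms $\Psi_j$ are forced to vanish, and then the diagonal argument of Lemma \ref{lem:dim-bound} (this is where non-singularity of $f$ enters) gives $\dim N(\alpha)\leq\left(de+d-e-2-\lfloor\frac{m}{d-1}\rfloor\right)n$, i.e.\ Lemma \ref{lem:m}. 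The maximum over strata is attained at $m=\Delta$, and it is the division by $d-1$ in $\lfloor\Delta/(d-1)\rfloor$, converted by Lemma \ref{lem:jens} (which also handles the parity of $e+1$), that produces $\lfloor\frac{e+1}{2d-2}\rfloor$ and hence the bound $2\mu(e)+4-\tilde\nu\left(1+\lfloor\frac{e+1}{2d-2}\rfloor\right)$. If you really cut at $\frac{de+1}{2d-2}$, the worst-stratum saving would instead involve roughly $\lfloor\frac{de+1}{2(d-1)^2}\rfloor$, so the stated bound would not emerge; worse, the companion major-arc analysis cannot be carried out for that larger set of major arcs, so the decomposition \eqref{eq:plan} underlying the proposition would not be available.
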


Note that 
when $e=1$, corresponding to the case of  lines,  we  have the bound
$w(N_{\minor}) \leq 2 \mu(1) - \frac{n-2^d d}{2^{d-2}},$
which is interesting as soon as $n> 2^d d$, if $d\geq 3$.

\begin{proof}[Proof of Theorem \ref{theorem_naivespace}]
This follows on combining Propositions \ref{prop:major} and \ref{prop:minor} in  \eqref{eq:plan}.
\end{proof}

\section{The weight function}\label{s:weight}

Our weight function is constructed with the aim of being able to prove an analogue of the Weyl differencing argument  \cite[Proposition 5.5]{BS} in our setting, which we do in Proposition  \ref{prop:weyl_differencing}. Unfortunately,  the weight function from \cite{Bilu} 
is not in an appropriate form to carry this out. In order to  mimic the steps in the proof of \cite[Proposition~5.5]{BS}, which were done using the properties of cohomology, we need a 
function that in some sense measures the weights of the cohomology of the fibres, while the weight function from \cite{Bilu} used the global notion of weight of a mixed Hodge module.  Thus, while over $\C$ it coincides with the weight function from \cite{Bilu}, in the relative setting we define it rather using the weights of the underlying variations of Hodge structures (which introduces a shift by the dimension of the support). 

Note that while most of the intermediary estimates in the implementation of the motivic circle method happen in relative Grothendieck rings and thus use our new weight, the final results are stated in the Grothendieck ring over $\C$, where our new weight coincides with the weight from \cite{Bilu}. 

In Section \ref{sect:mhm}, we recall some definitions and properties concerning mixed Hodge modules and the corresponding Grothendieck rings. In Section \ref{sect:weight_function_def} we define the weight function at the level of Grothendieck rings of mixed Hodge modules, and prove its fundamental properties. Then we pass to Grothendieck rings of varieties in Section \ref{sect:weight_function_varieties} using the same procedure as in \cite{Bilu}. Finally, in Section \ref{sect:convergence_power_series} we explain what we mean by convergence of power series in the topology defined by the weight.

\subsection{Mixed Hodge modules}\label{sect:mhm}
For  references about mixed Hodge modules, the interested reader may consult the axiomatic introduction by Peters and Steenbrink in \cite[Chapter 14]{peters-steenbrink}, Schnell's notes \cite{schnell}, or Saito's original paper \cite{Saito90}.
\subsubsection{The category of mixed Hodge modules}
If $S$ is a variety over $\C$, we denote by $\MHM_S$ the abelian category of mixed Hodge modules on $S$ and by $D^b(\MHM_S)$ its  bounded derived category. A morphism $f:S\to T$ between complex varieties induces functors $f_!:D^b(\MHM_S)\to D^b(\MHM_T)$ and
$f^*: D^b(\MHM_T)\to D^b(\MHM_S)$.

In the case where $S$ is a point, the category $\MHM_{\pt}$ is exactly the category of polarisable mixed Hodge structures.  For any integer $d\in \Z$, we denote by~$\Q_{\pt}^{\Hdg}(d)\in \MHM_{\pt}$ the Hodge structure of type $(-d,-d)$ with underlying vector space~$\Q$. For $d=0$, it will be denoted simply by $\Q^{\Hdg}_{\pt}$. 

Every mixed Hodge module $M$ on a variety $S$ has a well-defined notion of support, which will will be denoted by $\supp(M)$. 

\subsubsection{The complex $\Q_X^{\Hdg}$}

For any complex variety $X$, we denote by 
$a_X:X\to \Spec\ \C$ its structural morphism, and by $\Q_X^{\Hdg}$ the complex of mixed Hodge modules given by $a_X^{*}\Q_{\pt}^{\Hdg}$.
In the case when $X$ is smooth and connected, the complex of mixed Hodge modules $\Q_X^{\Hdg}$ is concentrated in degree $\dim X$, and $\mathcal{H}^{\dim X}\Q_X^{\Hdg}$ is pure of weight $\dim X$, given by the pure Hodge module associated to the constant rank one variation of Hodge structures of weight 0 on $X$.

\subsubsection{Mixed Hodge modules with monodromy}
We denote by $\mhm_X^{\mon}$ the category of mixed Hodge modules $M$ on a complex variety~$X$ endowed with commuting actions of a finite order operator $T_s:M\to M$ and a locally nilpotent operator $N:M\to M(-1)$. The category $\mhm_X$ can be identified with a full subcategory of $\mhm_X^{\mon}$ via the functor
$$\mhm_X\to \mhm_{X}^{\mon},$$
sending a mixed Hodge module $M$ to itself with $T_s = \id$ and $N = 0$. We refer to \cite[Section~4.1.6]{Bilu} for a definition of the twisted external tensor product $\twtimes$.

\subsubsection{The weight filtration on Hodge modules}
Each $M\in\mhm_S$ has a finite increasing weight filtration $W_{\bullet}M$, the graded parts of which will be denoted $\gr^{W}_{\bullet}$. For a bounded complex of mixed Hodge modules $M^{\bullet}$, we say~$M^{\bullet}$ has weight $\leq n$ if $\gr^W_i\mathcal{H}^j(M^{\bullet}) = 0$ for all integers $i$ and $j$ such that $i>j+n$. 

For varieties $X$ and $Y$ over $\C$ we say that a functor $F:D^{b}(\mhm_X)\to D^{b}(\mhm_Y)$ does not increase weights if for every $n\in\Z$ and every $M^{\bullet}\in D^{b}(\mhm_X)$ with weight $\leq n$, the complex $F(M^{\bullet})$ is also of weight $\leq n$. In particular, for any morphism of complex varieties~$f$, the functors $f_!$ and $f^{*}$ do not increase weights (see \cite[(4.5.2)]{Saito90}).

\subsubsection{Grothendieck ring of mixed Hodge modules}
We refer to \cite[Section 4.1.7]{Bilu} for the definition of the Grothendieck rings $K_0(\mhm_X)$ and $K_0(\mhm_X^{\mon})$. For every morphism $f:X\to Y$ of complex varieties, the functors $f_!, f^{*}$ between the corresponding derived categories of mixed Hodge modules induce a group morphism
$$f_!: K_0(\MHM_X^{\mon})\to K_0(\MHM_Y^{\mon})$$
and a ring morphism
$$f^{*}: K_0(\MHM_Y^{\mon})\to K_0(\MHM_X^{\mon}).$$

\begin{fact}\label{rem:purehodgemodules}
Let $\HM_X$ be the subcategory of $\MHM_X$ of Hodge modules of pure weight. By decomposing into graded pieces, one can see that the natural inclusion $K_0(\HM_X)\to K_0(\MHM_X)$ induces an isomorphism. 
	\end{fact}
\subsubsection{Hodge modules and variations of Hodge structures}\label{sec:structure_thm} 

Saito's structure theorem (see e.g. \cite[Theorem 3.21]{Saito90}) gives a correspondence between pure polarisable Hodge modules on a variety $Z$ with strict support in $Z$ and direct systems of polarisable variations of Hodge structures (with quasi-unipotent local monodromies) over smooth open subsets of $Z$, with a shift in weight (by the dimension of the support). Moreover, this category of Hodge modules is semi-simple, with simple objects coming, through this correspondence, from variations of Hodge structures for which the monodromy representation is irreducible \cite[Theorem 14.37]{peters-steenbrink}. We will use these facts to give a particularly simple presentation of Grothendieck rings of mixed Hodge modules.

We denote by $\mathrm{VHS}_X$ the disjoint union, over all irreducible closed subvarieties $Z\subset X$, of the direct limit over smooth open dense $U\subset Z$ of the sets of isomorphism classes of  polarisable variations of pure Hodge structures (satisfying the above condition of having quasi-unipotent local monodromies with irreducible action) on $U$.

There is a well-defined morphism
$$\Z^{(\VHS_X)}\to K_0(\MHM_X)$$
sending a variation of Hodge structures over an open dense subset  $U\subset Z$ of an irreducible closed subvariety $Z\subset X$ to the corresponding Hodge module with strict support in $Z$. 

\begin{lemma} The above morphism is an isomorphism, i.e. $K_0(\MHM_X)$ is freely generated by the elements of $\mathrm{VHS}_X$.  
\end{lemma}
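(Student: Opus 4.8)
The plan is to show that the morphism $\Z^{(\VHS_X)}\to K_0(\MHM_X)$ is both surjective and injective, using Saito's structure theorem and the semisimplicity of the category of pure Hodge modules.

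\textbf{Surjectivity.} By Remark \ref{rem:purehodgemodules}, the natural inclusion $K_0(\HM_X)\to K_0(\MHM_X)$ is an isomorphism, so it suffices to show that every class of a pure Hodge module lies in the image. A pure Hodge module $M$ on $X$ decomposes, by the decomposition theorem for the support filtration, as a direct sum $\bigoplus_Z M_Z$ where $M_Z$ has strict support in an irreducible closed subvariety $Z\subset X$; hence $[M]=\sum_Z [M_Z]$ and we reduce to the case of strict support in $Z$. By Saito's structure theorem (\cite[Theorem 3.21]{Saito90}), $M_Z$ corresponds to a polarisable variation of Hodge structures on some smooth open dense $U\subset Z$ (up to a weight shift by $\dim Z$), and the category of such Hodge modules with strict support in $Z$ is semisimple with simple objects corresponding to variations of Hodge structures with irreducible monodromy representation (\cite[Theorem 14.37]{peters-steenbrink}). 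Writing $[M_Z]$ as a sum of classes of simple objects and pulling these back along the correspondence to elements of $\VHS_X$ shows $[M_Z]$, and hence $[M]$, is in the image.

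\textbf{Injectivity.} The semisimplicity makes this clean: for a semisimple abelian category, the Grothendieck group is freely generated by the isomorphism classes of simple objects. Concretely, each $K_0$ of the subcategory of Hodge modules with strict support in a fixed irreducible $Z$ is free on the simples with strict support in $Z$; these simples are precisely the images of the irreducible-monodromy elements of $\VHS_X$ attached to $Z$; and distinct $Z$ contribute linearly independent classes since a Hodge module cannot have strict support in two different irreducible subvarieties simultaneously. Assembling over all $Z\subset X$, and invoking again $K_0(\HM_X)\xrightarrow{\sim} K_0(\MHM_X)$, we see that the elements of $\VHS_X$ form a free generating set. To make the correspondence between $\VHS_X$-elements and simple Hodge modules a genuine bijection, one uses that two polarisable VHS on open dense subsets $U,U'\subset Z$ define the same simple Hodge module if and only if they agree on $U\cap U'$, which is exactly the equivalence relation built into the direct limit defining $\VHS_X$.

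\textbf{Main obstacle.} The routine part is the bookkeeping with the support decomposition; the delicate point is verifying that the correspondence of Saito's theorem is compatible with direct sums and detects isomorphism classes precisely, so that it induces a bijection between $\VHS_X$ and the set of isomorphism classes of simple pure Hodge modules on $X$ (with the weight shift handled consistently). Once that bijection is in hand, freeness is a formal consequence of semisimplicity. I expect the proof in the paper to simply cite \cite{Saito90} and \cite{peters-steenbrink} for these compatibilities and keep the argument short, following the same procedure as in \cite[Section~4.1.7]{Bilu}.
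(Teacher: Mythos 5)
Your proposal is correct and follows essentially the same route as the paper: reduce to pure Hodge modules via Remark \ref{rem:purehodgemodules}, then use the strict-support decomposition, Saito's structure theorem and semisimplicity (simples corresponding to irreducible-monodromy variations) to identify $K_0(\HM_X)$ with the free group on $\mathrm{VHS}_X$. The paper phrases this as constructing the inverse morphism by decomposition into simple objects, which is the same content as your surjectivity-plus-injectivity argument, just stated more briefly.
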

\begin{proof} By Fact \ref{rem:purehodgemodules}, we may replace $K_0(\MHM_X)$ by $K_0(\HM_X)$. We construct the morphism in the other direction by using decomposition into simple objects and the above correspondence. 
\end{proof}

A similar property holds for $K_0(\MHM_X^{\mon})$, considering the set $\VHS^{\mon}_X$ where the additional datum of the operators $T_s$ and $N$ is added. 

\subsection{Definition of the weight function} \label{sect:weight_function_def}
We may define a filtration $W_{\leq \bullet}$ on the group $K_0(\MHM^{\mon}_X)$ in the following way:  $W_{\leq n}K_0(\MHM^{\mon}_X)$ is the subgroup generated by \begin{itemize}
\item classes of pure Hodge modules $(M,\id,N)$ (i.e. with $T_s = \id$), with irreducible support, of weight $m$ such that $m- \dim\supp(M) \leq n$, and 
\item classes of pure Hodge modules $(M,T_s,N)$, with irreducible support, of weight $m$ such that $m-\dim\supp(M)\leq n-1$.
\end{itemize}
This dichotomy depending on the form of the semisimple monodromy is of the same type as in the definition of the weight filtration in \cite[Section~4.5.1]{Bilu}, which was motivated in particular by the definition of the twisted exterior product. 
 
 Note that by \ref{sec:structure_thm}, this corresponds to filtering $K_0(\MHM^{\mon}_X)$ by the weights of the underlying variations of Hodge structures, slightly twisted depending on whether the semi-simple part $T_s$ of the monodromy is trivial or not. More precisely, we may describe the associated graded pieces 
   of this filtration in the following form:
 \begin{lemma}\label{lem:hodge_modules_graded} We have $$\mathrm{Gr}^W_{n}K_0(\MHM^{\mon}_{X}) = W_{\leq n}K_0(\MHM^{\mon}_X)/W_{\leq n-1}K_0(\MHM^{\mon}_X) = \Z^{(VHS^{\mon}_{n,X})}$$
 where $\VHS^{\mon}_{n,X}$ is the subset of $\mathrm{VHS}^{\mon}_{X}$ given by variations of Hodge structures which have  weight $n-1$ and correspond to Hodge modules with non-trivial $T_s$, together with those of weight $n$ corresponding to Hodge modules with trivial $T_s$.
 \end{lemma}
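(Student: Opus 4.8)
The plan is to unwind the definition of the filtration $W_{\leq\bullet}$ and combine it with the free-generation statement established just above for $K_0(\MHM_X^{\mon})$, namely that $K_0(\MHM_X^{\mon})$ is freely generated over $\Z$ by the set $\VHS_X^{\mon}$. The key observation is that under this identification the generators of $W_{\leq n}K_0(\MHM_X^{\mon})$ listed in the definition correspond exactly to a \emph{subset} of the free basis $\VHS_X^{\mon}$: a pure Hodge module $(M,\id,N)$ with irreducible support $Z$ and weight $m$ corresponds, via Saito's structure theorem, to a variation of Hodge structures on a smooth dense open of $Z$ of weight $m-\dim Z$, with trivial $T_s$; and such a module lies in $W_{\leq n}$ precisely when $m-\dim\supp(M)\leq n$, i.e.\ the underlying VHS has weight $\leq n$. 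Similarly, a pure Hodge module with $T_s\neq\id$, weight $m$, support $Z$ corresponds to a VHS of weight $m-\dim Z$ with non-trivial $T_s$, and lies in $W_{\leq n}$ iff that VHS-weight is $\leq n-1$.

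First I would record that, because $\{\VHS_X^{\mon}\}$ is a $\Z$-basis, a subgroup generated by a subset $B\subset \VHS_X^{\mon}$ is itself free on $B$, and the quotient by such a subgroup is free on the complementary basis elements. Applying this with $B = $ (basis elements of VHS-weight $\leq n$ and trivial $T_s$) $\cup$ (basis elements of VHS-weight $\leq n-1$ and non-trivial $T_s$), one gets that $W_{\leq n}K_0(\MHM_X^{\mon})$ is free on $B$, and hence that
$$
W_{\leq n}K_0(\MHM_X^{\mon})/W_{\leq n-1}K_0(\MHM_X^{\mon}) \;\cong\; \Z^{(B\setminus B')},
$$
where $B'$ is the analogous set at level $n-1$. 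One then checks that $B\setminus B'$ is exactly the set of basis elements of VHS-weight $n$ with trivial $T_s$ together with those of VHS-weight $n-1$ with non-trivial $T_s$ — which is the set $\VHS_{n,X}^{\mon}$ as defined in the statement. This yields the claimed description of the associated graded.

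The only genuine point to be careful about — and the step I expect to be the main obstacle — is the bookkeeping in Saito's structure theorem: one must verify that the correspondence between pure Hodge modules with strict support in an irreducible $Z$ and polarisable VHS on smooth dense opens of $Z$ is compatible with the monodromy data $(T_s,N)$ in the monodromic setting, and that the weight shift is precisely $\dim Z$ (matching the normalisation in Section \ref{sec:structure_thm} and in \cite[Section~4.5.1]{Bilu}), so that the two dichotomies — ``$m-\dim\supp(M)\leq n$ versus $\leq n-1$'' in the definition of $W_{\leq\bullet}$ and ``VHS-weight $n$ versus $n-1$'' in the definition of $\VHS_{n,X}^{\mon}$ — line up exactly. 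Granting this compatibility (which is exactly the content of \ref{sec:structure_thm}), the rest is the elementary fact about quotients of free abelian groups by subgroups generated by basis elements, and the proof is complete.
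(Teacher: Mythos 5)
Your proposal is correct and follows essentially the same route as the paper: the paper's proof is precisely the observation that, via the structure theorem of Section \ref{sec:structure_thm} (with the weight shift equal to the dimension of the support) and the free generation of $K_0(\MHM_X^{\mon})$ by $\VHS_X^{\mon}$, the filtration $W_{\leq n}$ is spanned by a subset of the free basis, so the graded piece is free on the complementary basis elements $\VHS^{\mon}_{n,X}$. You have merely spelled out the bookkeeping that the paper leaves implicit, so no further changes are needed.
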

 
\begin{proof} This follows from \ref{sec:structure_thm}, using the fact that the shift in weight between Hodge modules and the corresponding variations of Hodge structures is of exactly the dimension of the support. 
\end{proof}
 We then define
$$w_X(\a) = \min\{n: \a\in W_{\leq n} K_0(\MHM_X^{\mon})\}.$$
This weight function satisfies the following properties.

\begin{prop}\label{prop:weight-properties} Let $S$ be a complex variety. 
\begin{enumerate} \item 
\label{item:weight-property-sum} Let $(\a_i)_{ i\in I}$ be a finite family of elements of $K_0(\MHM^{\mon}_S)$. Then 
$$w_S\left(\sum_{i\in I}\a_i\right)\leq \max_{i\in I} w_S(\a_i).$$

\item  
\label{item:weight-property-pushforward} Let $p:T\to S$ be a dominant morphism with fibres of dimension $\leq d$. Let $\a\in K_0(\MHM^{\mon}_T)$. Then there exists a
non-empty
 open subset $S_0\subset S$ such that, denoting by $\b$ the pullback of $\a$ to $p^{-1}(S_0)$, we have
$$w_{S_0}(p_!\b) \leq w_{p^{-1}(S_0)}(\b) + 2d.$$
If $S$ is a point, we have
$$w(p_!\a) \leq w_{T}(\a) + 2d.$$
\item \label{item:weight-property-subset}Let $T\subset S$ be a locally closed subset and $i:T\to S$ the corresponding inclusion morphism. For every $\a\in K_0(\MHM^{\mon}_{T})$, we have
$$w_{S}(i_! \a) \leq w_{T}(\a).$$
\end{enumerate} 
\end{prop}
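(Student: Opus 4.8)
The plan is to verify the three asserted properties of the weight function $w_X$ directly from its definition as the minimal filtration step, using the description of the associated graded pieces in Lemma \ref{lem:hodge_modules_graded} together with the fact that the functors $f_!$ and $f^*$ do not increase weights of mixed Hodge modules.

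For part \eqref{item:weight-property-sum}, I would argue that the $W_{\leq n}$ form an exhaustive increasing filtration by subgroups: if $w_S(\a_i) \leq n$ for every $i$, then each $\a_i \in W_{\leq n}K_0(\MHM^{\mon}_S)$, and since this is a subgroup, $\sum_i \a_i \in W_{\leq n}K_0(\MHM^{\mon}_S)$ as well; take $n = \max_i w_S(\a_i)$. This is immediate from the definition.

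For part \eqref{item:weight-property-subset}, the point is that for a locally closed immersion $i:T\to S$, the functor $i_!$ sends a pure Hodge module on $T$ with strict support in an irreducible $Z\subset T$ and weight $m$ to a complex on $S$ whose cohomology Hodge modules have strict support in $\bar Z$ (the closure in $S$) and weight $\leq m$; moreover $i$ does not change the dimension of the support, $\dim Z = \dim \bar Z$, and $i_!$ preserves the monodromy operators $T_s, N$. Decomposing an arbitrary class into the generators listed in the definition of $W_{\leq n}$, and noting that $i_!$ of a generator of weight-twist $\leq n$ is again a sum of generators of weight-twist $\leq n$ (using that $i_!$ does not increase weights and is compatible with the semisimple-monodromy dichotomy), one concludes $i_!\a \in W_{\leq n}K_0(\MHM^{\mon}_S)$ whenever $\a \in W_{\leq n}K_0(\MHM^{\mon}_T)$, hence $w_S(i_!\a)\leq w_T(\a)$.

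Part \eqref{item:weight-property-pushforward} is the main obstacle and requires the generic-smoothness/decomposition input. The difficulty is that $p_!$ does not increase the \emph{global} weight of a mixed Hodge module, but our weight function measures the weight \emph{minus the dimension of the support}, so pushing forward along $p$ with positive-dimensional fibres can raise the support dimension by up to $d$ and hence raise our relative weight. I would reduce to the case of a generator: a pure Hodge module $M$ on $T$ with strict support in an irreducible $Z\subset T$, corresponding via Saito's structure theorem to a variation of Hodge structures on a smooth dense open $U\subset Z$ of weight $m - \dim Z$ (in the sense of the underlying VHS). After shrinking $S$ to a suitable open $S_0$ over which $p|_Z$ is well-behaved (smooth or at least equidimensional onto its image with fibre dimension $\leq d$, and over which the relevant cohomology sheaves of $p_!$ are themselves VHS on smooth opens — this is where generic smoothness and the decomposition theorem for the proper part enter), the cohomology Hodge modules $\mathcal H^j(p_!M)$ over $S_0$ have strict support in $\overline{p(Z)}$, dimension $\dim p(Z) \geq \dim Z - d$, and weight $\leq m + j$ with $|j|$ controlled; when one subtracts the support dimension, the net increase is at most $2d$, accounting for the shift $j$ ranging over an interval of length $2d$ coming from the fibre dimension. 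The monodromy operators $T_s, N$ are carried along functorially, so the dichotomy in the definition of $W_{\leq \bullet}$ is respected. For the case $S = \pt$ there is no need to shrink: $p_!$ does not increase the global weight, and since the target is a point the "support dimension" term is $0$ on the target while it was at most $d$ on the source (wait — rather, one uses that a generator on $T$ of $w_T$-value $\leq n$ has underlying weight $\leq n + d$, and $p_!$ preserves this bound on the global weight, giving $w(p_!\a)\leq n + 2d$ after accounting for both the loss of the source support-shift and the spread in cohomological degrees). I expect the bookkeeping of these shifts — reconciling global weights of Hodge modules with the support-twisted weights used here — to be the technical heart of the argument.
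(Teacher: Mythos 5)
This is essentially the paper's own proof: part (1) follows from the subgroup property of the filtration, part (3) by reducing to pure generators and using that $i_!$ does not increase weights while preserving the underlying variations of Hodge structure, the support dimension and the monodromy dichotomy, and part (2) by reducing to a single pure generator and combining the fact that $p_!$ does not increase weights with the cohomological amplitude bound $\leq d$ and a support-dimension drop of at most $d$ over a suitably shrunk $S_0$, exactly the $d+d$ accounting you spell out in the point case. The only differences are cosmetic: the paper gets the generic variation-of-Hodge-structure form of the cohomologies of $p_!$ from topological local triviality over a dense open (no decomposition theorem needed) and disposes of generators whose support does not dominate $S$ simply by choosing $S_0$ inside the complement of the image closure, which cleanly settles the point you leave to ``bookkeeping''.
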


\begin{remark} Property (2) differs from the corresponding property of the weight in \cite[Lemma 4.5.1.3(d)]{Bilu},  where the upper bound holds with $d$ in place of $2d$.
\end{remark}
\begin{proof}[Proof of Proposition \ref{prop:weight-properties}] 
~
\begin{enumerate}\item This follows immediately from the definition.
\item 

For clarity, we first do the proof without the extra monodromy operators.

Let $V$ be a variation of Hodge structures of weight $n$ over an open subset $U$ of a subvariety $Z$ of $T$, and let $M_V$ be the corresponding Hodge module over $Z$. 

Since the functor $p_!$ is of cohomological amplitude $\leq d$ (see e.g. \cite[Lemma 4.1.4.2]{Bilu}, we know that $p_{!} M_V$ is a complex of mixed Hodge modules over $S$, of amplitude $\leq d$. Since the functor $p_{!}$ does not increase weights, this complex moreover has weight $\leq n + \dim Z$, which means that 
$$\gr_{i}^W\mathcal{H}^j(p_{!}M_V) = 0\ \ \ \ \text{if}\ \ \ \ i> j+ n + \dim Z.$$ 
Taking classes in $K_0(\MHM_S^{\mon})$,
$$[p_!M_V] = \sum_{i\leq d} (-1)^i [\mathcal{H}^i p_{!} M_V]$$
is a sum of Hodge modules of weight $\leq n + \dim Z + d$. When $S$ is a point, we can conclude directly here, since these Hodge modules turn out to be simply Hodge structures, of weights bounded by $n + \dim Z + d\leq n + 2d.$

We now turn to the case where $S$ is not a point. Assume first that $p(U)$ is contained in a closed subvariety of $S$. Then the result is trivially true by taking $S_0$ to be the complement of that subvariety. Thus, we may now assume that $p$ remains dominant when restricted to $U$, and we reduce to proving the result for the induced map $p':U\to S$. We pick a non-empty open subset $S_0$ of $S$ above which $p'$ is a topological fibration. Then above $S_0$, the Hodge modules $\mathcal{H}^i p'_!M_V$ are variations of Hodge structures, with supports of dimensions $\geq \dim Z-d$, whence the result.

Now, assume there is also monodromy involved. We have shown that up to the above restriction to open subsets, the underlying variations of Hodge structures of $p_!M_V$ had weights $\leq n+2d$ if we start with $V$ of weight $n$. If $M_V$ has non-trivial semisimple monodromy, then by definition $w(M_V) = n+1$ and in this situation the bound $w_{S_0}(p'_!M_V)\leq n+1 +2d$ holds.  If now $M_V$ has trivial monodromy, then so do all of the Hodge modules involved in $p_!M_V$. We then have $w(M_V) = n$ and the bound $w_{S_0}(p'_!M_V)\leq n+ 2d$ holds. 
\item We write $\a = \sum a_i M_i$ where $a_i \in \Z$  and the $M_i$ are pure Hodge modules (with monodromy) supported inside $S_0$, with corresponding variations of Hodge structures $V_i$. Then $w_S(i_!\a)$ is at most the weight of $\sum a_i i_!M_i$ which has the same variations of Hodge structures occurring, hence the result.  \qedhere
\end{enumerate}
\end{proof}

\subsection{The weight function on Grothendieck rings of varieties}\label{sect:weight_function_varieties}

We have defined a weight filtration on the Grothendieck ring of mixed Hodge modules with monodromy $K_0(\MHM_S^{\mon})$. We now use the same procedure as in \cite{Bilu} to deduce from it a weight function on the Grothendieck ring of varieties with exponentials $\expp_S$. We only give a brief overview of the construction here, referring to \cite{Bilu} and the references therein for the details.

\subsubsection{Motivic vanishing cycles}
Let $k$ be a field of characteristic 0 and let $S$ be a variety over $k$. We refer to \cite[Section 2.3.5]{Bilu} for the definition (building on some earlier work \cite{lunts-schnurer} of Lunts and Schnürer) of the \textit{total vanishing cycles measure}
$$\Phi_S:\expp_S\to (\M_S^{\hat{\mu}}, \ast),$$
which is functorial in $S$. Here $\M_S^{\hat{\mu}}$ is the Grothendieck ring of varieties over~$S$ with $\hat{\mu}$-action (see \cite[Section 2.1.5]{Bilu}), and $\ast$ denotes Looijenga's convolution product (see \cite[Section 2.2.1]{Bilu}). Recall moreover that the restriction of $\Phi_S$ to $\M_S$ coincides with the natural inclusion $\M_S\to \M_S^{\hat{\mu}}$.

\subsubsection{The Hodge realisation}
Let $S$ be a complex variety.  There is a group morphism
$$\begin{array}{rccc}\chi^{\Hdg}_S:&\M_S^{\hat{\mu}}&\to &K_0(\mhm_S^{\mon})\\
                                     & [X\xrightarrow{f} S,\sigma] & \mapsto& \sum_{i\in\Z}(-1)^i[\mathcal{H}^i(f_!\Q_X^{\Hdg}),T_s(\sigma), 0] \end{array}$$
called the \textit{Hodge realisation morphism} (see \cite[Section 4.4.1]{Bilu} for a discussion of its properties).

\subsubsection{Weight function on Grothendieck rings of varieties}
Let $S$ be a complex variety. The weight filtration on the ring $\expp_S$ is given by
$$
W_{\leq n}\expp_S:= (\chi_S^{\Hdg}\circ \Phi_S)^{-1}(W_{\leq n}K_0(\mhm_S^{\mon})),
$$
for every $n\in \Z$. The completion with respect to this filtration is denoted by $\widehat{\expp_S}$.
The weight function on $\expp_S$, again denoted $w_S$, is given by the composition
$$\expp_S\xrightarrow{\Phi_S} \M_S^{\hat{\mu}}\xrightarrow{\chi_S^{\Hdg}}K_0(\mhm_S^{\mon})\xrightarrow{w_S}  \Z.$$

\subsubsection{Weight and dimension}

\begin{lemma}\label{lemma:weight_dimension} Let $S$ be a complex variety and $X$ a variety over $S$. One has the equality
$$w_S(X) = 2\dim_S X.$$
\end{lemma}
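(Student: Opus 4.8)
The plan is to reduce the statement to the case of the trivial exponential $f = 0$ and to a point base, where it becomes a statement about weights of cohomology of smooth proper varieties combined with the scissor relations. First I would unwind the definitions: by construction $w_S(X)$ is the minimal $n$ with $\chi_S^{\Hdg}(\Phi_S([X,0])) \in W_{\leq n}K_0(\mhm_S^{\mon})$. Since the restriction of $\Phi_S$ to $\M_S$ is the natural inclusion $\M_S \to \M_S^{\hat\mu}$, and $\chi_S^{\Hdg}$ restricted to $\M_S$ sends $[X \xrightarrow{f} S]$ to $\sum_i (-1)^i [\mathcal{H}^i(f_!\Q_X^{\Hdg}), \id, 0]$, the element in question carries only trivial semisimple monodromy. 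Hence the dichotomy in the definition of $W_{\leq n}$ is irrelevant here, and $w_S(X)$ is just the largest weight (as a variation of Hodge structures, i.e.\ already shifted by the dimension of the support) appearing in $\chi_S^{\Hdg}([X \xrightarrow{a} S])$, where $a$ is the structural morphism.

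Next I would prove both inequalities. For the upper bound $w_S(X) \leq 2\dim_S X$: one proceeds by Noetherian induction on $X$, using the scissor relations and Proposition \ref{prop:weight-properties}(1), (3) to replace $X$ by a stratification into smooth $S$-varieties; on a smooth $S$-variety of relative dimension $r$ the complex $\Q_X^{\Hdg}$ is concentrated in degree $\dim X$ and pure of weight $\dim X$ (as recalled in \S\ref{sect.QXHdg}), and pushing forward along $a$ (which does not increase weights and has fibre dimension $\leq r$) gives a complex of weight $\leq \dim X + \dim S$; translating to the VHS-normalisation subtracts $\dim S$, leaving weight $\leq \dim X - \dim(\text{base of each piece}) = r$ on each stratum, so $w_S \leq 2r$ by Lemma \ref{lem:hodge_modules_graded}. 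For the lower bound $w_S(X) \geq 2\dim_S X$: it suffices to exhibit one component of top relative dimension $r = \dim_S X$ contributing a VHS of weight exactly $r$; restricting over a suitable dense open $S_0 \subset S$ and a smooth dense open of the relevant irreducible component $X_0 \subset X$ with $a|_{X_0}$ smooth of relative dimension $r$, the top cohomology sheaf $R^{2r}(a|_{X_0})_! \Q$ is a nonzero VHS of weight $2r$ (weight $r$ after the dimension shift), and this class cannot be cancelled by contributions from lower-dimensional strata, which all have strictly smaller weight by the upper bound argument.

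The main obstacle is the lower bound, specifically ensuring the top-weight class genuinely survives in the Grothendieck group $K_0(\MHM_S^{\mon})$ and is not killed by cancellation with classes coming from the boundary strata or from lower-dimensional components. This is handled cleanly by the freeness statement of Lemma \ref{lem:hodge_modules_graded}: the associated graded $\mathrm{Gr}^W_{2r}$ is a free abelian group on VHS of weight exactly $r$ (trivial monodromy case), and the image of $[X]$ in this graded piece is the class of $R^{2r}a_!\Q$ restricted to the open locus, which is a nonzero effective VHS, hence nonzero in the free group. A minor technical point is that Proposition \ref{prop:weight-properties}(2) only gives the push-forward weight bound after restricting to an open subset of the base; but since $w_S(X)$ is insensitive to this (one argues the weight over a dense open agrees with the weight over all of $S$, because the forgetful/extension maps respect the filtration by Proposition \ref{prop:weight-properties}(3)), this causes no real difficulty.
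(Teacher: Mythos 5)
Your argument follows essentially the same route as the paper's proof: stratify so that the pushforward becomes a complex of variations of Hodge structures, bound their weights by those of the compactly supported cohomology of the fibres, observe that the top weight $2\dim_S X$ is attained by the top-degree class of the generic fibre, and use the freeness in Lemma \ref{lem:hodge_modules_graded} (together with Proposition \ref{prop:weight-properties}(3) for the boundary strata) to rule out cancellation. The only blemish is the bookkeeping in your upper bound --- ``weight $\leq \dim X + \dim S$'' followed by subtracting $\dim S$ does not yield $r$; the correct count, as in the proof of Proposition \ref{prop:weight-properties}(2), is that the cohomology Hodge modules of $a_!\Q_X^{\Hdg}$ have weight at most $\dim X + r$ while, over a dense open stratum, their supports have dimension $\dim S$, so the underlying variations have weight at most $2r$.
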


\begin{proof} Let $p:X\to S$ be the structural morphism, which we may assume to be dominant by replacing $S$ with the closure of $p(X)$. Further, we pick a stratification $(S_i)_i$ of $S$ such that over every stratum $p$ is topologically a fibration. For every $i$, put $X_i = p^{-1}(S_i)$ and $p_i = p_{|X_i}$. Up to refining our stratification, we may assume that over each $S_i$, the Hodge modules involved in the complex $(p_i)_!\Q^{\Hdg}_{X_i}$ correspond to variations of Hodge structures defined over $S_i$. Now pick a stratum $X_i$. 
By proper base change, for every $s\in S_i$, the fibre $((p_i)_!\Q^{\Hdg}_{X_i})_s$ is given by a complex with cohomology given by $H^*(X_s,\Q)$, which involves Hodge structures of weights at most $\dim_{S_i}X_i$, with equality for the Hodge structure in top weight. From this we may deduce, using Lemma \ref{lem:hodge_modules_graded}, that the weight of $p_!\Q_X^{\Hdg}$ is $\max\dim_{S_i}X_i = \dim_SX.$
\end{proof}
\begin{lemma}[Triangular inequality for weights]\label{lemma:triangular_ineq} Let $S$ be a complex variety, $X$ a variety over $S$ and $f:X\to \A^1$ a morphism. Then
$$w_S([X,f])\leq w_S(X).$$
\end{lemma}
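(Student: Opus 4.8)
The plan is to reduce the statement to a property of the total vanishing cycles measure $\Phi_S$ composed with the Hodge realisation, since by definition $w_S$ factors through $\chi_S^{\Hdg}\circ\Phi_S$. Concretely, I would argue as follows. Write $[X,f]=[X]-[X_0,f]+\dots$; more directly, recall the standard decomposition in $\expp_S$ relating a variety with exponential to its ``motivic nearby/vanishing cycle'' content: one has $\Phi_S([X,f]) = \Phi_S([X,0]) - (\text{class built from the vanishing cycles of } f)$, and crucially the underlying varieties of all the pieces of $\Phi_S([X,f])$ map to $S$ through (sub)schemes of $X$, so their relative dimensions over $S$ are bounded by $\dim_S X$. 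The point is purely one of dimensions: every geometric piece occurring in $\Phi_S([X,f])$ is supported on a subscheme of $X$ (the special fibre $f^{-1}(0)$, together with arc/jet spaces of $X$ along it, whose projection to $S$ still factors through $X$), hence has relative dimension $\le \dim_S X$ over $S$.

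The key steps, in order: (1) Recall from \cite[Section 2.3.5]{Bilu} that $\Phi_S([X,f])$ is represented by a $\Z$-linear combination of classes $[Y_j\to S,\sigma_j]$ with $\hat\mu$-action, where each $Y_j$ admits a morphism to $X$ (it is built from $f^{-1}(0)$ and truncated arc spaces of $(X,f)$ along $f^{-1}(0)$, all of which come with natural maps to $f^{-1}(0)\subset X$), so that $\dim_S Y_j \le \dim_S X$ for every $j$. (2) Apply $\chi_S^{\Hdg}$: each $[Y_j\to S,\sigma_j]$ maps to $\sum_i(-1)^i[\mathcal H^i((Y_j\to S)_!\Q_{Y_j}^{\Hdg}), T_s(\sigma_j),0]$, and by exactly the weight computation carried out in the proof of Lemma \ref{lemma:weight_dimension} (proper base change plus the bound that $H^*$ of a fibre of relative dimension $r$ involves Hodge structures of weight $\le r$), each such class lies in $W_{\le 2\dim_S Y_j}K_0(\MHM_S^{\mon}) \subseteq W_{\le 2\dim_S X}$. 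Here one must be a touch careful about the monodromy operator $\sigma_j$: when $T_s(\sigma_j)$ is nontrivial the defining filtration shifts by $1$, but this only enters through a contribution of weight $\le 2\dim_S Y_j - 2 + 1 < 2\dim_S X$, so it is harmless. (3) Conclude by property \eqref{item:weight-property-sum} of Proposition \ref{prop:weight-properties} that $w_S([X,f]) \le 2\dim_S X = w_S(X)$, the last equality being Lemma \ref{lemma:weight_dimension}.

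The main obstacle is purely bookkeeping: one has to unwind the definition of $\Phi_S$ precisely enough to see that all the geometric pieces it produces are supported (over $S$) on subschemes of $X$, and hence do not exceed relative dimension $\dim_S X$ — in particular the arc-space truncations in the motivic vanishing cycles formula have the same relative dimension over $S$ as the special fibre they sit over, because the extra arc coordinates are fibred in affine spaces that get cancelled by the normalising powers of $\LL$ built into $\Phi_S$. Once that is granted, the weight estimate is identical to the one already used for Lemma \ref{lemma:weight_dimension}, so no new Hodge-theoretic input is needed. An alternative, cleaner route that avoids unwinding $\Phi_S$ at all: note $[X,f] - [X,0] = [X,f] - [X\times\A^1,\pr_2]/\LL \cdot \LL + \dots$ — but this quickly becomes circular; I would instead simply invoke that $\Phi_S$ is a ring homomorphism into $(\M_S^{\hat\mu},\ast)$ sending $[X,f]$ to something represented by varieties over $X$, which is the content recorded in \cite[Section 2.3.5]{Bilu}, and then run steps (2)–(3).
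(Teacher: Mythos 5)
Your overall strategy coincides with the paper's: factor $w_S$ through $\chi_S^{\Hdg}\circ\Phi_S$, bound the relative dimension of $\Phi_S([X,f])$ by $\dim_S X$ (the paper simply cites \cite[Proposition~2.3.5.2]{Bilu} for this dimensional triangular inequality; your arc-space/resolution sketch is aiming at the same statement and, like the paper, ultimately leans on \cite[Section~2.3.5]{Bilu}), then bound the weight of the Hodge realisation by twice that dimension and conclude with Lemma~\ref{lemma:weight_dimension}. Steps (1) and (3) of your plan are therefore fine and match the paper.

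The gap is in your step (2), exactly at the monodromy. You assert that whenever $T_s(\sigma_j)$ is nontrivial the corresponding contribution has weight at most $2\dim_S Y_j-2+1$, but no reason is given for the drop by $2$, and the per-generator bound $w_S\bigl(\chi_S^{\Hdg}([Y_j\to S,\sigma_j])\bigr)\le 2\dim_S Y_j$ that you are implicitly using is false for general classes with $\hat{\mu}$-action: the top-weight part of the compactly supported cohomology of a fibre of relative dimension $r$ is spanned by the classes of its $r$-dimensional irreducible components, and a finite-order automorphism can permute these nontrivially, producing a pure piece of weight $2r$ with nontrivial $T_s$, whose weight-function value is $2r+1$ by the definition of the filtration (see Lemma~\ref{lem:hodge_modules_graded}; already two points swapped by $\mu_2$ give a class of weight $1$, while $2\dim=0$). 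Since the pieces $Y_j$ occurring in $\Phi_S([X,f])$ can have $\dim_S Y_j=\dim_S X$, your argument as written does not exclude a total weight of $2\dim_S X+1$. Closing this requires an input you have not supplied — for instance that the monodromy-carrying constituents of a total vanishing-cycle class have relative dimension at most $\dim_S X-1$ (the full-dimensional part having trivial $T_s$), or some cancellation among the $[Y_j,\sigma_j]$ — and this is precisely what the paper delegates to the proof of \cite[Lemma~4.6.3.2]{Bilu}, which it invokes for the inequality $w_S(\Phi_S([X,f]))\le 2\dim_S\Phi_S([X,f])$ before combining it with \cite[Proposition~2.3.5.2]{Bilu} and Lemma~\ref{lemma:weight_dimension}.
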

\begin{proof} By \cite[Proposition 2.3.5.2]{Bilu}, we have the triangular inequality for motivic vanishing cycles 
$$\dim_S(\Phi_S([X,f]))\leq \dim_SX.$$
On the other hand, following the proof of Lemma 4.6.3.2 in \cite{Bilu}, we see that
$$w_S(\Phi_S([X,f])) \leq 2 \dim_S \Phi_S([X,f]).$$
(Here we consider the weight function on $\M_S^{\hat{\mu}}$, as defined in \cite{Bilu}.) 
We conclude by using Lemma \ref{lemma:weight_dimension}.
\end{proof}

\begin{lemma}[Cancellation of maximal weights] \label{lemma:weight_cancellation}
Let
 $X$ and $Y$ be complex varieties, both irreducible and of dimension $d$. Then
$$w([X]-[Y]) \leq 2d-1.$$
\end{lemma}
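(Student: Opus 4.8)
The plan is to trace through the definition of $w$ on $\expp_\C$ and to exhibit a cancellation of top-weight parts: for an irreducible $d$-dimensional complex variety the top of its compactly supported cohomology is one-dimensional and does not depend on the variety, so the corresponding pieces of $[X]$ and $[Y]$ cancel in $\mathrm{Gr}^W_{2d}$.

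First I would unwind the realisation. Since $[X]-[Y]=[X,0]-[Y,0]$ lies in the image of $\M_\C\to\expp_\C$, and the restriction of $\Phi_\C$ to $\M_\C$ is the natural inclusion $\M_\C\to\M_\C^{\hat\mu}$, we have $\Phi_\C([X]-[Y])=[X]-[Y]\in\M_\C\subset\M_\C^{\hat\mu}$ with trivial $\hat\mu$-action. Applying $\chi^{\Hdg}_\C$ and arguing as in the proof of Lemma~\ref{lemma:weight_dimension} with $S$ a point, $\chi^{\Hdg}_\C(\Phi_\C([X]))\in K_0(\mhm_\pt^{\mon})$ is a $\Z$-linear combination of pure Hodge structures with trivial monodromy, coming (up to the normalisation of Section~\ref{sect:weight_function_varieties}) from the compactly supported cohomology $H^\bullet_c(X,\Q)$, and likewise for $Y$. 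Over a point the support of a Hodge structure has dimension $0$, so a pure Hodge structure of weight $m$ with trivial monodromy lies in $W_{\leq m}K_0(\mhm_\pt^{\mon})$ but not in $W_{\leq m-1}$; hence $w_\pt$ of such a class equals the largest weight surviving after cancellation. It therefore suffices to show that no Hodge structure of weight $\geq 2d$ survives in $\chi^{\Hdg}_\C(\Phi_\C([X]-[Y]))$.

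Next I would invoke two standard facts. First, $H^i_c(X,\Q)=0$ for $i>2d$ and $H^i_c(X,\Q)$ has weights $\leq i$ by Deligne, so the only contributions of weight $\geq 2d$ come from $i=2d$. Second, for $X$ irreducible of dimension $d$ one has $H^{2d}_c(X,\Q)\cong\Q(-d)$: pick a smooth dense open $U\subseteq X$ with complement $Z$ of dimension $\leq d-1$, use the excision sequence together with $H^j_c(Z,\Q)=0$ for $j>2d-2$ to get $H^{2d}_c(X,\Q)\cong H^{2d}_c(U,\Q)$, then apply Poincar\'e duality on the smooth connected variety $U$. The same holds for $Y$, so the top-weight terms both contribute the class $[\Q(-d),\id,0]$ (with the same sign, as the two complexes are built in the same way) and cancel; what remains has all weights $\leq 2d-1$. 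Hence $\chi^{\Hdg}_\C(\Phi_\C([X]-[Y]))\in W_{\leq 2d-1}K_0(\mhm_\pt^{\mon})$, which by the definition of the weight filtration on $\expp_\C$ is exactly the assertion $w([X]-[Y])\leq 2d-1$.

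I expect the only genuine input to be the classical identification $H^{2d}_c(X,\Q)\cong\Q(-d)$ for irreducible $X$ of dimension $d$ (with the weight bound on the lower $H^i_c$): this is what makes the top-weight part of $[X]$ independent of $X$ and produces the cancellation. The point to be careful about is the normalisation of $w$ over a point --- there is no shift by the dimension of the support, so on classes with trivial monodromy ``weight of the Hodge structure'' and ``$w$'' coincide, and it is this that forces the cancellation to lower the weight by one rather than merely preserving the bound $2d$.
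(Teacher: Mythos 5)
Your argument is correct. It differs from the paper only in that the paper's proof of this lemma is a one-line citation: it invokes \cite[Lemma 4.6.3.4]{Bilu} together with the observation that over a point the weight function defined in Section~\ref{sect:weight_function_def} agrees with the one from \cite{Bilu} (no shift by the dimension of the support, no nontrivial monodromy). What you have done is essentially reprove that cited lemma from scratch: you unwind $\chi^{\Hdg}_\C\circ\Phi_\C$ on classes coming from $\M_\C$ (trivial $\hat\mu$-action, hence trivial $T_s$), reduce to the weights of the mixed Hodge structures on $H^\bullet_c$, and exhibit the cancellation of the single top-weight piece via the standard facts that $H^i_c$ has weights $\leq i$ and that $H^{2d}_c(X,\Q)\cong\Q(-d)$ for $X$ irreducible of dimension $d$. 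That is precisely the mechanism behind the cited result, so conceptually the routes coincide; the trade-off is that your version is self-contained and makes explicit where irreducibility and equal dimension are used (one-dimensionality and variety-independence of the top piece), while the paper's version outsources this and only has to justify the comparison of the two weight normalisations over a point --- the same normalisation issue you correctly flag at the end. One small point to make airtight if you wrote this out in full: the identification of $\mathcal{H}^i(a_{X!}\Q_X^{\Hdg})$ with the mixed Hodge structure on $H^i_c(X,\Q)$ under the conventions of Section~\ref{sect.QXHdg}, and the fact that both top classes enter with the same sign $(-1)^{2d}=+1$, which you do note; with that, the freeness of the graded pieces in Lemma~\ref{lem:hodge_modules_graded} gives exactly the drop from $2d$ to $2d-1$.
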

\begin{proof} This follows from \cite[Lemma 4.6.3.4]{Bilu}, given that over a point, our weight and the weight in \cite{Bilu} coincide. 
\end{proof}

\subsection{Convergence of power series and evaluation}\label{sect:convergence_power_series}
Let $X$ be a variety over $\C$, and consider a power series 
$$
F(T) =\sum_{i\geq 0} X_i T^i\in \expp_X[[T]].
$$ 
The radius of convergence of $F$ is defined by 
$$
\sigma_F = \limsup_{i\to \infty} 
\frac{w_X(X_i)}{2i}.
$$
We say that $F(T)$ converges for $|T|< \LL^{-r}$ if $r\geq \sigma_F$. If $F(T)$ converges for $|T|< \LL^{-r}$, then $F(\LL^{-m})$ exists as an element of $\widehat{\expp_X}$ for every $m> r$. 

If $F(T) = \prod_{x\in X}\left(1 + \sum_{i\geq 1} X_{i,x} T^i\right)$ is a motivic Euler product which converges for $|T|< \LL^{-r}$, then for any integer $M> r$ we write $F(\LL^{-m})$ in the form
$$\prod_{x\in X}\left(1 + \sum_{i\geq 1} X_{i,x} T^i\right)_{|T=\LL^{-m}}.$$

\begin{remark} Special values of motivic Euler products should be handled with extreme care, as motivic Euler products do not behave well with respect to non-monomial substitutions. (See \cite[Section 6.5]{BiluHowe} for a discussion of this fact.) In particular, in principle, when writing a special value of a motivic Euler product, one should always write out the formal product, and then specify at what value of $T$ it has been evaluated. 
\end{remark}

With this caveat, to highlight the analogy with number theory, we will allow ourselves the following abuse of notation (which is already employed in \cite{LVW} and \cite{faisant23}).

\begin{notation}\label{notation:euler_product} We will denote by
$$\prod_{x\in X} a_x$$
the evaluation at $T=1$ of the motivic Euler product
$$\prod_{x\in X}\left(1 + (a_x-1)T\right).$$
\end{notation}
\section{Weyl differencing and a general exponential sum bound}\label{sec:weyl}

Both the major and minor arc treatments in the circle method rely on a general upper bound for certain exponential sums, which in the $\F_q(t)$-setting take the form
$$\sum_{(g_1,\dots,g_n)\in \Poly_{< E}^n} \psi(\res(\alpha f(g_1,\dots,g_n)).$$
The aim of this section is to provide such bounds in the motivic setting. We start by proving a property of the weight in Section \ref{section:weight_property}, which allows us to adapt to our setting a geometric version of the classical Weyl differencing argument provided in \cite{BS}. This is achieved in Proposition \ref{prop:weyl_differencing}. The rest of the section is dedicated to proving our general bound. After an application of Proposition \ref{prop:weyl_differencing}, we rewrite our bound in terms of the multilinear forms associated to the polynomial $f$, and apply some inequalities coming from the geometry of numbers over $\C(t)$. 
We point out that we never have to rely on any point counting estimates, 
unlike for the procedure in \cite{BS}. 
Indeed, we   circumvent the use of spreading out arguments and rely purely on estimates for the dimensions of various  spaces. 

\subsection{Key properties of the weight}\label{section:weight_property}

\begin{lemma}\label{lem:minus} For every variety $X$ over $S$ and every morphism $f:X\to \A^{1}$, we have
$$\Phi_S([X,f]) = \Phi_{S}([X,-f]).$$
\end{lemma}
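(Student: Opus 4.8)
The plan is to reduce the identity $\Phi_S([X,f]) = \Phi_S([X,-f])$ to a symmetry of the total vanishing cycles construction. First I would recall from \cite[Section 2.3.5]{Bilu} how $\Phi_S$ is built: for a variety with exponential $[X,f]$, one forms the morphism $f$ and considers the associated nearby/vanishing cycles of the constant family $X \times \A^1 \to \A^1$ suitably twisted, or equivalently works with the graph embedding and the vanishing cycles of the function $f$ on $X$ (with its $\hat\mu$-action coming from the arc-space description). The key point is that $\Phi_S([X,f])$ depends on $f$ only through the pair consisting of the hypersurface $\{f = 0\}$ and the behaviour of $f$ transverse to it, recorded motivically through the $\hat\mu$-equivariant Milnor fibres. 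Since $\{f=0\} = \{-f=0\}$, the only thing to check is that negating $f$ does not change the resulting class in $\M_S^{\hat\mu}$.

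The cleanest way to see this is to exhibit the automorphism of $\A^1$ given by $u \mapsto -u$ (multiplication by $-1 \in \G_m$) and chase it through the definition. Multiplication by a nonzero scalar $\lambda$ on the target $\A^1$ induces, on arc spaces, a reparametrisation that rescales the Milnor fibre $f^{-1}(\lambda t^N \bmod t^{N+1})$; in the limit defining the motivic Milnor fibre this rescaling acts on the $\hat\mu$-torsor of $N$-th roots and, because one takes a colimit over all $N$ compatibly, it induces an isomorphism of the resulting $\hat\mu$-varieties. Concretely, $\lambda = -1$ is a root of unity, so the induced map is compatible with the $\hat\mu$-action up to the automorphism of $\hat\mu$ it determines, but since $-1$ has finite order this is absorbed. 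Hence $[f^{-1}(\cdots), \sigma] = [(-f)^{-1}(\cdots),\sigma]$ in $\M_S^{\hat\mu}$, and summing/taking the colimit gives $\Phi_S([X,f]) = \Phi_S([X,-f])$.

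Alternatively, and perhaps more robustly, I would argue by functoriality and the zero criterion: by Lemma \ref{lemma:zero_criterion} it suffices to check the identity fibrewise over points $s \in S$ with residue field a finitely generated extension of $\C$, reducing to the case $S = \Spec\, K$ for $K$ a field of characteristic $0$. There one may invoke the comparison with the topological/étale realisation — over a point the vanishing cycles measure matches the motivic Milnor fibre of Denef–Loeser, for which the invariance under $f \mapsto -f$ (indeed under $f \mapsto \lambda f$ for any $\lambda \in \G_m$, with the $\hat\mu$-action twisted accordingly) is classical; for $\lambda = -1$ the twist is trivial since $-1$ has order dividing every even $N$. One then lifts back to $\M_S^{\hat\mu}$.

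The main obstacle I anticipate is purely bookkeeping rather than conceptual: one must make sure the $\hat\mu$-action is tracked correctly through the sign change, since $\Phi_S$ lands in the equivariant Grothendieck ring $\M_S^{\hat\mu}$ and a careless argument would only give equality after forgetting the action. The honest work is to verify that the automorphism $u \mapsto -u$ of $\A^1$ induces, at each finite level $N$, a map of $\mu_N$-varieties which is the identity on $\mu_N$ (this is where $(-1)^N$ matters, and one checks the colimit over $N$ washes out the odd-$N$ discrepancy), so that no nontrivial twist is introduced. Everything else — functoriality of $\Phi_S$ in $S$, compatibility with the stratifications used to define the motivic integral — is already packaged in the results of \cite[Section 2.3]{Bilu} that we are allowed to cite.
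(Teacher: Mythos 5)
Your central premise is not an accurate description of the measure: $\Phi_S([X,f])$ is \emph{not} determined by the hypersurface $\{f=0\}$ together with the Milnor fibres along it. By \cite[Theorem 2.3.5.1]{Bilu}, for a generator with $X$ smooth and $f$ proper one has $\Phi_k([X,f])=\epsilon_!(\phi_f^{\tot})$, where the total vanishing cycles $\phi_f^{\tot}\in\M^{\hat{\mu}}_{\A^1_k}$ have fibre at $a$ equal to $\phi_{f-a}$; the construction records data over \emph{every} value $a\in\A^1$, not only $a=0$ (this is precisely what allows $\Phi$ to see the exponential). Consequently the lemma needs more than invariance of the Milnor fibre at $0$: negating $f$ permutes the fibres of the total class, $(\phi_f^{\tot})_a=\phi_{f-a}=\phi_{-(f-a)}=(\phi_{-f}^{\tot})_{-a}$, so that $\phi^{\tot}_f=i_!\phi^{\tot}_{-f}$ for the involution $i:a\mapsto -a$ of $\A^1$, and the two sides only become equal after applying $\epsilon_!$ to push down to the base. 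This involution-plus-pushforward step is the real content of the lemma beyond the fibrewise statement, and your proposal never reaches it because the ``only the zero locus and its transverse behaviour matter'' reduction is assumed rather than proved -- and is in fact the very kind of statement the lemma is about.

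Two further points. First, any arc-space or log-resolution description of vanishing cycles requires $X$ smooth (and, in the cited form, $f$ proper); since the lemma is stated for arbitrary $X$ over $S$, one must reduce to such generators, which the paper does by first passing to a field base (zero criterion plus functoriality of $\Phi$) and then invoking Bittner's presentation of $\expp_k$; your proposal does not address this reduction. Second, your fibrewise ingredient is sound and is a genuine alternative to the paper's log-resolution remark: at each level $N$ the reparametrisation $\phi(t)\mapsto\phi(\lambda t)$ with $\lambda^N=-1$ (such $\lambda$ exists because every residue field in play contains $\C$) gives a $\mu_N$-equivariant isomorphism between the arc spaces computing the motivic fibres of $f$ and of $-f$, so there is no ``odd-$N$ discrepancy'' that needs to be washed out in the colimit. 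Finally, be wary of the phrase ``lift back'' in your second approach: an \'etale or topological realisation is not injective, so verifying the identity after realisation proves nothing; the comparison must be made with the motivic Denef--Loeser fibre itself, as in the fibrewise argument above.
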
 

\begin{proof} By Lemma \ref{lemma:zero_criterion} and \cite[Theorem 2.3.5.1]{Bilu} we may reduce to the case where $S$ is the spectrum of a field $k$ of characteristic zero. Now, by Bittner's presentation of the Grothendieck ring of varieties, we know that $\expp_{k}$ (which is a quotient of $K_0(\Var_{\A^1_k})$) is generated by classes $[X,f]$ such that $X$ is smooth and $f$ is proper (as in \cite[Theorem~5.1.3]{CLL}). For such a class, using \cite[Theorem 2.3.5.1]{Bilu} we may write 
$$\Phi_k([X,f]) = \eps_{!}(\phi_f^{\tot}),$$
where $\eps:\A^{1}\to k$ is the structural morphism and $\phi_f^{\tot}\in \M_{\A^{1}_k}^{\hat{\mu}}$ are the total motivic vanishing cycles  defined in \cite[Section 2.3.1]{Bilu}. We now remark that for a morphism $g:X\to \A^{1}$ defined on a smooth variety $X$, the motivic vanishing cycles $\phi_g$ only depend on a log-resolution of the zero locus of $g$, and therefore we have $\phi_g = \phi_{-g}$. Using this, we see that
for every $a\in \A^{1}_k$, we have 
$$(\phi^{\tot}_{f})_a = \phi_{f-a} = \phi_{-f+a} = (\phi^{\tot}_{-f})_{-a}$$
in $\M_{k(a)}^{\hat{\mu}}$. We thus see that, denoting by $i:\A^1\to \A^1$ the map $a\mapsto -a$, we have the identity $\phi^{\tot}_f = i_!\phi^{\tot}_{-f}$. Pushing to $\spec k$ via $\eps$ we get the result. 
\end{proof}

Throughout this section, let $S$ be a variety over $\C$ and let 
$G\in \OO_S[x_1,\dots,x_N]$, for an integer $N\geq 1$. For every point $s\in S$, we denote by $G_s$ the pullback of $G$ via $s$. 

\begin{lemma} \label{lem:weight-product} We have $$w_S([\A^N_S, G][\A^N_S,-G]) = 2w_S([\A^N_S,G]).$$
\end{lemma}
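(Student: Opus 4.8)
The plan is to reduce the statement to the non-relative case $S = \Spec k$ using the functional interpretation, and then exploit the structure theorem for the weight filtration together with Lemma~\ref{lem:minus}.

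First I would observe that, by Lemma~\ref{lemma:zero_criterion} (the zero criterion), it suffices to understand the fibres: for each point $s \in S$ with residue field $k(s)$, the pullback of $[\A^N_S, G][\A^N_S, -G]$ is $[\A^N_{k(s)}, G_s][\A^N_{k(s)}, -G_s]$. However, weight is not literally determined fibrewise; rather, by the construction in Section~\ref{sect:weight_function_varieties}, $w_S$ factors through $\chi_S^{\Hdg} \circ \Phi_S$, which is functorial in $S$. So I would instead work directly with the weight filtration on $K_0(\MHM_S^{\mon})$ and the associated graded pieces described in Lemma~\ref{lem:hodge_modules_graded}, which are free abelian groups on sets $\VHS^{\mon}_{n,X}$ of variations of Hodge structure. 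The point is that $w_S(\a) = n$ means precisely that the image of $\a$ in $\gr^W_n$ is nonzero, and that this graded piece is torsion-free, so $w_S$ behaves well under the obvious maps.

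The key input is Lemma~\ref{lem:minus}, which gives $\Phi_S([\A^N_S, G]) = \Phi_S([\A^N_S, -G])$. Applying $\chi_S^{\Hdg}$, the two classes $[\A^N_S, G]$ and $[\A^N_S, -G]$ have \emph{the same image} $\alpha := \chi_S^{\Hdg}(\Phi_S([\A^N_S, G]))$ in $K_0(\MHM_S^{\mon})$. Therefore the class of the product in $K_0(\MHM_S^{\mon})$ — which, because $\chi_S^{\Hdg} \circ \Phi_S$ is a ring homomorphism, is the product of the images — equals $\alpha \cdot \alpha = \alpha^2$. So the statement reduces to: $w_S(\alpha^2) = 2 w_S(\alpha)$, for $\alpha$ of this particular form. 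I would then argue that the filtration $W_{\leq \bullet}$ is multiplicative (if $\a \in W_{\leq n}$ and $\b \in W_{\leq m}$ then $\a\b \in W_{\leq n+m}$, which follows from the behaviour of weights under external tensor product, exactly as in \cite[Section~4.5.1]{Bilu}), giving immediately $w_S(\alpha^2) \leq 2w_S(\alpha)$. For the reverse inequality I need that the top graded piece of $\alpha^2$ is nonzero; writing $\alpha = \sum_i a_i M_i$ with the $M_i$ of weight exactly $w_S(\alpha)$ in the graded sense (plus lower-order terms), the top part of $\alpha^2$ is represented by $\sum_{i,j} a_i a_j (M_i \twtimes M_j)$, and since $\gr^W$ is a free $\Z$-module on the VHS generators and the twisted external product of nonzero VHS classes is again a nonzero basis-like element (Hodge structures being a semisimple category, so no cancellation can occur among distinct simple summands), this top part is nonzero. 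Hence $w_S(\alpha^2) = 2w_S(\alpha)$.

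The main obstacle I anticipate is the last step: ruling out cancellation in the top-weight graded piece of $\alpha^2$. The subtlety is that $\alpha$ is a $\Z$-linear combination of VHS classes, possibly with signs, and a priori $\alpha \otimes \alpha$ could have its top-weight part cancel. What saves us is that the top-weight part of $\alpha^2$ is the \emph{square} in the graded ring $\gr^W_\bullet$ of the top-weight part of $\alpha$; and the top graded pieces $\gr^W_n K_0(\MHM^{\mon}_X) = \Z^{(\VHS^{\mon}_{n,X})}$ form a polynomial-like structure under $\twtimes$ where products of distinct basis elements remain distinct basis elements — so if $\beta = \sum a_i e_i \neq 0$ in such a free module, then $\beta^2 = \sum_{i,j} a_i a_j (e_i \cdot e_j)$ has nonzero terms (e.g. the term $a_{i_0}^2 (e_{i_0}\cdot e_{i_0})$ for a maximal index, or more carefully one uses a monomial ordering argument). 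I would make this precise by invoking the description of the ring structure on $\bigoplus_n \gr^W_n$ from \cite[Section~4.5]{Bilu}, which identifies it with a free module on which multiplication is controlled by concatenation of VHS data, hence has no zero divisors among the relevant homogeneous elements. This is the only place requiring genuine care; everything else is formal functoriality plus Lemma~\ref{lem:minus}.
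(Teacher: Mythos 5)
Your opening moves coincide with the paper's: use Lemma \ref{lem:minus} and the fact that $\chi_S^{\Hdg}\circ\Phi_S$ is a ring morphism to reduce everything to showing that $\alpha^2$ has weight $2w_S(\alpha)$ for $\alpha=\chi_S^{\Hdg}\circ\Phi_S([\A^N_S,G])$, with the upper bound being formal and the whole content being the absence of cancellation in top weight. But at that decisive step your argument has a genuine gap. The associated graded $\bigoplus_n\gr^W_nK_0(\MHM_S^{\mon})$ is \emph{not} a monomial-type algebra in which ``products of distinct basis elements remain distinct basis elements'': the basis elements are \emph{simple} objects, and the tensor product of two simple variations of Hodge structures decomposes into many simple summands (already $V\otimes V^\vee$ contains the unit plus a traceless part), so the multiplication is that of a representation-ring-like object, not concatenation of VHS data. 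Such rings do have homogeneous zero divisors — in the relative setting, two simples supported on disjoint closed subvarieties of $S$ multiply to zero — so the claim ``no zero divisors among the relevant homogeneous elements'' is false, and your fallback ``no cancellation can occur among distinct simple summands'' is exactly what needs proof: writing $\beta=\sum_i a_ie_i$ with signs, the cross terms $a_ia_je_i\twtimes e_j$ ($i\neq j$) can share simple constituents with the diagonal terms, and nothing you say rules out their cancelling.

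What the paper uses, and what is missing from your proposal, is \emph{polarisability}. Writing $\alpha=\sum_i a_iH_i$ with $H_i$ simple pure polarisable of weight $n_i$ (after stratifying $S$ so that any two supports are equal or disjoint), the polarisation gives $H_i\simeq H_i^{\vee}(-n_i)$, so the second factor can be rewritten dually and
$$\alpha^2=\sum_{i,j}a_ia_j\,\Hom(H_j,H_i)(-n_j).$$
Simplicity (Schur's lemma, together with the equal-or-disjoint support arrangement) says the unit object $\Q^{\Hdg}(-n_j)$ occurs in the $(i,j)$ term exactly when $i=j$, so the coefficient of the twisted constant variation in the top-weight graded piece is $\sum_{i:\,n_i=\max}a_i^2>0$. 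It is this positivity — a square-sum produced by self-duality — that rules out cancellation; freeness of $\gr^W$ on the simples is then only used to conclude that a single nonvanishing coefficient forces the class to be nonzero. Without the duality input your argument does not close, so you should replace the ``monomial ordering / no zero divisors'' step by the polarisation-plus-Schur computation above.
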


\begin{proof} 

We write
$$\chi_S^{\Hdg} \circ \Phi_S([\A^N_S, G]) = \sum_{Z_i\subset S}a_iH_i,$$
where each $H_i$ is a simple pure polarisable variation of Hodge structures with monodromy over $Z_i$. Up to stratifying, we may assume that for every $i,j$, either $Z_i= Z_j$ or $Z_i\cap Z_j = \varnothing$. 

Since $H_i$ is polarisable, denoting by $n_i$ its weight we have the selfduality 
$$
H_i \simeq H_i^{\vee} (-n_i).
$$
From this, using
Lemma \ref{lem:minus} and the  fact that $\chi_S^{\Hdg}$ and $\Phi_S$ are ring morphisms,
we get
\begin{align*}\chi_S^{\Hdg} \circ \Phi_S([\A^N_S, G][\A^N_S, -G])& = \chi_S^{\Hdg} \circ \Phi_S([\A^N_S, G])\chi_S^{\Hdg} \circ \Phi_S([\A^N_S, -G])  \\
&=\left( \sum_{Z_i\subset S}a_iH_i \right)\left( \sum_{Z_i\subset S}a_iH_i^{\vee}(-n_i) \right)\\
& =  \sum_{i,j} a_ia_j \mathrm{Hom}(H_i, H_j)(-n_i). 
\end{align*}
Because of the assumption on $Z_i, Z_j$ for $i\neq j$, and because of the simplicity of the $H_i$, we get that 
$$\chi_S^{\Hdg} \circ \Phi_S([\A^N_S, G][\A^N_S, -G])= \sum_{i} a_i^2\Q^{\Hdg}_S(-n_i),
$$
where $\Q_S^{\Hdg}$ denotes the constant rank one variation of Hodge structures of weight 0 over~$S$. We get a sum of variations of Hodge structure with positive coefficients: by Lemma \ref{lem:hodge_modules_graded}, its weight is therefore $2\max_i{n_i} = 2w_S([\A^N_S,G]).$
\end{proof}

\subsection{The inductive argument}

We define $V(G)$ to be the variety over $S$ given by $$
(\y^{(1)},\dots,\y^{(d-1)},s)\in (\A^{N})^{d-1}_S$$ 
such that 
$$\sum_{\eps_1,\dots,\eps_{d-1}\in \{0,1\}}(-1)^{\eps_1 + \dots + \eps_{d-1}}G_s(\x + \eps_1 \y^{(1)} + \dots + \eps_{d-1}\y^{(d-1)})$$
is a constant function of $\x$. 

\begin{prop} \label{prop:weyl_differencing}Let $G\in \OO_S[x_1,\dots,x_N]$ be a polynomial of degree $\leq d$ in $N$ variables. The weight function $w_S: \expp_S\to \Z$ satisfies
$$w_S([\A^{N}_S, G]) \leq \frac{\dim_SV(G) + N(2^{d-1} - (d-1))}{2^{d-2}}.$$
\end{prop}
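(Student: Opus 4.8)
The strategy is to establish the bound by induction on $d$, following the classical Weyl differencing argument but tracking weights instead of exponential sum magnitudes. First I would handle the base case $d=1$: if $G$ has degree $\leq 1$, then $G_s$ is an affine-linear form in each fibre, so by Lemma~\ref{lemma:linear_form} we have $[\A^N_{k(s)}, G_s]$ equal to $\LL^N$ or $0$ depending on whether the linear part vanishes. Hence $w_S([\A^N_S,G])\leq 2N$, while the right-hand side of the claimed inequality reads $\dim_S V(G) + N(2^0 - 0) = \dim_S V(G) + N$; here $V(G)$ is the whole of $\A^0_S = S$ (an empty product of $\y$'s), so $\dim_S V(G) = 0$ and we need $w_S([\A^N_S,G])\leq N$ --- wait, this does not match, so the correct base case to run the induction from should be $d=2$, where the exponent $2^{d-2}=1$. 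For $d=2$ one computes directly using the fact that a quadratic $G$, after one differencing step, produces linear forms, and $V(G)$ records the rank-drop locus; the bound then follows from Lemma~\ref{lemma:linear_form} applied fibrewise together with Proposition~\ref{prop:weight-properties}(2).

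For the inductive step, given $G$ of degree $\leq d$, I would write $G = \sum_{\epsilon}(-1)^{\epsilon_1}G(\x+\epsilon_1\y)$ type identities: form the differenced polynomial and use the key algebraic identity
$$\left|\sum_{\x\in\A^N} \psi(G(\x))\right|^2 = \sum_{\y\in\A^N}\sum_{\x\in\A^N}\psi\bigl(G(\x+\y)-G(\x)\bigr),$$
whose motivic incarnation is the statement that $[\A^N_S,G]\cdot[\A^N_S,-G]$ is (up to the diagonal change of variables $(\x,\z)\mapsto(\x,\x+\y)$) the class $[\A^{2N}_S, (\x,\y)\mapsto G(\x+\y)-G(\x)]$. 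Applying Lemma~\ref{lem:weight-product}, the left side has weight exactly $2w_S([\A^N_S,G])$. On the right, I would integrate out the $\x$-variable: for fixed $\y$ the polynomial $\x\mapsto G_s(\x+\y)-G_s(\x)$ has degree $\leq d-1$, so by the inductive hypothesis (applied over the base $\A^N_S$, with the extra $\y$-coordinates) its fibrewise weight is controlled by $\dim$ of the corresponding $V$-variety plus $N(2^{d-2}-(d-2))$ over $2^{d-3}$; then Proposition~\ref{prop:weight-properties}(2) pushes forward along the projection that forgets $\y$, costing at most $2N$ in weight (fibres of dimension $N$). Combining, $2w_S([\A^N_S,G]) \leq$ (inductive bound for the degree-$(d-1)$ polynomial in $2N$ variables over $S$) $+ 2N$, and a careful bookkeeping of how $V(G)$ for the degree-$d$ polynomial relates to the $V$ of its first difference --- namely $\dim_S V(G) = \dim V_{d-1}$ of the differenced polynomial, since iterating the differences is associative --- yields the claimed recursion, which telescopes to the stated closed form.

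The main obstacle I anticipate is the bookkeeping around Proposition~\ref{prop:weight-properties}(2): that statement only gives the weight bound after restricting to a dense open subset $S_0\subset S$, so applying it iteratively one must repeatedly shrink the base. To handle the locally-closed pieces one stratifies $S$ and uses Proposition~\ref{prop:weight-properties}(1) and (3) to reassemble, noting that the dimension count for $V(G)$ is upper semicontinuous and the worst stratum dominates. A secondary subtlety is matching the change-of-variables isomorphism $(\x,\z)\mapsto(\x,\x+\y)$ with the definition of $V(G)$: the iterated difference
$$\sum_{\epsilon_1,\dots,\epsilon_{d-1}\in\{0,1\}}(-1)^{\epsilon_1+\dots+\epsilon_{d-1}}G_s(\x+\epsilon_1\y^{(1)}+\dots+\epsilon_{d-1}\y^{(d-1)})$$
being constant in $\x$ is exactly the condition that, after $d-1$ differencing steps starting from $G$, the remaining (degree $\leq 1$) polynomial in $\x$ has vanishing linear part, so Lemma~\ref{lemma:linear_form} gives the jump in $[\A^N,\cdot]$ precisely on $V(G)$; getting the indices and signs to line up through the induction is where most of the care is needed, but it is routine once the right formulation is fixed.
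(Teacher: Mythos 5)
Your worry about the base case is just an arithmetic slip: for $d=1$ the denominator is $2^{d-2}=\tfrac12$, so the right-hand side reads $2\bigl(\dim_S V(G)+N\bigr)=2N$ when $V(G)\neq\varnothing$, which is exactly what the triangular inequality (Lemma~\ref{lemma:triangular_ineq}) gives; and when $V(G)=\varnothing$ every fibre $G_s$ has degree exactly one, so the class vanishes by Lemmas~\ref{lemma:linear_form} and~\ref{lemma:zero_criterion} and both sides are $-\infty$. So the induction can, and in the paper does, start at $d=1$; your proposed restart at $d=2$ is unnecessary, and the ``direct'' $d=2$ computation you sketch is really the general inductive step in disguise.

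The genuine gap is in your inductive step. After the differencing identity (which is the paper's Lemma~\ref{lem:weight-product} plus the change of variables, and is fine), you apply the inductive hypothesis to $H(\x;\y)=G(\x+\y)-G(\x)$ over the enlarged base $T=\A^N_S$ and then push forward along forgetting $\y$ at a flat cost of $2N$, while taking for the $V$-variety of $H$ the bound $\dim_S V(G)$. This double-counts: the quantity entering the inductive hypothesis over $T$ is the maximal \emph{fibre} dimension of the projection $p\colon V(G)\to\A^N_S$, which in general is only bounded by $\dim_S V(G)$ (it need not drop by $N$), so your recursion adds $2^{d-2}N$ to the numerator at each step instead of $(2^{d-2}-1)N$. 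It telescopes to $\bigl(\dim_S V(G)+N(2^{d-1}-1)\bigr)/2^{d-2}$ at best, not to the stated $N\bigl(2^{d-1}-(d-1)\bigr)$ numerator; already for $d=2$ the step as written only returns the trivial bound $2N$ rather than $\dim_S V(G)+N$. The missing idea is the stratification of the $\y$-space by the fibre dimension of $p$: on a stratum $W_j$ of relative dimension $r$ over $S$ one has $\dim_{W_j}V\bigl(G(\x)-G(\x+\y_0)\bigr)\le\dim_S V(G)-r$, the inductive hypothesis is applied over $W_j$, and Proposition~\ref{prop:weight-properties}(2) then costs only $2r$; maximizing $-r/2^{d-3}+2r$ over $0\le r\le N$ produces exactly the $-(d-1)$ term. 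This is not cosmetic bookkeeping: the $-(d-1)$ is what later makes the singular series converge and the minor-arc bound nontrivial, so the weaker closed form your recursion yields would not do. (Your concern about the open-subset restriction in Proposition~\ref{prop:weight-properties}(2) is legitimate, and it is handled by exactly this kind of stratification combined with parts (1) and (3).)
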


\begin{proof} 
We essentially follow  the proof of \cite[Proposition 5.5]{BS}, working  by induction on~$d$, except that it is crucial for us to work over a base, as the ground field is not finite and we need to apply the induction hypothesis in the relative setting. Assume first that $d=1$. Then $V(G)$ is a subvariety of $S$ given by the $s\in S$ such that $G_s(\x)$ is constant. Assume first that $V(G)$ is empty. Then for every $s\in S$, the polynomial $G_s(\x)$ is of degree exactly 1, and so $[\A^N_{\kappa(s)}, G_s]$ = 0 by
Lemma \ref{lemma:linear_form}. Lemma~\ref{lemma:zero_criterion} therefore yields $[\A_S^N,G] = 0$, and so both sides of the inequality in the statement are equal to $-\infty$. On the other hand, if $V(G)$ is nonempty, then $\dim_SV(G) = 0$. By the triangular inequality of Lemma \ref{lemma:triangular_ineq}, we have 
$$w_S([\A^N_S, G]) \leq 2N,$$
which is exactly the inequality that needed to be proved. 

We now assume that the result is already known for polynomials of degree $\leq d-1$. Let~$G$ be a polynomial of degree $d\geq 2$. 
We begin with an application of 
Lemma \ref{lem:weight-product}, which yields
\begin{align*}
2w_S([\A_S^N,G]) &\leq w_S([\A_S^N,G][\A_S^N,-G])\\
&=
w_S([\A_S^N\times_S \A_S^N,G(\x)-G(\y)])\\
&=
w_S([\A_S^N\times_S \A_S^N,G(\x)-G(\x+\y)]),
\end{align*}
since $G(\x)-G(\y)$ is related to 
$G(\x)-G(\x+\y)$ via an invertible change of variables.  

For any $\y_0=(y_{1},\dots,y_{N})$, the difference $G(\x) - G(\x+\y_0)$ is a polynomial of degree $d-1$ in $x_1,\dots,x_{N}$. The variety $V(G)$  admits a map $p:V(G)\to \A_S^N$ along the coordinates $\y$, whose fibre over a point $\y_0\in \A_S^N$ is the variety in the definition of 
$V( G(\x) - G(\x+\y_0))$.  
Choose a stratification of $\A_S^N$, with  strata $W_j$ being  varieties  such that the fibre dimension of this map is constant on each stratum $W_j$.
Thus 
\begin{equation}\label{eq:yellow}
\dim_{W_j} V( G(\x) - G(\x+\y_0))=\dim_S p^{-1}(W_j)-\dim_S W_j \leq \dim_S V(G) -\dim_S W_j,
\end{equation}
for any  $\y_0\in W_j$.
On appealing to part (\ref{item:weight-property-sum}) of Proposition  \ref{prop:weight-properties},
it follows that
$$
2w_S([\A_S^N,G]) \leq \max_j  
w_S([\A_S^N\times_S W_j,G(\x)-G(\x+\y)]).
$$
Fix a stratum $W_j$ and denote by $q_j: W_j\to S$ the restriction to $W_j$ of the projection map $\A^N_S \to S$. Up to refining our stratification, we may assume that $q_j:W_j\to S_j$ (where $S_j = q_j(W_j)$) has fibres of constant dimension $\dim_S W_j=r\in [0,N]$ and that
$$
w_{S_j}([\A_{S_j}^N\times_{S_j} W_j,G(\x)-G(\x+\y)])\leq 
w_{W_j}([\A_{W_j}^N,G(\x)-G(\x+\y_0)]) +2r,
$$
by part (\ref{item:weight-property-pushforward}) of 
Proposition  \ref{prop:weight-properties}. 
It follows from \eqref{eq:yellow} and the inductive hypothesis that 
$$
w_{W_j}([\A_{W_j}^N,G(\x)-G(\x+\y_0)])\leq 
\frac{\dim_SV(G) -r+ N(2^{d-2} - (d-2))}{2^{d-3}},
$$
for any $\y_0\in W_j$. 
Hence it follows that 
$$
w_{S_j}([\A_{S_j}^N\times_{S_j} W_j,G(\x)-G(\x+\y)])
\leq 
\frac{\dim_SV(G) +(2^{d-2}-1)r+ N(2^{d-2} - (d-2))}{2^{d-3}}.
$$
We now apply part (\ref{item:weight-property-subset}) of Proposition \ref{prop:weight-properties} with $T=S_j$ (which up to refining the stratification some more may be assumed locally closed) to deduce that 
$$
w_S([\A_S^N\times_S W_j,G(\x)-G(\x+\y)])
\leq 
\frac{\dim_SV(G) +(2^{d-2}-1)r+ N(2^{d-2} - (d-2))}{2^{d-3}}.
$$
Taking the maximum over the different strata, we finally deduce that 
\begin{align*}
2w_S([\A_S^N,G])
&\leq \max_{0\leq r\leq N} 
\frac{\dim_SV(G) +(2^{d-2}-1)r+ N(2^{d-2} - (d-2))}{2^{d-3}}\\
&=
\frac{\dim_SV(G) +N(2^{d-1} - (d-1))}{2^{d-3}},
\end{align*}
which suffices to complete the proof.
\end{proof}

Lemma \ref{lemma:triangular_ineq}  yields the trivial upper bound 
$w_S([\A^N_S, G]) \leq 2N$, whereas 
Proposition \ref{prop:weyl_differencing} yields
$$
w_S([\A^{N}_S, G]) \leq 2N -\frac{ N(d-1)-\dim_SV(G)}{2^{d-2}}.
$$
Thus we get a saving over the trivial bound as soon as we get a non-trivial bound for the dimension of $V(G)$ over $S$.

\subsection{A general bound for exponential sums}\label{sec:exp_sums_general_bound}

In this section, we want to prove a general upper bound for an exponential sum of the form
$$ [\Poly_{<E}^n\times S, \res(\alpha f(g_1,\dots,g_n))],$$
when $S$ is a variety parameterising  values of $\alpha$. 
We  write our polynomial $f$ in the form
$$
f(x_1,\dots,x_n)=\sum_{j_1,\dots,j_d=1}^n c_{\mathbf{j}} x_{j_1}\dots x_{j_d},
$$
for symmetric coefficients $c_{\mathbf{j}}$. (i.e.\  $c_{\mathbf{j}}=c_{\sigma(\mathbf{j})}$ for any  $\sigma\in S_d$.)
Associated to $f$ are the multilinear forms
$$
\Psi_j(\h^{(1)},\dots,\h^{({d-1})})
=d!\sum_{j_1,\dots,j_{d-1}=1}^n c_{j_1,\dots,j_{d-1},j} 
h_{j_1}^{(1)}\dots h_{j_{d-1}}^{(d-1)},
$$
for $1\leq j\leq n$. Note that 
$f(\x)=\frac{1}{d!}\sum_{j=1}^n \Psi_j(\x,\dots,\x) x_j$.

Recall that for a Laurent series $h = \sum_{i\leq M}h_i t^{i}\in k((t^{-1}))$ we denote by
$\{h\} = \sum_{i\leq -1}h_i t^i$ its fractional part. The
main goal of this section is to prove the following general bound for the weight of the exponential sum.

\begin{prop}\label{pro:stone}
We have 
$$
w_{S}(
[\Poly_{<E}^n\times S, \res(\alpha f(g_1,\dots,g_n))]
)\leq 
\frac{\max_{\alpha\in S} \dim N(\alpha) + En(2^{d-1} - (d-1))}{2^{d-2}},$$
where
\begin{equation}\label{eq:Nalpha}
N(\alpha) = \left\{ (\u^{(1)},\dots,\u^{(d-1)})\in \left(\Poly_{< E}^n\right)^{d-1}: 
\begin{array}{l}
\ord \{\alpha \Psi_j(\u^{(1)},\dots,\u^{(d-1)})\} < -E\\
\forall j\in \{1,\dots,n\}
\end{array}
\right\}.
\end{equation}
\end{prop}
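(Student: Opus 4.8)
The plan is to apply the Weyl differencing bound of Proposition~\ref{prop:weyl_differencing} to the polynomial $G = \res(\alpha f(g_1,\dots,g_n))$, viewed as a polynomial in the $En$ coefficients of $g_1,\dots,g_n$, over the base $S$. Here $G$ has degree $d$ in those coefficients since $f$ is homogeneous of degree $d$ and $\res$ is linear. So with $N = En$, Proposition~\ref{prop:weyl_differencing} immediately gives
$$
w_S([\Poly_{<E}^n\times S, \res(\alpha f(g_1,\dots,g_n))]) \leq \frac{\dim_S V(G) + En(2^{d-1} - (d-1))}{2^{d-2}}.
$$
The whole content of the proposition is therefore to identify $\dim_S V(G)$ — the dimension over $S$ of the variety of $(d-1)$-fold difference tuples making the iterated difference of $G$ independent of $\x$ — with $\max_{\alpha\in S}\dim N(\alpha)$, where $N(\alpha)$ is the explicit set in \eqref{eq:Nalpha}.

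First I would compute the $(d-1)$-fold finite difference
$$
\Delta(\x;\y^{(1)},\dots,\y^{(d-1)}) := \sum_{\eps_1,\dots,\eps_{d-1}\in\{0,1\}}(-1)^{\eps_1+\dots+\eps_{d-1}} G(\x + \eps_1\y^{(1)} + \dots + \eps_{d-1}\y^{(d-1)}).
$$
Since $G(\g) = \res(\alpha f(\g))$ and $\res$, $\alpha\cdot(-)$ are linear, this equals $\res\big(\alpha\, \Delta_f(\x;\y^{(1)},\dots,\y^{(d-1)})\big)$ where $\Delta_f$ is the corresponding iterated difference applied to $f$ itself (thought of as acting on the polynomial $n$-tuples coefficient-wise — here the $\x, \y^{(i)}$ are tuples of polynomials of degree $<E$, encoded by their coefficient vectors). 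A standard multilinearity computation shows that the $(d-1)$-st difference of the degree-$d$ form $f$ in the directions $\h^{(1)},\dots,\h^{(d-1)}$ is affine-linear in $\x$ with linear part $\sum_{j} \Psi_j(\h^{(1)},\dots,\h^{(d-1)}) x_j$ (up to the normalising factor $d!$ already built into $\Psi_j$), and a constant term independent of $\x$. Hence $\Delta_f(\x;\h^{(1)},\dots,\h^{(d-1)}) = \sum_{j=1}^n \Psi_j(\h^{(1)},\dots,\h^{(d-1)})\, x_j + (\text{const in }\x)$, so
$$
\Delta(\x;\y^{(1)},\dots,\y^{(d-1)}) = \res\Big(\alpha \sum_{j=1}^n \Psi_j(\y^{(1)},\dots,\y^{(d-1)})\, x_j\Big) + (\text{const in }\x).
$$

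Now $\x$ ranges over $(\Poly_{<E})^n$, i.e.\ the coefficients of each $x_j$ are free variables; $\Delta$ is a constant function of $\x$ precisely when every coefficient of $x_j$, for every $j$, appears with zero coefficient. Since $x_j = \sum_{i=0}^{E-1} x_{j,i} t^i$, the coefficient of $x_{j,i}$ in $\res(\alpha \Psi_j x_j)$ is the coefficient of $t^{-1-i}$ in $\alpha\Psi_j(\y^{(1)},\dots,\y^{(d-1)})$; requiring these to vanish for all $0\le i\le E-1$ and all $j$ is exactly the condition $\ord\{\alpha \Psi_j(\y^{(1)},\dots,\y^{(d-1)})\} < -E$ for all $j$ (the fractional part, so only the $t^{-1},\dots,t^{-E}$ coefficients are constrained, matching the truncation built into $\res(\alpha f(\g))$ when $\alpha$ is viewed in $\A^{(-1,-de-2)}$ — I should double check the exact index bookkeeping here so that it is $<-E$ and not $\le -E$). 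Thus the fibre of $V(G)\to S$ over $\alpha$ is exactly $N(\alpha)$, giving $\dim_S V(G) = \max_{\alpha\in S}\dim N(\alpha)$ and the proposition follows.

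The main obstacle I anticipate is purely bookkeeping rather than conceptual: carefully tracking which coefficients of $\alpha$ and of the $g_i$ are "active" so that the order/valuation inequality comes out as stated (strict, with bound $E$), and confirming that the constant-in-$\x$ part of the iterated difference genuinely imposes no further constraint — i.e.\ that $V(G)$ is cut out only by the vanishing of the $\x$-linear coefficients and not by anything coming from the constant term. The multilinear algebra identity expressing the $(d-1)$-fold difference of $f$ in terms of the $\Psi_j$ is classical (it is the polarisation identity underlying Weyl differencing in \cite{birch,BS}), so I would quote it or relegate it to a short lemma rather than re-derive it in detail.
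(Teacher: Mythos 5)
Your proposal is correct and follows essentially the same route as the paper: apply Proposition~\ref{prop:weyl_differencing} to $G_\alpha=\res(\alpha f(g_1,\dots,g_n))$ in the $En$ coefficient variables, then identify the fibre of $V(G_\alpha)$ over $\alpha$ with $N(\alpha)$ by matching the $\x$-linear part of the $(d-1)$-fold difference with the coefficients of $t^{-1},\dots,t^{-E}$ in $\alpha\Psi_j(\u^{(1)},\dots,\u^{(d-1)})$, exactly as in the paper's Lemma~\ref{lem:id}. The bookkeeping points you flag (strictness of the order bound, irrelevance of the constant-in-$\x$ term, and the harmless $d!$ factor in characteristic zero) all check out as you anticipate.
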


\subsubsection{Weyl differencing}
Denote by $G_{\alpha}$ the polynomial 
$$G_{\alpha} = \res\left(\alpha f(g_1,\dots,g_n)\right),
$$
for $\alpha\in S$, 
considered as a function in the coefficients of $g_1,\dots,g_n$. Writing
$$g_j(t) = a_{0,j} + a_{1,j}t + \dots + a_{E-1,j}t^{E-1},$$
we see that $G_{\alpha} $ is a polynomial of degree $d$ in the $N = En$ variables $(a_{i,j})$ for $0\leq i<E$ and  $1\leq j \leq n$. In this notation we have 
$$[\Poly_{<E}^n\times S, \res(\alpha f(g_1,\dots,g_n)]=[ \A^N_S, G_\alpha].
$$
Hence, on applying Proposition \ref{prop:weyl_differencing}, we get
$$ w_{S}([ \A^N_S, G_\alpha]) \leq \frac{\dim_SV(G_\alpha) + N(2^{d-1} - (d-1))}{2^{d-2}}.$$
Thus, to bound our exponential sum, it suffices to obtain a bound on $\dim V(G_{\alpha})$ for every $\alpha\in S$.

\subsubsection{Reformulation using multilinear forms}
Recalling that $\alpha=b_1t^{-1}+\cdots+b_{de+1}t^{-de-1}$, we see that 
the polynomial $G_{\alpha}$ is the coefficient of $t^{-1}$ in
$$(b_1t^{-1} + b_2 t^{-2} + \cdots )\sum_{j_1,\dots,j_d=1}^n c_{\mathbf{j}} \prod_{i=1}^d(a_{0,j_i} + a_{1,j_i}t + \dots + a_{E-1, j_i}t^{E-1}),$$
$$= (b_1t^{-1} + b_2 t^{-2} + \cdots ) \sum_{j_1,\dots,j_d=1}^n c_{\mathbf{j}} \sum_{i_1,\dots,i_d = 0}^{E-1} a_{i_1,j_{1}}\dots a_{i_d,j_d}t^{i_1 + \dots + i_d},
$$
which is given by
$$
\sum_{j_1,\dots,j_d = 1}^n\sum_{i_1,\dots,i_d= 0}^{E-1} b_{i_1 + \dots+  i_d + 1} c_{\mathbf{j}}a_{i_1,j_{1}}\dots a_{i_d,j_d}. 
$$
Using this expression, $V(G_{\alpha})$ is the set of $(\y^{(1)},\dots,\y^{(d-1)})\in (\A^{N})^{d-1}$ such that 
\begin{equation}\label{eq:lamb}
\sum_{\eps_1,\dots,\eps_{d-1}\in \{0,1\}}(-1)^{\eps_1 + \dots + \eps_{d-1}}\sum_{\mathbf{i,j}}b_{i_1 + \dots+ i_d + 1} c_{\mathbf{j}}\prod_{k=1}^d(a_{i_k,j_k} + \eps_1 y^{(1)}_{i_k,j_k} + \dots + \eps_{d-1}y_{i_k,j_k}^{(d-1)})
\end{equation}
is a constant function in the $(a_{i,j})_{\substack{0\leq i\leq E-1\\1\leq j \leq n}}$. On expanding the product, the expression  $\prod_{k=1}^d(a_{i_k,j_k} + \eps_1 y^{(1)}_{i_k,j_k} + \dots + \eps_{d-1}y_{i_k,j_k}^{(d-1)})$ is clearly linear in the variables $a_{i_1,j_1},\dots, a_{i_d,j_d}$. 
Hence \eqref{eq:lamb} is at most linear in the $(a_{i,j})_{\substack{0\leq i\leq E-1\\1\leq j \leq n}}$, and so it suffices to ensure that the coefficient of each $a_{i,j}$ vanishes. This gives us equations
\begin{equation}\label{eq:V_of_G_rewrite}\sum_{i_1,\dots,i_{d-1} = 0}^{E-1}\sum_{j_1,\dots,j_{d-1} = 1}^n b_{i_1 + \dots + i_{d-1} + i + 1}c_{j_1,\dots,j_{d-1},j}y_{i_{1},j_1}^{(1)}\dots y_{i_{d-1}, j_{d-1}}^{(d-1)} = 0,\end{equation}
for every $i\in \{0,\dots,E-1\}$ and every $j\in\{1,\dots,n\}$. 
The statement of Proposition \ref{pro:stone} is now a direct consequence of the following result. 

\begin{lemma} 
There is a canonical identification
$V(G_{\alpha}) = N(\alpha),$
where  $N(\alpha)$ is given by \eqref{eq:Nalpha}.
\end{lemma}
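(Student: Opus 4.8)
The plan is to unwind both $V(G_\alpha)$ and $N(\alpha)$ into explicit systems of polynomial equations on $(\A^N)^{d-1} = (\Poly_{<E}^n)^{d-1}$ and to check that these two systems are literally identical, so that the two subschemes coincide. First I would fix the identification of the ambient spaces: a point $(\y^{(1)},\dots,\y^{(d-1)})$ with $\y^{(k)} = (y^{(k)}_{i,j})_{0\le i<E,\,1\le j\le n}$ corresponds to the tuple of vectors of polynomials $\u^{(k)} = (u^{(k)}_1,\dots,u^{(k)}_n)$, where $u^{(k)}_j(t) = \sum_{i=0}^{E-1} y^{(k)}_{i,j}t^i$ --- exactly the way the coordinates $(a_{i,j})$ on $\A^N$ encode $g_j(t) = \sum_i a_{i,j}t^i$. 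Under this identification the equations \eqref{eq:V_of_G_rewrite}, which were already shown to cut out $V(G_\alpha)$, are equations in the $y^{(k)}_{i,j}$.

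Next I would compute the other side. Expanding the product of the $u^{(k)}_{j_k}(t)$ and collecting powers of $t$ gives
$$\Psi_j(\u^{(1)},\dots,\u^{(d-1)}) = d!\sum_{m\ge 0}\Bigl(\sum_{\substack{i_1+\dots+i_{d-1}=m\\ 0\le i_k<E}}\;\sum_{j_1,\dots,j_{d-1}=1}^n c_{j_1,\dots,j_{d-1},j}\,y^{(1)}_{i_1,j_1}\cdots y^{(d-1)}_{i_{d-1},j_{d-1}}\Bigr)t^m,$$
a polynomial of degree at most $(d-1)(E-1)$. Writing $\alpha = \sum_{l\ge 1}b_l t^{-l}$, the coefficient of $t^{-r}$ in the fractional part $\{\alpha\,\Psi_j(\u^{(1)},\dots,\u^{(d-1)})\}$, for $r\ge 1$, is obtained by pairing $t^m$ with $t^{-m-r}$, so it equals
$$d!\sum_{i_1,\dots,i_{d-1}=0}^{E-1}\sum_{j_1,\dots,j_{d-1}=1}^n b_{i_1+\dots+i_{d-1}+r}\,c_{j_1,\dots,j_{d-1},j}\,y^{(1)}_{i_1,j_1}\cdots y^{(d-1)}_{i_{d-1},j_{d-1}}.$$

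Finally I would read off the order condition. Since $\ord$ records the highest power of $t$ occurring, the inequality $\ord\{\alpha\,\Psi_j(\u^{(1)},\dots,\u^{(d-1)})\} < -E$ holds precisely when the coefficients of $t^{-1},\dots,t^{-E}$ in $\{\alpha\Psi_j\}$ all vanish, i.e.\ when the displayed expression vanishes for $r = 1,\dots,E$. Substituting $i = r-1$ identifies this system with \eqref{eq:V_of_G_rewrite} exactly (the nonzero scalar $d!$ is immaterial). Hence $V(G_\alpha)$ and $N(\alpha)$ are cut out inside $(\A^N)^{d-1} = (\Poly_{<E}^n)^{d-1}$ by the same ideal, so they coincide as closed subschemes, which is the claimed canonical identification. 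There is no genuine obstacle here beyond bookkeeping; the only points requiring care are the matching of the index range $i\in\{0,\dots,E-1\}$ in \eqref{eq:V_of_G_rewrite} with the range $r\in\{1,\dots,E\}$ forced by the bound $\ord < -E$, and the observation that the upper summation limit $E-1$ on both sides is dictated by the constraint $\deg u^{(k)}_j < E$.
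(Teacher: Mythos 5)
Your proposal is correct and matches the paper's own argument: both expand $\alpha\,\Psi_j(\u^{(1)},\dots,\u^{(d-1)})$ in powers of $t$, observe that the condition $\ord\{\alpha\Psi_j\}<-E$ is the vanishing of the coefficients of $t^{-1},\dots,t^{-E}$, and identify these coefficients (up to the harmless factor $d!$, nonzero in characteristic $0$) with the equations \eqref{eq:V_of_G_rewrite} cutting out $V(G_\alpha)$, via the shift $r=i+1$. No gaps.
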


\begin{proof} Writing $\u^{(i)}= \sum_{k=0}^{E-1} \h_{k}^{(i)} t^k,$ with $\h_{k} = (h_{k,1},\dots,h_{k,n})\in \A^{n}$, we see that 
\begin{align*}  
\alpha &\Psi_j(\u^{(1)},\dots,\u^{(d-1)})\\
& =  \left(\sum_{r\geq 1}b_rt^{-r}\right) d! \sum_{j_1,\dots,j_{d-1} = 1}^{n}c_{j_1,\dots,j_{d-1},j}u_{j_1}^{(1)}\dots u_{j_{d-1}}^{(d-1)}\\
& =   d!  \left(\sum_{r\geq 1}b_rt^{-r}\right) \sum_{j_1,\dots,j_{d-1} = 1}^{n}c_{j_1,\dots,j_{d-1},j}\sum_{i_1,\dots,i_{d-1} = 0}^{E-1}h_{i_1,j_1}^{(1)}\dots h_{i_{d-1},j_{d-1}}^{(d-1)} t^{i_1 + \dots + i_{d-1}}\\
& =  d! \sum_{r\geq 1}b_r  \sum_{j_1,\dots,j_{d-1} = 1}^{n}c_{j_1,\dots,j_{d-1},j}\sum_{i_1,\dots,i_{d-1} = 0}^{E-1} h_{i_1,j_1}^{(1)}\dots h_{i_{d-1},j_{d-1}}^{(d-1)}t^{i_1+ \dots + i_{d-1} - r}.
\end{align*}
The space $N(\alpha)$ is defined by the vanishing of the coefficients of $t^{-1},\dots, t^{-E}$. For $i \in \{0,\dots,E-1\}$, the coefficient of degree $t^{-i-1}$ is given by 
$$d! \sum_{i_1,\dots,i_{d-1}= 0}^{E-1}\sum_{j_1,\dots,j_{d-1} = 1}^n b_{i_1+ \dots + i_{d-1} + i + 1} c_{j_1,\dots,j_{d-1},j}  h_{i_1,j_1}^{(1)}\dots h_{i_{d-1},j_{d-1}}^{(d-1)}.$$
 Comparing with equation (\ref{eq:V_of_G_rewrite}), we get the required identification. 
\end{proof}

Thus, it now remains to find a bound on $\dim N(\alpha)$. To be able to apply it in both  the major and in the minor arc setting, it will rely on parameters associated to a rational approximation of $\alpha$.  

\subsubsection{A dimension bound on zero-sets defined by multilinear forms} 
For given $E\geq 1$, 
let $V_E\subset \A^{(d-1)En}$ denote the variety 
of points 
$$
(\h_k^{(1)},\dots, \h_k^{(d-1)})_{0\leq k<E}\in \A^{(d-1)En},
$$ 
for which 
 $$
\Psi_j\left(\h_0^{(1)}+t\h_1^{(1)}+\dots+t^{E-1}\h_{E-1}^{(1)},\dots,
\h_0^{(d-1)}+t\h_1^{(d-1)}+\dots+t^{E-1}\h_{E-1}^{(d-1)}\right)
$$
vanishes identically in $t$, for $1\leq j\leq n$. We prove the following result. 

\begin{lemma}\label{lem:dim-bound}
We have $\dim V_E\leq (d-2)En$.
\end{lemma}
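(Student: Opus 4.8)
The plan is to reduce the statement to the classical case $E=1$ by Lagrange interpolation. For $E=1$, the variety $V_1\subset\A^{(d-1)n}$ is precisely the affine zero-locus of the multilinear forms $\Psi_1,\dots,\Psi_n$, and I would take as input the bound $\dim V_1\le(d-2)n$, which is the standard geometric ingredient of the circle method for non-singular forms, going back to Birch \cite{birch} (see also \cite{BV} and \cite{lee} for its function-field incarnation). This bound is sharp: the coordinate subspace $\{\h^{(1)}=\0\}$ lies inside $V_1$ and already has dimension $(d-2)n$. Thus the genuine content of Lemma~\ref{lem:dim-bound} is the case $E=1$, and the passage to general $E$ will be soft.

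For the $E=1$ case I would either quote Birch directly or argue as follows. Projecting $V_1\to\A^{(d-2)n}$ onto $(\h^{(1)},\dots,\h^{(d-2)})$, the fibre over a point $\mathbf{a}$ is the kernel of the linear map $L_{\mathbf{a}}\colon\h\mapsto\bigl(\Psi_j(\h^{(1)},\dots,\h^{(d-2)},\h)\bigr)_{1\le j\le n}$; stratifying the base by the rank of $L_{\mathbf{a}}$ reduces the problem to bounding the codimension of the loci where this rank is small. Non-singularity of $f$ is exactly what controls these loci: on the diagonal one has $\Psi_j(\h,\dots,\h)=(d-1)!\,\partial_{x_j}f(\h)$, which ties the dimensions of the low-rank loci to that of the affine singular locus $\{\nabla f=\0\}=\{\0\}$, and the latter being a single point is what makes the estimate go through. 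I expect this to be the only genuinely nontrivial ingredient, and since it is classical I would cite it rather than reprove it.

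Granting $\dim V_1\le(d-2)n$, I would finish as follows. Fix distinct points $\tau_1,\dots,\tau_E\in\C$. Since each coordinate of every $\u^{(i)}$ is a polynomial of degree $<E$, it is determined by its values at $\tau_1,\dots,\tau_E$, so the evaluation map $\Theta\colon(\u^{(i)})_i\mapsto\bigl((\u^{(i)}(\tau_s))_i\bigr)_{s=1}^{E}$, regarded as a map $\A^{(d-1)En}=\bigl((\Poly_{<E})^n\bigr)^{d-1}\to\bigl(\A^{(d-1)n}\bigr)^{E}$, is an isomorphism of $\C$-varieties. Since evaluation at $\tau_s$ is a ring homomorphism, any point of $V_E$ satisfies $\Psi_j(\u^{(1)}(\tau_s),\dots,\u^{(d-1)}(\tau_s))=\Psi_j(\u^{(1)},\dots,\u^{(d-1)})(\tau_s)=0$ for all $s$ and all $j$, whence $\Theta(V_E)\subseteq V_1^{E}$. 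Therefore $\dim V_E=\dim\Theta(V_E)\le\dim V_1^{E}=E\dim V_1\le(d-2)En$, as required; the only ingredient beyond the $E=1$ bound is that $\C$ contains $E$ distinct points, which is immediate. The main obstacle is thus really the classical $E=1$ estimate and the appeal to non-singularity of $f$ there; everything after that is formal.
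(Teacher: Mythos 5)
Your argument is correct, but it takes a different route from the paper. You reduce to the case $E=1$ via the evaluation isomorphism at $E$ distinct points (using that evaluation is a ring homomorphism, so $\Theta(V_E)\subseteq V_1^E$ and hence $\dim V_E\le E\dim V_1$), and then you import the classical bound $\dim V_1\le (d-2)n$ for non-singular $f$. The paper instead proves the bound for all $E$ at once and self-containedly: it intersects $V_E$ with the diagonal $D=\{\h_k^{(1)}=\dots=\h_k^{(d-1)},\ 0\le k<E\}$, which has dimension $En$, shows $D\cap V_E=\{\0\}$ by extracting the coefficients of $t^{\ell}$ for $\ell=0,d-1,2(d-1),\dots$ and using $\Psi_j(\h,\dots,\h)=(d-1)!\,\partial_{x_j}f(\h)$ together with non-singularity, and then applies the affine dimension theorem to get $\dim V_E\le (d-2)En$. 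Note that the paper's argument specialised to $E=1$ is exactly the classical diagonal-trick proof of the fact you cite, so the two proofs share the same kernel; your evaluation step replaces the paper's coefficient-extraction induction. What each buys: your reduction is modular and makes the passage from $E=1$ to general $E$ purely formal, while the paper's version avoids any external citation and treats all $E$ uniformly. One caveat: your fallback sketch of the $E=1$ case via projection to $(\h^{(1)},\dots,\h^{(d-2)})$ and rank stratification is not a complete argument as written (you never show how non-singularity bounds the codimension of the low-rank loci; the clean proof is the diagonal trick itself), so your proof really does rest on the citation — which is legitimate, since the bound for non-singular forms is standard in the circle-method literature, though you should point to a precise statement (e.g.\ in \cite{birch}, \cite{lee} or \cite{BSfree}) rather than to the folklore.
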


\begin{proof}
The  condition in the definition of $V_E$ is equivalent to the system of polynomial
equations
$$
\sum_{\substack{
0\leq i_1,\dots,i_{d-1}<E
\\ i_1+\dots+i_{d-1}=\ell }} 
\Psi_j(\h_{i_1}^{(1)},\dots,\h_{i_{d-1}}^{({d-1})})=0,
$$
for $0\leq \ell\leq (d-1)(E-1)$ and $1\leq j\leq n$.
Define the diagonal $D\subset \A^{(d-1)En}$, via 
$$
D=\{\h_k^{(1)}=\h_k^{(2)}=\dots=\h_k^{(d-1)}: 0\leq k<E  \}.
$$
Then $D$ has affine dimension $En$. Moreover,  $D\cap V_E$ consists precisely of the points 
$(\h_0, \dots,\h_{E-1})\in \A^{En}$ for which 
$$
\sum_{\substack{
0\leq i_1,\dots,i_{d-1}<E
\\ i_1+\dots+i_{d-1}=\ell }} 
\Psi_j(\h_{i_1},\dots,\h_{i_{d-1}})=0,
$$
for $0\leq \ell\leq (d-1)(E-1)$ and $1\leq j\leq n$.
When  $\ell=0$ we deduce that 
$$
\Psi_j(\h_0,\dots , \h_0)=0,
$$
for $1\leq j\leq n$. 
But $\Psi_j(\x,\dots,\x)=d!\frac{\partial f}{\partial x_j}$ for $1\leq j\leq n$. Thus,
since $f$ is non-singular, the only solution to this system of equations is $\h_0=\0$. When $\ell=d-1$ we obtain the system of equations
$$
\Psi_j(\h_1,\dots , \h_1)=0,
$$
for $1\leq j\leq n$, since $\h_0=\0$ and the $\Psi_j$ are multilinear.  Thus also 
$\h_1=\0$. Proceeding in this way, for $\ell$ running through multiples of $d-1$, one ultimately concludes that $\h_0=\dots=\h_{E-1}=\0$, whence 
$\dim(D\cap V_E)=0$.
The affine dimension theorem  implies that any irreducible component of 
 $U\cap V$ has dimension at least $\dim U+\dim V-m$,  for any irreducible affine varieties $U,V\subset \mathbf A^m$. Clearly $D$ is irreducible and 
 $\dim(D\cap V_E)=\max_J \dim (D\cap J)$, where the maximum is over the irreducible components of $V_E$. 
 Hence, for any 
 irreducible component $J$ of $V_E$, we deduce that 
\begin{align*}
0&\geq \dim (D\cap J)\\
&\geq \dim J+\dim D- (d-1)En\\
&\geq \dim J- (d-2)En,
\end{align*}
from which the lemma follows.
\end{proof}

\subsubsection{Application of the shrinking lemma}

Recall the definition \eqref{eq:Nalpha} of $N(\alpha)$. 
We are now ready to prove the following general result, which gives us a means of bounding  the remaining term in Proposition \ref{pro:stone}.

\begin{lemma}\label{lemma:general_bound_exp_sums} 
Let $\alpha = \frac{h_1}{h_2} + \theta$ such that $\ord(\theta) <0$. Let  $\rho = \deg(h_2)$ and $\psi = \ord ( \theta)$. 
Assume $s\geq 0$ to be an integer chosen such that 
\begin{enumerate} \item We have $-E-1 - (d-1)s < -\rho$ and $(d-1)(E-1-s) + \psi < -\rho$. 
\item We either have $(d-1)(E-1-s) <\rho$ or $-E - (d-1)s - \psi \leq \rho$. 
\end{enumerate}
Then 
$$\dim N(\alpha) \leq (d-2)En + sn.$$ 
\end{lemma}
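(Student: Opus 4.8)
The goal is to bound $\dim N(\alpha)$, where $N(\alpha)$ is the space of tuples $(\u^{(1)},\dots,\u^{(d-1)})$ of polynomial vectors of degree $<E$ such that each $\alpha\Psi_j(\u^{(1)},\dots,\u^{(d-1)})$ has fractional part of order $<-E$. Writing $\alpha = h_1/h_2 + \theta$, I would first split the condition $\ord\{\alpha \Psi_j\} < -E$ into a part coming from $h_1/h_2$ and a part coming from $\theta$. Since $\Psi_j(\u^{(1)},\dots,\u^{(d-1)})$ is a polynomial of degree $\leq (d-1)(E-1)$, multiplying by $\theta$ (of order $\psi<0$) gives something of order $\leq (d-1)(E-1)+\psi$, and multiplying by $h_1/h_2$ gives something whose fractional part is controlled by $h_2$ and the size of $\Psi_j$. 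The two hypotheses in (1) are precisely the inequalities ensuring that the "overlap" between these two ranges is empty — i.e. that the condition $\ord\{\alpha\Psi_j\}<-E$ forces both $h_2 \mid \Psi_j(\u^{(1)},\dots,\u^{(d-1)})$ up to a controlled remainder \emph{and} a bound on the order of $\Psi_j$ itself.

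**Key steps.** First I would make the change of variables replacing each $\u^{(i)}$ by $\h_0^{(i)}+t\h_1^{(i)}+\dots+t^{E-1}\h_{E-1}^{(i)}$, so that $N(\alpha)\subset \A^{(d-1)En}$ is cut out by vanishing of the coefficients of $t^{-1},\dots,t^{-E}$ in each $\alpha\Psi_j$. Next, using hypothesis (1), I would argue that a point of $N(\alpha)$ must satisfy: (a) the polynomial $\Psi_j(\u^{(1)},\dots,\u^{(d-1)})$, which has degree $\leq (d-1)(E-1)$, actually has order $\leq \rho - \psi - E - 1$ or so — i.e. its top coefficients vanish — and (b) $h_2$ divides $\Psi_j(\u^{(1)},\dots,\u^{(d-1)})$ modulo a remainder of bounded degree. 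Hypothesis (2) is a mild alternative used to ensure one of these two reductions is non-vacuous and the bookkeeping of degrees closes up. The effect of all this should be that, after fixing the $\h_k^{(i)}$ for $k$ in a range of size $\sim s$ freely (contributing $sn$ to the dimension, for each of... no — contributing at most $sn$ total after accounting for the multilinear structure), the remaining coefficients are forced by the vanishing of the multilinear forms to lie in a variety of the shape $V_{E'}$ for a suitable $E'\leq E$, whose dimension is bounded by $(d-2)E'n\leq (d-2)En$ via Lemma~\ref{lem:dim-bound}. Combining gives $\dim N(\alpha)\leq (d-2)En + sn$.

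**Executing the count.** Concretely, I expect the argument to isolate a block of $s$ of the "coordinate vectors" (in the $t$-adic expansion of the $\u^{(i)}$, or in the $t^{-1}$-adic expansion of $\alpha$) that are genuinely free, while the complementary block is constrained exactly as in the definition of $V_{E-s}$ (shifted appropriately), so that one can invoke Lemma~\ref{lem:dim-bound} with $E-s$ in place of $E$: the free part has dimension $\leq sn$ per multilinear slot, but the multilinearity and the non-singularity of $f$ (used in the proof of Lemma~\ref{lem:dim-bound}) collapse this to a single factor of $sn$. The precise accounting of which coefficients are free and which are constrained — i.e. translating hypotheses (1) and (2) on $s$ into the correct range of indices — is the delicate part.

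**Main obstacle.** The hard part will be the precise translation of the two order inequalities in hypotheses (1) and (2) into a clean statement of the form "$N(\alpha)$ fibres over an affine space of dimension $sn$ with fibres contained in a copy of $V_{E-s}$", and in particular making sure no coefficients of $\alpha$ beyond those that matter leak into the constraints. This is a geometry-of-numbers-over-$\C(t)$ style argument (a "shrinking lemma" in the sense of Mahler, as flagged in the text and in Appendix~\ref{sec:gon}), and the bookkeeping of which monomials $t^{-i}$, $0\leq i < E$, appear in which part of the product $\alpha\Psi_j$ — and the role of the cutoff at order $-E$ versus the divisibility by $h_2$ of degree $\rho$ — requires care. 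Once that structural decomposition is in place, the dimension bound follows mechanically from Lemma~\ref{lem:dim-bound}.
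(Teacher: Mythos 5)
There is a genuine gap, and it sits exactly where you flag the "delicate part": you never actually produce the mechanism that converts the hypotheses on $s$ into your claimed decomposition, and the dimension bookkeeping you propose is not the one that works. The paper's proof proceeds by applying the function-field shrinking lemma (Lemma \ref{new-geometry}, via the lattices $\sfl_{a,b}(\mathbf{U})$ of Appendix \ref{sec:gon}) once in each of the $d-1$ multilinear slots: each application shortens one block of variables from degree $<E$ to degree $<E-s$ \emph{and} strengthens the order condition, at a cost of $ns$ in dimension per slot, yielding
\begin{equation*}
\dim N(\alpha)\leq (d-1)ns+\dim N_s(\alpha),\qquad
N_s(\alpha)=\bigl\{\ul{\u}\in(\Poly_{<E-s}^n)^{d-1}:\ \ord\{\alpha\Psi_j(\ul{\u})\}<-E-(d-1)s\ \forall j\bigr\}.
\end{equation*}
Only then does the arithmetic with $h_1/h_2$ and $\theta$ enter: for $m=\Psi_j(\ul{\u})$ with $\ul{\u}\in N_s(\alpha)$ one has $\deg m\leq(d-1)(E-1-s)$, hypothesis (1) forces $\ord\{(h_1/h_2)m\}<-\rho$ and hence $h_2\mid m$, and hypothesis (2) then forces $m=0$ (either because $\deg m<\rho$, or because $\deg m<-E-(d-1)s-\psi\leq\rho$). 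This is why the quantity $(d-1)s$ appears in hypotheses (1) and (2): they are calibrated to the \emph{shrunk} set $N_s(\alpha)$, not to $N(\alpha)$ itself, which is the step your outline elides by trying to run the divisibility argument directly. Finally Lemma \ref{lem:dim-bound} applied with $E-s$ gives $\dim N_s(\alpha)\leq(d-2)(E-s)n$, and the totals combine as $(d-1)ns+(d-2)(E-s)n=(d-2)En+sn$.

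Your proposed accounting — a free block contributing "$sn$ total" plus a constrained part of shape $V_{E'}$ with the crude bound $(d-2)E'n\leq(d-2)En$ — cannot be made to work as stated. If you give each of the $d-1$ slots its $s$ free coefficient vectors you pay $(d-1)sn$, not $sn$, and there is no mechanism by which "multilinearity and the non-singularity of $f$ collapse this to a single factor of $sn$"; that claim is unsupported and is precisely the point at which the proof would fail. The saving in the paper comes not from collapsing the free part but from the fact that the constrained part lives in $V_{E-s}$ rather than $V_E$, so the bound $(d-2)(E-s)n$ absorbs $(d-2)sn$ of the cost. So while you correctly anticipate the two ingredients in the background (divisibility by $h_2$ forcing $\Psi_j=0$, and Lemma \ref{lem:dim-bound}), the essential missing idea is the iterated use of the shrinking lemma to trade degrees of freedom for a stricter order condition, together with the correct ledger $(d-1)ns+(d-2)(E-s)n$.
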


\begin{proof} 
We  adopt the notation in 
Section \ref{sec:gon} in the proof of this result.  
The variety $N(\alpha)$ admits a projection $N(\alpha)\to \A^{E(d-2)}$,  along the 
coordinates $(\u^{(1)},\dots,\u^{(d-2)})$. 
Consider the linear forms $L_j(\u)=\alpha\Psi_j(\u^{(1)},\dots,\u^{(d-2)},\u)$, for $1\leq j\leq n$. 
These form a symmetric $n\times n$ matrix $\mathbf U$ (depending on 
$\u^{(1)},\dots,\u^{(d-2)}$ and $\alpha$).
The  fibre above  a point 
$(\u^{(1)},\dots,\u^{(d-2)})\in  \A^{E(d-2)}$ is 
equal to the  space 
whose dimension is taken in the definition of 
$\nu(\sfl_{E,E}(\mathbf{U}),0)$ that is 
discussed in Remark \ref{rem:5ways}.
Taking $a=b=E$, 
an application of  Lemma \ref{new-geometry}  now yields
$$
\nu(\sfl_{E,E}(\mathbf{U}),0)\leq ns+
\nu(\sfl_{E-s,E+s}(\mathbf{U}),-s),
$$
for any integer $s\geq 0$.
But,  according to Remark \ref{rem:5ways}, 
$\nu(\sfl_{E-s,E+s}(\mathbf{U}),-s)$ is just the dimension of the space of $\u\in  \Poly_{< E-s}^n$ for which 
$\ord\left\{ \alpha\Psi_j(\underline{\u})\right\}<-E-s$, for all $j\in \{1,\dots,n\}$. Hence 
$$
\dim 
N(\alpha)\leq ns + \dim N_s'(\alpha),
$$
for any integer $s\geq 0$, where
$$
N_s'(\alpha)=\left\{ \underline{\u}\in \left(\Poly_{< E}^n\right)^{d-2}\times
 \Poly_{< E-s}^n:
 \ord\left\{ \alpha\Psi_j(\underline{\u})\right\}<-E-s, 
 \forall j\in \{1,\dots,n\}
  \right\}.
$$ 
Repeating this argument $d-2$ times, once for each of the remaining projections, we 
 conclude that  
\begin{equation}\label{eq:cup}
\dim N(\alpha)\leq (d-1)ns + \dim N_s(\alpha),
\end{equation}
for any integer $s\geq 0$, where
$$
N_s(\alpha)=\left\{ \underline{\u}\in \left(\Poly_{< E-s}^n\right)^{d-1}:
 \ord\left\{ \alpha\Psi_j(\underline{\u})\right\}<-E-(d-1)s, 
 \forall j\in \{1,\dots,n\}
  \right\}.
$$

Our goal is to choose $s$, depending on $\rho$ and $\psi$, such that any $\ul{\u}$ appearing in the definition of $N_s(\alpha)$ must in fact satisfy $\Psi_j(\ul{\u}) = 0$ for all $j\in \{1,\dots,n\}$. 
Let $\ul{\u}\in N_s(\alpha)$ and let $j\in \{1,\dots,n\}$. Put $m = \Psi_j(\ul{\u})\in k[t]$. 
Each coordinate of $\ul{\u}$ is a polynomial of degree at most $E-s-1$ in $t$.  
Since $\Psi_j$ is a multilinear form of total degree $d-1$ in the variables $\u^{(1)},\dots,\u^{(d-1)}$, 
it follows that 
$\deg(m) \leq (d-1)(E-s-1).$
Write $\{\alpha m\} = \left\{ \frac{h_1}{h_2}m + \theta m\right\}$, and note that we have the upper bound $$\ord\{ \theta m \} \leq \deg(m) + \ord(\theta) \leq (d-1)(E-s-1) + \psi.$$

We will proceed in two steps. First we will find conditions on $s$ such that $h_2$ should divide $m$. For this, it suffices to bound $\ord \left\{ \frac{h_1}{h_2}m\right\}$ by something smaller than $-\rho$.   We write
$$\ord \left\{ \frac{h_1}{h_2}m\right\} = \ord \{ \alpha m - \theta m\} \leq \max\left( \ord \{ \alpha m\},\ord \{\theta m\}\right).
$$
We have seen that 
$\ord\{ \theta m \} \leq  (d-1)(E-s-1) + \psi$. Moreover, since $m=\Psi_j(\underline{\u})$, it follows from the definition of $N_s(\alpha)$ that 
$\ord\left\{ \alpha m\right\}\leq -E-1-(d-1)s$.
Hence we
get
$$\ord \left\{ \frac{h_1}{h_2}m\right\}\leq  \max \left( -E-1 - (d-1)s,(d-1)(E-s-1) + \psi\right).$$
Thus, we see that condition $(1)$ in the statement of the lemma forces $\ord \{ \frac{h_1}{h_2}m\}< -\rho,$
and therefore forces $h_2$ to divide $m$. 

The second step consists in finding conditions which would ensure that $m = 0$. Given that $h_2$ divides $m$, one possibility is just to force $\deg(m) < \rho$. This is implied by the condition $$(d-1)(E-1-s) < \rho,$$ which is the first inequality in part $(2)$ of the lemma. 
Alternatively, note that since $h_2$ divides $m$, we have $$\ord\{\theta m\} = \ord\{\alpha m\} < -E-(d-1)s.$$ On the other hand, $\ord (\theta m) \leq (d-1)(E-s-1) + \psi$, and the latter is negative since condition $(1)$ is satisfied. This means that 
$$\ord(\theta m) = \ord \{\theta m\}  < -E - (d-1) s$$
and therefore $\deg (m) < -E - (d-1) s - \psi.$ Thus, if $-E-(d-1)s - \psi \leq \rho$, which is the second inequality in part $(2)$ of the lemma, then $m =0$. 

We may conclude that if $s$ is chosen so that conditions $(1)$ and $(2)$ in the lemma are satisfied, then 
$$N_s(\alpha) = \left\{ \underline{\u}\in \left(\Poly_{< E-s}^n\right)^{d-1}:
\Psi_j(\underline{\u})=0,  \forall j\in \{1,\dots,n\}\right\}.
$$ 
 Lemma \ref{lem:dim-bound} gives 
$\dim N_s(\alpha)\leq (d-2) (E-s)n$, and so the statement  follows from \eqref{eq:cup}.
\end{proof}

\begin{remark} Assume $\frac{h_1}{h_2} = 0$, so that  $\rho = 0$. Then we may take
$$ s = \max\left( 0, E-1 +  \left\lceil\frac{1 + \psi}{d-1}\right\rceil, \min\left(E, -\left\lfloor \frac{E + \psi}{d-1}\right\rfloor\right)\right).$$ 
\end{remark}
\begin{remark}\label{rem:theta=0}
Assume that $\alpha\in k(t)$, so that $\theta = 0$ and $\psi = -\infty$. Then we may take
$$s = \max\left(0, \left\lceil\frac{\rho - E}{d-1}\right\rceil, E-1 - \left\lfloor \frac{\rho-1}{d-1}\right\rfloor\right).$$
\end{remark}

\section{The motivic major arcs}\label{sec:major}
\subsection{Definition}
Recall that the motivic analogue of the space $\T$ occurring in the function field circle method is the space $\A^{(-1,-de-2)} $.  It is an affine space of dimension $de+1$, representing elements of $\T$ modulo $t^{-de-2}k[[t^{-1}]]$. 
Fix a positive parameter $\gamma >0$. For every element $\beta=(b_1,\dots,b_{de+1})\in \A^{(-1,-de-2)}$ and every integer $m\geq 0$, we define the matrices
$$A_{\beta,m}  = \left( \begin{array}{ccc} b_1 & \dots & b_{m+1} \\
 \vdots& \ddots &\vdots  \\
b_{de+1 - \gamma -m}& \dots & b_{de+1-\gamma}\end{array} \right) 
$$
and
$$A^{*}_{\beta,m} = \left( \begin{array}{ccc} b_1 & \dots & b_{m} \\
\vdots& \ddots &\vdots  \\
b_{de+1 - \gamma -m}& \dots & b_{de-\gamma}\end{array} \right),
$$
the latter being obtained from the former by removing the last column. Note that $\ker A_{\beta,m}$ is generated by a vector $(c_0,\dots,c_m)$ satisfying $c_m\neq 0$ if and only if $A^{*}_{\beta,m}$ has full rank. We define the subspaces $M_{m,\gamma}$ of $\A^{(-1,-de-2)}$ for $m\geq -1$ inductively in the following way. Let $M_{-1,\gamma} = \varnothing$, and  let 
$$
M_{m,\gamma} = M_{m-1,\gamma}\cup \left\{(b_1,\dots,b_{de+1}): \rk A_{\beta,m} \leq m,\ \rk A^{*}_{\beta,m} = m\right\}.
$$
Denote by $M^{*}_{m,\gamma}$ the second set in this union. By the discussion in Section \ref{sect:approximation}, we see that $(b_1,\dots,b_{de+1})$ is an element of $M^{*}_{m,\gamma}$ if and only if there exists $h_2$ of degree exactly $m$ and $h_1$ of degree $<m$ such that 
$\alpha = b_1t^{-1} + \dots + b_{de+1}t^{-de-1} $ satisfies \begin{equation} \label{eq:order_condition_with_m}\ord(h_2\alpha -h_1)\leq -de-2 + \gamma +m,
\end{equation} 
and such that there do not exist $h_1,h_2$ such that $\deg(h_1) < \deg (h_2) < m$ satisfying (\ref{eq:order_condition_with_m}). 
In particular, one has
$\alpha= \frac{h_1}{h_2} + \theta$
with $\ord \theta \leq -de-2 + \gamma$. Note moreover that if $(b_1,\ldots,b_{de+1})\in M_{m,\gamma}^{*}$, then $m$ is minimal such that $\alpha= \frac{h_1}{h_2} + \theta$ with $\deg(h_1)< \deg(h_2) = m$ and $\ord(\theta) < -de-2 + \gamma$. Indeed, if we had polynomials $h'_1,h'_2$ such that $\deg(h'_1) < \deg(h'_2) = m'$ for some $m'< m$, and we would have
$$\ord(h'_2 \alpha - h'_1) \leq -de-2 + \gamma + m'\leq -de-2 + \gamma + m,$$
which would contradict the condition above. In particular, this implies that these polynomials $h_1$ and $h_2$ are relatively prime.

If we choose $h_2$ to be monic, we see that the choices of $h_1,h_2,\theta$ for $(b_1,\ldots,b_{de+1})\in M^{*}_{m,\gamma}$ are unique ($h_2$ coming from the unique generator of $\ker A_{\beta,m}$ with last coordinate equal to 1), and that the coefficients of $h_1, h_2$ and $\theta$ are algebraic functions of the coefficients of $\alpha$. 

\begin{lemma}\label{lem:6.1}
 For any integers $\gamma>0$ and $m\geq 0$, we have
$$
M_{m,\gamma} = \left\{ (b_1,\dots,b_{de+1}): 
\begin{array}{l}
b_1t^{-1} + \dots + b_{de+1}t^{-de-1} = \frac{h_1}{h_2} + \theta\\
\gcd(h_1,h_2)=1 \text{ and $h_2$ monic}\\
\deg h_1 < \deg h_2 \leq m \\
\ord\, \theta\leq -de -2 + \gamma
\end{array}
\right\}.
$$
\end{lemma}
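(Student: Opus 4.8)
The plan is to unwind the inductive definition of $M_{m,\gamma}$ and then match it term by term against the linear-algebra description of the sets $M^*_{m',\gamma}$ recorded just before the statement. Iterating $M_{m,\gamma} = M_{m-1,\gamma} \cup M^*_{m,\gamma}$ from $M_{-1,\gamma} = \varnothing$ gives $M_{m,\gamma} = \bigcup_{m'=0}^{m} M^*_{m',\gamma}$, a union that is moreover increasing in $m$. So it suffices to show that $\beta = (b_1,\dots,b_{de+1})$ lies in some $M^*_{m',\gamma}$ with $m' \leq m$ precisely when $\alpha := b_1 t^{-1} + \dots + b_{de+1} t^{-de-1}$ admits a decomposition $\alpha = h_1/h_2 + \theta$ with $\gcd(h_1,h_2) = 1$, $h_2$ monic, $\deg h_1 < \deg h_2 \leq m$ and $\ord\theta \leq -de-2+\gamma$.

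Here I would invoke the discussion preceding the statement, which identifies membership $\beta \in M^*_{m',\gamma}$ with the existence of a $h_2$ of degree \emph{exactly} $m'$ and a $h_1$ of degree $< m'$ such that $\ord(h_2\alpha - h_1) \leq -de-2+\gamma+m'$, i.e.\ $\alpha = h_1/h_2 + \theta$ with $\ord\theta \leq -de-2+\gamma$ --- but with no coprimality or monicity imposed. So the whole content of the lemma is the passage between arbitrary rational approximations and reduced ones.

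For the inclusion ``$\subseteq$'' I would take $\beta \in M^*_{m',\gamma}$ with $m' \leq m$ and its associated $h_2$ (of degree $m'$), $h_1$ (of degree $< m'$) and $\theta$; cancelling $g := \gcd(h_1,h_2)$ and rescaling the pair so that the denominator is monic yields coprime polynomials $p, q$ with $q$ monic, $\deg q = m' - \deg g \leq m$ and $\deg p < \deg q$, the series $\theta$ being unchanged; the degenerate case $h_1 = 0$ just gives $p = 0$, $q = 1$. Conversely, given a decomposition as on the right-hand side, I would set $m' = \deg h_2 \leq m$ and note $h_2\alpha - h_1 = h_2\theta$; since $\ord$ is the degree valuation on $\C((t^{-1}))$ and additive, $\ord(h_2\alpha - h_1) = \deg h_2 + \ord\theta \leq m' - de - 2 + \gamma$, so $(h_1,h_2)$ exhibits $\beta \in M^*_{m',\gamma} \subseteq M_{m,\gamma}$.

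There is no genuine obstacle here; the only thing requiring care is the bookkeeping --- reconciling the ``degree exactly $m'$'' built into $M^*_{m',\gamma}$ with the bound ``$\deg h_2 \leq m$'' of the statement (handled by the union over $m' \leq m$), the normalisation to coprime monic representatives, and the trivial major arc $h_1 = 0$, $h_2 = 1$ (the $\beta$ with $\ord\alpha \leq -de-2+\gamma$). The substantive input is the linear-algebra reformulation of $M^*_{m,\gamma}$ already established before the statement, so the argument is essentially formal.
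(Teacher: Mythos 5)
Your proof is correct and follows essentially the same route as the paper: the paper argues by induction on $m$, splitting the converse inclusion according to whether $\deg h_2\leq m-1$ (induction) or $\deg h_2=m$ (the characterisation of $M^*_{m,\gamma}$ given just before the lemma), which is exactly your unrolled union $\bigcup_{m'\leq m}M^*_{m',\gamma}$ treated stratum by stratum via the same remark. The only difference is presentational: you make explicit the gcd-cancellation and monic normalisation that the paper dismisses as obvious, and you dispense with the induction by quoting the characterisation at every level $m'$ at once.
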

\begin{proof} We proceed by induction on $m$. The statement is clearly true for $m=0$. Assume it is true for $m-1$. Then by the above remark the inclusion of $M_{m,\gamma}$ in the right hand side is obvious. Conversely, assume that $b_1t^{-1} + \dots + b_{de+1}t^{-de-1} = \frac{h_1}{h_2} + \theta$ as in the statement of the lemma, and where we choose $h_2$ to be of minimal possible degree. If $\deg h_2 \leq m-1$, then we may conclude by induction that $(b_1,\dots,b_{de+1})\in M_{m-1,\gamma}\subset M_{m,\gamma}$. If $\deg h_2 = m$, then we have $(b_1,\dots,b_{de+1})\in M^{*}_{m,\gamma}\subset M_{m,\gamma}$, and so the inverse inclusion holds as well. 
\end{proof}

This characterisation of $M_{m,\gamma}$ is what we will use in practice.  
It is similar to the set $A_m^{de+1}$ that was discussed in Remark \ref{rem:Am}, the chief difference being that our bound on $\ord \theta$ does not depend on the degree of $h_2$.

By definition, we have

$$M_{m,\gamma} = \bigsqcup_{m'= 0}^{ m}M^*_{m',\gamma},$$ where using the description of Lemma \ref{lem:6.1},  $M^*_{m',\gamma}$ corresponds to the subset where $h_2$ has degree exactly $m'$. 
We  take the major arcs to be 
\begin{equation}\label{eq:Major} 
\mathfrak{M}=M_{e+1-\gamma,\gamma} = \bigsqcup_{0\leq m'\leq e+1-\gamma} M_{m',\gamma}^{*}.
\end{equation}
Bearing in mind Lemma \ref{lem:active}, 
we  then take 
\begin{equation}\label{eq:calculus}
N_{\maj} = \sum_{0\leq m'\leq e+1-\gamma} N_{\maj} (m'),
\end{equation}
where
\begin{equation}\label{eq:calculus-m}
N_{\maj}(m') = \LL^{-de-1}\left[ \Poly_{\leq e}^n \times M_{m',\gamma}^*,\res\left(\left(\frac{h_1}{h_2}+\theta\right)f(g_1,\dots,g_n)\right)\right],
\end{equation}
which is viewed as an element of $\expp_{\Poly_{\leq e}^n \times M_{m',\gamma}^*}.$ 
(We shall see that the condition $m'\leq e+1-\gamma$ arises naturally during the course of  the argument.)

For now we merely assume that  $\gamma\leq e$, although  
we shall  ultimately take 
\begin{equation}\label{eq:gamma}
\gamma=\left\lceil \frac{e+1}{2}\right\rceil.
\end{equation}
For any $m \geq 0$, let
\begin{equation}\label{eq:define_Bm}
B_{m} = ( \Poly_{<m}\times \MPoly_{m})_{*}
\end{equation}
denote the space of pairs $(h_1,h_2)$ of coprime polynomials such that $\deg h_1 < \deg h_2=m$, with $h_2$ monic.  Then we may define the ``exponential sum'' 
\begin{equation}\label{eq:def-Smf}
S_{m}(f) = \left[\Poly_{<m}^n\times B_{m}, \res \left(\left(\frac{h_1}{h_2}\right)f(\bar{g}_1,\dots,\bar{g}_n)\right)\right].
\end{equation}
 There is a piecewise isomorphism 
 $$
 M_{m,\gamma}^*\simeq B_{m} \times \A^{(-de-2 + \gamma, -de-2)},$$ 
where $B_{m}$ is given by \eqref{eq:define_Bm}, which is 
 obtained  
 by sending $\alpha$ to $(h_1,h_2,\theta)$. 
This induces natural morphisms
$$
\expp_{B_{m}} \to \expp_{M_{m,\gamma}^*}
$$
and 
\begin{equation}\label{eq:morphism_to_majarcsA}\expp_{\A^{(-de-2 + \gamma, -de-2)}} \to \expp_{M_{m,\gamma}^*}.\end{equation}
With these, the sum $S_{m}(f)$ can  be viewed as an element of $\expp_{M_{m,\gamma}^*}$, for each $m$.

\subsection{Contribution of $M_{m,\gamma}^*$}

We  start by computing the contribution 
\eqref{eq:calculus-m} for fixed $m\leq e+1$.   Note that $N_{\maj}(m)$ is viewed as an element of $\expp_{\Poly_{\leq e}^n \times M_{m,\gamma}^*}$, but we  allow ourselves the slight abuse of notation of also denoting by $N_{\maj}(m)$ the image of this element in some other Grothendieck rings.  
By Euclidean division, for each $1\leq i\leq n$,  
we may write $g_i = h_2 q_i + \bar{g}_i$,  which gives us an isomorphism
$$\Poly_{\leq e}^{n} \simeq \Poly_{\leq e-m}^n \times \Poly_{<m}^n,$$
 sending $(g_1,\dots,g_n)$ to $(q_1,\dots,q_n, \bar{g}_1,\dots \bar{g}_n)$. In terms of classes in the Grothendieck ring, this implies
\begin{align*}
N_{\maj}(m) 
&= \LL^{-de-1}\left[\Poly_{<m}^n\times M_{m,\gamma}^*, \res \left(\left(\frac{h_1}{h_2}\right)f(\bar{g}_1,\dots,\bar{g}_n)\right)\right] \\
&\quad \cdot\left[\Poly_{\leq e-m}^n\times \Poly_{<m}^n \times M_{m,\gamma}^*, \res\left(\theta f(\bar{g}_1 + h_2q_1,\dots,\bar{g}_n + h_2q_n)\right)\right],
\end{align*}
where the product is taken in $\expp_{\Poly_{< m}^n\times M_{m,\gamma}^*}$.
The following result shows that  there is no dependence on $\bar{g}_1,\dots,\bar{g}_n$ in the second factor, under our assumption on $m$.

\begin{lemma}\label{lem:order_small}
Assume that $m\leq e+1-\gamma$. 
Then 
$$
\ord(\theta(f(\bar{g}_1 + h_2q_1,\dots,\bar{g}_n + h_2q_n) -f(h_2q_1,\dots,h_2q_n))) < -1.
$$
\end{lemma}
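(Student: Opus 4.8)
The strategy is to expand $f(\bar{g}_1 + h_2 q_1,\dots,\bar{g}_n + h_2 q_n)$ multilinearly and track the order of each term after multiplication by $\theta$. Since $f$ is homogeneous of degree $d$, writing $f(\x+\y) = \sum_{S\subseteq\{1,\dots,n\}}(\text{terms})$ is cleaner if we instead use the symmetric multilinear form: for any $\x,\y$ we have
$$
f(\x+\y)-f(\y)=\sum_{k=1}^{d}\binom{d}{k}\Phi(\underbrace{\x,\dots,\x}_{k},\underbrace{\y,\dots,\y}_{d-k}),
$$
where $\Phi$ is the symmetric $d$-linear form associated to $f$. Applying this with $\x=(\bar g_i)_i$ and $\y=(h_2 q_i)_i$, every term on the right-hand side is divisible by at least one factor $h_2$, coming from the $\y$-slots, so it is of the form $h_2\cdot P$ for a polynomial $P$. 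The degree of each such term is at most $d\cdot\max(\deg \bar g_i,\deg(h_2q_i))$; using $\deg\bar g_i<m'$ and $\deg(h_2 q_i)\le m'+(e-m')=e$, we get that each term has degree $\le de$, but more usefully, each term is a multiple of $h_2$ and has degree $\le de$, so after dividing by $h_2$ (degree exactly $m'$) we have a polynomial of degree $\le de-m'$... actually the sharper point is different, see below.

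The key estimate: we must bound $\ord\bigl(\theta\cdot(\text{term})\bigr)$. Here $\theta$ satisfies $\ord\theta\le -de-2+\gamma$ by the description of $M_{m,m',\gamma}$ (Lemma~\ref{lem:6.1}). Each term in the expansion of $f(\bar g+h_2 q)-f(h_2 q)$ has degree at most $de$: indeed it is a product of $d$ linear factors, each a coordinate of $\bar g$ or of $h_2 q$, and since $\max_i\deg g_i\le e$ while the substitution $g_i=h_2 q_i+\bar g_i$ does not increase the relevant degree bound ($\deg(h_2q_i)\le e$ and $\deg\bar g_i<m'\le e$), every monomial has degree $\le de$. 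But in fact the terms we care about each carry a factor of $\bar g_i$ in at least one slot, hence their degree is at most $(d-1)e + (m'-1) \le de - (e+1-m') \le de - \gamma$, using the hypothesis $m'\le e+1-\gamma$. Therefore
$$
\ord\bigl(\theta\cdot(\text{term})\bigr)\le (de-\gamma)+(-de-2+\gamma)=-2<-1,
$$
which is exactly the claimed bound after summing over all terms (the order of a sum is at most the max of the orders).

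\textbf{Main obstacle.} The only delicate point is getting the degree bound on the terms of $f(\bar g+h_2q)-f(h_2q)$ right: one must check that every monomial appearing genuinely involves at least one coordinate of $\bar g$ (which is what "$-f(h_2q)$" removes the exceptions to) and hence has degree at most $(d-1)e+(m'-1)$ rather than $de$. Granting that, the arithmetic with $\ord\theta\le -de-2+\gamma$ and the hypothesis $m'\le e+1-\gamma$ closes immediately. I would therefore structure the write-up as: (i) expand via the multilinear form and isolate that each surviving term has a $\bar g$-factor; (ii) bound the degree of such a term by $(d-1)e + m' - 1$; (iii) combine with $\ord\theta \le -de-2+\gamma$ and $m' \le e+1-\gamma$ to conclude $\ord(\theta\cdot(\cdots))\le -2<-1$.
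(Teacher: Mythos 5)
Your proof is correct and follows essentially the same route as the paper: expand the difference so that every surviving term carries at least one $\bar g_i$-factor, bound its order by $(d-1)e+m'-1$ (using $\deg \bar g_i\le m'-1$ and $\deg(h_2q_i)\le e$), and combine with $\ord\theta\le -de-2+\gamma$ and $m'+\gamma\le e+1$ to get order $\le -2<-1$. The only cosmetic difference is that you phrase the expansion via the polarized multilinear form, while the paper simply observes the difference is a sum of degree-$d$ monomials each of positive degree in the $\bar g_i$; the estimates are identical.
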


\begin{proof}
We note that the difference $f(\bar{g}_1 + h_2q_1,\dots,\bar{g}_n + h_2q_n) -f(h_2q_1,\dots,h_2q_n)$ is a sum of monomials of degree $d$ in $\bar{g}_1,\dots,\bar{g}_n, h_2q_1,\dots,h_2q_n$. Moreover, the degree in $\bar{g}_1,\dots,\bar{g}_n$ is at least 1. We know that $\ord h_2 q_i \leq e$ and $\ord \bar{g}_i \leq m-1\leq e$.  Thus, using the bound $m-1$ for at least one of the $\bar{g}_i$-factors of each monomial, and the bound~$e$ for all the other factors, we get
$$
\ord (f(\bar{g}_1 + h_2q_1,\dots,\bar{g}_n + h_2q_n) -f(h_2q_1,\dots,h_2q_n)) \leq (d-1)e + m-1.
$$
Moreover, we know that $\ord (\theta)\leq -de-2 + \gamma$. Combining the two estimates, we get
\begin{align*}
\ord \left(\theta (f(\bar{g}_1 + h_2q_1,\dots,\bar{g}_n + h_2q_n) -f(h_2q_1,\dots,h_2q_n) )\right)
&\leq 
(d-1)e + m-1 -de-2 +\gamma\\
&\leq -e-3+m + \gamma.
\end{align*}
This is  bounded by $-2$ if $m + \gamma \leq e + 1$. 
 \end{proof}
 
We may now  write the contribution of $M_{m,\gamma}^*$ as
$$
N_{\maj}(m) =
 \LL^{-de-1} S_{m}(f) \cdot[\Poly_{\leq e-m}^n\times M_{m,\gamma}^*, \res(\theta f(h_2q_1,\dots,h_2q_n)],$$
now viewed in $\expp_{M_{m,\gamma}^*}$,
where 
$S_{m}(f)$ is given by  \eqref{eq:def-Smf}.
We apply an averaging argument to the rightmost factor, in order to remove the dependence in $m$ completely.  
We first of all re-introduce a factor $\Poly_{<m}^n \simeq \A^{nm}$ to obtain
\begin{align*}
[\Poly_{\leq e-m}^n\times &M_{m,\gamma}^*, \res(\theta f(h_2q_1,\dots,h_2q_n)]_{M_{m,\gamma}^*}
\\& =\LL^{-nm}[\Poly_{\leq e-m}^n\times \Poly_{<m}^n\times  M_{m,\gamma}^*, \res(\theta f(h_2q_1,\dots,h_2q_n))]_{M_{m,\gamma}^*} .
\end{align*}
Now we use  Lemma \ref{lem:order_small},  together with  the isomorphism coming from Euclidean division, in order to get that the right hand side is
\begin{align*}
&=\LL^{-nm}[\Poly_{\leq e-m}^n\times \Poly_{<m}^n\times  M_{m,\gamma}^*, \res(\theta f(\bar{g}_1 + h_2q_1,\dots,\bar{g}_2 + h_2q_n))]_{M_{m,\gamma}^*}\\
 &= \LL^{-nm}[\Poly_{\leq e}^n\times M_{m,\gamma}^*, \res(\theta f(g_1,\dots,g_n))]_{M_{m,\gamma}^*} \\
 &= \LL^{-nm}[\Poly_{\leq e}^n\times \A^{(-de-2+ \gamma, -de-2)},  \res(\theta f(g_1,\dots,g_n))]_{M_{m,\gamma}^*},
\end{align*} 
where the latter is viewed as an element of $\expp_{M_{m,\gamma}^*}$ via the morphism~(\ref{eq:morphism_to_majarcsA}).
Pushing everything to $\expp_{\C}$, we end up with
\begin{align*}
N_{\maj}(m) &= \LL^{-de-1} S_{m}(f) \LL^{-nm} [\Poly_{\leq e}^n \times\A^{(-de-2+ \gamma, -de-2)}, \res(\theta f(g_1,\dots,g_n))] \\
&= S_{m}(f) \LL^{-nm} \int_{\A^{(-de-2+ \gamma, -de-2)}} [\Poly_{\leq e}^n \times \A^{(-de-2+ \gamma, -de-2)}, \res(\theta f(g_1,\dots,g_n))] d\theta ,
\end{align*} 
where the integral is the one from Section \ref{sect:motivic_integrals}. 
Inserting this into \eqref{eq:calculus}, we are therefore led to the following result. 

\begin{prop}\label{prop:major_arcs_first_expression}
We have 
$$
N_{\maj} =  \left( \sum_{0\leq m\leq e+1-\gamma} S_{m}(f) \LL^{-nm} \right)  \int U(\theta)d\theta,
$$
in $\expp_{\C}$, where
\begin{equation}\label{eq:U(theta)}
U(\theta) := [\Poly_{\leq e}^n \times \A^{(-de-2+\gamma, -de-2)}, \res(\theta f(g_1,\dots,g_n))]\in \expp_{\A^{(-de-2+\gamma, -de-2)}}.
\end{equation}
\end{prop}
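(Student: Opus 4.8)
The statement is the assembly of the stratum-by-stratum computation carried out just above, together with the bookkeeping needed to place everything in $\expp_\C$. Since \eqref{eq:calculus} gives $N_{\maj} = \sum_{0\leq m'\leq e+1-\gamma} N_{\maj}(m')$, the plan is first to establish, for each $m'$ in this range, the identity
$$
N_{\maj}(m') = S_{m'}(f)\,\LL^{-nm'}\int_\theta U(\theta)
$$
in $\expp_\C$, and then to factor the common element $\int_\theta U(\theta)$ out of the finite sum by distributivity.

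For the per-stratum identity I would argue as follows. Write $g_i = h_2 q_i + \bar g_i$ with $\deg \bar g_i < m'$, giving the Euclidean-division isomorphism $\Poly_{\leq e}^n \simeq \Poly_{\leq e-m'}^n \times \Poly_{<m'}^n$. Splitting the exponent as $\res\big((h_1/h_2) f(g_1,\dots,g_n)\big) + \res\big(\theta f(g_1,\dots,g_n)\big)$, the first summand depends only on $(\bar g_1,\dots,\bar g_n,h_1,h_2)$: the difference $f(\bar g_1 + h_2 q_1,\dots,\bar g_n + h_2 q_n) - f(\bar g_1,\dots,\bar g_n)$ is divisible by $h_2$, so multiplying it by $h_1/h_2$ yields an honest polynomial with vanishing residue. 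This isolates the factor $S_{m'}(f)$ of \eqref{eq:def-Smf}. For the second summand I would invoke Lemma~\ref{lem:order_small}, which applies precisely because $m'\leq e+1-\gamma$, to replace $\res(\theta f(\bar g_1 + h_2 q_1,\dots,\bar g_n + h_2 q_n))$ by $\res(\theta f(h_2 q_1,\dots,h_2 q_n))$, thereby removing the $\bar g$-dependence. An averaging step then erases the residual $m'$-dependence: reinstate a factor $\Poly_{<m'}^n\simeq \A^{nm'}$ (written as a factor $\LL^{-nm'}$), run Euclidean division backwards together with a second application of Lemma~\ref{lem:order_small}, descend along the morphism \eqref{eq:morphism_to_majarcsA}, and recognise the outcome, after multiplication by $\LL^{-de-1}$, as the motivic integral $\int_\theta U(\theta)$ of Section~\ref{sect:motivic_integrals}, with $U(\theta)$ as in \eqref{eq:U(theta)}. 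Tracking the powers of $\LL$ produced along the way gives the displayed per-stratum formula.

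The only delicate point, I expect, is the bookkeeping of base schemes rather than any fresh geometric input: $S_{m'}(f)$ is naturally a class over $B_{m'}$, $U(\theta)$ a class over $\A^{(-de-2+\gamma,-de-2)}$, and $N_{\maj}(m')$ a class over $M_{m,m',\gamma}$. I would use the piecewise isomorphism $M_{m,m',\gamma}\simeq B_{m'}\times \A^{(-de-2+\gamma,-de-2)}$ and the morphisms \eqref{eq:morphism_to_majarcsB} and \eqref{eq:morphism_to_majarcsA} to check that, after pushing forward to $\expp_\C$, the $(h_1,h_2)$-dependence and the $\theta$-dependence genuinely split as a product (essentially a Fubini statement for the external product of classes). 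Granting this, one sums over $0\leq m'\leq e+1-\gamma$ and pulls the $m'$-independent element $\int_\theta U(\theta)\in\expp_\C$ outside the sum, which is exactly the claimed formula.
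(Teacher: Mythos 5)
Your proposal follows essentially the same route as the paper: decompose $N_{\maj}$ stratum by stratum via \eqref{eq:calculus}, use Euclidean division $g_i = h_2 q_i + \bar g_i$ to split off $S_{m'}(f)$ (the residue of $(h_1/h_2)(f(\g)-f(\bar{\g}))$ vanishing by divisibility by $h_2$), apply Lemma~\ref{lem:order_small} to eliminate the $\bar{\g}$-dependence in the $\theta$-factor, and then average by reinstating $\Poly_{<m'}^n$ at the cost of $\LL^{-nm'}$, reversing the division with a second use of Lemma~\ref{lem:order_small}, and descending along \eqref{eq:morphism_to_majarcsA} to identify the remaining factor with $\int_\theta U(\theta)$. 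This is precisely the paper's argument, including the base-scheme bookkeeping via the piecewise isomorphism $M_{m,m',\gamma}\simeq B_{m'}\times\A^{(-de-2+\gamma,-de-2)}$, so the proposal is correct.
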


The aim is now to analyse the two remaining factors to obtain motivic analogues of the singular series and the singular integral. 

\subsection{Singular series}

It is now time to treat the truncated singular series in Proposition~\ref{prop:major_arcs_first_expression}.
Recalling the definition \eqref{eq:gamma} of $\gamma$, we have 
$$
\sum_{0\leq m\leq e+1-\gamma} S_{m}(f) \LL^{-nm}=
\sum_{0\leq m\leq \lfloor \frac{e+1}{2}\rfloor} S_{m}(f) \LL^{-nm}.
$$
We begin with a useful upper bound, which  also proves valuable in the treatment of the minor arcs.

\begin{lemma}\label{lem:jens}
Let  $\Delta=\lfloor \frac{e+1}{2}\rfloor$.  Then 
$$
\max_{m\geq \Delta}\left( 2^{d} m - \left\lfloor \frac{m}{d-1}\right\rfloor n \right)\leq 
\left\lfloor \frac{e+1}{2d-2}\right\rfloor (2^{d}(d-1) - n) +2^d(d-1).
$$
\end{lemma}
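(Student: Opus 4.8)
The plan is to reduce the claimed inequality to an elementary optimisation over the real variable $m$, then verify that the maximum is attained at $m=\Delta$ after accounting for the floor functions. Write $\phi(m) = 2^d m - \lfloor m/(d-1)\rfloor n$. First I would replace the floor $\lfloor m/(d-1)\rfloor$ by $m/(d-1) - \delta(m)$ where $\delta(m)\in[0,1)$; this gives $\phi(m) \leq 2^d m - \frac{m}{d-1}n + n = \frac{m}{d-1}(2^d(d-1) - n) + n$. Since $n > 2^d(d-1)$ by hypothesis (this is exactly $\tilde\nu>0$ in \eqref{eq:tilde-nu}), the coefficient $2^d(d-1)-n$ is negative, so this upper bound is a \emph{strictly decreasing} linear function of $m$, hence the supremum over $m\geq\Delta$ is morally at $m=\Delta$. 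However, one has to be careful: the floor makes $\phi$ non-monotone in general — it jumps up by $2^d$ as $m$ increases within a block $[k(d-1), (k+1)(d-1))$ and drops by $n - 2^d(d-1) > 0$ at each multiple of $d-1$. So the true maximum on $m\geq\Delta$ is attained at the point of the form $m = (k+1)(d-1) - 1$ with $k$ minimal subject to this being $\geq\Delta$, i.e. essentially $k = \lfloor \Delta/(d-1)\rfloor$ or one more.

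The cleanest route is to note that $\phi$ on each block $[k(d-1), (k+1)(d-1)-1]$ is maximised at the right endpoint $m = (k+1)(d-1)-1$, where $\lfloor m/(d-1)\rfloor = k$, giving value $2^d((k+1)(d-1)-1) - kn$. As a function of the integer $k\geq 0$ this is $k(2^d(d-1) - n) + 2^d(d-1) - 2^d = k(2^d(d-1)-n) + 2^d(d-2)$, again strictly decreasing in $k$ since $2^d(d-1)-n<0$. Therefore I would take $k$ to be the smallest integer with $(k+1)(d-1)-1 \geq \Delta$, i.e. the smallest $k$ with $k \geq (\Delta - d + 2)/(d-1)$; a short computation shows $k = \lceil (\Delta-d+2)/(d-1)\rceil$, and one checks this equals $\lfloor \Delta/(d-1)\rfloor = \lfloor (e+1)/(2d-2)\rfloor$ (using $\Delta = \lfloor (e+1)/2\rfloor$, and the standard identity $\lfloor \lfloor a/b\rfloor / c\rfloor = \lfloor a/(bc)\rfloor$). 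Substituting $k = \lfloor (e+1)/(2d-2)\rfloor$ into $k(2^d(d-1)-n) + 2^d(d-2)$ and bounding $2^d(d-2) \leq 2^d(d-1)$ yields exactly the right-hand side of the statement.

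The only subtlety — and the step I expect to require the most care — is matching up the floor expressions: showing that the relevant block index $k$ really is $\lfloor (e+1)/(2d-2)\rfloor$ rather than that quantity plus or minus one. The safe way is to argue by cases on the residue of $e+1$ modulo $2(d-1)$, or simply to prove the two inequalities $k \leq \lfloor(e+1)/(2d-2)\rfloor$ and $k \geq \lfloor(e+1)/(2d-2)\rfloor$ separately using the nested-floor identity together with $\Delta = \lfloor(e+1)/2\rfloor$. Once the index is pinned down, the estimate is immediate from the monotonicity established above. Note also the degenerate possibility $\Delta < d-1$ (so $k$ could be $0$): then the maximum is $\phi$ on the block $[0,d-2]$ evaluated at $m=d-2$ (if $\Delta \leq d-2$) giving $2^d(d-2) \leq 2^d(d-1)$, consistent with the stated bound with the $\lfloor(e+1)/(2d-2)\rfloor$ term being $0$. $\qed$
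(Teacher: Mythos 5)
Your proof is correct. The first half runs on the same engine as the paper's proof (the hypothesis $n>2^{d}(d-1)$ makes the relevant quantity decrease as $\lfloor m/(d-1)\rfloor$ grows, so the maximum sits at the smallest admissible $m$), but you execute the floor bookkeeping differently: the paper simply bounds $m\leq(\lfloor m/(d-1)\rfloor+1)(d-1)$, evaluates the resulting bound at $m=\Delta$, and then splits into cases according to the parity of $e+1$, with an explicit computation $e=(2d-2)q+r$ in the odd case to compare $\lfloor e/(2d-2)\rfloor$ with $\lfloor(e+1)/(2d-2)\rfloor$. You instead compute the exact maximum by observing that on each block $[k(d-1),(k+1)(d-1)-1]$ the function is maximised at the right endpoint, that these endpoint values $k(2^{d}(d-1)-n)+2^{d}(d-2)$ decrease in $k$, and that the relevant block index is $\lfloor\Delta/(d-1)\rfloor=\lfloor(e+1)/(2d-2)\rfloor$ by the nested-floor identity $\lfloor\lfloor a/b\rfloor/c\rfloor=\lfloor a/(bc)\rfloor$; this eliminates the parity case analysis entirely and in fact yields the slightly sharper constant $2^{d}(d-2)$ in place of $2^{d}(d-1)$, which you then relax to match the statement. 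I checked the identification $k=\lceil(\Delta-d+2)/(d-1)\rceil=\lfloor\Delta/(d-1)\rfloor$ and the degenerate case $\Delta<d-1$; both are fine. One small imprecision: the heuristic sentence saying $\phi$ ``drops by $n-2^{d}(d-1)$ at each multiple of $d-1$'' describes the net change over a full period (the single-step drop is $n-2^{d}$), but since your actual argument proceeds via the block maxima, this does not affect the proof.
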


\begin{proof}
To begin with, we note that 
\begin{align*}
2^{d} m - \left\lfloor \frac{m}{d-1}\right\rfloor n&\leq 
2^{d}\left(\left\lfloor \frac{m}{d-1}\right\rfloor + 1\right) (d-1) - \left\lfloor \frac{m}{d-1}\right\rfloor n\\
&\leq 
\left\lfloor \frac{m}{d-1}\right\rfloor (2^{d}(d-1) - n) +2^d(d-1).
\end{align*}
Since $n > 2^{d}(d-1)$, the maximum is reached when $m= \Delta= \lfloor \frac{e+1}{2}\rfloor$
is minimal.

If  $2\mid e+1$, it now follows that 
$$
\max_{m\geq \Delta}
\left(2^{d} m - \left\lfloor \frac{m}{d-1}\right\rfloor n\right)
\leq 
\left\lfloor \frac{e+1}{2d-2}\right\rfloor (2^{d}(d-1) - n) +2^d(d-1),
$$
as claimed in the lemma.  
Suppose next that $2\nmid e+1$, so that $\Delta=\frac{e}{2}$.
Then 
\begin{align*}
2^{d} m - \left\lfloor \frac{m}{d-1}\right\rfloor n
&\leq 
\left\lfloor \frac{e+2}{2d-2}\right\rfloor (2^{d}(d-1) - n) +2^d(d-1)\\
&\leq 
\left\lfloor \frac{e+1}{2d-2}\right\rfloor (2^{d}(d-1) - n) +2^d(d-1),
\end{align*}
if $m\geq \Delta+1$, whereas 
$$
2^{d} m - \left\lfloor \frac{m}{d-1}\right\rfloor n
=2^{d-1} e - \left\lfloor \frac{e}{2d-2}\right\rfloor n
$$
if $m=\Delta$.
In order to complete the proof, it remains to prove that 
\begin{equation}\label{eq:rtp}
2^{d-1} e - \left\lfloor \frac{e}{2d-2}\right\rfloor n\leq \left\lfloor \frac{e+1}{2d-2}\right\rfloor (2^{d}(d-1) - n) +2^d(d-1).
\end{equation}
Writing $e=(2d-2)q+r$ for $r<2d-2$, we must have  $r\leq 2d-4$, since $e$ is even. 
But then 
$$
 \left\lfloor \frac{e}{2d-2}\right\rfloor=
 \left\lfloor \frac{e+1}{2d-2}\right\rfloor=q,
 $$
 whence \eqref{eq:rtp} is equivalent to 
 $2^{d-1}r\leq 2^d(d-1)$, which is self-evident. 
 \end{proof}

\subsubsection{Convergence of the series}

Recall from  \eqref{eq:def-Smf} that 
$$
S_{m}(f) = \left[\Poly_{<m}^n\times B_{m}, \res \left(\left(\frac{h_1}{h_2}\right)f(\bar{g}_1,\dots,\bar{g}_n)\right)\right],
$$
where $B_{m}$ is given by \eqref{eq:define_Bm}.
We
can use Proposition \ref{pro:stone} to bound this sum, obtaining $$
w_{B_{m}} (S_{m}(f)) \leq \frac{\max_{(h_1,h_2)\in B_m}\dim N(h_1/h_2) + nm(2^{d-1}-(d-1))}{2^{d-2}}.
$$
We apply Lemma~\ref{lemma:general_bound_exp_sums} with $\theta = 0$ (so that $\psi = -\infty$) and $\rho =E= m$. It follows from Remark \ref{rem:theta=0} that we can take
$$
s=m-1-\left\lfloor \frac{m-1}{d-1}\right\rfloor.
$$
Using this, it therefore follows  that 
 \begin{align*}
 \dim N(h_1/h_2)
 &\leq (d-2)mn + \left(m-1-\left\lfloor \frac{m-1}{d-1}\right\rfloor
 \right)n\\
&\leq (d-1)mn - 
\left(1+\left\lfloor \frac{m-1}{d-1}\right\rfloor
 \right)n,
\end{align*}
whence
 $$w_{B_{m}} (S_{m}(f)) \leq \frac{2^{d-1}n m - 
 (1+\lfloor \frac{m-1}{d-1}\rfloor)n
 }{2^{d-2}} =  2nm- \frac{(1+\lfloor \frac{m-1}{d-1}\rfloor)n}{2^{d-2}}.
 $$
 From this, noticing that $\dim B_{m} = 2m$,  property (\ref{item:weight-property-pushforward}) of Proposition \ref{prop:weight-properties} yields
 \begin{equation}\label{eq:weight-of-complete-sum}
 w (S_{m}(f)) - 2nm 
 \leq 
 4m - \frac{(1+\lfloor \frac{m-1}{d-1}\rfloor)n}{2^{d-2}}.
 \end{equation}

We now compute the radius of convergence of the series $\sum_{m\geq 0}S_m(f)T^m$, in the sense of Section \ref{sect:convergence_power_series}. 
We have   $
1+\left\lfloor \frac{m-1}{d-1}\right\rfloor \geq \frac{m}{d-1},
 $
 for any $m\geq 0$. Hence
$$
\frac{w(S_{m}(f))}{2m}\leq n +  \left(\frac{2^{d}(d-1)-n}{2^{d-1}(d-1)}\right) ,
$$
and so as soon as the condition $n>2^{d}(d-1)$ is satisfied, we get
$$\limsup_{m\to \infty} \frac{w(S_{m}(f))}{2m}\leq n + \text{negative constant}.$$
 We may deduce that the radius of convergence of the series 
$\sum_{m\geq 0}S_m(f)T^m$  is bounded by $n-\eps$ for some $\eps >0$, and  the series converges for $|T| < \LL^{-n+ \eps}$. In particular, the \textit{motivic singular series}
\begin{equation}\label{eq:SS}
 \mathfrak{S}(f) := \sum_{m\geq 0} S_{m}(f) \LL^{-nm}
 \end{equation}
 is well-defined as an element of $\widehat{\expp_{\C}}$

 \subsubsection{Replacing by full series}
These bounds also allow us to evaluate the error made by replacing the finite sum
$$
\sum_{0\leq m\leq \lfloor \frac{e+1}{2}\rfloor} S_{m}(f) \LL^{-nm}
$$
 by the full sum $\mathfrak{S}(f)$ defined in \eqref{eq:SS}.
 For this, it suffices to  bound the weight of a term
$
S_{m}(f) \LL^{-mn}$
for $m\geq \lfloor \frac{e+1}{2}\rfloor+1$.  It follows from 
\eqref{eq:weight-of-complete-sum} that 
$$
 w (S_{m}(f) \LL^{-mn})
\leq
\frac{1}{2^{d-2}}
\left(2^d (m-1)-
\left(1+ \left\lfloor \frac{m-1}{d-1}\right\rfloor\right) n\right)+4-\frac{n}{2^{d-2}}.
$$
Hence  Lemma \ref{lem:jens} implies that 
$$
\max_{m\geq \lfloor \frac{e+1}{2}\rfloor+1}   w (S_{m}(f) \LL^{-mn})\leq 
\frac{\left\lfloor \frac{e+1}{2d-2}\right\rfloor (2^{d}(d-1) - n) +2^d(d-1)}{2^{d-2}}
+4-\frac{n}{2^{d-2}}.
$$
Property (\ref{item:weight-property-sum}) of Proposition \ref{prop:weight-properties} allows us to  conclude as follows. 

\begin{lemma}\label{lem:train}
Recall the definition \eqref{eq:tilde-nu} of $\tilde \nu$.
Then 
  $$
  w \left(\mathfrak{S}(f) - \sum_{0 \leq m\leq \lfloor \frac{e+1}{2}\rfloor} S_{m}(f) \LL^{-nm}\right) \leq 4
  -\tilde\nu\left(1+ \left\lfloor \frac{e+1}{2d-2}\right\rfloor \right).
  $$
  \end{lemma}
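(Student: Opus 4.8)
The plan is to recognise the displayed difference as the tail of the convergent series \eqref{eq:SS} and to estimate each of its terms using the weight bound \eqref{eq:weight-of-complete-sum} together with the combinatorial inequality of Lemma~\ref{lem:jens}. Write $\Delta = \lfloor\frac{e+1}{2}\rfloor$. By the definition \eqref{eq:SS}, the element whose weight we must bound is exactly the tail
$$
\mathfrak{S}(f) - \sum_{0\leq m\leq \Delta} S_{m}(f)\LL^{-nm} = \sum_{m\geq \Delta+1} S_{m}(f)\LL^{-nm},
$$
which converges in $\widehat{\expp_\C}$ because the full series does, as shown in \S\ref{sec:convergence_singular_series}. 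Since $\widehat{\expp_\C}$ is the completion with respect to the weight filtration, each $W_{\leq \nu}\expp_\C$ extends to a closed subgroup of $\widehat{\expp_\C}$; hence, applying part~\eqref{item:weight-property-sum} of Proposition~\ref{prop:weight-properties} to the partial sums $\sum_{\Delta+1\leq m\leq M} S_m(f)\LL^{-nm}$ and letting $M\to\infty$, it will suffice to prove that $w(S_m(f)\LL^{-nm}) \leq 4 - \tilde\nu\bigl(1+\lfloor\frac{e+1}{2d-2}\rfloor\bigr)$ for every $m\geq \Delta+1$.

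Next I would feed \eqref{eq:weight-of-complete-sum} into this. That estimate gives $w(S_m(f)\LL^{-nm}) \leq 4m - \frac{(1+\lfloor\frac{m-1}{d-1}\rfloor)n}{2^{d-2}}$, and since $4 = 2^{d}/2^{d-2}$ this rearranges to
$$
w(S_m(f)\LL^{-nm}) \leq \frac{2^{d}(m-1) - \lfloor\frac{m-1}{d-1}\rfloor n}{2^{d-2}} + 4 - \frac{n}{2^{d-2}}.
$$
Now I apply Lemma~\ref{lem:jens} with the running index shifted by one: for $m\geq \Delta+1$ the integer $k=m-1$ satisfies $k\geq \Delta$, so $\max_{k\geq\Delta}\bigl(2^{d}k - \lfloor\frac{k}{d-1}\rfloor n\bigr) \leq \lfloor\frac{e+1}{2d-2}\rfloor(2^{d}(d-1)-n) + 2^{d}(d-1)$ (the hypothesis $n>2^d(d-1)$ of that lemma holds since $\tilde\nu>0$ by \eqref{eq:tilde-nu}). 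Dividing through by $2^{d-2}$ and using the identities $\frac{2^{d}(d-1)-n}{2^{d-2}} = -\tilde\nu$ and $\frac{2^{d}(d-1)}{2^{d-2}} - \frac{n}{2^{d-2}} = -\tilde\nu$, the right-hand side becomes $-\tilde\nu\lfloor\frac{e+1}{2d-2}\rfloor - \tilde\nu + 4 = 4 - \tilde\nu\bigl(1+\lfloor\frac{e+1}{2d-2}\rfloor\bigr)$, which is precisely the claimed bound uniformly in $m\geq\Delta+1$.

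The computation is essentially mechanical once \eqref{eq:weight-of-complete-sum} and Lemma~\ref{lem:jens} are available, so the only genuinely delicate point is the first paragraph: passing from the finitely-additive statement of part~\eqref{item:weight-property-sum} of Proposition~\ref{prop:weight-properties} to the infinite convergent tail. This is where the convergence established in \S\ref{sec:convergence_singular_series} is indispensable — it guarantees the weights of the terms tend to $-\infty$, so that the limit of partial sums lies in the same closed filtration step $W_{\leq \nu}\expp_\C$ and no accumulation can raise the weight above $\max_{m\geq\Delta+1} w(S_m(f)\LL^{-nm})$. Everything else is the arithmetic simplification carried out above.
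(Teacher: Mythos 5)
Your proposal is correct and follows essentially the same route as the paper: identify the difference as the tail $\sum_{m\geq\lfloor\frac{e+1}{2}\rfloor+1}S_m(f)\LL^{-nm}$, bound each term via \eqref{eq:weight-of-complete-sum} rewritten with the index shifted to $m-1$, apply Lemma~\ref{lem:jens}, and conclude with part~\eqref{item:weight-property-sum} of Proposition~\ref{prop:weight-properties} together with convergence in the weight topology. Your explicit remark on passing from the finitely-additive weight inequality to the infinite convergent tail is a point the paper leaves implicit, but the argument is the same.
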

  
  \begin{remark}\label{rem:weight-SS}
  We can also conclude from our argument that 
   $\mathfrak{S}(f)$ has weight zero, equal to $1$ plus an element of $\widehat{\expp_{\C}}$ of negative weight. 
 To see this we use  
 $1+\left\lfloor \frac{m-1}{d-1}\right\rfloor \geq \frac{m}{d-1},$ for any $m\geq 1$, so that 
 \eqref{eq:weight-of-complete-sum} yields
 \begin{equation}\label{eq:train}
  w (S_{m}(f) \LL^{-nm}) \leq 
  4m - \frac{nm}{2^{d-2}(d-1)} = \left(\frac{2^{d}(d-1)-n}{2^{d-2}(d-1)}\right)m .
 \end{equation}
The condition $n> 2^{d}(d-1)$ therefore implies that  
$$w(\mathfrak{S}(f)-1) = w\left(\sum_{m \geq 1} S_{m}(f)\LL^{-mn}\right) \leq -\nu, 
$$
for a constant $\nu>0$.
\end{remark}
 
  \subsubsection{Motivic Euler product decomposition}\label{s:SS1}
  
  \begin{prop}[Factorisation property]
  \label{prop:factor}
   Let $m_1,m_2$ be positive integers, and let $V_{m_1,m_2}$ be the space of pairs of coprime monic polynomials $l_1,l_2$ such that $\deg(l_i) = m_i$ for $i = 1,2$. Let $\pi_i:V_{m_1,m_2}\to \MPoly_{m_i}$ be given by $(l_1,l_2)\mapsto l_i$, and let $\pi_{12}:V_{m_1,m_2}\to \MPoly_{m_1 + m_2}$ be given by $(l_1,l_2)\mapsto l_1l_2$. Then, there is an isomorphism 

$$\pi_{12}^*S_{m_1 + m_2}(f) \simeq \pi_1^* S_{m_1}(f) \times_{V_{m_1,m_2}} \pi_2^{*} S_{m_2}(f)$$
of varieties with exponentials over $V_{m_1,m_2}$ . Moreover, when $m_1 = m_2$, this isomorphism  commutes with  switching $l_1$ and $l_2$. 
  \end{prop}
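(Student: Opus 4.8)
I would prove the proposition by writing down an explicit isomorphism of varieties with exponentials over $V_{m_1,m_2}$ and checking that it intertwines the two morphisms to $\A^1$; exhibiting the map directly (rather than, say, comparing classes via Lemma~\ref{lemma:zero_criterion}) is what is needed since the assertion is at the level of varieties with exponentials. Set $m=m_1+m_2$. Over a point $(l_1,l_2)\in V_{m_1,m_2}$ the polynomials $l_1,l_2$ are coprime, monic, of degrees $m_1,m_2$, so $l_1l_2$ is monic of degree $m$, and unwinding the pullbacks one finds that $\pi_{12}^{*}S_{m}(f)$ has total space $\{(l_1,l_2,h,\bar{g}):\gcd(h,l_1l_2)=1,\ \deg h<m,\ \bar{g}\in\Poly_{<m}^n\}$ with exponential $\res\!\big(\tfrac{h}{l_1l_2}f(\bar{g})\big)$, while $\pi_1^{*}S_{m_1}(f)\times_{V_{m_1,m_2}}\pi_2^{*}S_{m_2}(f)$ has total space $\{(l_1,l_2,a,\bar{g}^{(1)},b,\bar{g}^{(2)}):\gcd(a,l_1)=\gcd(b,l_2)=1,\ \deg a<m_1,\ \deg b<m_2,\ \bar{g}^{(i)}\in\Poly_{<m_i}^n\}$ with exponential $\res\!\big(\tfrac{a}{l_1}f(\bar{g}^{(1)})\big)+\res\!\big(\tfrac{b}{l_2}f(\bar{g}^{(2)})\big)$, the sum arising from the definition of the ring structure on $K_0(\evar)$ recalled in Section~\ref{s:def_groth}. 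The two elementary algebraic inputs are partial fractions (for the $h$ variable) and the Chinese Remainder Theorem (for the $\bar{g}$ variables).

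\textbf{The two isomorphisms of algebraic families.} First, the assignment $(a,b)\mapsto h:=al_2+bl_1$ defines a morphism $\Poly_{<m_1}\times\Poly_{<m_2}\times V_{m_1,m_2}\to\Poly_{<m}\times V_{m_1,m_2}$ over $V_{m_1,m_2}$ which is fibrewise a linear isomorphism (injectivity: $al_2+bl_1=0$ forces $l_1\mid al_2$, hence $l_1\mid a$ by coprimality, hence $a=0$ by degree, hence $b=0$; surjectivity by a dimension count). Moreover $\gcd(h,l_1)=\gcd(a,l_1)$ and $\gcd(h,l_2)=\gcd(b,l_2)$, since $h\equiv al_2\pmod{l_1}$ with $l_2$ a unit mod $l_1$ and symmetrically; as $l_1,l_2$ are coprime this gives $\gcd(h,l_1l_2)=1\iff\gcd(a,l_1)=\gcd(b,l_2)=1$, so partial fractions restricts to an isomorphism of the relevant open loci. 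Second, reduction modulo $l_1$ and modulo $l_2$ defines a morphism $\Poly_{<m}\times V_{m_1,m_2}\to(\Poly_{<m_1}\times\Poly_{<m_2})\times V_{m_1,m_2}$ over $V_{m_1,m_2}$ (remainders by monic polynomials depend polynomially on the data), again fibrewise a linear isomorphism (injectivity: a polynomial of degree $<m$ divisible by $l_1l_2$ vanishes; surjectivity by dimension), and this handles the $n$ tuples $\bar{g}$, sending $\bar{g}\in\Poly_{<m}^n$ to $(\bar{g}^{(1)},\bar{g}^{(2)}):=(\bar{g}\bmod l_1,\ \bar{g}\bmod l_2)$.

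\textbf{Compatibility of the exponentials.} This is the step I would write out carefully. For a monic $l$ of degree $r$ and any $P\in\C[t]$, writing $P=Ql+R$ with $\deg R<r$ gives $P/l=Q+R/l$, so $\res(P/l)=\res(R/l)$ depends only on $P\bmod l$. Since $f$ has scalar coefficients, $f(\bar{g})\equiv f(\bar{g}\bmod l)\pmod{l}$ for any monic $l$. Combining these with $h=al_2+bl_1$, so that $\tfrac{h}{l_1l_2}=\tfrac{a}{l_1}+\tfrac{b}{l_2}$, yields
$$\res\!\left(\frac{h}{l_1l_2}f(\bar{g})\right)=\res\!\left(\frac{a}{l_1}f(\bar{g})\right)+\res\!\left(\frac{b}{l_2}f(\bar{g})\right)=\res\!\left(\frac{a}{l_1}f(\bar{g}^{(1)})\right)+\res\!\left(\frac{b}{l_2}f(\bar{g}^{(2)})\right),$$
which is exactly the exponential on the fibre product described above. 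Assembling the isomorphisms of Step~2 on underlying varieties and this identity on exponentials proves the stated isomorphism. Finally, when $m_1=m_2$ the involution $l_1\leftrightarrow l_2$ of $V_{m_1,m_1}$ sends the partial-fraction datum $(a,b)$ to $(b,a)$ and the CRT datum $(\bar{g}^{(1)},\bar{g}^{(2)})$ to $(\bar{g}^{(2)},\bar{g}^{(1)})$, so the isomorphism is equivariant for the simultaneous swap of the two factors.

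\textbf{Main obstacle.} There is no conceptual difficulty; the work is in making Steps~1 and~2 genuine morphisms of schemes over $V_{m_1,m_2}$ with the open coprimality loci correctly matched, and in the residue bookkeeping of Step~3 — in particular tracking the open subscheme $B_m\subset\Poly_{<m}\times\MPoly_m$ through the partial-fraction map. The residue identity of Step~3 is the point I would spell out in full.
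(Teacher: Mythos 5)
Your proposal is correct and follows essentially the same route as the paper's proof: Chinese Remainder for the $\bar{g}$-variables, the partial-fraction isomorphism $h\mapsto(h\bmod l_1,\,h\bmod l_2)$ with inverse $(a,b)\mapsto al_2+bl_1$ for the numerators, and the residue identity $\res\bigl(\tfrac{h}{l_1l_2}f(\bar g)\bigr)=\res\bigl(\tfrac{a}{l_1}f(\bar g^{(1)})\bigr)+\res\bigl(\tfrac{b}{l_2}f(\bar g^{(2)})\bigr)$ verifying compatibility of the exponentials. Your explicit check that the coprimality loci match under the partial-fraction map, and the observation that $\res(P/l)$ depends only on $P\bmod l$, are details the paper leaves implicit, but the argument is the same.
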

  
  \begin{proof}
We follow the argument in  \cite[Lemma 3.5]{BS}.
Recall from \eqref{eq:def-Smf} that 
$$
S_{m}(f) = \left[\Poly_{<m}^n\times B_{m}, \res \left(\left(\frac{h_1}{h_2}\right)f({g}_1,\dots,{g}_n)\right)\right],
$$
for any positive integer $m$, where
$B_{m} = ( \Poly_{<m}\times \MPoly_{m})_{*}$.

To begin with, 
there is an isomorphism 
\begin{equation}\label{eq:isom1}
(\Poly_{<m_1+m_2})^n\to (\Poly_{<m_1})^n\times (\Poly_{<m_2})^n,
\end{equation}
which is given by sending a tuple of polynomials $(g_1,\dots,g_n)$ of degree $<m_1+m_2$ to the pair of tuples
$$
((g_{1,1},\dots,g_{n,1}),(g_{1,2},\dots,g_{n,2})),
$$ 
where each $g_{i,1}$ has degree $<m_1$ (respectively,
each $g_{i,2}$ has degree $<m_2$), and such that $g_i\equiv g_{i,1} \bmod l_1$ and 
$g_i\equiv g_{i,2} \bmod l_2$, for $1\leq i\leq n$. The inverse map is given by 
$$
((g_{1,1},\dots,g_{n,1}),(g_{1,2},\dots,g_{n,2})) \mapsto 
\left(l_2 (g_{i,1}l_2^{-1}\bmod  l_1) +l_1 (g_{i,2}l_1^{-1}\bmod  l_2)\right)_{1\leq i\leq n},
$$
where the inverses are understood to be modulo $l_1$ and $l_2$, respectively. Since $l_1$ and $l_2$ are coprime,  their inverses modulo each other are polynomial. Moreover, since they are monic it follows from  Euclid's algorithm  that the modulo operation is also polynomial.

We also obtain an isomorphism of $V_{m_1,m_2}$-varieties 
\begin{equation}\label{eq:B-iso}
B_{m_1+m_2} \times_{\pi_{12}} V_{m_1,m_2} \to 
B_{m_1} \times_{\pi_{1}} V_{m_1,m_2} \times_{\pi_{2}} 
B_{m_2},
\end{equation}
given by 
$$
\left((h_1,l_1l_2), (l_1,l_2)\right)\to \left((h_1l_2^{-1}\bmod{l_1}, l_1), (l_1,l_2), 
(h_1l_1^{-1}\bmod{l_2}, l_2)\right), 
$$
where inverses are again understood to be modulo $l_1$ and $l_2$, respectively. The  inverse mapping is given by
$$
\left((h_{1,1},l_1),(l_1,l_2), 
(h_{1,2},l_2)\right) \to 
\left( (h_{1,1}l_2+h_{1,2}l_1, l_1l_2), (l_1,l_2)\right).
$$
To check that the isomorphisms (\ref{eq:isom1}) and (\ref{eq:B-iso}) induce isomorphisms of varieties with exponentials, we compute, using the same notation:
\begin{align*}\res &
\left(
 \frac{h_{1,1}}{l_1 }  f\left(  g_{1,1},\dots,g_{n,1} \right)+\frac{h_{1,2}}{l_2 }  f\left(  g_{1,2},\dots,g_{n,2}\right)\right)\\
  & = 
  \res
\left(
 \frac{h_{1,1}}{l_1 }  f\left(  g_{1},\dots,g_{n} \right)+\frac{h_{1,2}}{l_2 }  f\left(  g_{1},\dots,g_{n}\right)\right)\\
 & = 
  \res
\left(
\frac{ h_{1,1} l_2 +h_{1,2} l_1}{l_1l_2} f\left(  g_{1},\dots,g_{n}\right)\right)\\
&=
\res\left( \frac{h}{l_1 l_2 }f\left(  g_{1},\dots,g_{n} \right)\right).
\end{align*}
The lemma follows on putting these together. 
  \end{proof}

  We can now write the series $\sum_{m\geq 0}S_m(f)T^m$ as a motivic Euler product. For this, 
  for every $m\geq 0$,
  we need to write  $S_{m}(f)$ as the $m$-th coefficient of a motivic Euler product; that is, as the sum of some configuration spaces. 
For  $i\geq 1$, define  
\begin{equation}\label{eq:def-Ui}
U_i(f)  = \left[\Poly_{<i}^n\times C_i , \res \left(\frac{h}{(t-x)^i}f(g_1,\dots,g_n)\right)\right],
\end{equation}
where $C_i = \{(h,x)\in \Poly_{<i}\times \A^1: h(x) \neq 0\}.$ We view this as an element of $\expp_{\A^1}$ via the second projection $C_i\to \A^1$. 
    Note that if $S_i(f)$ is viewed as an element of $\expp_{\MPoly_i}$ via the projection morphism $B_i \to \MPoly_i$, given by $(h_1,h_2)\mapsto h_2$, then  $U_i(f)$ is the restriction of $S_i(f)$ to the locus $\Delta_i \simeq \A^1$ of polynomials of the form $(t-x)^i$.

Recalling the definition of motivic Euler products from  Section \ref{sect:motivic_euler_prods}, we have the following result. 
  
  \begin{prop}
  \label{pro:EPD}
     We have the motivic Euler product decomposition
  \begin{equation}\label{eq:sing_series_euler_prod}\mathfrak{S}(f) = \prod_{x\in \A^1}\left(1 + \sum_{i\geq 1} U_i(f)_x T^i\right)\Big|_{T=\LL^{-n}}.
  \end{equation}
  \end{prop}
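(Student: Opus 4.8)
The plan is to obtain the Euler product in the same way one extracts an Euler product from a multiplicative arithmetic function: the factorisation property of Proposition~\ref{prop:factor} supplies the multiplicativity, and the unique factorisation of a monic polynomial over $\C$ into a product of powers of distinct linear polynomials $t-x$, $x\in\A^1$, plays the role of the factorisation of an integer into prime powers. The underlying formalism is that of the motivic Euler products recalled in Section~\ref{sect:motivic_euler_prods} and developed in \cite[Section~3]{Bilu}, and the argument runs parallel to the sheaf-theoretic one carried out in \cite{BS} just after \cite[Lemma~3.5]{BS}; what must be done is to verify the hypotheses of that formalism in the present setting and to check that the resulting prime-power local factors are exactly $1+\sum_{i\geq 1}U_i(f)_xT^i$.

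First I would stratify. Fix $m\geq 1$. For a partition $\omega=(n_i)_{i\geq 1}$ of $m$, let $\MPoly_m^{\omega}\subset\MPoly_m$ be the locally closed subvariety of monic polynomials having exactly $n_i$ distinct roots of multiplicity $i$ for each $i$; unique factorisation gives the stratification $\MPoly_m=\bigsqcup_{\omega}\MPoly_m^{\omega}$ together with an isomorphism $\MPoly_m^{\omega}\simeq\Conf^{\omega}(\A^1)_{i\geq 1}$, carrying a tuple of pairwise distinct points of $\A^1$, grouped according to the multiplicity each is to receive, to $\prod_{i,k}(t-x_{i,k})^i$. Over $\MPoly_m^{\omega}$, the Chinese remainder theorem identifies a polynomial $h_1\in\Poly_{<m}$ coprime to $h_2=\prod_{i,k}(t-x_{i,k})^i$ with the collection of its residues $h_1^{(i,k)}:=h_1\bmod (t-x_{i,k})^i\in\Poly_{<i}$, each subject to $h_1^{(i,k)}(x_{i,k})\neq 0$, and identifies a tuple $\g\in\Poly_{<m}^n$ with the collection of its residue tuples $\g^{(i,k)}\in\Poly_{<i}^n$. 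Writing $C_i=\{(h,x)\in\Poly_{<i}\times\A^1:h(x)\neq 0\}$ as in \eqref{eq:def-Ui}, this exhibits $\Poly_{<m}^n\times B_m$, restricted over $\MPoly_m^{\omega}$, as the variety underlying $\Conf^{\omega}(\Poly_{<i}^n\times C_i)_{i\geq 1}$: the open complement-of-the-big-diagonal condition corresponds to the pairwise coprimality of the linear factors, and the $\prod_i\mathfrak{S}_{n_i}$-quotient to the fact that roots of a common multiplicity are interchangeable.

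Next I would match the exponentials. Iterating Proposition~\ref{prop:factor}, peeling off one factor $(t-x_{i,k})^i$ at a time and using its symmetry clause for factors of equal degree, shows that under the identifications just made one has $\res\!\bigl(\frac{h_1}{h_2}f(\g)\bigr)=\sum_{i\geq 1}\sum_{k=1}^{n_i}\res\!\bigl(\frac{h_1^{(i,k)}}{(t-x_{i,k})^i}f(\g^{(i,k)})\bigr)$. The right-hand side is precisely the morphism to $\A^1$ that the recipe of Section~\ref{sect:motivic_euler_prods} attaches to $\Conf^{\omega}$ of the family of varieties with exponentials $\bigl(U_i(f)\bigr)_{i\geq 1}$. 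Hence $S_m(f)$ restricted over $\MPoly_m^{\omega}$ equals $[\Conf^{\omega}(U_i(f))_{i\geq 1}]$, and summing over $\omega$ by the scissor relation gives $S_m(f)=\sum_{\omega\vdash m}[\Conf^{\omega}(U_i(f))_{i\geq 1}]$ in $K_0(\evar_{\C})$, which is exactly the $m$-th coefficient of $\prod_{x\in\A^1}\bigl(1+\sum_{i\geq 1}U_i(f)_xT^i\bigr)$ (the term $m=0$ being $1$ on both sides). Since this coefficient equals $S_m(f)$, the product series has the same radius of convergence as $\sum_{m\geq 0}S_m(f)T^m$, shown in Section~\ref{sec:convergence_singular_series} to be strictly smaller than $n$; the specialisation at $T=\LL^{-n}$ is therefore legitimate, and one concludes
$$
\mathfrak{S}(f)=\sum_{m\geq 0}S_m(f)\LL^{-nm}=\prod_{x\in\A^1}\Bigl(1+\sum_{i\geq 1}U_i(f)_xT^i\Bigr)\Big|_{T=\LL^{-n}}.
$$

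The step I expect to be the main obstacle is the bookkeeping inside the third paragraph: promoting the Chinese remainder identification to an isomorphism of varieties \emph{with exponentials} that is equivariant for the $\prod_i\mathfrak{S}_{n_i}$-actions defining $\Conf^{\omega}$, so that it passes to the quotients. This is the precise point at which the last sentence of Proposition~\ref{prop:factor} — that the factorisation isomorphism commutes with switching two factors of equal degree — is indispensable, and where one must check that permuting the roots of a given multiplicity permutes the residues of $h_1$ and of $\g$ compatibly and leaves the summed residue invariant. The remaining ingredients are the formal properties of motivic Euler products established in \cite{Bilu}.
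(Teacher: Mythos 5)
Your proposal is correct and follows essentially the same route as the paper: stratify $\MPoly_m$ by root multiplicities into pieces isomorphic to $\Conf^{\omega}(\A^1)$, iterate the factorisation property of Proposition~\ref{prop:factor} (with its symmetry clause to descend through the $\prod_i\mathfrak{S}_{m_i}$-quotient) to identify $S_m(f)$ over each stratum with $[\Conf^{\omega}(U_i(f))_{i\geq 1}]$, and then sum over partitions to match coefficients of the Euler product before specialising at $T=\LL^{-n}$ using the convergence already established. The point you flag as the main obstacle—equivariance of the Chinese-remainder identification—is exactly what the paper handles via the compatibility of the iterated factorisation isomorphism with permutations of equal-degree factors, so your treatment matches the paper's.
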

  
  \begin{proof} 
Recalling the definition
\eqref{eq:SS} of 
$\mathfrak{S}(f)$, it suffices to prove that 
$$
1 + \sum_{m\geq 1} S_m(f) T^m=
\prod_{x\in \A^1}\left(1 + \sum_{i\geq 1} U_i(f)_x T^i\right).
$$
The element $S_{m}(f)$ comes from a variety with exponential over $\MPoly_m$. On the other hand, there is an identification $\MPoly_m\simeq \Sym^m\A^1$ sending a monic polynomial $P = \prod_{i=1}^m(t-x_i)$ to the effective zero-cycle $\sum_{i=1}^mx_i$, and giving rise to a stratification of $\MPoly_m$ into pieces $\Conf^{\omega}(\A^1)$ indexed by partitions $\omega = (m_i)_i$ of $m$. We are going to exhibit an isomorphism 
  $$S_{m}(f)_{|\Conf^{\omega}(\A^1)} \to \Conf^{\omega}(U_i(f))_{i\geq 1}.$$
  
  We define the variety
  $$V_{\omega} = \{(l_{i,1},\dots,l_{i,m_i})_{i\geq 1}: l_{i,j}\ \text{relatively prime, monic},\ \deg l_{i,j} = i\} = \left(\prod_{i\geq 1}(\MPoly_i)^{m_i}\right)_{*}$$
  where we denote by $()_*$ the fact that we take only tuples of relatively prime polynomials. We also define $W_{\omega} \subset V_{\omega}$ by 
  $$W_{\omega} = \{(l_{i,j})_{i,j}\in V_{\omega}: 
  l_{i,j}\ \text{is of the form}\ (T-x_{i,j})^i\} = \left( \prod_{i\geq 1} \Delta_i^{m_i}\right)_{*}.
  $$
 Finally, we consider the morphisms
  $$\pi_{i,j}: V_{\omega}\to \MPoly_i,\ \ \ (l_{p,q})_{p,q} \mapsto l_{i,j}, $$
  which restrict to $\pi_{i,j}: W_{\omega} \to \Delta_i$,
  as well as 
  $$\pi_{\omega}: V_{\omega}\to \MPoly_m,\ \ \ (l_{p,q})_{p,q} \mapsto \prod_{i,j} l_{i,j}.$$
  
  By iterating the factorisation property in Proposition \ref{prop:factor}, we obtain an isomorphism
    $$
    \pi_{\omega}^* S_{m}(f) \simeq \prod_{i\geq 1}\prod_{1 \leq j \leq m_i} \pi_{i,j}^*S_{i}(f),
    $$
  which is compatible with any permutation of the $l_{i,j}$ with fixed $i$. 
  Restriction to $W_{\omega}$ now gives
  $$\pi_{\omega}^* S_{m}(f)_{| W_{\omega}} \simeq \prod_{i\geq 1}\prod_{1 \leq j \leq m_i} \pi_{i,j}^*U_{i}(f),$$
    There is a natural permutation action of $\mathfrak{S}_{\omega} = \prod_{i} \mathfrak{S}_{m_i}$ on $V_{\omega}$, which restricts to $W_{\omega}$, and such that via the isomorphism $\Delta_i \simeq \A^1$, the quotient $W_{\omega}$ is naturally identified with $\Conf^{\omega}(\A^1)$. Taking the quotient by this permutation action, we get the result. 
    \end{proof}

  \subsubsection{Interpretation as local densities}\label{s:SS2}
 
   \begin{lemma} \label{lem:factor}
   Let $x\in \A^1.$ Then, for every $N\geq 1$, we have 
  $$1 + \sum_{i=1}^N U_i(f)_{x}\LL^{-in} =\LL^{-N(n-1)} [ \Lambda_N(f,x)],$$
  where $ \Lambda_N(f,x)$ is given by \eqref{eq:lambda}.
  \end{lemma}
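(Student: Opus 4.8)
The plan is to mimic the activation step of the circle method carried out in Lemma~\ref{lem:active}: rewrite $[\Lambda_N(f,x)]$ as a motivic exponential sum over a space of ``principal parts at $x$'', and then match it term by term against the $U_i(f)_x$ by stratifying that space according to the order of the pole. The local computation underpinning everything is the identity
\[
\res\!\left(\frac{h}{(t-x)^i}\,p\right) = \text{coefficient of }(t-x)^{i-1}\text{ in }h(t)\,p(t)
\]
for $h\in\Poly_{<i}$ and $p\in\C[t]$, which I would prove in one line by Euclidean division $p=(t-x)^iq+\bar p$: the term $hq$ is a polynomial and contributes nothing to $\res$, and only the $(t-x)^{i-1}$-coefficient of $h\bar p$ produces a $t^{-1}$ in the Laurent expansion of $h\bar p/(t-x)^i$ at infinity. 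Three consequences I would extract: $\res(\tfrac{h}{(t-x)^i}p)$ depends only on $p\bmod(t-x)^i$; it is a linear form in the coefficients of $h$; and, taking $i=N$, the collection of these forms as $h$ ranges over $\Poly_{<N}$ vanishes identically exactly when $(t-x)^N\mid p$ --- because $h\mapsto hp\bmod(t-x)^N$ has image the ideal generated by $p$ in $\C[t]/(t-x)^N$, and the functional ``coefficient of $(t-x)^{N-1}$'' kills that ideal iff it is the zero ideal.

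With this in hand, the first step is to produce the exponential-sum presentation. Writing $\g\in\Poly_{<N}^n=\A^{Nn}$ and $h\in\Poly_{<N}=\A^N$, Lemma~\ref{lemma:linear_form} applied fibrewise says that $[\A^N_{k(\g)},\,h\mapsto\res(\tfrac{h}{(t-x)^N}f(\g))]$ equals $\LL^N$ when $\g\in\Lambda_N(f,x)$ and $0$ otherwise; Lemma~\ref{lemma:zero_criterion} then upgrades this to
\[
[\Lambda_N(f,x)] = \LL^{-N}\bigl[\Poly_{<N}^n\times\Poly_{<N},\ \res(\tfrac{h}{(t-x)^N}f(\g))\bigr]
\]
in $\expp_\C$. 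Next I would stratify the $h$-space $\Poly_{<N}=\A^N$ by the $(t-x)$-adic valuation of $h$: the point $h=0$, where the exponential is trivial and the contribution is just $[\Poly_{<N}^n]=\LL^{Nn}$, together with, for each $1\le i\le N$, the locally closed piece $\{h=(t-x)^{N-i}\tilde h:\ \tilde h(x)\ne0,\ \deg\tilde h<i\}$, which is isomorphic to the fibre $(C_i)_x$ of $C_i$ over $x$ and on which $\tfrac{h}{(t-x)^N}=\tfrac{\tilde h}{(t-x)^i}$. Finally, on the $i$-th stratum I would perform Euclidean division by $(t-x)^i$ in each coordinate of $\g$, giving $\Poly_{<N}^n\simeq\Poly_{<i}^n\times\Poly_{<N-i}^n$ and, since $f(\g)\equiv f(\bar\g)\bmod(t-x)^i$, an identification of the contribution with $\LL^{(N-i)n}U_i(f)_x$ using the definition~\eqref{eq:def-Ui} of $U_i(f)$. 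Assembling and multiplying by $\LL^{-N(n-1)}$ collapses the powers of $\LL$ exactly to $1+\sum_{i=1}^N\LL^{-in}U_i(f)_x$.

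Most of this is bookkeeping; the one point I expect to need genuine care is the vanishing criterion in the first step --- that, ranging over all $h\in\Poly_{<N}$, the linear forms $\res(\tfrac{h}{(t-x)^N}f(\g))$ detect precisely the condition $(t-x)^N\mid f(\g)$ --- since this is exactly the input that makes Lemma~\ref{lemma:linear_form}'s dichotomy produce the right variety, and it rests on the ideal-theoretic description in $\C[t]/(t-x)^N$ above rather than on any single well-chosen $h$. A secondary nuisance is keeping straight which Grothendieck ring each class lives in (the relative ring over $\Poly_{<N}^n$ for the zero-criterion argument, then $\expp_\C$, with $U_i(f)_x$ obtained from $U_i(f)\in\expp_{\A^1}$ by pulling back along $x\colon\Spec\C\to\A^1$), but this is routine.
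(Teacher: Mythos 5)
Your argument is correct, but it is organised differently from the paper's. The paper never passes through an ``activation'' step at level $N$: instead it evaluates each $U_i(f)_x$ individually by splitting the complete sum over $h\in\Poly_{<i}$ into the part with $h(x)\neq 0$ and the part with $(t-x)\mid h$, obtaining $U_i(f)_x=\LL^{i-1}(\LL-1)[\Lambda_i(f,x)]-\LL^{i-1}[\Lambda_i'(f,x)]$ with $\Lambda_i'$ the locus where $f(\g)$ is divisible by $(t-x)^{i-1}$ but not $(t-x)^i$, then uses $[\Lambda_i'(f,x)]=\LL^n[\Lambda_{i-1}(f,x)]-[\Lambda_i(f,x)]$ and lets the sum over $i$ telescope. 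You instead run the orthogonality the other way: you write $[\Lambda_N(f,x)]$ as one complete motivic exponential sum over all numerators $h\in\Poly_{<N}$ (via Lemmas \ref{lemma:linear_form} and \ref{lemma:zero_criterion}, exactly as in Lemma \ref{lem:active}), stratify the $h$-space by the $(t-x)$-adic valuation, and identify the stratum of valuation $N-i$ with $\LL^{(N-i)n}U_i(f)_x$ after Euclidean division of $\g$ by $(t-x)^i$. This is the motivic analogue of decomposing a complete sum modulo $(t-x)^N$ according to the gcd of numerator and modulus; it buys a direct identity with no auxiliary spaces $\Lambda_i'$ and no telescoping, at the cost of the extra level-$N$ activation step and the bookkeeping you describe (in particular the criterion that the linear forms $h\mapsto\res\bigl(\tfrac{h}{(t-x)^N}f(\g)\bigr)$ vanish identically exactly when $(t-x)^N\mid f(\g)$, which your ideal-theoretic argument in $\C[t]/(t-x)^N$ handles correctly). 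The one point to state explicitly is that the zero criterion is being applied in the localised ring $\expp_{\Poly_{<N}^n}$ because of the factor $\LL^{-N}$ (or, equivalently, clear that factor before comparing values); the paper makes the same move in Lemma \ref{lem:active}, so this is a matter of phrasing rather than a gap.
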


  \begin{proof}
Recall the definition of $U_i$ from \eqref{eq:def-Ui}. We are going to reason completely analogously to the passage from the singular series to local densities in the classical circle method, which relies on some simple properties of Ramanujan sums. (See, for example, the proof of Lemma 2.14 in \cite{Browning_book}.) We work over $\Poly_{<i}^n$, which we may partition into three pieces: the first piece is $\Lambda_i(f,x)$, the second one is 
 $$\Lambda'_i(f,x) = \{\g \in \Poly_{<i}^n: f(\g)\equiv 0\bmod{(t-x)^{i-1}}\ \text{but}\ f(\g) \not\equiv 0 \bmod{(t-x)^{i}}\},$$
 and the third one is 
 $$\Lambda''_i(f,x) = \{\g \in \Poly_{<i}^n: f(\g)\not\equiv 0 \bmod{(t-x)^{i-1}}\}.$$

 For $\g\in \Lambda_i(f,x)$, the residue is zero and so the fibre of $U_i(f)_x$ above $\g$ is simply given by the class of $C_i$, which is easily seen to be equal to $\LL^{i-1}(\LL-1)$, since by sending $h\in C_i$ to the tuple given by the coefficients of its Taylor expansion at $x$, we have an isomorphism \begin{equation}\label{eq:isoC_i}C_i\simeq (\A^{1}-\{0\})\times \A^{i-1}.\end{equation}
 
 We now take $\g\not\in \Lambda_i(f,x)$. Thus, we may write 
$f(\g) = (t-x)^{i- \ell}p(t)$ for some integer $\ell \geq 1$ and some polynomial $p$ in $t$ such that $p(x)\neq 0$. The fibre of $U_i(f)_x$ at $\g$ then takes the form
$$\left[C_i,\res\left(\frac{h(t)p(t)}{(t-x)^{\ell}}\right)\right]$$
in $\expp_{\kappa(\g)}$ where $\kappa(\g)$ is the residue field of $\g$. By using the  isomorphism (\ref{eq:isoC_i}) and denoting by $\a$ the vector of the coefficients of orders $0,\ldots,\ell - 1$ of the Taylor expansion of $h$ at $x$, this can be rewrittten as
 $$\LL^{i-\ell}\left[(\A^{1}-\{0\})\times \A^{\ell-1}, \res \left(\frac{\lambda_0(\a) + \lambda_1(\a)(t-x) + \cdots + \lambda_{\ell-1}(\a)(t-x)^{\ell-1}}{(t-x)^{\ell}}\right)\right],$$
 where $\lambda_0,\ldots,\lambda_{\ell-1}$ are linear forms in the coordinates of $\a$. Note that one of the coefficients of $\lambda_{\ell-1}$ is $p(x)$, and therefore $\lambda_{\ell-1}$ is non-zero. Observe also that for any constant $c$, and for any integer $j\geq 1$, 
 $$\res\left(\frac{c}{(t-x)^j}\right) = \left\{\begin{array}{ll} 0& \text{if}\ j\geq 2,\\
 c& \text{if}\ j = 1.\end{array}\right.$$
 
 We now distinguish between the case when $\g\in \Lambda'_i(f,x)$ and the case when $\g\in \Lambda''_i(f,x)$. In the former case, we have $\ell = 1$, and our class becomes equal to 
 $$\LL^{i-1}[\A^1-\{0\}, \lambda_{0}(\a)] = \LL^{i-1}[\A^1,\lambda_{0}(\a)] -\LL^{i-1} = -\LL^{i-1}$$
 by Lemma \ref{lemma:linear_form}. In the latter case, we have $\ell\geq 2$, and by a similar argument we find that our class vanishes. 
  Thus, we have shown  the  relation
  $$
  U_i(f)_x = \LL^{i-1}(\LL-1) [\Lambda_i(f,x)] - \LL^{i-1}[\Lambda'_{i}(f,x)],
  $$
  in $\expp_{\Poly_{<i}^n}.$

Finally, on observing that $[\Lambda'_i(f,x)] = \LL^{n}[\Lambda_{i-1}(f,x)] - [\Lambda_i(f,x)]$, the left-hand side in the lemma  leads to a telescopic sum, and the only term remaining is the one on the right-hand side.
  \end{proof}

 \begin{remark}\label{rem:faisant}
 Recall the 
 motivic Euler product introduced in Proposition
\ref{pro:EPD}.
Consider the companion motivic Euler product
\begin{equation}\label{eq:auxiliary_product}\prod_{x\in \A^1}\left( 1 + \sum_{i\geq 1} U_i(f)_x\LL^{-in}S T^{i}\right),
\end{equation}
with an extra variable $S$.  Then, by definition, the motivic Euler product $\mathfrak{S}(f)$ 
is equal to the product (\ref{eq:auxiliary_product}) evaluated at $S = 1$ and then at $T = 1$.
On the other hand,  
the exponential sum estimates from Section \ref{sec:exp_sums_general_bound} apply in the same way as in the proof of 
\eqref{eq:train}, and allow us to deduce the bound
$$
w(U_i(f)_x\LL^{-in}) \leq \frac{(2^{d-1}(d-1) -n)i}{2^{d-2}(d-1)}.
$$
On assuming that $n>2^{d-1}(d-1)$, 
this  shows that we may evaluate the product \eqref{eq:auxiliary_product} at $T = 1$ and then Lemma \ref{lem:factor} shows that we obtain the motivic Euler product
$$\prod_{x\in \A^1}\left(1 + \left(\lim_{N\to \infty}\LL^{-N(n-1)}[ \Lambda_N(f,x)]-1\right) S\right).$$
Hence, on evaluating at $S = 1$,  we get that 
$$\mathfrak{S}(f) = \prod_{x\in \A^1} \left( 1 + \left(\lim_{N\to \infty}\LL^{-N(n-1)}[ \Lambda_N(f,x)]-1\right) S\right)_{|S = 1}.$$

\end{remark}

  Bearing in mind Notation \ref{notation:euler_product}, the following result is a consequence of 
  Remark \ref{rem:faisant}.
  
\begin{cor}\label{cor:local-factor}
We have
$$
\mathfrak{S}(f)=
\prod_{x\in \A^1}\lim_{N\to \infty}\LL^{-N(n-1)}[ \Lambda_N(f,x)],
$$
where $\Lambda_N(f,x)$ is given by \eqref{eq:lambda}.
\end{cor}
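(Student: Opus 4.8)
The plan is to check that the right-hand side, once unwound through Notation~\ref{notation:euler_product}, is precisely the expression obtained at the end of Remark~\ref{rem:faisant}. Set $a_x = \lim_{N\to\infty}\LL^{-N(n-1)}[\Lambda_N(f,x)]$; by Notation~\ref{notation:euler_product} the symbol $\prod_{x\in\A^1}a_x$ denotes the evaluation at $S=1$ of the motivic Euler product $\prod_{x\in\A^1}\left(1 + (a_x-1)S\right)$. So the whole content of the corollary is the identity
$$
\mathfrak{S}(f) = \prod_{x\in\A^1}\left(1 + \left(\lim_{N\to\infty}\LL^{-N(n-1)}[\Lambda_N(f,x)] - 1\right)S\right)\Big|_{S=1},
$$
which is exactly the last displayed formula of Remark~\ref{rem:faisant}. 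In this sense the proof is immediate; below I spell out the chain of reasoning so that it is self-contained.

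First I would start from the Euler product decomposition $\mathfrak{S}(f) = \prod_{x\in\A^1}\left(1 + \sum_{i\geq 1}U_i(f)_x T^i\right)|_{T=\LL^{-n}}$ of Proposition~\ref{pro:EPD}, and rewrite it via the monomial substitution $T\mapsto\LL^{-n}T$ as the value at $T=1$ of $\prod_{x\in\A^1}\left(1 + \sum_{i\geq 1}U_i(f)_x\LL^{-in}T^i\right)$. Then I would pass to the companion product $\prod_{x\in\A^1}\left(1 + \sum_{i\geq 1}U_i(f)_x\LL^{-in}S T^i\right)$ in two commuting variables $S,T$. The exponential sum estimates of Section~\ref{sec:exp_sums_general_bound}, used exactly as in the proof of \eqref{eq:train}, give the bound $w(U_i(f)_x\LL^{-in})\leq \frac{(2^{d-1}(d-1)-n)i}{2^{d-2}(d-1)}$, which is negative since $n>2^d(d-1)$; hence the coefficients of this series in $T$ have weight tending to $-\infty$ and the product may be evaluated at $T=1$. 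By Lemma~\ref{lem:factor} each local factor then collapses to $1 + \left(\lim_{N\to\infty}\LL^{-N(n-1)}[\Lambda_N(f,x)] - 1\right)S$, and in particular the limit $a_x$ exists in $\widehat{\expp_\C}$. Finally I would evaluate at $S=1$; this is legitimate because $\mathfrak{S}(f) = \sum_{m\geq 0}S_m(f)\LL^{-nm}$ converges in the weight topology by Section~\ref{sec:convergence_singular_series}, and it yields $\mathfrak{S}(f)$ on one side and $\prod_{x\in\A^1}a_x$ on the other, as required.

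The only genuinely delicate point — which is a matter of careful bookkeeping rather than a deep difficulty — is that the substitutions at play ($T=\LL^{-n}$, and then $S=1$) are not monomial, and motivic Euler products are known to misbehave under non-monomial substitutions (cf.\ the remark following Notation~\ref{notation:euler_product}). This is precisely why the argument is routed through the companion product in the auxiliary variables $S$ and $T$: every individual operation performed is either a bona fide monomial substitution ($T\mapsto\LL^{-n}T$) or a genuine convergent evaluation governed by an explicit negative weight bound ($T=1$ through the exponential sum estimate, $S=1$ through convergence of the singular series), and the order in which the evaluations at $T=1$ and $S=1$ are carried out is immaterial by those same bounds. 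Modulo this, the corollary is a formal consequence of Proposition~\ref{pro:EPD}, Lemma~\ref{lem:factor}, Remark~\ref{rem:faisant} and Notation~\ref{notation:euler_product}.
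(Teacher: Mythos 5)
Your proposal is correct and follows essentially the same route as the paper: the paper's proof of Corollary \ref{cor:local-factor} consists precisely of invoking Remark \ref{rem:faisant} (the companion product in the auxiliary variable $S$, the weight bound on $U_i(f)_x\LL^{-in}$ justifying evaluation at $T=1$ via Lemma \ref{lem:factor}, then evaluation at $S=1$) together with Notation \ref{notation:euler_product}. Your write-up simply spells out that chain of reasoning, including the correct caution about non-monomial substitutions, so there is nothing to add.
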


\subsection{Singular integral}

Returning to Proposition \ref{prop:major_arcs_first_expression}, we proceed by analysing the term
$$
\int U(\theta) d\theta,
$$
where $U(\theta)$ is given by \eqref{eq:U(theta)}. We want to rewrite it in a way that draws an obvious comparison with  the singular integral in the classical circle method.

\subsubsection{Rewriting the sum (over polynomials) as an integral (over power series in $t^{-1}$)} For this, we show that the function $(g_1,\dots,g_n) \mapsto \res (\theta f(g_1,\dots,g_n))$ is invariant modulo $\T^n$; i.e., for every $\x \in t^{e}k[[t^{-1}]]$ and every $\sigma\in \T^n$, we claim that
$$
\ord(\theta (f(\x + \sigma) - f(\x))) < -1.
$$
The polynomial $f(\x + \sigma) - f(\x)$ is a sum of monomials of global degree $d$, with the degree in $\x$ being at most $d-1$. Thus, bounding the order of the components of $\x$ by $e$ and the order of the components of $\sigma$ by $-1$, we get
$$\ord((f(\x + \sigma) - f(\x))) \leq (d-1) e -1,$$
whence
$$\ord(\theta (f(\x + \sigma) - f(\x)))\leq -de-2 + \gamma + (d-1)e -1 = -e-3 + \gamma <-1,$$
since  $\gamma \leq e$
in \eqref{eq:gamma}.
Thus, it makes sense to consider the element $$[\A^{n(e,-1)}\times \A^{(-de-2+\gamma, -de-2)}, (\x,\theta)\mapsto\res(\theta f(\x))]\in \expp_{\A^{n(e,-1)}\times \A^{(-de-2+\gamma, -de-2)}},$$ and moreover, since there is an obvious isomorphism $\A^{n(e,-1)}\simeq \Poly_{\leq e}^n$, we get the equality
 $$U(\theta) = \int_{\A^{n(e,-1)}} [\A^{n(e,-1)}\times \A^{(-de-2+\gamma, -de-2)}, \res(\theta f(\x))] d\x$$
in $\expp_{\A^{(-de-2+\gamma, -de-2)}}.$

\subsubsection{Change of variables to get an integral over $\T^n$}
Using Remark \ref{rem:change_of_var}, the change of variables $\x = t^{e+1}\y$, together with the homogeneity of the polynomial $f$, gives the equality 
 $$U(\theta) = \LL^{n(e+1)} \int_{\A^{n(-1,-e-2)}}  [\A^{n(-1,-e-2)}\times \A^{(-de-2+\gamma, -de-2)}, \res(\theta t^{(e+1)d}f(\y))]d\y,$$
in $\expp_{\A^{(-de-2+\gamma, -de-2)}}.$

\subsubsection{Integrating over the space of $\theta$, and a change of variable}
Now, using the change of variable $\beta = t^{(e+1)d}\theta$ and Remark \ref{rem:change_of_var}, we get
\begin{equation}\label{eq:theta-to-beta}
\int U(\theta) d\theta= \LL^{n(e+1)}\LL^{-d(e+1)}\int
\mathcal{J}(\y)d\y,
\end{equation}
where
$$
\mathcal{J}(\y)=
 \int_{\A^{(d-2+\gamma, d-2)}} [\A^{n(-1,-e-2)}\times \A^{(d-2+\gamma, d-2)}, \res(\beta f(\y))]d\beta.
 $$
We may now establish the following result.

\begin{prop}\label{pro:orp}
We have 
$$
\int U(\theta) d\theta = \LL^{-de-1+\gamma}
[V_{d+\gamma -1}],
$$
where
$$
V_{d+ \gamma -1} = \{ \y\in \A^{n(-1,-e-2)}: \ord f(\y) < -d-\gamma + 1\}.
$$
\end{prop}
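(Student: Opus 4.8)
The plan is to start from \eqref{eq:theta-to-beta}, which already reduces the computation of $\int_\theta U(\theta)$ to evaluating $\int_\y \mathcal{J}(\y)$, where
$$
\mathcal{J}(\y)=\int_\beta \bigl[\A^{n(-1,-e-2)}\times \A^{(d-2+\gamma,\,d-2)},\ \res(\beta f(\y))\bigr].
$$
The first step is to unwind the morphism $\res(\beta f(\y))$. Write $f(\y)=\sum_{j\geq d} c_j(\y)\,t^{-j}$, a finite sum whose coefficients $c_j$ are polynomials in the coordinates of $\y$; by homogeneity, and since every component of $\y$ has order $\leq -1$, one has $c_j=0$ for $j<d$. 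The affine space $\A^{(d-2+\gamma,\,d-2)}$ is $\gamma$-dimensional, with coordinates the coefficients $\beta_{d-1},\dots,\beta_{d-2+\gamma}$ of $t^{d-1},\dots,t^{d-2+\gamma}$ in $\beta$. Extracting the coefficient of $t^{-1}$ in the product $\beta f(\y)$ pairs $\beta_i$ with $c_{i+1}(\y)$, so that
$$
\res(\beta f(\y))=\sum_{i=d-1}^{d-2+\gamma}\beta_i\,c_{i+1}(\y).
$$
For fixed $\y$ this is a linear form on the $\gamma$-dimensional space of $\beta$'s, with coefficients $c_d(\y),\dots,c_{d+\gamma-1}(\y)$.

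The second step is to carry out the $\beta$-integral using this description. By the normalisation of Section \ref{sect:motivic_integrals} (one variable $\beta$ of level $(d-2+\gamma,d-2)$, so a factor $\LL^{d-1}$), $\mathcal{J}(\y)$ is $\LL^{d-1}$ times the image in $\expp_{\A^{n(-1,-e-2)}}$ of the class above. Evaluating that image fibrewise over a point $\y$ and applying Lemma \ref{lemma:linear_form} to the linear form displayed above gives $\LL^{\gamma}$ when $c_d(\y)=\dots=c_{d+\gamma-1}(\y)=0$, and $0$ otherwise. Since $c_1=\dots=c_{d-1}=0$ automatically, this vanishing condition is exactly $\ord f(\y)<-d-\gamma+1$, i.e.\ $\y\in V_{d+\gamma-1}$. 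Hence Lemma \ref{lemma:zero_criterion} yields
$$
\mathcal{J}(\y)=\LL^{d-1+\gamma}\,[V_{d+\gamma-1}]
$$
in $\expp_{\A^{n(-1,-e-2)}}$, with $V_{d+\gamma-1}$ regarded as a closed subvariety of $\A^{n(-1,-e-2)}$.

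The last step is pure bookkeeping: forgetting down to $\expp_\C$ and applying the $\y$-integral normalisation ($n$ variables of level $(-1,-e-2)$, hence a factor $\LL^{(-e-1)n}$) gives $\int_\y\mathcal{J}(\y)=\LL^{-(e+1)n}\LL^{d-1+\gamma}[V_{d+\gamma-1}]$, and substituting into \eqref{eq:theta-to-beta} produces
$$
\int_\theta U(\theta)=\LL^{n(e+1)}\LL^{-d(e+1)}\LL^{-(e+1)n}\LL^{d-1+\gamma}[V_{d+\gamma-1}]=\LL^{-de-1+\gamma}[V_{d+\gamma-1}],
$$
as claimed. The one delicate point — and essentially the only place an error could creep in — is the index bookkeeping in the second step: one must verify that the residue pairing couples precisely the $\gamma$ coordinates of $\beta$ with precisely the coefficients $c_d,\dots,c_{d+\gamma-1}$ of $f(\y)$, so that the resulting vanishing locus is exactly $V_{d+\gamma-1}$ rather than an off-by-one variant. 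Everything else is a routine combination of Lemmas \ref{lemma:linear_form} and \ref{lemma:zero_criterion} with the change-of-variables and normalisation conventions already set up in Section \ref{sect:motivic_integrals}.
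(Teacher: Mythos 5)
Your proposal is correct and follows essentially the same route as the paper's own proof: both start from \eqref{eq:theta-to-beta}, expand $\res(\beta f(\y))$ as the linear form $\sum_{i=d-1}^{d-2+\gamma}\beta_i c_{i+1}(\y)$ in the $\gamma$ coordinates of $\beta$, apply Lemma \ref{lemma:linear_form} fibrewise together with Lemma \ref{lemma:zero_criterion} to identify $\mathcal{J}(\y)$ with $\LL^{d-1+\gamma}[V_{d+\gamma-1}]$, and finish with the same normalisation bookkeeping. The index bookkeeping you flag as the delicate point is handled exactly as in the paper, including the use of homogeneity to rule out terms of order $>-d$ in $f(\y)$.
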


\begin{proof}
By definition, we have 
$$
\mathcal{J}(\y)=
 \LL^{d-1} [\A^{n(-1,-e-2)}\times \A^{(d-2 + \gamma,d-2)}, \res(\beta f(\y))]
 $$
 in $\expp_{\A^{n(-1,-e-2)}}.$
We write
$$
\beta = b_{d-2  +\gamma} t^{d-2 + \gamma} + \dots + b_{d-1}t^{d-1} + t^{d-2} k[[t^{-1}]]
$$ 
and
$$f(\y) = c_{d}(\y)t^{-d} + c_{d+1}(\y)t^{-d-1} +\cdots, $$
where we have used  the fact that $f$ is homogeneous of degree $d$ to deduce that  $f(\y)$ only has terms of degrees $\leq -d$. 
Then $$\res (\beta f(\y)) = b_{d-1} c_d(\y) + b_{d} c_{d+1}(\y)+ \dots + b_{d + \gamma -2} c_{d + \gamma -1}(\y).$$ 
Thus, by Lemma \ref{lemma:linear_form}, for every $\y\in \A^{n(-1,-e-2)}$, the value of  
$\mathcal{J}(\y)$
at $\y$ is $\LL^{d-1 + \gamma}$ if and only if $$c_d(\y)=c_{d+1}(\y)= \dots= c_{d + \gamma -1}(\y) = 0,$$
that is, if and only if $\ord f(\y) < -d-\gamma +1$, and 0 otherwise. Using Lemma \ref{lemma:zero_criterion}, 
it now follows from \eqref{eq:theta-to-beta} that
\begin{align*} 
\int U(\theta) d\theta
&= \LL^{n(e+1)}\LL^{-d(e+1)}
\int
[V_{d+\gamma -1}]\LL^{d-1 + \gamma}d\y\\
&= \LL^{n(e+1)}\LL^{-d(e+1)}
\LL^{-n(e+1)}[V_{d+\gamma -1}]\LL^{d-1 + \gamma},
\end{align*}
with 
$
V_{d+ \gamma -1}$ as in the statement.  The proposition easily follows.
\end{proof}

\subsection{Connection to jet spaces}
For any integer $N$, 
recall that the $N$th {\em jet space}  of $X$ is given by
$$
\mathcal{L}_N(X)= \{\x = \x_{0} + \x_1 t + \dots + \x_{N} t^N:  f(\x) \equiv 0 \bmod t^{N+1}\},
$$
where $\x_0,\dots,\x_N$ run over $\A^n$.
We take $\mathcal{L}_N(X)=\varnothing$ when $N<0$, and note that $\mathcal{L}_0(X)=X$. 

Jet spaces of smooth varieties are very well understood.  In our setting, the variety $X$ has one singular point at 0, so we decided it would be useful to give a few details on the structure of jet spaces for hypersurfaces defined by non-singular  homogeneous polynomials $f\in \C[x_1,\dots,x_n]$ of degree $d$.

\begin{lemma}\label{lem:jet_space} 
For any $N\geq 1$, we have 
$$
\frac{[\mathcal{L}_{N}(X)]}{\LL^{(N+1)(n-1)}}=
\frac{[\mathcal{L}_{N-1}(X)]}{\LL^{N(n-1)}}+
\LL^{N-n-n\lfloor N/d\rfloor}\cdot 
\begin{cases}
\LL-1
& \text{ if $d\nmid N$,}\\
\LL \left([X]-\LL^{n-1}\right) & 
\text{ if $d\mid N$.}
\end{cases}
$$
in $\expp_\C$. 
\end{lemma}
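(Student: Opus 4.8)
We argue by induction on $N$, exploiting the truncation morphism $\pi_N\colon\mathcal{L}_N(X)\to\mathcal{L}_{N-1}(X)$ that forgets the top coefficient $\x_N$. Given $\bar\x=\x_0+\dots+\x_{N-1}t^{N-1}\in\mathcal{L}_{N-1}(X)$ and $\x_N\in\A^n$, a Taylor expansion in $\x_Nt^N$ gives, modulo $t^{N+1}$,
$$f(\bar\x+\x_Nt^N)\equiv f(\bar\x)+t^N\,\nabla f(\x_0)\cdot\x_N,$$
because $\nabla f(\bar\x)\equiv\nabla f(\x_0)\bmod t$ and all higher-order terms in $\x_Nt^N$ have order $\geq 2N\geq N+1$. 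Writing $f(\bar\x)=c_N(\bar\x)t^N+(\text{order}\geq N+1)$, where $c_N$ is a regular function on $\mathcal{L}_{N-1}(X)$, the fibre of $\pi_N$ over $\bar\x$ is the affine subspace $\{\x_N\in\A^n:\nabla f(\x_0)\cdot\x_N=-c_N(\bar\x)\}$.

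First I would stratify $\mathcal{L}_{N-1}(X)$ by whether $\x_0=\0$. On $U_{N-1}:=\{\x_0\neq\0\}$ one has $\nabla f(\x_0)\neq\0$: by Euler's identity $d\,f(\x_0)=\x_0\cdot\nabla f(\x_0)$, so $\nabla f(\x_0)=\0$ would make $\x_0$ a singular point of $X$, forcing $\x_0=\0$ since $f$ is non-singular. Hence $\pi_N$ is piecewise an $\A^{n-1}$-bundle over $U_{N-1}$ (locally solve for one coordinate of $\x_N$), so $[\pi_N^{-1}(U_{N-1})]=\LL^{n-1}[U_{N-1}]$. On $Z_{N-1}:=\{\bar\x\in\mathcal{L}_{N-1}(X):\x_0=\0\}$ one has $\nabla f(\0)=\0$, so the fibre is $\A^n$ if $f(\bar\x)\equiv 0\bmod t^{N+1}$ and empty otherwise; writing $W_N:=\{\bar\x\in Z_{N-1}:f(\bar\x)\equiv 0\bmod t^{N+1}\}$ and combining with $[\mathcal{L}_{N-1}(X)]=[U_{N-1}]+[Z_{N-1}]$ yields
$$[\mathcal{L}_N(X)]-\LL^{n-1}[\mathcal{L}_{N-1}(X)]=\LL^n[W_N]-\LL^{n-1}[Z_{N-1}].$$
Multiplying the asserted identity through by $\LL^{(N+1)(n-1)}$ and simplifying exponents, the lemma is equivalent to showing that this difference equals $\LL^{\,n(N-\lfloor N/d\rfloor)-1}$ times $\LL-1$ (if $d\nmid N$) or $\LL([X]-\LL^{n-1})$ (if $d\mid N$).

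To evaluate $[Z_{N-1}]$ and $[W_N]$, substitute $\bar\x=t\y$ with $\deg\y\leq N-2$; homogeneity gives $f(t\y)=t^df(\y)$, so the congruences become $f(\y)\equiv 0\bmod t^{N-d}$ and $f(\y)\equiv 0\bmod t^{N+1-d}$ respectively. If $N\leq d-1$ both are vacuous, so $[Z_{N-1}]=[W_N]=\LL^{n(N-1)}$. If $N=d$, then $[Z_{d-1}]=\LL^{n(d-1)}$ and $W_d\cong X\times\A^{n(d-2)}$, so $[W_d]=\LL^{n(d-2)}[X]$. If $N\geq d+1$, freeing the coefficients of $\y$ not constrained by the congruence identifies $Z_{N-1}\cong\mathcal{L}_{N-d-1}(X)\times\A^{n(d-1)}$ and $W_N\cong\mathcal{L}_{N-d}(X)\times\A^{n(d-2)}$, giving $[Z_{N-1}]=\LL^{n(d-1)}[\mathcal{L}_{N-d-1}(X)]$ and $[W_N]=\LL^{n(d-2)}[\mathcal{L}_{N-d}(X)]$.

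Finally I would carry out the induction, with base cases $1\leq N\leq d$: for $N\leq d-1$ the difference equals $\LL^{nN-1}(\LL-1)$, which is the $d\nmid N$ case since $\lfloor N/d\rfloor=0$; for $N=d$ it equals $\LL^{n(d-1)}[X]-\LL^{nd-1}=\LL^{\,n(d-1)-1}\LL([X]-\LL^{n-1})$, which is the $d\mid N$ case. For $N\geq d+1$ the formulas above give
$$\LL^n[W_N]-\LL^{n-1}[Z_{N-1}]=\LL^{n(d-1)}\bigl([\mathcal{L}_{N-d}(X)]-\LL^{n-1}[\mathcal{L}_{N-d-1}(X)]\bigr),$$
and applying the inductive hypothesis to $N-d$ (valid since $1\leq N-d<N$), together with $\lfloor(N-d)/d\rfloor=\lfloor N/d\rfloor-1$ and the equivalence $d\mid(N-d)\Leftrightarrow d\mid N$, produces the required right-hand side once the exponents are collected: $n(d-1)+n\bigl((N-d)-\lfloor N/d\rfloor+1\bigr)-1=n(N-\lfloor N/d\rfloor)-1$. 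I expect the main friction points to be the careful treatment of the boundary regimes ($N<d$, $N=d$, $N>d$, and the degenerate case $d=2$ where some auxiliary affine factors collapse) and the verification that $\pi_N$ is piecewise trivial over $U_{N-1}$; the remaining manipulations of jet-space strata and exponents are routine.
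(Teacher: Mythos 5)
Your proof is correct, and it is essentially the paper's argument: the same stratification by $\x_0=\0$, the same use of non-singularity to make the truncation an $\A^{n-1}$-fibration on the open stratum, and the same homogeneity substitution $\bar\x=t\y$ producing the period-$d$ recursion, merely organized as an induction on $N$ (with your $W_N$ playing the role of the paper's $Z_N$ up to an $\A^n$-factor) rather than the paper's closed-form assembly via $\Psi_N$ and $N=qd+r$.
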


\begin{proof}
We  write 
$Z_N$ for the locus of points in  $\mathcal{L}_N(X)$ with $\x_0=\mathbf{0}$ and we let 
$U_N=\mathcal{L}_N(X)\setminus Z_N$. 
We claim that 
$$
Z_N \simeq 
\begin{cases}
\A^{Nn} & \text{ if $0\leq N\leq d-1$,}\\
\mathcal{L}_{N-d}(X)\times \A^{(d-1)n} &  \text{ if $N\geq d$.}
\end{cases}
$$
To see  this, we note that $Z_N$ is the set of $\x_1 t+\dots +\x_{N}t^N$ such that 
$f(\x_1 t+\dots +\x_{N}t^N)\equiv 0 \bmod{t^{N+1}}$. 
When $N\leq d-1$ it is clear that $Z_N\simeq \A^{Nn}$. Suppose that $N\geq d$. 
Then $Z_N$  is equal to  space 
of $\x=\x_1 t+\dots +\x_{N}t^{N}$ such that 
$f(\x)\equiv 0 \bmod{t^{N+1}}$. Since 
$f$ is homogeneous of degree $d$, we have 
$f(\x_1 t+\dots +\x_{N}t^{N})=t^df(\x_1 +\x_2t\dots +\x_{N}t^{N-1})$, whence $Z_N$  is equal to  space 
of $\x=\x_1+\x_2 t+\dots +\x_{N}t^{N-1}$ such that 
$f(\x)\equiv 0 \bmod{t^{N+1-d}}$.
The claim is now obvious. 

Now let $N\geq 1$ and 
consider the truncation  
 morphism $
 \pi :U_N\to U_{N-1}.
 $
For each  $\x=\x_0+\x_1 t+\dots +\x_{N-1}t^{N-1}\in U_{N-1}$, the fibre $\pi^{-1}(\x)$ is the set of $\x_N\in \A^n$ such that $f(\x+\x_N t^N)\equiv 0 \bmod t^{N+1}$. Note  that 
$f(\x)\equiv 0 \bmod t^N$ and 
$$
f(\x+\x_N t^N)=f(\x)+t^N\x_N \cdot \nabla f(\x_0) +O(t^{N+1}),
$$ by Taylor's theorem. 
Since $f$ is non-singular, 
 $\nabla f(\x_0)$ does not vanish for $\x_0\neq \mathbf{0}$. Hence
 $\pi^{-1}(\x)\simeq \A^{n-1}$. 
 Hence it follows that 
 \begin{equation}\label{eq:pavel}
[U_N]= [U_{N-1}]\LL^{n-1},
 \end{equation}
 for any $N\geq 1$, since  $\pi$
is a  Zariski locally trivial fibration with fibre $\A^{n-1}$.
 
To prove the lemma we write
 \begin{equation}\label{eq:pavel'}
\Psi_N=
\frac{[\mathcal{L}_{N}(X)]}{\LL^{(N+1)(n-1)}}-
\frac{[\mathcal{L}_{N-1}(X)]}{\LL^{N(n-1)}},
\end{equation}
for any $N\geq 1$. 
To begin with, we assume that $N>d$. Then 
 \begin{align*}
[ \mathcal{L}_N(X)]&= [U_N] +[Z_N]=[U_N]+[\mathcal{L}_{N-d}(X)]\LL^{(d-1)n},
\end{align*}
and the same expression holds with $N-1$ in place of $N$. Thus
\begin{align*}
\Psi_N
&= \LL^{-(N+1)(n-1)}\left([U_N]- [U_{N-1}] \LL^{n-1}\right) +\LL^{-(n-d)} \Psi_{N-d}
= \LL^{-(n-d)} \Psi_{N-d},
\end{align*}
by \eqref{eq:pavel}.  
We next calculate $\Psi_N$ when $N\leq d$, for which we observe that 
$$
[Z_N]=\begin{cases}
\LL^{Nn} &\text{ if $0\leq N<d$,}\\
[X]\LL^{(d-1)n} &\text{ if $N=d$},
\end{cases}
$$
since $\mathcal{L}_0(X)=X$.
It easily follows from \eqref{eq:pavel'} that 
\begin{align*}
\Psi_d
&=  \LL^{-(d+1)(n-1)}\left([Z_d]- [Z_{d-1}] \LL^{n-1}\right)\\
&=  \LL^{-(d+1)(n-1)}\left([X]\LL^{(d-1)n}-  \LL^{dn-1}\right)\\
&= \LL^{-(2n-d-1)}  \left([X]-\LL^{n-1}\right).
\end{align*}
Similarly,
\begin{align*}
\Psi_N
&= \LL^{-(N+1)(n-1)}\left([Z_N]- [Z_{N-1}] \LL^{n-1}\right) \\
&= \LL^{-(N+1)(n-1)}\left(\LL^{Nn}-  \LL^{Nn-1} \right) \\
&= \LL^{-(n-N)}\left(\LL-1\right),
\end{align*}
if $1\leq N\leq d-1$.

We are now ready to complete the proof of the lemma. Any $N\geq 1$ can be written 
$N=qd+r$, where $q=\lfloor N/d\rfloor$ and
$0\leq r<d$. 
If  $r>0$ then we may put  the above calculations together to deduce that 
$$
\Psi_N=\LL^{-(n-d)q} \Psi_r= \LL^{N-n(q+1)}(\LL-1)=
\LL^{N-n-n\lfloor N/d\rfloor}(\LL-1)
$$
On the other hand, if $r=0$ then $q\geq 1$ and 
$$
\Psi_N=\LL^{-(n-d)(q-1)} \Psi_d=
\LL^{N-n+1-nN/d}
 \left([X]-\LL^{n-1}\right).
$$
This completes the proof of the lemma.
\end{proof}

\begin{remark}\label{rem:pad}
Let $N\geq 1$. If 
 $d\nmid N$ then 
$$
\dim\left( 
\LL^{N-n-n\lfloor N/d\rfloor}\cdot (\LL-1)\right)
\leq (N+1)(1-n/d),
$$
since
$\lfloor N/d\rfloor\geq N/d-1+1/d$. Moreover, if $d\mid N$ then 
  $$
\dim\left( 
\LL^{N-n-n\lfloor N/d\rfloor}
\cdot\LL \left([X]-\LL^{n-1}\right)
 \right)\leq N(1-n/d).
  $$
\end{remark}

\begin{cor}\label{cor:dim}
Assume that $n>d$. Then the $N$-th jet space $\mathcal{L}_N(X)$ is irreducible and has dimension $(n-1)(N+1)$. 
\end{cor}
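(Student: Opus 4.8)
The plan is to prove the two assertions together by strong induction on $N$, using the decomposition $\mathcal{L}_N(X) = U_N \sqcup Z_N$ that appears in the proof of Lemma~\ref{lem:jet_space} (where $Z_N$ is the locus of jets with $\x_0 = \mathbf{0}$ and $U_N$ its open complement), together with the elementary fact that every irreducible component of $\mathcal{L}_N(X)$ has dimension at least $(n-1)(N+1)$. For the base case $N = 0$ one has $\mathcal{L}_0(X) = X$, the affine cone over the smooth projective hypersurface $\tilde X \subset \Proj^{n-1}$; since $n > d$ forces $\dim \tilde X = n-2 \geq 1$, the variety $\tilde X$ is connected, hence irreducible, and so $X$ is irreducible of dimension $n-1$. (If $d = 1$ then $X$ is a linear subspace and the statement is trivial, so one may assume $d \geq 2$.)

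For the inductive step I would first record that, by the fibration argument in the proof of Lemma~\ref{lem:jet_space}, the truncation maps exhibit $U_N$ as an iterated Zariski-locally trivial $\A^{n-1}$-bundle over $U_0 = X \setminus \{\mathbf{0}\}$. As $U_0$ is irreducible of dimension $n-1$, it follows that $U_N$ is irreducible of dimension $(n-1)(N+1)$, and in particular non-empty; being open in $\mathcal{L}_N(X)$, it is therefore a dense open subset of any irreducible component of $\mathcal{L}_N(X)$ that it meets. Next I would bound $\dim Z_N$: the proof of Lemma~\ref{lem:jet_space} gives $Z_N \cong \A^{Nn}$ for $1 \leq N \leq d-1$ and $Z_N \cong \mathcal{L}_{N-d}(X) \times \A^{(d-1)n}$ for $N \geq d$. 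Feeding the inductive hypothesis into the second case, a one-line computation yields $\dim Z_N = Nn$ in the first case and $\dim Z_N = (n-1)(N+1) - (n-d)$ in the second; in both cases $\dim Z_N < (n-1)(N+1)$, the inequality being precisely the statement $n > d$ (using also $N \leq d-1 < n-1$ in the first case).

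To conclude, I would observe that $\mathcal{L}_N(X)$ is cut out inside $\A^{(N+1)n}$ by the $N+1$ coefficients of $f(\x) \bmod t^{N+1}$, so Krull's Hauptidealsatz gives $\dim C \geq (N+1)(n-1)$ for every irreducible component $C$. Combined with $\dim Z_N < (N+1)(n-1)$, this shows that no component is contained in $Z_N$; hence every component meets the dense open irreducible set $U_N$, forcing each component to coincide with $\overline{U_N}$. Thus $\mathcal{L}_N(X) = \overline{U_N}$ is irreducible of dimension $(n-1)(N+1)$, which closes the induction. A variant, staying closer to Lemma~\ref{lem:jet_space}, is to extract $\dim \mathcal{L}_N(X) = (n-1)(N+1)$ directly from that lemma and Remark~\ref{rem:pad} by induction, and then run the same component-counting step. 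The point to be careful about is that the naive route to irreducibility --- exhibiting $Z_N$ inside $\overline{U_N}$ via an explicit one-parameter deformation of a jet centred at the origin --- does not work, since the homogeneity of $f$ destroys the vanishing that defines $\mathcal{L}_N(X)$; the dimension-theoretic argument above circumvents this, and this is really the crux of the proof.
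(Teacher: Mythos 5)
Your argument is correct, and it takes a genuinely different route from the paper. The paper disposes of irreducibility in one line by invoking Kolchin's theorem (char.~$0$, $X$ irreducible), and then proves only the dimension bound, by induction on $N$ using the class identity of Lemma~\ref{lem:jet_space} together with Remark~\ref{rem:pad}: the recursion shows that $\dim\bigl([\mathcal{L}_N(X)]\LL^{-(N+1)(n-1)}\bigr)<0$ would fail unless $\dim\mathcal{L}_N(X)\leq (n-1)(N+1)$, the reverse inequality being the same Krull-type count you use. You instead prove both statements simultaneously and purely geometrically, from the decomposition $\mathcal{L}_N(X)=U_N\sqcup Z_N$ underlying the proof of Lemma~\ref{lem:jet_space}: $U_N$ is an iterated Zariski-locally trivial $\A^{n-1}$-bundle over $X\setminus\{\0\}$, hence irreducible of dimension $(n-1)(N+1)$; the inductive dimension count gives $\dim Z_N<(n-1)(N+1)$ exactly when $n>d$ (your computations $\dim Z_N=Nn$ for $N\leq d-1$ and $\dim Z_N=(n-1)(N+1)-(n-d)$ for $N\geq d$ are right); and the Hauptidealsatz lower bound on every component then forces each component to meet, hence equal, $\overline{U_N}$. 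What your approach buys is self-containedness and transparency about where $n>d$ enters: irreducibility of jet schemes (unlike arc spaces) does \emph{not} follow from irreducibility of $X$ alone, and for this cone it is equivalent to the numerical condition $d\leq n-1$ (a special case of Musta\c{t}\u{a}'s criterion), so your direct argument carries the hypothesis visibly through the proof rather than relying on a citation plus the separate Grothendieck-ring computation; the paper's route, in exchange, reuses Lemma~\ref{lem:jet_space}, which it needs anyway for the major-arc analysis. Your closing caveat about the failure of a naive deformation argument is side commentary and does not affect the proof.
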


\begin{proof}
Let us begin by calculating the  dimension, noting that
the lower bound
$$\dim \mathcal{L}_N(X)\geq (n-1)(N+1)$$ 
is trivial. To show that 
$\dim \mathcal{L}_N(X)\leq (n-1)(N+1)$, we argue by induction on $N$.
When $N=0$ we have 
  $\mathcal{L}_0(X)=X$ and the claim is obvious. 
If $N\geq 1$ it 
 follows from 
Lemma~\ref{lem:jet_space}, the induction hypothesis and Remark \ref{rem:pad}
that   
$$
\dim\left( \frac{[\mathcal{L}_{N}(X)]}{\LL^{(N+1)(n-1)}}-
\frac{[\mathcal{L}_{N-1}(X)]}{\LL^{N(n-1)}} \right)\leq (N+1)(1-n/d)<0,
  $$
since $n>d$. 
This confirms that $\dim \mathcal{L}_N(X)\leq (n-1)(N+1)$. 

Turning to the question of irreducibility, we appeal to work of 
Mustaţa \cite{must}. This confirms that all the jet spaces
$\mathcal{L}_N(X)$ are irreducible if and only if $X$  has 
 canonical singularities. 
 But $X$ is an affine hypersurface with an isolated singularity at the point $(0,\dots,0)$.
 Since $n>d$, it follows from Koll\'ar \cite[Corollary~3.3]{kollar} that this singularity is canonical, which completes the proof. 
\end{proof}

We now proceed by relating the jet spaces to the spaces that we met in Corollary \ref{cor:local-factor} 
and 
Proposition \ref{pro:orp}.

\begin{lemma}\label{lem:relation'}
Let $x\in \A^1$ and let $N\geq 1$. Then we have the relation
$$
[\Lambda_N(f,x)] =[\Lambda_N(f,\infty)]=  [\mathcal{L}_{N-1}(X)]
$$
in the Grothendieck ring of varieties, 
where 
$\Lambda_N(f,x)$ is given by 
\eqref{eq:lambda} and 
$ \Lambda_N(f,\infty)$ is given by 
\eqref{eq:jet}.
\end{lemma}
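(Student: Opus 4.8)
The plan is to reduce the statement to the case $x=0$, where $\Lambda_N(f,0)$ is recognised as the $(N-1)$-st jet space of $X$ essentially by unwinding the definitions.

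First I would fix the identification of the ambient affine space: a tuple $\g=(g_1,\dots,g_n)\in\C[t]^n$ with $\deg g_i<N$ is the same datum as a point of $\A^{Nn}$, via the coefficient vectors $\x_j=(g_{1,j},\dots,g_{n,j})\in\A^n$ with $\g=\x_0+\x_1 t+\dots+\x_{N-1}t^{N-1}$. Under this identification the defining condition of $\Lambda_N(f,0)$ in \eqref{eq:lambda}, namely $f(\g)\in t^N\C[t]$, i.e.\ $f(\g)\equiv 0\bmod t^N$, is literally the condition defining $\mathcal{L}_{N-1}(X)$ in \eqref{eq:def-jet}. Hence $\Lambda_N(f,0)=\mathcal{L}_{N-1}(X)$ as $\C$-schemes, and in particular their classes agree.

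Next, for a general $x\in\A^1$, I would use the translation automorphism $\sigma_x\colon\C[t]\to\C[t]$, $p(t)\mapsto p(t+x)$. It fixes $\C$, preserves the space of polynomials of degree $<N$, and carries the ideal $\bigl((t-x)^N\bigr)$ onto $\bigl(t^N\bigr)$; applying it coordinatewise to $\g$ commutes with evaluating $f$, since $f$ has coefficients in $\C$, so $f(\g)\in\bigl((t-x)^N\bigr)$ if and only if $f(\sigma_x(\g))\in\bigl(t^N\bigr)$. Thus $\g\mapsto\sigma_x(\g)$ is an isomorphism of $\C$-varieties $\Lambda_N(f,x)\xrightarrow{\sim}\Lambda_N(f,0)$ — it is linear and invertible in the coefficient coordinates — which yields $[\Lambda_N(f,x)]=[\Lambda_N(f,0)]$.

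Finally, for the place at infinity, homogeneity of $f$ shows that $f(g_1(t),\dots,g_n(t))=\sum_{\ell}\gamma_\ell t^{\ell}$ and $f(g_1(t^{-1}),\dots,g_n(t^{-1}))=\sum_{\ell}\gamma_\ell t^{-\ell}$ for the same polynomial functions $\gamma_\ell$ of the coefficients of $\g$, the two expressions differing only by the substitution $t\leftrightarrow t^{-1}$. Hence the vanishing of the coefficients of $t^0,\dots,t^{N-1}$ in the first is equivalent to $f(g_1(t^{-1}),\dots,g_n(t^{-1}))\in t^{-N}\C[t^{-1}]$, so $\Lambda_N(f,\infty)$ and $\Lambda_N(f,0)$ are cut out in $\A^{Nn}$ by the same equations and coincide as $\C$-schemes; combining the three steps proves the lemma. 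None of the steps poses a genuine obstacle: the only point requiring care is to keep track of the coefficient identifications and to check that the translation and inversion substitutions define morphisms of $\C$-schemes rather than mere bijections on $\C$-points, which is clear since both are given by explicit polynomial (indeed, for the translation, linear) formulas in the coordinates.
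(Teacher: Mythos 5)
Your proposal is correct and follows essentially the same route as the paper: the identification $\Lambda_N(f,0)=\mathcal{L}_{N-1}(X)$ is a matter of unwinding definitions, the case of general $x$ is handled by a linear change of coordinates (your translation $t\mapsto t+x$ is just a repackaging of the paper's explicit re-expansion of $\g$ in powers of $t-x$, with the same binomial-coefficient formulas on coefficients), and the identification of $\Lambda_N(f,\infty)$ with $\Lambda_N(f,0)$ via $t\leftrightarrow t^{-1}$ is the one the paper treats as immediate. No gaps.
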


\begin{proof}
The first equality is clear and so it suffices to prove that
$\Lambda_N(f,x)\simeq \mathcal{L}_{N-1}(X)$, for any $x\in \A^1$.
According to \eqref{eq:lambda},  $\Lambda_N(f,x)$ is the space of $\g=\g_0+\g_1 t+\dots +\g_{N-1}t^{N-1}$ such that 
$f(\g)\equiv 0\bmod{(t-x)^N}$, for $\g_0,\dots,\g_{N-1}\in \A^n$.
This is isomorphic to  the space of 
$\g'=\g_0'+\g_1' (t-x)+\dots +\g_{N-1}'(t-x)^{N-1}$ such that 
$f(\g')\equiv 0\bmod{(t-x)^N}$, via the map 
$\g\mapsto\g'$ given by 
\begin{align*}
\g_0'
&=\g_0+x\g_1+\dots +x^{N-1} \g_{N-1},\\
\g_1'
&=\g_1+2x\g_2+3x^2\g_3+\dots +(N-1)x^{N-1} \g_{N-1},\\
\g_2'
&=\g_2+\tbinom{3}{2}x\g_3+\tbinom{4}{2}x^2\g_4+\dots +\tbinom{N-1}{2}x^{N-1} \g_{N-1},\\
&~~\vdots\\
\g_{N-1}'&=\g_{N-1}.
\end{align*}
But this is just the jet space 
$\mathcal{L}_{N-1}(X)$, which thereby  completes the proof.
\end{proof}

Turning to the  space $V_{d+ \gamma -1}$ that appears in Proposition \ref{pro:orp}, we  prove the following result, relating it to the variety 
$ \Lambda_N(f,\infty)$ in 
\eqref{eq:jet} for a suitable choice of $N$.

\begin{lemma}\label{lem:relation}
We have the relation
$$[V_{d+ \gamma -1}] = [\Lambda_{\gamma}(f,\infty)]\LL^{n(e+1-\gamma)}$$
in the Grothendieck ring of varieties.
\end{lemma}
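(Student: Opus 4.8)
The plan is to unwind the definition of $V_{d+\gamma-1}$ and compare it coefficient-by-coefficient with the definition of $\Lambda_\gamma(f,\infty)$, keeping careful track of how many coordinates are "free". Recall from Proposition~\ref{pro:orp} that
$$
V_{d+\gamma-1} = \{\y\in \A^{n(-1,-e-2)}:\ \ord f(\y) < -d-\gamma+1\},
$$
so an element $\y$ is an $n$-tuple of polynomials in $t^{-1}$ with terms of degrees $-1$ down to $-e-1$; writing $\y = \y_1 t^{-1} + \dots + \y_{e+1} t^{-e-1}$ with each $\y_i\in\A^n$, the condition $\ord f(\y) < -d-\gamma+1$ says that the coefficients of $t^{-d}, t^{-d-1}, \dots, t^{-d-\gamma+1}$ in $f(\y)$ all vanish (using that $f$ is homogeneous of degree $d$, so $f(\y)$ has no terms of order $>-d$). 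First I would observe that these $\gamma$ vanishing conditions involve only $\y_1,\dots,\y_\gamma$: indeed the coefficient of $t^{-d-j}$ in $f(\y)$ depends only on $\y_1,\dots,\y_{j+1}$ for $0\leq j\leq\gamma-1$, hence only on $\y_1,\dots,\y_\gamma$. Therefore the coordinates $\y_{\gamma+1},\dots,\y_{e+1}$ (that is, $n(e+1-\gamma)$ affine coordinates) are completely unconstrained, which already exhibits the factor $\LL^{n(e+1-\gamma)}$.

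Next I would identify the locus cut out in the remaining coordinates $(\y_1,\dots,\y_\gamma)\in\A^{n\gamma}$ with $\Lambda_\gamma(f,\infty)$. By \eqref{eq:jet}, $\Lambda_\gamma(f,\infty)$ is the space of $n$-tuples $\g = (g_1,\dots,g_n)$ with $\deg g_i < \gamma$ and $f(g_1(t^{-1}),\dots,g_n(t^{-1})) \in t^{-\gamma}\C[t^{-1}]$. Writing $g_i(t^{-1}) = g_{i,0} + g_{i,1}t^{-1} + \dots + g_{i,\gamma-1}t^{-\gamma+1}$, homogeneity of $f$ gives $f(\g(t^{-1})) = f_0 + f_1 t^{-1} + \dots$ with leading term $f_0 = f(g_{1,0},\dots,g_{n,0})$; the condition is that $f_0 = f_1 = \dots = f_{\gamma-1} = 0$. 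The natural map sending $\y$ with coordinates $\y_1,\dots,\y_\gamma$ to the tuple $\g$ with $g_{i,j} = (\y_{j+1})_i$ — i.e.\ shifting the indexing by one and reading $\y$ as a polynomial in $t^{-1}$ with constant term $\y_1$ — is an isomorphism of affine spaces $\A^{n\gamma}\to\A^{n\gamma}$, and under this shift the coefficient of $t^{-d-j}$ in $f(\y)$ corresponds exactly to the coefficient $f_j$ of $t^{-j}$ in $f(\g(t^{-1}))$ for $0\leq j\leq\gamma-1$ (again using that $f$ is homogeneous of degree $d$). Hence this isomorphism carries the vanishing locus in $(\y_1,\dots,\y_\gamma)$ onto $\Lambda_\gamma(f,\infty)$.

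Putting the two observations together gives a Zariski-locally-trivial (in fact trivial) $\A^{n(e+1-\gamma)}$-bundle structure $V_{d+\gamma-1}\simeq \Lambda_\gamma(f,\infty)\times\A^{n(e+1-\gamma)}$, whence $[V_{d+\gamma-1}] = [\Lambda_\gamma(f,\infty)]\,\LL^{n(e+1-\gamma)}$ in the Grothendieck ring, as claimed. I do not anticipate a serious obstacle here; the only point requiring a little care is the bookkeeping of the index shift by one (the space $\A^{(-1,-e-2)}$ has coordinates $\y_1,\dots,\y_{e+1}$ attached to $t^{-1},\dots,t^{-e-1}$, whereas a polynomial of degree $<\gamma$ in $t^{-1}$ has a constant term) and the verification that the $\gamma$ defining equations of $V_{d+\gamma-1}$ genuinely only involve the first $\gamma$ blocks of coordinates — both of which follow immediately from the homogeneity of $f$ of degree $d$.
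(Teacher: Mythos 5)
Your argument is correct and is essentially identical to the paper's proof: both exploit homogeneity of $f$ to see that the $\gamma$ vanishing conditions only involve $\y_1,\dots,\y_\gamma$ (splitting off the free factor $\A^{n(e+1-\gamma)}$), and then identify the constrained locus with $\Lambda_\gamma(f,\infty)$ via the substitution $\y_1t^{-1}+\dots+\y_\gamma t^{-\gamma}=t^{-1}\g$, i.e.\ your index shift by one. No issues.
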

\begin{proof}
For any $\y =  \y_1 t^{-1} + \dots + \y_{e+1}\t^{-e-1}\in V_{d+ \gamma -1}$, the condition  $\ord f(\y)< -d-\gamma +1$ says that the coefficients of the terms in $f(\y)$ of degrees $-d, \dots, -d-\gamma +1$ should all be zero. This  does not affect the coordinates $\y_{\gamma +1},\dots,\y_{e+1}$, because $f$ is homogeneous of degree $d$. Thus, there is an isomorphism
  $$
  V_{d + \gamma -1} \simeq \{ \y \in \A^{n(-1, -\gamma-1)}: \ord f(\y) < -d-\gamma +1\} \times \A^{n(e+1-\gamma)},
  $$
 sending $\y$ to $(\y_1t^{-1} + \dots + \y_{ \gamma}t^{-\gamma}, (\y_{\gamma+1},\dots,\y_{e+1})).$
 Writing $\y_1t^{-1} + \dots + \y_{\gamma}t^{-\gamma}=t^{-1}\g$, where 
 $\g=\y_1 +\y_2 t^{-1}+ \dots + \y_{\gamma}t^{-(\gamma-1)}$, 
 we see that 
$\ord f(\y) < -d-\gamma +1$ if and only if 
$f(\g) \in t^{-\gamma}\C[t^{-1}]$. 
Hence 
 $V_{d + \gamma -1} \simeq \Lambda_\gamma(f,\infty)\times \A^{n(e+1-\gamma)}$, in the notation of \eqref{eq:jet}.
 \end{proof}

\subsection{Major arcs: conclusion}\label{sect:major_arcs_conclusion}

We are now ready to conclude the proof of
Proposition~\ref{prop:major}.
We  return to Proposition \ref{prop:major_arcs_first_expression}, where
$\gamma\geq 1$ is given by \eqref{eq:gamma}. 
It follows from Proposition \ref{pro:orp} 
and Lemma \ref{lem:relation}
that 
\begin{align*} N_{\maj} 
& =  \LL^{\mu(e)} \left( \sum_{0\leq m\leq \lfloor \frac{e+1}{2}\rfloor} S_{m}(f) \LL^{-nm} \right) \LL^{-(n-1)\gamma}[\Lambda_{\gamma}(f,\infty)],
\end{align*}
where   $\mu(e)$ is given by \eqref{eq:mu}.

Next, we observe that 
$w(\LL^{-(n-1)\gamma}[\Lambda_{\gamma}(f,\infty)])\leq 0$,
as follows from Lemma \ref{lem:relation'} and Corollary \ref{cor:dim}.
Appealing to Lemma~\ref{lem:train}, we conclude that 
$$
N_{\maj}
 =  \LL^{\mu(e)}\left(\mathfrak{S}(f) \cdot  \LL^{-(n-1)\gamma}[\Lambda_{\gamma}(f,\infty)]
+ R_e' \right),
$$
where 
$\mathfrak{S}(f)$ is the motivic Euler product described in 
Corollary \ref{cor:local-factor} and 
  $$
  w \left(R_e'\right) \leq 4
  -\tilde\nu \left(1+\left\lfloor \frac{e+1}{2d-2}\right\rfloor\right),
  $$
with   $\tilde\nu$ given by \eqref{eq:tilde-nu}. Inserting 
Lemma \ref{lem:relation'} into  Lemma \ref{lem:jet_space}, it therefore follows
from Remark \ref{rem:pad} that 
$$
\LL^{-(n-1)\gamma}[\Lambda_{\gamma}(f,\infty)] 
=
\lim_{N\to \infty} \LL^{-(n-1)N}[\Lambda_{N}(f,\infty)]
+E,
$$
in $\widehat{\expp_\C}$, where
$$
w(E)\leq 
\begin{cases}
-2\gamma (n/d-1)
&\text{ if $d\mid \gamma$,}\\
-2(\gamma+1) (n/d-1) &\text{ if $d\nmid \gamma$.}
\end{cases}
$$

We now return to our expression for the major arcs and we recall from  
Remark \ref{rem:weight-SS} that $w(\mathfrak{S}(f))=0$.
This allows us to write
$$
N_{\maj}
 =  \LL^{\mu(e)}\left(\mathfrak{S}(f) \cdot \lim_{N\to \infty} \LL^{-(n-1)N}[\Lambda_{N}(f,\infty)]
+ R_e'' \right),
$$
where
\begin{equation}\label{eq:max}
  w \left(R_e''\right) 
  \leq \max\left( -\frac{(e+1+\kappa_{d,e})(n-d)}{d}
  , ~4
  -\tilde\nu \left(1+\left\lfloor \frac{e+1}{2d-2}\right\rfloor\right)\right)
\end{equation}
and
$$
\kappa_{d,e} =
\begin{cases}
0 &\text{ if $d\mid \gamma$,}\\
2 &\text{ if $d\nmid \gamma$.}
\end{cases}
$$

Suppose first that $d\geq 3$. 
Then we claim that the second term always exceeds the first term in the maximum \eqref{eq:max}, 
as claimed in  Proposition~\ref{prop:major}.
To check the claim, we note that 
it is equivalent to 
$$
\tilde\nu \left(1+ \left\lfloor \frac{e+1}{2d-2}\right\rfloor\right)\leq \frac{(e+1+\kappa_{d,e})(n-d)}{d}+4
.
$$
On inserting the definition 
\eqref{eq:tilde-nu} of 
$\tilde \nu$, and collecting together the coefficients of $n$, we see that this holds if 
$
A_{d,e}\leq nB_{d,e},
$
where $A_{d,e}=\kappa_{d,e}-(e+1)-4$ and 
$$
B_{d,e}=\frac{e+1+\kappa_{d,e}}{d} -\frac{1}{2^{d-2}}\left(1+
\left\lfloor \frac{e+1}{2d-2}\right\rfloor\right).
$$
Since $A_{d,e}<0$, the claim  follows if we can prove that $B_{d,e}\geq 0$ for $e\geq 1$ and $d\geq 3$. 
If $d\nmid \gamma$ then  $\kappa_{d,e}=2$ and
\begin{align*}
B_{d,e}
&\geq (e+1)\left(\frac{1}{d}-\frac{1}{2^{d-1}(d-1)}\right) +\frac{2}{d}-\frac{1}{2^{d-2}}
\geq \frac{4}{d}-\frac{d}{2^{d-2}(d-1)},
\end{align*}
on taking $e\geq 1$. Thus we deduce that $B_{d,e}\geq 0$ for all $d\geq 2$.
If $d\mid \gamma$ then $\kappa_{d,e}=0$ and 
we must have  $e+1\geq 2d-1\geq 5$, if $d\geq 3$. 
But then it follows that 
\begin{align*}
B_{d,e}
&\geq (e+1)\left(\frac{1}{d}-\frac{1}{2^{d-1}(d-1)}\right) -\frac{1}{2^{d-2}}
\geq \frac{5}{d}-\frac{2d+3}{2^{d-1}(d-1)}.
\end{align*}
This is non-negative for all $d\geq 3$.

Finally, we suppose that $d=2$. We may also suppose that $2\mid \gamma$, since 
the previous paragraph suffices to handle the case $2\nmid \gamma$, even when $d=2$.
Thus \eqref{eq:max} becomes
\begin{align*}
  w \left(R_e''\right) 
 & \leq \max\left( -\frac{(e+1)(n-2)}{2}
  , ~4
  -\tilde\nu \left(1+\left\lfloor \frac{e+1}{2}\right\rfloor\right)\right)\\
  & \leq \max\left( -\frac{(e+1)(n-2)}{2}, ~-\frac{n(e+2)}{2}+2e+8\right)
\end{align*}
since $\kappa_{2,e}=0$, $\tilde \nu=n-4$
and $1+\lfloor \frac{e+1}{2}\rfloor \geq e/2+1$.
This completes the proof of Proposition~\ref{prop:major}.

\section{The motivic minor arcs}\label{sect:minor_arcs}

Recall the expression for $M_{m,\gamma}$ in Lemma \ref{lem:6.1} and put 
$\Delta = \left\lfloor \frac{e+1}{2}\right \rfloor.$ 
It follows from \eqref{eq:Major} that  the full set of  major arcs is $\mathfrak{M}: = M_{\Delta, \gamma}$, where $\gamma=\left\lceil \frac{e+1}{2}\right \rceil.$ 
 Our aim in this section  is therefore  to get a bound for
 $$N_{\minor} = \LL^{-de-1}[ \Poly_{\leq e}^n \times S, \res(\alpha f(g_1,\dots,g_n))].
 $$
 where $S=
\A^{de+1}-\mathfrak{M}$.
Now it follows from Proposition \ref{pro:stone} that 
\begin{equation}\label{eq:joust}
w_{S}(
[\Poly_{\leq e}^n\times S, \res(\alpha f(g_1,\dots,g_n)]
)\leq 
\frac{\max_{\alpha\in S} \dim N(\alpha) + N(2^{d-1} - (d-1))}{2^{d-2}},
\end{equation}
where $N=(e+1)n$ and 
$$
N(\alpha) = \left\{ (\u^{(1)},\dots,\u^{(d-1)})\in \left(\Poly_{\leq e}^n\right)^{d-1}:
\begin{array}{l}
\ord \{\alpha \Psi_j(\u^{(1)},\dots,\u^{(d-1)})\} < -e-1\\
\forall j\in \{1,\dots,n\}
\end{array}
\right\}.
$$

\subsection{The minor arc bound}

We recall the definition of 
$A_m^{de+1}$ from Remark \ref{rem:Am}. In particular, Remark \ref{rem:Am_0} implies that 
$A_{m_0}^{de+1}=\A^{de+1}$, where
$m_0=\lceil\frac{de+1}{2} \rceil$. Bearing this in mind, we begin by  proving the following result. 

\begin{lemma}\label{lem:m}
Let $\alpha\in A^{de+1}_{m+1} -A^{de+1}_m$, with  $m\leq m_0-1$.  Then 
$$
\dim N(\alpha) \leq  \left(de + d -e-2 -  \left \lfloor \frac{m}{d-1}\right \rfloor \right) n.
$$
\end{lemma}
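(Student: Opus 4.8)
The idea is to apply the general dimension bound of Lemma \ref{lemma:general_bound_exp_sums} to the exponential-sum variety $N(\alpha)$ with $E = e+1$, using the rational approximation to $\alpha$ whose existence is encoded in the hypothesis $\alpha \in A^{de+1}_{m+1} - A^{de+1}_m$. First I would unpack what this hypothesis gives: by Remark \ref{rem:Am}, membership in $A^{de+1}_{m+1}$ produces coprime polynomials $h_1, h_2$ with $h_2$ monic, $\deg h_1 < \deg h_2 \le m+1$, and $\ord(\alpha - h_1/h_2) \le -de - 2 + (m+1) - \deg h_2$, while the \emph{failure} of membership in $A^{de+1}_m$ forces $\deg h_2 = m+1$ exactly (otherwise the same $h_1/h_2$ would witness $\alpha \in A^{de+1}_m$, since a smaller numerator degree only helps). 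So I get $\rho := \deg h_2 = m+1$ and, writing $\alpha = h_1/h_2 + \theta$, the bound $\psi := \ord(\theta) \le -de - 2 + (m+1) - (m+1) = -de-2$.

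Next I would verify that the integer $s := (d-1)(E-1) - (\rho - 1)$ — i.e.\ $s = (d-1)e - m$ — satisfies conditions (1) and (2) of Lemma \ref{lemma:general_bound_exp_sums} with $E = e+1$. This is the routine calculational core: condition (1) asks $-E - 1 - (d-1)s < -\rho$ and $(d-1)(E-1-s) + \psi < -\rho$; with the stated $s$ one has $(d-1)(E-1-s) = \rho - 1 = m$, so the second inequality reads $m + \psi < -(m+1)$, which follows from $\psi \le -de-2$ since $de+1 > 2m+1$ when $m \le m_0 - 1$; the first inequality reduces to $(d-1)s > \rho - E - 1$, which is easy. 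For condition (2), the first alternative $(d-1)(E-1-s) < \rho$ becomes $m < m+1$, which holds outright. (I should double-check the edge case where $s$ as defined is negative — then one takes $s = 0$ and the bound is trivially weaker — and that $s \le E$, i.e.\ $(d-1)e - m \le e+1$, which may fail for small $e$; in that regime the conclusion should still follow, possibly trivially, and I would record that.)

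Granting the hypotheses, Lemma \ref{lemma:general_bound_exp_sums} yields $\dim N(\alpha) \le (d-2)En + sn = (d-2)(e+1)n + ((d-1)e - m)n$. Expanding, $(d-2)(e+1) + (d-1)e - m = (d-2)e + d - 2 + (d-1)e - m = (2d-3)e + d - 2 - m$. This does not yet match the claimed bound $de + d - e - 2 - \lfloor m/(d-1)\rfloor = (d-1)e + d - 2 - \lfloor m/(d-1)\rfloor$, so the honest choice of $s$ must be smaller. The point is that one should optimise $s$: rather than forcing $\deg(m') < \rho$ via the first alternative in step two of the proof of Lemma \ref{lemma:general_bound_exp_sums}, one uses the second alternative $-E - (d-1)s - \psi \le \rho$ together with the divisibility $h_2 \mid m'$, which permits a much smaller $s$, roughly $s = e - \lceil m/(d-1)\rceil$ or so. I would therefore redo the bookkeeping choosing $s$ as small as conditions (1)–(2) allow given $\psi \le -de-2$: the binding constraint is (1)'s first inequality $(d-1)s > \rho - E - 1 = m - (e+2)$, which is vacuous, so in fact $s = \max(0, E - 1 - \lfloor(\rho-1)/(d-1)\rfloor) = \max(0, e - \lfloor m/(d-1)\rfloor)$ via Remark \ref{rem:theta=0}-style reasoning adapted to nonzero $\theta$. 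Then $\dim N(\alpha) \le (d-2)(e+1)n + (e - \lfloor m/(d-1)\rfloor)n = ((d-2)(e+1) + e - \lfloor m/(d-1)\rfloor)n = ((d-1)e + d - 2 - \lfloor m/(d-1)\rfloor)n = (de + d - e - 2 - \lfloor m/(d-1)\rfloor)n$, as required.

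The main obstacle is pinning down the \emph{smallest admissible} $s$: one must simultaneously exploit that $\deg h_2 = m+1$ is small (so few coefficients of $\alpha$ are "rational"), that $\ord\theta$ is as negative as possible (bounded using the non-membership in $A^{de+1}_m$), and that $h_2 \mid \Psi_j(\ul{\u})$ forces $\Psi_j(\ul{\u}) = 0$ via the degree count, and then check conditions (1)–(2) of Lemma \ref{lemma:general_bound_exp_sums} hold for that $s$. The arithmetic with floor functions is fiddly but elementary; everything geometric is already done in Lemmas \ref{lem:dim-bound} and \ref{lemma:general_bound_exp_sums}.
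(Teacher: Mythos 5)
There is a genuine gap at the very first step: your claim that $\alpha\notin A^{de+1}_m$ forces $\deg h_2=m+1$ is false. By Remark \ref{rem:Am}, to witness $\alpha\in A^{de+1}_m$ with the same pair $(h_1,h_2)$ of denominator degree $\rho\leq m$ one needs $\ord(\alpha-h_1/h_2)\leq -de-2+m-\rho$, which is one \emph{better} than the bound $-de-2+(m+1)-\rho$ supplied by membership in $A^{de+1}_{m+1}$: the approximation threshold moves with $m$, not only the degree bound, so "a smaller denominator degree only helps" is not enough. The correct dichotomy (and the one the paper's proof uses) is: either $\rho=m+1$ and $\psi\leq -de-2$, or $\rho\leq m$ and then non-membership pins down the order \emph{exactly}, $\psi=\ord\theta=-de-1+m-\rho$. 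You only treat the first branch, and the second one cannot be absorbed into it: there $\rho$ may be very small (e.g.\ $\rho=1$ with $m$ large), so for your choice $s=e-\lfloor m/(d-1)\rfloor$ the first alternative of condition (2) of Lemma \ref{lemma:general_bound_exp_sums}, namely $(d-1)(E-1-s)<\rho$, can fail outright (take $d=3$, $e=10$, $m=14$, $\rho=1$: the left side is $14$). One must instead use the second alternative $-E-(d-1)s-\psi\leq \rho$, which requires the exact value of $\psi$ --- precisely the information your incorrect reduction discards; with it, the inequality again reduces to $(d-1)e-m\leq(d-1)s$ and the same $s$ works. Nor can you repair matters by inflating the denominator to degree $m+1$, since the argument of Lemma \ref{lemma:general_bound_exp_sums} needs $h_1,h_2$ coprime to force $h_2\mid\Psi_j(\ul{\u})$, and a coprime fraction with denominator of degree $m+1$ approximating $\alpha$ to the required order need not exist.

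Apart from this missing case, your endgame matches the paper: in the branch you do treat, conditions (1) and (2) reduce to $(d-1)s\geq\max\{0,\,m-e,\,(d-1)e-m\}=(d-1)e-m$ (using $m\leq m_0-1$), the choice $s=e-\lfloor m/(d-1)\rfloor$ is admissible, and your final arithmetic giving $\dim N(\alpha)\leq(d-2)(e+1)n+(e-\lfloor m/(d-1)\rfloor)n$ is exactly the paper's. Two smaller points: your intermediate attempt with $s=(d-1)e-m$ contains the slip $(d-1)(E-1-s)=\rho-1$, which holds only for $d=2$ (harmless, since you abandon that choice), and in your final paragraph the verification of conditions (1)--(2) for the chosen $s$ is asserted rather than checked; the check is routine in the first branch, but in the second branch it is exactly where the exact order of $\theta$ must be used, so it cannot be waved through.
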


\begin{proof}
We 
seek to apply Lemma \ref{lemma:general_bound_exp_sums}.
  Since $\alpha\in A^{de+1}_{m+1}$, there exist coprime $h_1,h_2$ with $\deg h_1 < \deg h_2 \leq m+1$ and $\theta$ such that $\ord \theta \leq -de-2 + m+1 - \deg h_2$. 
  Moreover, since $\alpha \not \in A^{de+1}_{m}$, we have the following dichotomy: either $\deg h_2 = m+1$, or if $\deg h_2 \leq m$, then we cannot have
 $\ord \theta \leq -de-2 + m -\deg h_2$.  Thus  $$
\ord \theta = -de-2 + m+1 - \deg h_2$$
if $\deg h_2\leq m$.
 Using this, we now prove a bound on $N(\alpha)$ using Lemma \ref{lemma:general_bound_exp_sums} with $E=e+1$. 
 
 We place ourselves in the first case, where, using the notation of the lemma, $\rho = m+1$ and $\psi \leq -de-2.$ 
  We start by checking condition $(1)$ in the lemma. The first inequality 
 is satisfied
 provided that $(d-1)s\geq m-e.$
 Using $\psi \leq -de-2$, we see that the second inequality 
  is also implied by $(d-1)s\geq m-e$. 
    We now check condition $(2)$ in the lemma. In fact, as we have no longer have a 
    useful lower bound on $\psi$ in this setting, we  find a condition for the first inequality to hold, which is equivalent to  $$(d-1)e -m \leq (d-1) s.$$
  
  We now treat the second case, where $\rho \leq m$ and $\psi = -de-2 + m+1 - \rho = -de-1 + m - \rho$. 
    We start by  checking condition $(1)$ in the lemma. The first inequality 
is implied by 
$(d-1)s\geq m-e$.
The second  inequality   is equivalent to  $ (d-1) s\geq m-e$.
 We now check condition $(2)$ in the lemma, noting that  we have no useful lower bound on $\rho$ in this setting. But we see that the  second inequality holds
if and only if  $$(d-1)e-m \leq (d-1) s.$$
 
 In  conclusion, we are able to apply 
 Lemma \ref{lemma:general_bound_exp_sums} provided that $s$ is an integer chosen to satisfy
 $$
 (d-1)s\geq \max\left\{ 0, m-e, (d-1)e-m\right\}.
 $$
 Since $d\geq 2$, one  checks that the maximum is 
$ (d-1)e-m$ if
 $m\leq m_0-1$. 
 Hence, taking  
 $s= e - \left \lfloor \frac{m}{d-1}\right \rfloor$, we arrive at the upper bound
 $$\dim N(\alpha) \leq (d-2) (e+1) n + \left(e-  \left \lfloor \frac{m}{d-1}\right \rfloor \right)n = \left(de + d -e-2 -  \left \lfloor \frac{m}{d-1}\right \rfloor \right) n,
 $$
 as claimed.
\end{proof}

The bound in Lemma 
\ref{lem:m}  becomes stronger as $m$ increases. Thus, to optimise our minor arc estimate, we  stratify the minor arcs.
We note from Remarks \ref{rem:Am_0} and \ref{rem:Am}, together with  Lemma \ref{lem:6.1}, 
that 
$A^{de+1}_{\Delta}\subset M_{\Delta,\gamma}\subset \mathfrak{M}$ and $A^{de+1}_{m_0}=\A^{de+1}$, where
 $m_0=\lceil\frac{de+1}{2} \rceil$.
Since the spaces $A^{de+1}_{m}$ form an increasing sequence, we 
may therefore  cut up the  complement of the major arcs as
$$\A^{de+1}-\mathfrak{M} = \left(\bigsqcup_{\Delta \leq m\leq m_0-1} \left( A^{de+1,\star}_{m+1} -A^{de+1,\star}_m\right) \right),
$$
where $ A^{de+1,\star}_{m} : = A^{de+1}_{m} - \mathfrak{M}.$

We are now ready to produce our final bound for the minor arc contribution
$$N_{\minor} = \LL^{-de-1}\left[ \Poly_{\leq e}^n \times \left(\A^{de+1}-\mathfrak{M}\right), \res(\alpha f(g_1,\dots,g_n)\right].
$$ 
Thus, using  
property (\ref{item:weight-property-sum}) in Proposition \ref{prop:weight-properties}, we get 
\begin{align*} w(N_{\minor}) & \leq -2de-2 + w\left( \sum_{m=\Delta}^{m_0-1}[\Poly_{\leq e}^n\times (A^{de+1,\star}_{m+1} - A^{de+1,\star}_m), \res(\alpha f(g_1,\dots,g_n))]\right)\\
& \leq -2de-2  + \max_{\Delta\leq m \leq m_0-1}w( [\Poly_{\leq e}^n\times (A^{de+1,\star}_{m+1} - A^{de+1,\star}_m), \res(\alpha f(g_1,\dots,g_n))]).
\end{align*}
According to Remark \ref{rem:stratum-dimension} we have 
$$
\dim(A^{de+1,\star}_{m+1} - A^{de+1,\star}_m)
\leq \dim A^{de+1}_{m+1}
\leq 2(m+1).
$$
We now use successively property (\ref{item:weight-property-pushforward}) of Proposition \ref{prop:weight-properties} and the inequality \eqref{eq:joust}, to write
\begin{align*}
w( [&\Poly_{\leq e}^n\times (A^{de+1,\star}_{m+1} - A^{de+1,\star}_m), \res(\alpha f(g_1,\dots,g_n))])\\
&
\leq 
w_{A^{de+1,\star}_{m+1} - A^{de+1,\star}_m}\left( [\Poly_{\leq e}^n\times (A^{de+1,\star}_{m+1} - A^{de+1,\star}_m), \res(\alpha f(g_1,\dots,g_n))]\right) +4(m+1)\\
&\leq 
\frac{\dim_{A^{de+1,\star}_{m+1} - A^{de+1,\star}_m} N(\alpha) + (e+1)n(2^{d-1} - (d-1))}{2^{d-2}} +4(m+1),
\end{align*}
for any $m$. It now follows from Lemma \ref{lem:m} that 
$w(N_{\minor}) $ is 
\begin{align*} 
& \leq 2-2de  + \max_{\Delta\leq m \leq m_0-1}\left( \frac{\left(de + d - e - 2 - \left\lfloor \frac{m}{d-1}\right\rfloor\right) n + (e+1)n(2^{d-1} - (d-1)))}{2^{d-2}} + 4m \right)\\ 
&\leq 2-2de -\frac{n}{2^{d-2}} + 2n(e+1) + \frac{1}{2^{d-2}}\max_{\Delta\leq m \leq m_0-1}\left( 2^{d} m - \left\lfloor \frac{m}{d-1}\right\rfloor n \right).
\end{align*}
We now appeal to Lemma \ref{lem:jens}, to conclude that 
\begin{align*} w(N_{\minor}) 
 &\leq 2-2de -\frac{n}{2^{d-2}} + 2n(e+1) + 4(d-1) +\frac{1}{2^{d-2} }\left\lfloor \frac{e+1}{2d-2}\right\rfloor (2^{d}(d-1) -n).\\
&= \underbrace{2n(e+1) -2(de+1)}_{2\times \text{expected dimension}} + 4d - \frac{n}{2^{d-2}}  +\frac{1}{2^{d-2} }\left\lfloor \frac{e+1}{2d-2}\right\rfloor (2^{d}(d-1) -n).\\
& =2\mu(e)
+ \frac{1}{2^{d-2}} \left( (2^{d} d -n) +\left\lfloor \frac{e+1}{2d-2}\right\rfloor (2^{d}(d-1) -n)\right) \end{align*}
The statement of Proposition \ref{prop:minor} now follows.

\section{The space of morphisms}

In this section we prove Theorem \ref{theorem_morspace}, which deals with the space
$$
\Mor_{e}(\Proj^1,Z) = \left\{(g_1,\dots,g_n)\in (\Poly^n_{\leq e}-\{0\})/\C^{\times}: 
\begin{array}{l}
\max \deg g_i = e\\
\gcd(g_1,\dots,g_n) = 1 \\
f(g_1,\dots,g_n) = 0
\end{array}
\right\},
$$
where  $(\Poly^n_{\leq e}-\{0\})/\C^{\times}$ is the space of non-zero $n$-tuples of polynomials of degree $\leq e$, viewed modulo the multiplication by a non-zero scalar. 
We begin with the  following result, which  allows us to pass from the classes of the naive moduli spaces $M_e$ to 
the class of the moduli space $\Mor_{e}(\Proj^1,Z)$.

\begin{lemma}\label{lem:morspace_naivespace}
Let $e\geq 1$. Then 
$$
[\Mor_{e}(\Proj^1,Z)] = \frac{[M_e] - (\LL+1)[M_{e-1}] + \LL[M_{e-2}]}{\LL -1},
$$
with the convention that $M_{-1}$ is a point.
\end{lemma}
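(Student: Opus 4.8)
The plan is to extract $[\Mor_e(\Proj^1,\tilde X)]$ from $[M_e]$ by stripping away, one after another, three kinds of redundancy present in $M_e$: tuples all of whose components have degree $<e$, tuples that fail to be coprime, and the scaling action of $\C^\times$. Throughout, $X\subset\A^n$ is the affine hypersurface $f=0$ (which contains the origin, $f$ being homogeneous). For the first step, note that for every $e\ge 0$, with the convention $M_{-1}=\{0\}$, the subset $M_{e-1}\subset M_e$ consisting of tuples whose components all have degree $\le e-1$ is closed, with open complement $M_e^{=}:=\{\g\in M_e:\max_i\deg g_i=e\}$; the scissor relation then gives $[M_e]=[M_e^{=}]+[M_{e-1}]$.

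For the second step, I would set, for $m\ge 0$,
\[
C_m:=\{\g\in\Poly_{\le m}^n:\ \gcd(g_1,\dots,g_n)=1,\ \max_i\deg g_i=m,\ f(\g)=0\},
\]
so that $C_0=X\setminus\{0\}$, and stratify $M_e^{=}$ according to the degree $j\in\{0,\dots,e\}$ of $\gcd(\g)$ into locally closed pieces $M^{=}_{e,j}$. The crux is the multiplication map $\MPoly_j\times C_{e-j}\to M^{=}_{e,j}$, $(h,\g')\mapsto h\g'$: it lands in $M^{=}_{e,j}$ since $f(h\g')=h^{d}f(\g')=0$ and $\gcd(h\g')=h\gcd(\g')$ has degree exactly $j$; it is injective because coprimality of $\g'$ forces $h$ to be the monic gcd of $h\g'$; and it is surjective by writing $\g\in M^{=}_{e,j}$ as (monic gcd)$\,\cdot\,$(rescaled coprime quotient). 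Since a bijective morphism of complex varieties is a piecewise isomorphism and hence preserves Grothendieck classes, and $[\MPoly_j]=\LL^{j}$, I get $[M^{=}_{e,j}]=\LL^{j}[C_{e-j}]$, whence, summing over $j$ and using the first step,
\[
[M_e]-[M_{e-1}]=\sum_{j=0}^{e}\LL^{j}[C_{e-j}]\qquad(e\ge 0).
\]

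For the third step, I would write this identity for $e$ and for $e-1$, multiply the latter by $\LL$, and subtract; every term $\LL^{j}[C_{e-j}]$ with $j\ge 1$ cancels, leaving $[C_e]=[M_e]-(\LL+1)[M_{e-1}]+\LL[M_{e-2}]$. Finally $C_e$ is exactly the preimage of $\Mor_e(\Proj^1,\tilde X)$ under the tautological map $\A^{n(e+1)}\setminus\{0\}\to\Proj^{n(e+1)-1}$, so $C_e\to\Mor_e(\Proj^1,\tilde X)$ is a Zariski-locally trivial $\G_m$-bundle and $[C_e]=(\LL-1)[\Mor_e(\Proj^1,\tilde X)]$; dividing by $\LL-1$ gives the formula. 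I expect the main obstacle to be the geometric content of the second step — confirming that passing to the gcd of prescribed degree is genuinely algebraic, so that the stratification yields (piecewise) isomorphisms of varieties rather than mere bijections on points; this can be done via subresultants after a further finite stratification, or, as indicated, bypassed by the general fact that bijective morphisms of reduced $\C$-varieties preserve classes in $K_0(\Var_\C)$.
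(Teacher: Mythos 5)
Your proposal is correct and follows essentially the same route as the paper: both rest on stratifying $M_e\setminus M_{e-1}$ by the degree of the gcd, the factorisation (monic gcd)$\times$(coprime tuple) with $[\MPoly_j]=\LL^j$, and the $\G_m$-quotient relation $[C_m]=(\LL-1)[\Mor_m(\Proj^1,\tilde X)]$. The only difference is bookkeeping: the paper inverts the resulting relation $[M_e]-[M_{e-1}]=(\LL-1)\sum_{j}\LL^{j}[\Mor_{e-j}(\Proj^1,\tilde X)]$ via generating series in $T$, whereas you subtract $\LL$ times the identity at level $e-1$ directly, which is equivalent.
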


\begin{proof}
For every $e\geq 1$, note that $M_{e-1}$ is a subset of $M_e$, and that a point $g = (g_1,\dots,g_n)\in M_e- M_{e-1}$ may be written in the form
$$g = (hg'_1,\dots,hg'_n)$$
where $h = \gcd (g_1,\dots,g_n)$ and $(g'_1,\dots,g'_n)$, defines, up to multiplication by a non-zero scalar multiple, an element of $\Mor_{e-\deg h}(\Proj^1,Z)$. Thus, we have the  decomposition
$$
\left(M_e -M_{e-1}\right) / \C^{\times} = \bigsqcup_{i=0}^e\MPoly_{i} \times  \Mor_{e-i}(\Proj^1,Z),
$$
where we recall that $\MPoly_{i}$ is the space of degree $i$ monic polynomials.
Thus, in the Grothendieck ring, we have the relation
$$[M_{e}] - [M_{e-1}] = (\LL-1)\sum_{i = 0}^e\left([\MPoly_{i}] \times  [\Mor_{e-i}(\Proj^1,Z)]\right),
$$
for every $e \geq 1$.

In terms of generating series, we get the relation
$$
\sum_{e\geq 1 } ([M_{e}] - [M_{e-1}]) T^{e} = (\LL-1) \sum_{e\geq 1} \left( \sum_{i = 0}^e[\MPoly_{i}] \times  [\Mor_{e-i}(\Proj^1,Z)]\right)  T^e.
$$
The left-hand side may be rewritten as
$$\sum_{e\geq 1} [M_e] T^e - \sum_{e\geq 0} [M_e] T^{e+1} = (1-T) \sum_{e\geq 0}[M_e] T^e - [X],$$
using the fact that $M_0 = X$. 
As for the right-hand side, we rewrite it as
$$(\LL-1) \left( \left( \sum_{i\geq 0} [\MPoly_{i}] T^ i \right) \left( \sum_{i\geq 0} [\Mor_{i}(\Proj^1,Z)]T^i\right) - [Z]\right),$$
since $\Mor_{0}(\Proj^1,Z)=Z$.
Noting that $[\MPoly_{i}] = \LL^{i}$, we get that 
$$\sum_{i\geq 0} [\MPoly_{i}] T^ i = \sum_{i\geq 0} \LL^{i} T^i = \frac{1}{1-\LL T}.$$
We have $[Z]=(\LL-1)^{-1}([X]-1)$, from which it follows that 
$$(1-T)(1-\LL T)\left(\sum_{e\geq 0}[M_e] T^e\right) = (\LL -1)\sum_{e\geq 0} [\Mor_{e}(\Proj^1,Z)]T^e
+1-\LL T
.$$
We expand and deduce that  the left-hand side is
\begin{align*}
& =  (1-(1+\LL)T + \LL T^2)\left(\sum_{e\geq 0}[M_e] T^e\right) \\
& =  \sum_{e\geq 0} [M_e] T^e - (\LL + 1) \sum_{e\geq 0} [M_e] T^{e+1} + \LL \sum_{e\geq 0} [M_e]T^{e+2}\\
& = \sum_{e\geq 2}[M_e]T^{e} -  (\LL + 1) \sum_{e\geq 2} [M_{e-1}] T^{e} + \LL \sum_{e\geq 2} [M_{e-2}]T^{e} + [M_0] + [M_1] T - (\LL + 1)[M_0]T\\
& = [M_0] + ([M_1]-(\LL + 1)[M_0])T + \sum_{e\geq 2}([M_e] - (\LL+1)[M_{e-1}] + \LL[M_{e-2}]) T^e.
\end{align*}
The statement of the lemma easily follows.
\end{proof}

Our next result expresses the product of local densities in a more convenient form. 

\begin{lemma}\label{lem:*}
Assume that $n>d$ and let $x\in \A^1$. Then 
\begin{align*}
\lim_{N\to \infty }\LL^{-N(n-1)} [\Lambda_N(f,x)]
&= \lim_{N\to \infty }\LL^{-N(n-1)} [\Lambda_N(f,\infty)]\\
&= \LL^{-(n-2)}[Z]  (1-\LL^{-1}) (1-\LL^{-(n-d)})^{-1}.
\end{align*}
\end{lemma}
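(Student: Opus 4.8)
The plan is to compute the limit using Lemma~\ref{lem:relation'} and Lemma~\ref{lem:jet_space}, so that everything is reduced to understanding the class of the jet spaces $\mathcal{L}_N(X)$. By Lemma~\ref{lem:relation'}, we have $[\Lambda_N(f,x)] = [\Lambda_N(f,\infty)] = [\mathcal{L}_{N-1}(X)]$ in the Grothendieck ring, so the first equality in the statement is immediate and it suffices to evaluate $\lim_{N\to\infty}\LL^{-N(n-1)}[\mathcal{L}_{N-1}(X)]$. Setting $\Psi_N = \LL^{-(N+1)(n-1)}[\mathcal{L}_N(X)] - \LL^{-N(n-1)}[\mathcal{L}_{N-1}(X)]$ as in the proof of Lemma~\ref{lem:jet_space}, we can write $\LL^{-N(n-1)}[\mathcal{L}_{N-1}(X)] = \LL^{-(n-1)}[X] + \sum_{j=1}^{N-1}\Psi_j$, since $\mathcal{L}_0(X) = X$; here I should be slightly careful with the indexing, writing $\LL^{-N(n-1)}[\mathcal{L}_{N-1}(X)] = \LL^{-(n-1)}[\mathcal{L}_0(X)] + \sum_{j=1}^{N-1}\Psi_j$, which telescopes correctly because the $j$-th term of the sum equals $\LL^{-(j+1)(n-1)}[\mathcal{L}_j(X)] - \LL^{-j(n-1)}[\mathcal{L}_{j-1}(X)]$.

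The next step is to sum the series $\sum_{j\geq 1}\Psi_j$ using the explicit formula from Lemma~\ref{lem:jet_space}:
\begin{equation*}
\Psi_j = \LL^{j-n-n\lfloor j/d\rfloor}\cdot
\begin{cases}
\LL - 1 & \text{if } d\nmid j,\\
\LL([X] - \LL^{n-1}) & \text{if } d\mid j.
\end{cases}
\end{equation*}
Remark~\ref{rem:pad} shows that $w(\Psi_j)\to -\infty$ (indeed $\dim\Psi_j \leq (j+1)(1-n/d)$ when $d\nmid j$ and $\leq j(1-n/d)$ when $d\mid j$, both tending to $-\infty$ since $n>d$), so the series converges in $\widehat{\expp_\C}$ and the limit exists. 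To evaluate it, I split the sum according to whether $d\mid j$ or not. Writing $j = qd + r$ with $1\leq r\leq d-1$ for the terms with $d\nmid j$, one has $\LL^{j - n - n\lfloor j/d\rfloor} = \LL^{qd+r - n - nq} = \LL^{r}\LL^{-n}(\LL^{d-n})^q$, so
\begin{equation*}
\sum_{\substack{j\geq 1\\ d\nmid j}}\Psi_j = (\LL-1)\LL^{-n}\left(\sum_{q\geq 0}(\LL^{d-n})^q\right)\left(\sum_{r=1}^{d-1}\LL^r\right) = (\LL-1)\LL^{-n}\cdot\frac{1}{1-\LL^{d-n}}\cdot\frac{\LL^d - \LL}{\LL - 1},
\end{equation*}
while for $d\mid j$, writing $j = qd$ with $q\geq 1$, one has $\LL^{j-n-n\lfloor j/d\rfloor} = \LL^{qd - n - nq} = \LL^{-n}(\LL^{d-n})^q$, so
\begin{equation*}
\sum_{\substack{j\geq 1\\ d\mid j}}\Psi_j = \LL([X]-\LL^{n-1})\LL^{-n}\sum_{q\geq 1}(\LL^{d-n})^q = \LL([X]-\LL^{n-1})\LL^{-n}\cdot\frac{\LL^{d-n}}{1-\LL^{d-n}}.
\end{equation*}

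Finally I add $\LL^{-(n-1)}[X]$ to the sum of these two geometric series and simplify, using $[X] = (\LL-1)[\tilde X] + 1$ (which follows since $X$ is the affine cone over $\tilde X$, so $[X] = \LL[\tilde X] - [\tilde X] + 1$). The main bookkeeping obstacle is the algebraic simplification: one needs to collect all terms over the common denominator $1 - \LL^{d-n}$ and check that the result collapses to $\LL^{-(n-2)}[\tilde X](1-\LL^{-1})(1-\LL^{-(n-d)})^{-1}$. Multiplying numerator and denominator through by $\LL^n$ recasts $(1-\LL^{d-n})^{-1}$ as $\LL^n(\LL^n - \LL^d)^{-1}$, and one verifies that the $[X]$-dependent part assembles into a multiple of $[\tilde X]$ while the constant terms cancel; substituting $[X] = (\LL-1)[\tilde X]+1$ at the end and matching powers of $\LL$ gives the claimed closed form. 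This is a routine but slightly delicate computation of rational functions in $\LL$ with coefficients in $K_0(\Var_\C)$, and I would carry it out carefully rather than by inspection; everything else is a direct appeal to the two cited lemmas and to Remark~\ref{rem:pad} for convergence.
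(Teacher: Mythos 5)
Your argument is correct, and I have checked that the final rational-function bookkeeping does close up: collecting the three contributions over the common denominator $1-\LL^{-(n-d)}$, the $[X]$-terms involving $\LL^{1+d-2n}$ cancel and one is left with $\LL^{1-n}([X]-1)(1-\LL^{-(n-d)})^{-1}$, which with $[X]-1=(\LL-1)[\tilde X]$ gives exactly the claimed limit. Your route differs from the paper's: you invoke Lemma~\ref{lem:relation'} to pass to the jet spaces $\mathcal{L}_{N-1}(X)$ and then simply telescope and sum the explicit recursion of Lemma~\ref{lem:jet_space} (with convergence from Remark~\ref{rem:pad}), splitting into the two geometric series according to $d\mid j$ or not. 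The paper instead works with $\Lambda_N(f,x)$ directly: it stratifies by the exact power of $(t-x)$ dividing $\g$, uses homogeneity to identify each stratum as $\A^{ni(d-1)}\times\Lambda^*_{N-i}(f,x)$ where $\Lambda^*$ is the locus with $\g\not\equiv\0\bmod(t-x)$, and then uses the fibration identity \eqref{eq:pavel} (via an adaptation of the proof of Lemma~\ref{lem:relation'}) to evaluate $\LL^{-(N-di)(n-1)}[\Lambda^*_{N-di}(f,x)]=\LL^{-(n-1)}(\LL-1)[\tilde X]$, so that only a single geometric series $\sum_i\LL^{-(n-d)i}$ remains. Both proofs rest on the same two geometric inputs (the cone structure of $X$ and the $\A^{n-1}$-fibration over the smooth locus), but yours buys a shorter derivation as a formal consequence of results already stated, at the cost of a slightly heavier closed-form summation, while the paper's stratification isolates the single factor $(1-\LL^{-(n-d)})^{-1}$ at the outset and makes the appearance of $(\LL-1)[\tilde X]$ structurally transparent.
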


\begin{proof}
We prove the part
involving $\Lambda_N(f,x)$,  the proof for $\Lambda_N(f,\infty)$ being identical. 
 Given $x\in \A^1$ and integer $N\geq 0$, define 
$$
    \Lambda_N^*(f,x) = \left\{
    \g\in \C[t]^n: 
    \begin{array}{l}
        \deg (g_1),\dots,\deg(g_n)<N, ~ f(\g)\equiv 0\bmod{(t-x)^N}\\
        (g_1,\dots,g_n) \not\equiv (0,\dots,0) \bmod (t-x)
        \end{array}
        \right\},
$$
as an analogue of 
\eqref{eq:lambda}.
Next, let us put 
$$
    \Lambda_{N,i}(f,x) = \left\{
    \g\in \C[t]^n: 
    \begin{array}{l}
        \deg (g_1),\dots,\deg(g_n)<N, ~ f(\g)\equiv 0\bmod{(t-x)^N}\\
        \g \equiv \0 \bmod (t-x)^{i},~        \g \not\equiv \0 \bmod (t-x)^{i+1}
        \end{array}
        \right\},
$$
for any $0\leq i<N.$  
Let $ \g\in  \Lambda_{N,i}(f,x)$. Then $(t-x)^i\mid \g$, whence 
$(t-x)^{di}\mid f(\g)$. 
If
$N\leq id$,  then 
the constraint $f(\g)\equiv 0\bmod{(t-x)^N}$ is vacuous and it follows that 
$\Lambda_{N,i}(f,x)$
is isomorphic  to the set of polynomials 
 $\g\in \C[t]^n$ of degree $<N-i$ such that 
$\g \not\equiv \0 \bmod (t-x)$. Hence 
$[\Lambda_{N,i}(f,x)]=\LL^{n(N-i)}-\LL^{n(N-i-1)}$ if $N\leq id$.
On the other hand, if
 $N>id$ then 
$ \Lambda_{N,i}(f,x)$ is isomorphic  to the set of polynomials 
 $\g\in \C[t]^n$ of degree $<N-i$ such that 
$f(\g)\equiv 0\bmod{(t-x)^{N-di}}$ and 
$\g \not\equiv \0 \bmod (t-x)$.
In particular the coefficients of $\g$ are unconstrained in degrees 
$N-di,\dots,N-i-1$, and it follows that 
$\Lambda_{N,i}(f,x)\simeq\A^{ni(d-1)}\times  \Lambda_{N-di}^*(f,x)$ 
if 
$N>id$. 
We conclude that 
\begin{align*}
[\Lambda_N(f,x)] 
&= 
(1-\LL^{-n})
\sum_{N/d\leq i<N}
\LL^{n(N-i)}
+
\sum_{0\leq i< N/d} \LL^{n(d-1)i}
 [\Lambda_{N-di}^*(f,x)]\\
 &= 
\LL^{n(N-\lceil \frac{N}{d}\rceil)}+
\sum_{0\leq i< N/d} \LL^{n(d-1)i}
 [\Lambda_{N-di}^*(f,x)],
\end{align*}
from which it follows that 
\begin{equation}\label{eq:sofa}
\LL^{-N(n-1)}[\Lambda_N(f,x)] =  \LL^{N-n
\lceil \frac{N}{d}\rceil}
+
\hspace{-0.2cm}
\sum_{0\leq i< N/d} \LL^{-(n-d)i} 
 \LL^{-(N-di)(n-1)}
 [\Lambda_{N-di}^*(f,x)].
\end{equation}

The proof of Lemma 
\ref{lem:relation'} goes  through and yields
$[\Lambda_{N}^*(f,x)]=[U_{N-1}]$, where we recall that 
$$
U_{N-1}= \{\x = \x_{0} + \x_1 t + \dots + \x_{N-1} t^{N-1}:  f(\x) \equiv 0 \bmod t^{N}, ~\x_0\neq \mathbf{0}\}.
$$
It now follows from \eqref{eq:pavel} that 
$$
 \LL^{-(N+1)(n-1)} [\Lambda_{N+1}^*(f,x)] =
  \LL^{-N(n-1)} [\Lambda_{N}^*(f,x)], 
$$
for any $N\geq 1$, whence
$$
 \LL^{-(N-di)(n-1)}
 [\Lambda_{N-di}^*(f,x)] = \LL^{-(n-1)}
 [\Lambda_{1}^*(f,x)] = \LL^{-(n-1)} (\LL-1) [Z]
$$
in \eqref{eq:sofa}. 
Observing that 
$$
N-n
\left\lceil \frac{N}{d}\right\rceil \leq N-\frac{nN}{d}=
-N(n/d-1),
$$
and taking the limit $N\to \infty$, it follows that 
\begin{align*}
\lim_{N\to \infty }\LL^{-N(n-1)} [\Lambda_N(f,x)]
&= 
 \LL^{-(n-1)} (\LL-1) [Z]
 \sum_{i\geq 0} \LL^{-(n-d)i} \\
&= 
 \LL^{-(n-2)}[Z]  (1-\LL^{-1}) (1-\LL^{-(n-d)})^{-1}.
 \end{align*}
The statement of the lemma follows.
\end{proof}

\begin{proof}[Proof of Theorem \ref{theorem_morspace}]
Assume that $d\geq 3$, $e\geq 1$ and $n>2^d(d-1)$.
It  eases notation if we put
$$
\sigma_\infty(f)= \lim_{N\to \infty} \LL^{-(n-1)N}[\Lambda_{N}(f,\infty)],
$$
where $\Lambda_N(f,\infty)$ is given by \eqref{eq:jet}. 
Then
it follows from  Theorem \ref{theorem_naivespace} that 
\begin{align*}
[M_e] - &(\LL+1)[M_{e-1}] + \LL[M_{e-2}]\\
 &= \LL^{\mu(e)}(\mathfrak{S}(f) \sigma_\infty(f) + R_e) - ( \LL+1)\LL^{\mu(e-1)}(
 \mathfrak{S}(f) \sigma_\infty(f) + R_{e-1})\\
&\quad + \LL^{\mu(e-2) + 1}(\mathfrak{S}(f)\sigma_\infty(f) + R_{e-2}) \\
 &=\LL^{n-1 + e(n-d)}\mathfrak{S}(f)
\sigma_\infty(f)\
  (1 - (\LL + 1)\LL^{-(n-d)} + \LL^{-2(n-d) + 1})  \\
 & \quad + \LL^{n-1 + e(n-d)}(R_e - (\LL + 1)\LL^{-(n-d)} R_{e-1} + \LL^{-2(n-d)+1}R_{e-2}).
\end{align*}
Dividing by $\LL-1$, 
it follows from Lemma \ref{lem:morspace_naivespace}  
that 
\begin{equation}\label{eq:staging-post}
[\Mor_{e}(\Proj^1,Z)] 
= \LL^{\mu(e) - 1}\left( \mathfrak{S}(f)\sigma_\infty(f) \frac{(1-\LL^{-(n-d)})(1-\LL^{-(n-d) + 1})}{1-\LL^{-1}} + S_e\right),
\end{equation}
where
$$
S_e=\frac{R_e - (\LL + 1)\LL^{-(n-d)} R_{e-1} + \LL^{-2(n-d)+1}R_{e-2}}{1-\LL^{-1}}.
$$

We analyse the main term using 
Lemma \ref{lem:*}. 
Denoting by $F_v(T)$ the local factors of the motivic Euler product \eqref{eq:sing_series_euler_prod} of $\mathfrak{S}(f)$,
by multiplicativity of motivic Euler products and the fact that Kapranov's zeta function for $\A^1$ is 
$$
Z_{\A^1}(T)=\prod_{v\in \A^1} (1-T)^{-1} = \frac{1}{(1-\LL T)},
$$
we therefore obtain
\begin{align*}
\mathfrak{S}(f) (1-\LL^{-(n-d) + 1})
&=  \prod_{v\in \A^1}F_v(\LL^{-n}) \prod_{v\in \A^{1}} (1-\LL^{-(n-d)})
\\ 
&= \left(\prod_{x\in \A^1}F_v(\LL^{-n}T)(1-\LL^{-(n-d)}T)\right)_{|T = 1}.
\end{align*}
Also used here is the  compatibility of Euler products with transformations of the form $T\mapsto \LL^{r}T.$
By Remark \ref{rem:faisant} and Lemma \ref{lem:*}, each local factor of the latter motivic Euler product is equal  to  $\LL^{-(n-2)}[Z]  (1-\LL^{-1})$. Combining this with an application of Lemma~\ref{lem:*} to the contribution of $\sigma_{\infty}$ and compatibility of motivic Euler products with finite products, we may write
\begin{align*}
[\Mor_{e}(\Proj^1,Z)] 
= \frac{\LL^{\mu(e) - 1}}{1-\LL^{-1}}\left( 
\prod_{v\in \Proj^1} c_v
+ S_e\right),
\end{align*}
where
$
c_v=(1-\LL^{-1})\LL^{-(n-2)} [Z].
$ 

Turning to  the error term, 
we deduce from Theorem \ref{theorem_naivespace} that 
\begin{align*}
w(R_e) &\leq  4
  - \frac{n-2^{d}(d-1)}{2^{d-2}} \left(1+\left\lfloor \frac{e+1}{2d-2}\right\rfloor\right)\\
  &\leq  4
  - \frac{n-2^{d}(d-1)}{2^{d-1}(d-1)}(e+1),
\end{align*}
for any $e\geq 1$.
By convention $R_{e}=\varnothing$ if $e\leq 0$. 
Hence 
$$
w(S_e)\leq 
\max\left(4 - \nu(e+1), 6 - 2(n-d)-\nu e, 6 - 4(n-d) -\nu(e-1)\right)=4 - \nu(e+1),
$$
where
$$\nu=\frac{n-2^{d}(d-1)}{2^{d-1}(d-1)}>0.$$
The statement of 
Theorem \ref{theorem_morspace}
is now clear.
\end{proof}

\section{The variety of lines}
\label{sec:lines}

Our task in this section is to prove Theorem \ref{thm:lines}.
It follows from Lemma 
\ref{lem:morspace_naivespace} that 
$$
[\Mor_1(\Proj^1, Z)] = \frac{[M_1] - (\LL +1)[M_0]+\LL}{\LL-1},
$$
where $[M_0]=[X]=(\LL-1)[Z]+1$.
The variety of lines $F_1(Z)$ is obtained as a quotient of $\Mor_1(\Proj^1,Z)$ by the action of the automorphism group of $\Proj^1$. The quotient map
$$\Mor_1(\Proj^1,Z)\to F_1(Z)$$
is a Zariski-locally trivial fibration with fibre $\mathrm{PGL}_2$. 
Since $[\mathrm{PGL}_2]=\LL^3-\LL$, it follows that 
\begin{equation}\label{eq:from_theorem}
\begin{split}
[F_1(Z)] 
= \frac{[\Mor_1(\Proj^1, Z)]}{\LL^3-\LL}
&=
\frac{[M_1] -(\LL^2-1)[Z]-1}{(\LL^3-\LL)(\LL-1)}\\
&=
 \frac{[M_1]-1}{(\LL^3-\LL)(\LL-1)}-
\frac{[Z]}{\LL(\LL-1)}.
 \end{split}
 \end{equation}
 
We proceed by summarising what our work says about the class of $M_1$. Let  $d\geq 3$ and  assume that $n>2^d(d-1)$. Rather than 
Theorem \ref{theorem_naivespace}, we  invoke a version in which the truncated singular series appears. This amounts to 
combining 
the remark after 
Proposition~\ref{prop:minor} with the contents of Section \ref{sect:major_arcs_conclusion} in  \eqref{eq:plan}. Thus 
$$[M_1] = \LL^{\mu(1)}\left((1+S_{1}(f) \LL^{-n})  \LL^{-(n-1)}[X] + R_{1}\right),
$$
where $R_1$ is an error term satisfying
$$
w(R_1) \leq  4
  - \frac{n-2^{d}(d-1)}{2^{d-2}}.
$$
Recalling that $\mu(1)=2n-d-1$, by \eqref{eq:mu}, 
we deduce that 
\begin{equation}\label{eq:M1}
[M_1]=
 \LL^{n-d}(\LL-1) [Z] \left(1+S_{1}(f) \LL^{-n} \right)+ \widetilde{R_{1}},
\end{equation}
where
\begin{equation}\label{eq:weight-R1}
\begin{split}
w(\widetilde{R_{1}})
&\leq 
\max\left(2n-2d, 4n-2d+2
  - \frac{n-2^{d}(d-1)}{2^{d-2}}\right)\\&=4n-2d+2
  - \frac{n-2^{d}(d-1)}{2^{d-2}}.
  \end{split}
 \end{equation}

We proceed by computing $S_1(f)$. Note that in this case $B_1\simeq \G_m\times \A^1$ is just the space of pairs $(h,T-x)$ where $h$ is a nonzero constant, and the polynomials $g_1,\dots,g_n\in \Poly_{< 1}$ are just constants. Thus
$$\res\left(\frac{h}{T-x}f(g_1,\dots,g_n)\right)= hf(g_1,\dots,g_n).$$
We therefore get

\begin{align*}S_1(f) & = \left[\Poly_{<1}^n\times B_1: \res\left(\frac{h}{T-x}f(g_1,\dots,g_n)\right)\right]\\
& = [\Poly_{<1}^n\times \G_m\times \A^1, hf(g_1,\dots,g_n)].
\end{align*}
As an element of $\expp_{B_1}$, this is the pullback of 
$$[\Poly_{<1}^n\times \G_m, hf(g_1,\dots,g_n)]\in \expp_{\G_m}$$
via $B_1\to \G_m$ given by the projection $(h, T-x)\mapsto h$.

Applying the orthogonality relation, we see that
\begin{align*} [\Poly_{<1}^n\times \G_m, hf(g_1,\dots,g_n)] &= [\Poly_{<1}^n\times \A^1, hf(g_1,\dots,g_n)] - [\Poly_{<1}^n\times \{0\}, 0]\\
& =   [X]\LL-\LL^n,
\end{align*}
whence
\begin{equation}\label{eq:S1}
S_1(f)=[X]\LL^2-\LL^{n+1}.
\end{equation}

\begin{remark}
Applying \eqref{eq:weight-of-complete-sum} with $m=1$, it follows that our work provides the bound 
$$w([X]\LL^2 - \LL^{n+1}) \leq 2n +4-\frac{n}{2^{d-2}}.$$
That is, for any  hypersurface $X\subset \A^n$ defined by a non-singular form of degree $d$, we have the weight bound 
$$
w([X] - \LL^{n-1}) \leq 2(n-1) - \frac{n-2^{d-1}}{2^{d-2}}.
$$
When working over a finite field $k=\F_q$, this should be compared with the bound 
$$
\#X(\F_q)-q^{n-1}=O_{d,n}(q^{n/2}),
$$
that follows from Deligne's resolution of the Weil conjectures, as explained by Hooley \cite[Theorem 1]{hooley}.
\end{remark}

\begin{proof}[Proof of Theorem \ref{thm:lines}]
It follows from  \eqref{eq:S1}  that 
$
S_1(f)=\LL^2(\LL-1)[Z]+\LL^2-\LL^{n+1}.
$
Combining this with  \eqref{eq:M1}, we deduce that 
\begin{align*}
[M_1]
&=
 \LL^{n-d}(\LL-1) [Z] \left(1+
 \LL^{-n+2}(\LL-1)[Z]+\LL^{-n+2}-\LL \right)+ \widetilde{R_{1}}\\
&= (\LL-1)^2 \left(  \LL^{-d+2} [Z]^2 -\LL^{n-d} [Z] \right)
+ \widetilde{\widetilde{R_{1}}},
\end{align*}
where $\widetilde{\widetilde{R_1}}$ has the same weight as
$\widetilde R_1$
 in  \eqref{eq:weight-R1}.
Inserting this into 
\eqref{eq:from_theorem}, it follows that 
\begin{align*}
\LL^2[F_1(Z)] 
&= 
 \frac{\LL([M_1]-1)}{(\LL^2-1)(\LL-1)}-
\frac{\LL[Z]}{\LL-1}\\
&= 
\frac{
 \LL^{-d+2} [Z]^2-\LL^{n-d} [Z] }
 {1+\LL^{-1}}-
\frac{[Z]}{1-\LL^{-1}} 
-\frac{1}{(\LL^2-1)(1-\LL^{-1})}
+\frac{\LL\widetilde{\widetilde{R_1}}}
{(\LL^2-1)(\LL-1)}.
\end{align*}
The term $(1-\LL^{-1})^{-1}[Z]$ has weight  $2n-4$ 
and the third term has weight $-4$. Both of these are  dominated by the upper bound we have for the weight of the last  term on the right hand side. Dividing both sides by $\LL^2$, we thereby arrive at the statement of Theorem \ref{thm:lines}.
\end{proof}

\begin{proof}[Proof of Corollary \ref{cor:coefficients}]
As described in \cite[Corollary 17.2.2]{Arapura}, for example,  it is well-known that 
 the Hodge--Deligne polynomial of the hypersurface $Z$ takes the shape
$$
\mathrm{HD}(Z) = (uv)^{n-2} ( 1 + (uv)^{-1} + (uv)^{-2} + \cdots ) +g_1(u,v),
$$
where $g_1\in \Z[u,v][[(uv)^{-1}]]$ has only terms of total degree at most $n-2$ in $u$ and $v$. More precisely, it has monomials in $u,v$ of total degree exactly $n-2$ corresponding to the middle Hodge numbers $h^{p,q}(Z)$ with $p+q = n-2$, as well as a diagonal term $-(uv)^i$ for each negative $i$.   Noting that 
$$
(uv)^{n-2} ( 1 + (uv)^{-1} + (uv)^{-2} + \cdots ) =\frac{(uv)^{n-2}}{1-(uv)^{-1}},
$$
we 
thereby deduce that 
$$
\mathrm{HD}([Z]^2) = \frac{(uv)^{2n-4}}{(1-(uv)^{-1})^2} +g_2(u,v),
$$
where $g_2\in \Z[u,v][[(uv)^{-1}]]$ only has terms of total degree at most $2(n-2) + n-2 = 3n-6$. 

Thus, there exists $g_3\in \Z[u,v][[(uv)^{-1}]]$ with terms of total degree at most $3n-2d-6$ such that 
\begin{align*} 
\mathrm{HD}\left(\frac{\LL^{-d}[Z]^2 - \LL^{n-d-2}[Z]}{1 + \LL^{-1}}\right) 
& = \frac{(uv)^{2n-d-5} }{(1-(uv)^{-1})^{2}(1+(uv)^{-1})}+g_3(u,v)\\
& = \frac{(uv)^{2n-d-5} }{(1-(uv)^{-1})(1-(uv)^{-2})}+g_3(u,v)\\
& = (uv)^{2n-d-5} \sum_{k,l\geq 0}(uv)^{-k-2l}+g_3(u,v)\\
& = \sum_{m\geq 0} \left(\left\lfloor\frac{m}{2}\right\rfloor +1 \right)(uv)^{2n-d-5-m}+g_3(u,v).
\end{align*}
Using Theorem \ref{thm:lines}, we see that this should agree with the Hodge--Deligne polynomial of $F_1(Z)$ up to terms $u^pv^q$, with $p+q \leq 4n-2d-6 - \frac{n-2^{d}(d-1)}{2^{d-2}}$, which thereby concludes the proof. 
\end{proof}

\begin{remark}\label{rem:burillo}
We now assume $d=3$, so that we are in the case of a smooth cubic hypersurface $Z\subset \Proj^{n-1}$. Recall Galkin and Shinder's relation 
\eqref{eq:GS}.
We can use work of Burillo to calculate 
$\mathrm{HD}(\Sym^2(Z))$ and so show that  Corollary 
\ref{cor:coefficients} is consistent with it.  
Indeed, Burillo's formula \cite[(2.4)]{Burillo} states that $\mathrm{HD}(\Sym^2(Z))$ is given by the coefficient of $t^2$ in 
$$\prod_{p,q\geq 0} (1-(-1)^{p+q}u^pv^qt)^{(-1)^{p+q+1}h^{p,q}(Z)}.$$
Since  $h^{p,q}(Z) = \delta_{p,q}$, 
for  $p+q> n-2$ and $p+q\leq 2(n-2)$, 
we see that modulo terms of total degree at most $n-2$ in $u,v$, the polynomial $
\mathrm{HD}(\Sym^2(Z))$ is the coefficient of $t^2$ in the expansion
\begin{align*}
\prod_{0\leq p\leq n-2} (1-(uv)^pt)^{-1} =~& 
(1 + (uv)^{n-2} t + (uv)^{2(n-2)}t^2 + \cdots ) 
\\ & \times
(1+ (uv)^{n-3} t + (uv)^{2(n-3)}t^2 + \cdots )\cdots. 
\end{align*}
Thus there exists a polynomial $g_1\in \Z[u,v]$ of degree $\leq n-2$ such that 
$$
\mathrm{HD}(\Sym^2(Z)) = \sum_{n-2 \geq p\geq p'\geq 0} (uv)^{p+p'} +g_1(u,v).
$$
This may be rewritten
\begin{align*}
\mathrm{HD}(\Sym^2(Z)) &= \sum_{ 0\leq q \leq q'} (uv)^{2(n-2) -q-q'} + g_2(u,v)
\\
&=  \sum_{m\geq 0} \left(\left\lfloor\frac{m}{2}\right\rfloor +1 \right)(uv)^{2n-4-m}+g_2(u,v),
\end{align*}
where $g_2(u,v)\in \Z[u,v][[(uv)^{-1}]]$ has terms of total degree at most $n-2$ in $u$ and $v$.

Taking $d=3$ and $n\geq 17$ in Corollary \ref{cor:coefficients}, we compute the Hodge--Deligne polynomial of
$\LL^2[F_1(Z)] + (1 + \LL^{n-2})[Z],
$
modulo degree $\leq  \frac{7}{2}n$ in $u,v$.  This shows that there exists 
$h\in \Z[u,v][[(uv)^{-1}]]$ with terms of degrees $\leq \frac{7}{2}n$ such that 
\begin{align*}
\mathrm{HD}(\LL^2[F_1(Z)] + (1 + \LL^{n-2})[Z])=~&
\sum_{m\geq 0} \left(\left\lfloor\frac{m}{2}\right\rfloor +1 \right)(uv)^{2n-6-m} 
\\
&+ (uv)^{2(n-2)}( 1+ (uv)^{-1} + (uv)^{-2} + \cdots) +h(u,v).
\end{align*}
This is easily seen to agree with our  expression for $\mathrm{HD}(\Sym^2(Z))$ above. 
\end{remark}

\appendix
\section{Geometry of numbers over function fields}\label{sec:gon}

\subsection{Basic facts}

Let $k$ be a field. In this note we discuss basic facts from the geometry of numbers in the setting of the function field $k(t)$.  A non-archimedean absolute value $|\cdot|: k(t)\to \R_{\geq 0}$ 
is given by  taking $|0|=0$ and 
$
|x|=2^{\deg(p)-\deg(q)},
$
if $x=p/q$ for polynomials $p,q\in k[t]$, with $q\neq 0$.
The completion of $k(t)$ with respect to this absolute value is the  
field of Laurent series $K_\infty=k((t^{-1}))$, whose elements take the shape
$$
x=\sum_{-\infty<i\leq M} x_i t^i,
$$
for $x_i\in k$ and $M\in \Z$. 
The absolute value is extended to 
$K_\infty$ by taking 
$$
|x|=
\begin{cases}
0  &\text{ if $x=0$,}\\
2^M &\text{ 
if $x=\sum_{-\infty<i\leq M} x_i t^i\in K_\infty$ with 
$x_M\neq 0$. }
\end{cases}
$$
We  extend this to vectors by setting
$$
|\x|=\max(|x_1|,\dots,|x_n|),
$$
for any $\x\in K_\infty^n$. This provides a distance function on $K_\infty^n$.

Mahler \cite{mahler} initiated an extensive investigation of lattices $\sfl\subset K_\infty^n$, proving analogues of many 
results from  the classical setting of lattices in $\R^n$. 
In this section we  summarise some of the basic  facts that are needed in our application. (In doing so, we  recover work of Lee \cite{lee}, which is concerned with  the special case $k=\F_q$.)

\medskip

A (full rank) {\em lattice} $\mathsf{\Lambda}\subset K_\infty^n$ is defined to be a set of the form
$$
\sfl=\{u_1\x_1+\dots + u_n \x_n: u_1,\dots,u_n\in k[t]\},
$$
where 
 $\x_1,\dots, \x_n\in K_\infty^n$
are 
linearly independent vectors over $K_\infty$.
 This set of vectors is called the {\em basis} of the lattice. 
 Let $\mathbf{M}=(\x_1,\dots,\x_n)$ be the associated 
 $n\times n$ matrix of basis vectors.  The {\em determinant} of the lattice is defined to be
 $$
 \det(\sfl)=|\det \mathbf{M}|.
 $$ 
 This does not depend on the choice of basis for $\sfl$, as proved in \cite[Section~8]{mahler}
 or \cite[Lemma 2]{mahler'}.
 Next,  as in 
 \cite[Section~9]{mahler}, 
  the {\em successive minima} $\sigma_1,\dots,\sigma_n$
 associated to $\sfl$ are defined as follows. 
 We take 
$2^{\sigma_1}$ to be  minimum of
 $|\x_1|$, for non-zero
 $\x_1\in \sfl$.  
 Next, $2^{\sigma_2}$ is the minimum of
 $|\x_2|$, for 
 $\x_2\in \sfl\setminus \Span_{K_\infty}(\x_1)$. 
 One continues in this way, ultimately defining 
$2^{\sigma_n}$ to be the minimum of
 $|\x_n|$, for 
 $\x_n\in \sfl\setminus \Span_{K_\infty}(\x_1,\dots,\x_{n-1})$. 
It is clear from the construction that 
$\sigma_1,\dots,\sigma_n$ are integers satisfying
$
 -\infty< \sigma_1\leq \dots\leq \sigma_n.
$
The analogue of Minkowski's theorem is 
 \begin{equation}\label{eq:det}
 \det(\sfl)=2^{\sigma_1+\dots+\sigma_n},
 \end{equation}
which is established in \cite[Section~9]{mahler}.

For lattices in $\R^n$ there is a wealth of literature around the problem of counting the number of lattice points that are constrained to lie in Euclidean balls of growing radius in $\R^n$. In the function field setting, we are interested in the ``size'' of the set
$$
\left\{\x\in \sfl: |\x|< 2^R\right\}.
$$
This set forms a finitely generated $k$-vector space and we may consider its {\em dimension} as  a $k$-vector space.
The key object of interest  is then the quantity
\begin{equation}\label{eq:def-nu}
\nu(\sfl,R)=\dim \left\{\x\in \sfl: |\x|< 2^R\right\},
\end{equation}
as $R\to \infty$. 
For example, when $\sfl=k[t]^n$ we see that the monomials
$1,t,t^2,\dots, t^{R-1}$ are linearly independent over $k$,  
whence $\nu(k[t]^n,R)=nR.$

Our starting point for the analysis of $\nu(\sfl,R)$ is the identification of a suitable basis for the lattice $\sfl$. The following result is adapted from an argument of Davenport \cite[Lemma 12.3]{dav} (as already 
adapted to the  setting $k=\F_q$ by Lee \cite{lee}).

\begin{lemma}\label{lem:choose}
Let $\sfl\subset K_\infty^n$ be a lattice and let $\sigma_1,\dots,\sigma_n$ be the successive minima of $\sfl$. 
Then, possibly after a linear automorphism of $K_\infty^n$, 
there exists a basis $\x_1,\dots,\x_n$ of $\sfl$ such that, 
for $1\leq i\leq n$, we have 
$$
\x_i=(x_{i,1},\dots,x_{i,i},0,\dots,0)
\quad \text{ and }
\quad |\x_i| =|x_{i,i}|=2^{\sigma_i}.
$$
\end{lemma}

\begin{proof}
Choose vectors $\y_1,\dots,\y_n\in \sfl$  such that, 
for each $i\in \{1,\dots,n\}$, $\y_i\in \sfl$ is chosen to be  
linearly independent from $\y_1,\dots,\y_{i-1}$ and  to have norm
$|\y_i| =2^{\sigma_i}$.
As observed by Mahler in the proof of  \cite[Lemma 1]{mahler'}, it follows 
that $\y_1,\dots,\y_n$ form a basis for $\sfl$.
The basis matrix formed from these column vectors is an $n\times n$ matrix of full rank. 
After possibly composing with a linear automorphism 
of $K_\infty^n$,
we can henceforth assume that these vectors take the shape
$$
\y_i=(y_{i,1},\dots,y_{i,i},0,\dots,0),
$$
for $1\leq i\leq n$, with $y_{1,1}\cdots y_{n,n}\neq 0$. We take $\x_1=\y_1$. Next, we choose $\x_2\in \sfl\cap \Span_{k(t)}( \y_1,\y_2)$
such that 
$$
\Span_{k[t]}(\x_1,\x_2)=
\sfl\cap \Span_{k(t)}( \y_1,\y_2).
$$
It is clear that there exists $q,u_1,u_2\in k[t]$, with $q\neq 0$, such that 
\begin{equation}\label{B.4}
q\x_2=u_1\y_1+u_2\y_2.
\end{equation}
This implies that $qx_{2,2}=u_2y_{2,2}$, from which it follows that 
$u_2\neq 0$, since $q,x_{2,2},y_{2,2}$ are all non-zero.

We claim that the choice of $\x_2$ can be made in such a way that 
$|u_1|,|u_2|\leq |q|$, which   implies that 
$$
|\x_2|\leq |q|^{-1}\max\left\{ |u_1| |\y_1|,  |u_2| |\y_2|\right\}\leq |\y_2|=2^{\sigma_2},
$$
by the ultrametric inequality.  
To prove the claim, we note that 
$$
\y_2\in  \Span_{k[t]}(\x_1,\x_2)=\Span_{k[t]}(\y_1,\x_2).
$$ 
Hence, there exist 
$v_1,v_2\in k[t]$ such that 
$
\y_2=v_1\y_1+v_2\x_2.
$
But then \eqref{B.4} implies that 
$$
(q-u_2v_2)\x_2=(u_1+u_2v_1)\y_1,
$$
whence 
$(q-u_2v_2)x_{2,1}=(u_1+u_2v_1)y_{1,1}$ and 
$(q-u_2v_2)x_{2,2}=0$, since $y_{1,2}=0$. Thus it follows that 
$q=u_2v_2$ and $u_1=-u_2v_1$. Once inserted into 
\eqref{B.4} and recalling that $u_2\neq 0$, we obtain
$$
v_2 \x_2=-v_1\y_1+\y_2.
$$
Moreover, $v_2\neq 0$. Writing $-v_1=w_1v_2+r_1$, for $w_1,r_1\in k[t]$ such that 
$|r_1|<|v_2|$, we therefore  obtain
$$
v_2(\x_2-w_1\y_1)=r_1\y_1+\y_2.
$$ 
The claim follows on  redefining 
$\x_2-w_1\y_1=\x_2-w_1\x_1$ to be 
$\x_2$.

In a similar way, for $3\leq i\leq n$, 
$\x_i\in 
 \sfl\cap \Span_{k(t)}( \y_1,\dots,\y_i)$ can be chosen so that
$$
\Span_{k[t]}(\x_1,\dots,\x_i)=
\sfl\cap \Span_{k(t)}( \y_1,\dots ,\y_i),
$$
with 
$
|\x_i|\leq 2^{\sigma_i}.
$
Proceeding in this way, we  have 
$$
|x_{i,i}|\leq |\x_i|\leq 2^{\sigma_i},
$$
for $1\leq i\leq n$.

It remains to prove the lower bound 
$$
|x_{i,i}|\geq 2^{\sigma_i},
$$
for $1\leq i\leq n$.
The vectors $\x_1,\dots,\x_n$ form a basis for 
$\sfl$. Thus 
$$
\det(\sfl)=|\det(\x_1,\dots,\x_n)|=|x_{1,1}\dots x_{n,n}|.
$$
It follows that 
$$
\det(\sfl)\leq 2^{\sigma_1+\dots+\sigma_{i-1}} |x_{i,i}| 
2^{\sigma_{i+1}+\dots+\sigma_{n}},
$$
for any $i\in \{1,\dots,n\}$. Appealing to \eqref{eq:det}, we deduce that 
 $|x_{i,i}| \geq 2^{\sigma_i}$, as required.
\end{proof}

We are now ready to prove our key lattice point counting result. 

\begin{lemma}\label{lem:counting}
Let $\sfl\subset K_\infty^n$ be a lattice and let $\sigma_1,\dots,\sigma_n$ be the successive minima of $\sfl$.
Then for any  $R\in \Z_{> 0}$, we have 
$$
\nu(\sfl,R)=\sum_{i=1}^n \max\{0,R-\sigma_i\}.
$$
\end{lemma}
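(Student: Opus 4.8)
The plan is to use the basis furnished by Lemma~\ref{lem:choose} to set up convenient coordinates, and then count dimensions stratum by stratum according to which basis vectors are used. Let $\x_1,\dots,\x_n$ be the basis guaranteed by Lemma~\ref{lem:choose}, so that $\x_i$ has support in its first $i$ coordinates, $|\x_i|=|x_{i,i}|=2^{\sigma_i}$, and the $x_{i,i}$ are nonzero. A general lattice element is $\x=\sum_{i=1}^n u_i\x_i$ with $u_i\in k[t]$, and I want to understand when $|\x|<2^R$. First I would observe that, because $\x_j$ does not contribute to coordinates with index $>j$, the top-index coordinate of $\x$ in which something nontrivial happens is governed by the largest $i$ with $u_i\ne 0$: writing $m=\max\{i:u_i\ne 0\}$, the $m$-th coordinate of $\x$ equals $u_m x_{m,m}$, so $|\x|\geq |u_m||x_{m,m}|=|u_m|2^{\sigma_m}$. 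Conversely, for a fixed choice of which $u_i$ are allowed to be nonzero, one can build up the vector and control the norm.

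The cleanest way to organise the count is to filter the $k$-vector space $\sfl_R:=\{\x\in\sfl:|\x|<2^R\}$ by the subspaces $F_m=\{\x\in\sfl_R: \x\in\Span_{k(t)}(\x_1,\dots,\x_m)\}$, so that $F_0=0\subset F_1\subset\dots\subset F_n=\sfl_R$ and $\nu(\sfl,R)=\sum_{m=1}^n\dim(F_m/F_{m-1})$. The main step is to show $\dim(F_m/F_{m-1})=\max\{0,R-\sigma_m\}$. An element of $F_m$ not lying in $F_{m-1}$ has $u_m\ne 0$, and its $m$-th coordinate is $u_m x_{m,m}$; the condition $|\x|<2^R$ forces $|u_m x_{m,m}|<2^R$, i.e.\ $|u_m|<2^{R-\sigma_m}$, so $\deg u_m< R-\sigma_m$, giving at most $\max\{0,R-\sigma_m\}$ degrees of freedom for $u_m$ modulo $F_{m-1}$. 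For the reverse inequality I would argue that any polynomial $u_m$ with $\deg u_m<R-\sigma_m$ actually occurs: given such $u_m$, the vector $u_m\x_m$ has norm $|u_m|2^{\sigma_m}<2^R$ in coordinate $m$, but may be too large in the lower coordinates; however $\sfl\cap\Span_{k(t)}(\x_1,\dots,\x_{m-1})=\Span_{k[t]}(\x_1,\dots,\x_{m-1})$ contains vectors realising every residue, so one can correct $u_m\x_m$ by an element of $\Span_{k[t]}(\x_1,\dots,\x_{m-1})$ to bring all coordinates below $2^R$ — this is where the triangular shape and the precise identity $|x_{i,i}|=2^{\sigma_i}$ from Lemma~\ref{lem:choose} is used, via an inductive ``successive reduction'' sweeping down from coordinate $m-1$ to coordinate $1$, at each stage subtracting a suitable $k[t]$-multiple of $\x_j$ (legal since $x_{j,j}\ne0$ and has norm exactly $2^{\sigma_j}$, so division with remainder keeps the degree bound). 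Thus the map sending $\x\in F_m$ to the class of $u_m$ in $k[t]/\{\deg<R-\sigma_m\}$... more precisely to $u_m$ itself in the $\max\{0,R-\sigma_m\}$-dimensional space of polynomials of degree $<R-\sigma_m$, is surjective with kernel $F_{m-1}$, yielding $\dim(F_m/F_{m-1})=\max\{0,R-\sigma_m\}$.

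Summing over $m$ then gives $\nu(\sfl,R)=\sum_{m=1}^n\max\{0,R-\sigma_m\}$, as claimed. The main obstacle is the correction/reduction argument establishing the lower bound $\dim(F_m/F_{m-1})\geq\max\{0,R-\sigma_m\}$: one must verify carefully that subtracting $k[t]$-multiples of $\x_1,\dots,\x_{m-1}$ can always drive the lower coordinates below the threshold $2^R$ without disturbing the $m$-th coordinate, and that the relevant vectors genuinely lie in $\sfl$ — this is exactly the content of the spans $\Span_{k[t]}(\x_1,\dots,\x_j)=\sfl\cap\Span_{k(t)}(\x_1,\dots,\x_j)$ recorded in the proof of Lemma~\ref{lem:choose}, so I would make sure to invoke that equality explicitly rather than just the norm bounds. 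The upper bound and the assembly via the filtration are routine. As a sanity check, taking $\sfl=k[t]^n$ one has all $\sigma_i=0$ and recovers $\nu(k[t]^n,R)=nR$, matching the computation in the text.
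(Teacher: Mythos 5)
Your proof is correct and follows essentially the same route as the paper: both rest on the triangular basis from Lemma~\ref{lem:choose} and count the degrees of freedom of the coefficients $u_1,\dots,u_n$, the paper by rewriting $|\x|<2^R$ as a triangular system of inequalities and asserting the resulting dimension, you by filtering the solution space and computing the graded pieces. Your successive division-with-remainder correction (valid since $\sigma_j\leq\sigma_m<R$ on the relevant strata) simply makes explicit the step the paper dismisses as ``clearly''.
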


\begin{proof}
Let $\sfl\subset K_\infty^n$ be a lattice and let $\sigma_1,\dots,\sigma_n$ be the successive minima of $\sfl$. After composing with  
a 
linear automorphism of $K_\infty^n$, 
we may 
choose a basis 
$\x_1,\dots,\x_n$ for  $\sfl$ as in Lemma~\ref{lem:choose}.
Thus 
$$
\x_i=(x_{i,1},\dots,x_{i,i},0,\dots,0), \quad \text{for $1\leq i\leq n$},
$$
with 
$
|\x_i| =|x_{i,i}|=2^{\sigma_i},
$
for $1\leq i\leq n$. Any $\x\in \sfl$ has the form
$$
\x=u_1 \x_1+\dots +u_n \x_n,
$$
for $u_1,\dots,u_n\in k[t]$. 
The condition $|\x|< 2^R$ is then equivalent to the system of inequalities
\begin{align*}
|u_1+
u_2x_{2,1}/x_{1,1}+\dots+
u_{n}x_{n,1}/x_{1,1}|&<2^{R-\sigma_1}, \\
&\hspace{0.2cm}\vdots  \\
|u_{n-1}+ u_nx_{n,n-1}/x_{n,n}|
&<2^{R-\sigma_{n-1}}, \\
| u_n| &<2^{R-\sigma_n}.
\end{align*}
The set of polynomials 
$u_1,\dots,u_n\in k[t]$ satisfying these constraints clearly defines a $k$-vector space of dimension 
$\sum_{i=1}^n \max\{0,R-\sigma_i\}$,
as claimed in the statement of the lemma.
\end{proof}

Let $\sfl\subset K_\infty^n$ be a lattice with basis matrix $\mathbf{M}$. 
The   {\em dual lattice} $\sfl^*$ is the lattice with basis matrix 
$\mathbf{M}^{\text{adj}}$, obtained by taking the adjoint matrix of $\mathbf{M}$.
We clearly have 
$\det(\sfl^*)=1/\det(\sfl)$.  In fact the successive minima of $\sfl$ share a close correspondence with the successive minima $\sigma_1^*,\dots,\sigma_n^*$ of the dual lattice. 
The relation
\begin{equation}\label{eq:*-min}
\sigma_i=-\sigma_{n-i+1}^*, \quad  \text{for $1\leq i\leq n$},
\end{equation}
is established in \cite[Section~10]{mahler}.

\subsection{Davenport's shrinking lemma}\label{sec:shrink}

Given any $x=\sum_{-\infty<i\leq M} x_it^i\in K_\infty$ we define the {\em distance to the nearest integer} function to be
$$
\|x\|=\left|\sum_{-\infty<i\leq -1} x_it^i\right|.
$$
Associated to any $a,b\in \Z$ such that $b>0$,  and any symmetric $n \times n$ matrix $\mathbf{U}$ with entries in $K_\infty$,
we set $\sfl_{a,b}(\mathbf{U})\subset K_\infty^{2n}$ for the  lattice with underlying basis matrix
$$
\mathbf{M}_{a,b}(\mathbf{U}) = \begin{pmatrix} t^{-a} \mathbf{I}_n & \mathbf{0} \\ t^b \mathbf{U} & t^b \mathbf{I}_n \end{pmatrix} ,
$$
where $\mathbf{I}_n$ is the $n\times n$ identity matrix. 

\begin{remark}\label{rem:5ways}
Recalling \eqref{eq:def-nu}, we see that 
$\nu(\sfl_{a,b}(\mathbf{U}),0)$ is by definition the dimension of the $k$-vector space of pairs $(\x,\y)\in (k[t]^n)^2$ such that
$$|t^{-a}\x|< 1,\ \ \ \text{and}\ \ \ |t^{b}\mathbf{U}\x + t^b\y|< 1,$$
i.e. such that
$$|\x|< 2^{a},\ \ \ \text{and}\ \ \ |\mathbf{U}\x + \y|< 2^{-b}.$$
Now, given that $b>0$, for each $\x\in k[t]^n$ there will be at most one $\y\in k[t]^n$ satisfying $|\mathbf{U}\x + \y|< 2^{-b}$, namely the vector whose coordinates are given by the negatives of the integer parts of the respective coordinates of the vector $\mathbf{U}\x$, and the latter will work exactly when $\| \mathbf{U} {\bf x} \| < 2^{-b}$. Thus we may conclude that $\nu(\sfl_{a,b}(\mathbf{U}),0)$ is the dimension of the $k$-vector space of vectors 
$\x\in k[t]^n$ for which $| {\bf x} | < 2^a$ and $\| \mathbf{U} {\bf x} \| < 2^{-b}$.  
\end{remark}

The following version of Davenport's {\em shrinking lemma} generalises \cite[Lemma 5.3]{BSfree} to arbitrary function fields.

\begin{lemma}\label{new-geometry} 
Let $\mathbf{U}$ be a symmetric $n \times n$ matrix with entries in $K_\infty$. 
Let $a, b,s\in \mathbb{Z}$ such that $b >0$ and $s \geq 0$.
Then 
$$
\nu(\sfl_{a,b}(\mathbf{U}),0)\leq 
\nu(\sfl_{a-s,b+s}(\mathbf{U}),-s)
+
ns + n  \max\left\{ \left\lfloor \frac{a-b}{2} \right\rfloor, 0 \right\}.
$$
\end{lemma}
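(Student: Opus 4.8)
\textbf{Proof strategy for Lemma \ref{new-geometry}.}

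The plan is to follow the classical Davenport shrinking lemma argument, adapted to the non-archimedean setting $K_\infty = k((t^{-1}))$, where the counting of lattice points is replaced by the dimension function $\nu(\cdot,\cdot)$ from \eqref{eq:def-nu}. The key point is that, as in Lemma \ref{lem:counting}, everything is governed by the successive minima of the relevant lattices, so the inequality will reduce to a combinatorial statement about the successive minima of $\sfl_{a,b}(\mathbf U)$ versus those of $\sfl_{a-s,b+s}(\mathbf U)$. First I would record the structure of the lattice: a vector of $\sfl_{a,b}(\mathbf U)$ has the form $(t^{-a}\x, t^b(\mathbf U \x + \y))$ for $\x, \y \in k[t]^n$, so that $\nu(\sfl_{a,b}(\mathbf U),0)$ counts (the dimension of the space of) $\x \in k[t]^n$ with $|\x| < 2^a$ and $\|\mathbf U\x\| < 2^{-b}$, as noted in the excerpt. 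Passing from $(a,b)$ to $(a-s,b+s)$ and from radius $0$ to radius $-s$ scales the first block by $t^{-s}$ and the second by $t^s$; the matrix $\mathbf M_{a-s,b+s}(\mathbf U)$ is obtained from $\mathbf M_{a,b}(\mathbf U)$ by multiplying on the left by $\mathrm{diag}(t^{-s}\mathbf I_n, t^s\mathbf I_n)$, which has determinant $1$, so the two lattices have the same determinant. This is the non-archimedean analogue of the volume-preservation that makes Davenport's argument work.

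Next I would use the symmetry of $\mathbf U$ to relate $\sfl_{a,b}(\mathbf U)$ to its dual. As in the real case, the symmetry of $\mathbf U$ forces a pairing between the successive minima of $\sfl_{a,b}(\mathbf U)$: one checks that the dual lattice $\sfl_{a,b}(\mathbf U)^*$, with basis matrix the adjoint of $\mathbf M_{a,b}(\mathbf U)$, is, up to a unimodular change of basis and a coordinate swap, again of the form $\sfl_{-b,-a}(-\mathbf U)$ (or something equivalent after rescaling). Combined with \eqref{eq:*-min}, this yields a relation of the form $\sigma_i + \sigma_{2n+1-i} = b - a$ for the successive minima $\sigma_1 \leq \dots \leq \sigma_{2n}$ of $\sfl_{a,b}(\mathbf U)$. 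With this symmetry in hand, the successive minima of $\sfl_{a-s,b+s}(\mathbf U)$ are (up to reordering) $\sigma_i + s$ for the ``$\x$-type'' directions and $\sigma_i - s$ for the ``$\y$-type'' directions; more precisely, rescaling by $\mathrm{diag}(t^{-s}\mathbf I_n, t^s\mathbf I_n)$ shifts each minimum, and one must control how the shift interacts with the ordering. Then Lemma \ref{lem:counting} gives
\[
\nu(\sfl_{a,b}(\mathbf U),0) = \sum_{i=1}^{2n}\max\{0,-\sigma_i\}, \qquad
\nu(\sfl_{a-s,b+s}(\mathbf U),-s) = \sum_{i=1}^{2n}\max\{0,-s-\sigma_i'\},
\]
where $\sigma_i'$ are the minima of the rescaled lattice, and the inequality becomes an explicit comparison of sums of truncated linear functions of the $\sigma_i$.

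The main obstacle will be the bookkeeping in this last step: carefully tracking which of the $2n$ successive minima get shifted up by $s$ and which get shifted down by $s$ when one rescales, and then using the symmetry relation $\sigma_i + \sigma_{2n+1-i} = b-a$ together with the elementary inequality $\max\{0,x\} \leq \max\{0,x-s\} + s$ and a case analysis on the sign of $a-b$. The term $n\max\{\lfloor (a-b)/2\rfloor, 0\}$ in the bound is exactly the defect that arises from the ``middle'' minima $\sigma_n, \sigma_{n+1}$, which by symmetry satisfy $\sigma_n + \sigma_{n+1} = b-a$ but are not individually pinned down; when $a > b$ these can each be as small as roughly $(a-b)/2$ in absolute value, contributing up to $n\lfloor (a-b)/2\rfloor$ extra lattice points that are not captured by the shrunk lattice. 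I would isolate this by splitting the index set $\{1,\dots,2n\}$ into the ``small'' half and ``large'' half under the duality involution, bounding the contribution of each pair $\{i, 2n+1-i\}$ separately, and summing. The archimedean prototype in \cite[Lemma 5.3]{BSfree} and the general shrinking lemma literature provide the template; the only genuinely new ingredient is replacing Minkowski's second theorem and lattice-point counting by \eqref{eq:det} and Lemma \ref{lem:counting}, which is exact rather than approximate and so if anything simplifies the estimates.
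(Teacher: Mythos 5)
Your overall route is the one the paper takes: express both counts through successive minima via Lemma \ref{lem:counting}, use the symmetry of $\mathbf{U}$ to identify a rescaling of the dual lattice with a conjugate of $\sfl_{a,b}(\mathbf{U})$ by the swap matrix $\begin{pmatrix} \mathbf{0} & \mathbf{I}_n \\ -\mathbf{I}_n & \mathbf{0} \end{pmatrix}$, deduce from \eqref{eq:*-min} the relation $\sigma_i+\sigma_{2n+1-i}=b-a$ (hence $\sigma_{n+1}\geq \lceil (b-a)/2\rceil$), and then compare the two sums pairwise, the lower half of the indices costing at most $s$ each and the upper half at most $\max\{0,\lfloor (a-b)/2\rfloor\}$ each. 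That is exactly the structure of the paper's proof, and your duality step and pairwise comparison are sound in outline (note only that the bound $\max\{0,\lfloor(a-b)/2\rfloor\}$ is applied to all $n$ upper minima via $\sigma_i\geq\sigma_{n+1}$, not just to the middle pair).

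The genuine gap is in the step relating $\sfl_{a-s,b+s}(\mathbf{U})$ to $\sfl_{a,b}(\mathbf{U})$, which is the input to all your bookkeeping. Multiplying $\mathbf{M}_{a,b}(\mathbf{U})$ on the left by $\mathrm{diag}(t^{-s}\mathbf{I}_n,t^{s}\mathbf{I}_n)$ produces $\mathbf{M}_{a+s,b+s}(\mathbf{U})$, not $\mathbf{M}_{a-s,b+s}(\mathbf{U})$; in fact $\mathbf{M}_{a-s,b+s}(\mathbf{U})=t^{s}\,\mathbf{M}_{a,b}(\mathbf{U})$, so the shrunk lattice is the uniform dilate $t^{s}\sfl_{a,b}(\mathbf{U})$, its determinant is $2^{2ns}\det\sfl_{a,b}(\mathbf{U})$ (there is no volume preservation), and \emph{every} successive minimum shifts by exactly $+s$. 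Consequently your proposed splitting of the minima into ``$\x$-type'' ones shifted by $+s$ and ``$\y$-type'' ones shifted by $-s$ is not available — successive minima are not attached to coordinate directions, and here they all move the same way — so the case analysis you flag as the main obstacle would not close as described. The correct (and simpler) fact rescues the plan: with minima $\sigma_i+s$, Lemma \ref{lem:counting} identifies the right-hand count, namely the dimension of the space of $\x$ with $|\x|<2^{a-s}$ and $\|\mathbf{U}\x\|<2^{-(b+s)}$ (this is the quantity used in the proof of Lemma \ref{lemma:general_bound_exp_sums}), with $\sum_{i}\max\{0,-s-\sigma_i\}$, after which the pairwise comparison against $\sum_i\max\{0,-\sigma_i\}$ yields the stated loss $ns+n\max\{0,\lfloor(a-b)/2\rfloor\}$, exactly as in the paper; one also disposes of the trivial case $a\leq 0$ at the outset.
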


\begin{proof}
The inequality  is trivial when $a\leq 0$ since then the left hand side is $0$. Hence we may assume that 
$a>0$. 
Let $-\infty<\sigma_1\leq \dots \leq \sigma_{2n}$ be the successive minima of 
$\sfl_{a,b}(\mathbf{U})$ and let $-\infty<\sigma_1^*\leq \dots\leq \sigma_{2n}^*$ be the successive minima of the 
 dual lattice
$\sfl_{a,b}(\mathbf{U})^*\subset K_\infty^{2n}$, with  underlying matrix
\[ 
\mathbf{M}_{a,b}(\mathbf{U})^{\text{adj}} = \begin{pmatrix} t^{a} \mathbf{I}_n & -t^a \mathbf{U} \\ 
\mathbf{0} &
t^{-b} \mathbf{I}_n \end{pmatrix} .
\] 
We note that  
\begin{align*}
t^{b-a} \mathbf{M}_{a,b}(\mathbf{U})^{\text{adj}}
&=
\begin{pmatrix} t^{b} \mathbf{I}_n & -t^b \mathbf{U} \\ 
\mathbf{0} &
t^{-a} \mathbf{I}_n \end{pmatrix} 
=
\begin{pmatrix} \mathbf{0} & \mathbf{I}_n \\ - \mathbf{I}_n & \mathbf{0} \end{pmatrix} 
\mathbf{M}_{a,b}(\mathbf{U})
\begin{pmatrix} \mathbf{0} & \mathbf{I}_n \\ - \mathbf{I}_n & \mathbf{0} \end{pmatrix} ^{-1} .
\end{align*}
It follows that the lattice
with underlying basis matrix 
$t^{b-a} \mathbf{M}_{a,b}(\mathbf{U})^{\text{adj}}$
 is equal to the 
one with basis matrix 
$\mathbf{M}_{a,b}(\mathbf{U})$, up to 
left and right multiplication by a matrix in $\mathrm{SL}_{2n}(k)$.
Hence the associated lattices  share the same successive minima, whence
$
2^{\sigma_i}=2^{b-a+\sigma_i^*},
$
for $1\leq i\leq 2n$. Appealing to 
\eqref{eq:*-min}, it follows that 
$
\sigma_i+\sigma_{2n-i+1}=b-a,
$
for $1\leq i\leq 2n$.
Taking $i=n+1$, we deduce that 
\begin{equation}\label{eq:mint}
 \sigma_{n+1}\geq \left\lceil \frac{b-a}{2} \right\rceil.
\end{equation}

We now apply Lemma \ref{lem:counting} to deduce that 
$$
\nu(\sfl_{a,b}(\mathbf{U}),0)=
\sum_{i=1}^{2n} \max\{0,-\sigma_i\}
$$
and 
$$
\nu(\sfl_{a-s,b+s}(\mathbf{U}),-s)
=\sum_{i=1}^{2n} \max\{0,-s-\sigma_i\}.
$$
For $1\leq i\leq n$, it is clear that 
$$
\max\{0,-\sigma_i\}- \max\{0,-s-\sigma_i\}\leq s.
$$
Moreover, for $n+1\leq i\leq 2n$ we have 
$$
\max\{0,-\sigma_i\}- \max\{0,-s-\sigma_i\}\leq 
\max\left\{\left \lfloor \frac{a-b}{2} \right\rfloor, 0\right\},
$$
by \eqref{eq:mint}. The statement of the lemma is now clear.
\end{proof}

\end{document}